\definecolor{dark-red}{rgb}{0.5,0.15,0.15}
\definecolor{dark-blue}{rgb}{0.15,0.15,0.6}
\definecolor{dark-green}{rgb}{0.15,0.6,0.15}
\newcommand{\euscr}[1]{\EuScript{#1}} 
\newcommand{\acat}{\euscr{A}} 
\newcommand{\bcat}{\euscr{B}} 
\newcommand{\ccat}{\euscr{C}} 
\newcommand{\dcat}{\euscr{D}} 
\newcommand{\ecat}{\euscr{E}} 
\newcommand{\kcat}{\euscr{K}} 
\newcommand{\hcat}{\euscr{H}} 
\newcommand{\pcat}{\euscr{P}} 
\newcommand{\Fun}{\textnormal{Fun}} 
\newcommand{\Hom}{\textnormal{Hom}} 
\newcommand{\Ext}{\textnormal{Ext}} 
\newcommand{\Ind}{\textnormal{Ind}} 
\newcommand{\map}{\textnormal{map}} 
\newcommand{\Map}{\textnormal{Map}} 
\newcommand{\abeliangroups}{\euscr{A}b} 
\newcommand{\Ab}{\abeliangroups} 
\newcommand{\Vect}{\euscr{V}ect} 
\newcommand{\catinfty}{\euscr{C}at_{\infty}} 
\newcommand{\spectra}{\euscr{S}p} 
\newcommand{\Mod}{\euscr{M}od} 
\newcommand{\Comod}{\euscr{C}omod} 
\newcommand{\fieldp}{\mathbb{F}_{p}} 
\newcommand{\monunit}{\mathbbm{1}} 
\newcommand{\quiversteenrod}{\mathcal{Q}} 
\newcommand{\bbZ}{\mathbb{Z}} 
\newcommand{\adapted}{\euscr{A}\mathrm{dp}} 
\newcommand{\stableinftycats}{\euscr{S}\mathrm{t}} 
\newcommand{\Hrm}{\mathrm{H}} 
\newcommand{\sheaves}{\mathrm{Sh}}
\DeclareMathOperator*{\colim}{\mathrm{colim}}
\def\Syn{\mathrm{Syn}}
\def\Z{\bbZ}
\def\Sp{\mathrm{Sp}}
\def\F{\mathbb{F}}
\def\DD{\mathcal{D}}
\def\E{\mathbb{E}}
\def\Def{\mathrm{Def}}
\def\Ss{\mathbb{S}}
\def\BP{\mathrm{BP}}
\def\op{\mathrm{op}}
\newcommand{\X}{\mathsf{X}}
\newcommand{\Y}{\mathsf{Y}} 
\newcommand{\A}{\mathsf{A}}
\newcommand{\C}{\mathsf{C}} 
\newcommand{\Hsf}{\mathsf{H}} 
\newcommand{\Psf}{\mathsf{P}} 
\newcommand{\Isf}{\mathsf{I}} 
\newcommand{\BPn}[1]{\BP \langle #1 \rangle}
\newcommand{\coker}{\mathrm{coker}}
\newcommand{\fib}{\mathrm{fib}}
\newcommand{\cof}{\mathrm{cof}}
\newcommand{\triplerightarrow}{%
\tikz[minimum height=0ex]
  \path[->]
   node (a)            {}
   node (b) at (1em,0) {}
  (a.north)  edge (b.north)
  (a.center) edge (b.center)
  (a.south)  edge (b.south);%
}
\newcommand{\pullback}{\arrow[dr, phantom, "\lrcorner", very near start]}
\theoremstyle{plain}
\newtheorem{theorem}{Theorem}[section]
\newtheorem{lemma}[theorem]{Lemma}
\newtheorem{proposition}[theorem]{Proposition}
\newtheorem{corollary}[theorem]{Corollary}
\newtheorem*{theorem*}{Theorem}
\theoremstyle{definition}
\newtheorem{example}[theorem]{Example}
\newtheorem{warning}[theorem]{Warning}
\newtheorem{definition}[theorem]{Definition}
\newtheorem{recollection}[theorem]{Recollection}
\newtheorem{remark}[theorem]{Remark}
\newtheorem{notation}[theorem]{Notation}
\newtheorem{variant}[theorem]{Variant}
\newtheorem{construction}[theorem]{Construction}
\newtheorem{convention}[theorem]{Convention}
\newtheorem*{remark*}{Remark}
\newtheorem*{interpretation*}{Interpretation}
\newtheorem*{definition*}{Definition}
\newtheorem*{conjecture*}{Conjecture}
\newtheorem*{notation*}{Notation}
\newtheorem*{convention*}{Convention}
\theoremstyle{remark}
\numberwithin{equation}{section}
  \def\subsection{\@startsection{subsection}{1}%
  \z@{.7\linespacing\@plus\linespacing}{.5\linespacing}%
  {\normalfont\bfseries\centering}}
\let\oldtocsection=\tocsection
\let\oldtocsubsection=\tocsubsection
\let\oldtocsubsubsection=\tocsubsubsection
\renewcommand{\tocsection}[2]{\hspace{0em}\oldtocsection{#1}{#2}}
\renewcommand{\tocsubsection}[2]{\hspace{1em}\oldtocsubsection{#1}{#2}}
\renewcommand{\tocsubsubsection}[2]{\hspace{2em}\oldtocsubsubsection{#1}{#2}}
\DeclareMathOperator{\Rep}{Rep}
\DeclareMathOperator{\Tor}{Tor}
\DeclareMathOperator{\gr}{gr}
\mathchardef\mhyphen="2D
\newcommand{\KO}{\mathrm{KO}} 
\newcommand{\KU}{\mathrm{KU}} 
\newcommand{\KT}{\mathrm{KT}} 
\newcommand{\K}{\mathrm{K}} 
\newcommand{\CRT}{\mathrm{CRT}} 
\newcommand{\dashepi}{\mhyphen \mathrm{epi}}
\begin{document}
\title[Quivers and the Adams spectral sequence]{Quivers and the Adams spectral sequence}

\author{Robert Burklund}
\address{Department of Mathematical Sciences, University of Copenhagen, Denmark}
\email{rb@math.ku.dk}

\author{Piotr Pstr\k{a}gowski}
\address{Department of Mathematics, Harvard and Institute for Advanced Study, USA}
\email{pstragowski.piotr@gmail.com}

\begin{abstract}
In this paper, we describe a novel way of identifying Adams spectral sequence $E_{2}$-terms in terms of homological algebra of quiver representations. Our method applies much more broadly than the standard techniques based on descent-flatness, bearing on a varied array of ring spectra. In the particular case of $p$-local integral homology, we are able to give a decomposition of the $E_{2}$-term, describing it completely in terms of the classical Adams spectral sequence. In the appendix, which can be read independently from the main body of the text, we develop functoriality of deformations of $\infty$-categories of the second author and Patchkoria. 
\end{abstract}

\maketitle 

\tableofcontents


\section{Introduction}
\label{sec:intro}


Many problems in algebraic geometry and stable homotopy theory can be attacked using the method of descent. In the case most important to homotopy theorists, one picks a ring spectrum $R$ and forms the Amitsur resolution of the sphere spectrum: 
\[ \begin{tikzcd}
    \Ss \ar[r] & R \arrow[r,yshift=-1ex] \arrow[r,yshift=1ex] & R \otimes R \ar[l, shorten=4pt] \ar[r,yshift=-2ex] \ar[r] \ar[r,yshift=2ex] & R \otimes R \otimes R \ar[l, shorten=4pt,yshift=-1ex] \ar[l, shorten=4pt, yshift=1ex] \cdots  
\end{tikzcd} \]
Associated to this we have the $R$-based Adams spectral sequence (sometimes called the descent spectral sequence) which is of signature 
\[
E_{1}^{s, t} \coloneqq \pi_{t}(R^{\otimes s+1}) \Rightarrow \pi_{t-s} \Ss.
\]
In practice, the $E_{1}$-term above is enormous and a first, crucial, goal is to identify the $E_{2}$-term in terms of homological algebra.\footnote{To drive our point home, let us quote Ravenel \cite{ravenel_complex_cobordism}: "The cobar complex [\emph{the $E_{1}$-term}] is so large that one wants to avoid using it directly at all costs."} 

The standard assumption that allows one to do this is descent-flatness; that is, asking that $R_{*}R$ be flat as an $R_{*}$-module. If this is the case, one can show that the pair $(R_{*}, R_{*}R)$ becomes a Hopf algebroid in graded commutative rings. We then have a canonical identification 
\[
E_{2}^{s, t} \cong \Ext_{R_{*}R}^{s, t}(R_{*}, R_{*}),
\]
where the $\Ext$-groups are computed in $R_{*}R$-comodules. It is this identification that strongly ties homotopy theory to algebraic geometry: the Hopf algebroid presents a stack and the category of comodules can be identified with quasi-coherent sheaves on this stack\footnote{To be more precise, since $(R_{*}, R_{*}R)$ is a Hopf algebroid in \emph{graded-commutative} rings, the associated stack is Dirac stack in the sense of \cite{hesselholt2023dirac}.}, relating the homological algebra of the latter to the stable homotopy groups of spheres. 

Perhaps the most important example is given by taking $R$ to be the complex bordism spectrum $\mathrm{MU}$, for which the associated stack is the moduli stack of formal groups. This creates a bridge between homotopy theory and arithmetic geometry,  leading to the subject of chromatic homotopy theory. 

Unfortunately, the descent-flatness assumption fails in many important examples, such as topological modular forms, real $K$-theory, truncated Brown-Peterson spectra and integral homology. Some of these ring spectra are closer to the sphere than either $\mathrm{MU}$ or $\fieldp$, so that the corresponding Adams spectral sequence would be very efficient, yet the lack of a uniform description of the corresponding $E_{2}$-term makes large-scale calculations incredibly complicated. 

The standard strategy in these cases is to start the analysis at the $E_{1}$-term. While involved, this is a powerful technique, the famous example being Mahowald's celebrated resolution of the telescope conjecture at $p = 2$ and $n=1$ through the use of the $ko$-based Adams spectral sequence  \cite{mahowald1981bo, mahowald1982image}. A similar analysis at odd primes appears in the work of Gonzalez \cite{gonzalez_regular_complex, gonzalez2000vanishing}. At height two, $\mathrm{tmf}$-resolutions were studied using this technique in the work of B\`{e}audry, Behrens, Bhattacharya, Culver and Xu \cite{beaudry2019tmf, beaudry2021telescope}. 

In this paper we give a new method of identifying the $E_{2}$-terms of these Adams spectral sequences in terms of homological algebra of quiver representations. 
For simplicity and to keep things relatively familiar, in this introduction we will state the results under the assumption that $R$ is an $\mathbb{E}_{\infty}$-ring spectrum. The main body of the paper is written only under the assumption that the ring spectrum in question is associative (which is important, as many examples of interest are not commutative, such as connective Morava $K$-theory), but the results are easier to state in the fully commutative case. 

\begin{definition}
\label{definition:introduction_multiplicative_class_of_simples}
  A \emph{a multiplicative class of simples} $\pcat$ is a symmetric monoidal, full subcategory of compact $R$-modules 
  closed under finite sums, (de)suspensions, retracts and linear duals. 
  A \emph{representation of $\pcat$} is an additive functor
  \[ \X: \pcat \rightarrow \abeliangroups \]
  to the category of abelian groups.\footnote{In the main body of the text, we define representations as \emph{contravariant} functors, rather than covariant, see \cref{definition:pcat_quiver}. Since in the introduction we assume that $R$ is commutative and $\pcat$ is closed under taking duals, covariant and contravariant functors can be identified, see \cref{variant:covariant_description_of_quivers}.}\footnote{Note that since $\abeliangroups$ is a $1$-category, $\Rep(\pcat)$ depends only on the homotopy category of $\pcat$.} We say $R$ is \emph{Adams-type} with respect to $\pcat$ if every simple $S \in \pcat$ can be written $S \simeq \varinjlim S_{\alpha}$ as a filtered colimit of finite spectra such that $R \otimes_{A} S_{\alpha}$ is simple for each $\alpha$. 
\end{definition}

The category $\Rep(\pcat)$ of representations of $\pcat$ is a Grothendieck abelian category, and it inherits a local grading\footnote{A \emph{local grading} on a category is an action of $\bbZ$; equivalently, a choice of an automorphism. In the case of $\pcat$, the chosen automorphism is given by the suspension functor.} and a symmetric monoidal structure from that of $\pcat$. For any spectrum $A$ its $\pcat$-homology $\Hsf(A)$ is the $\pcat$-representation given by 
\[ \Hsf(A)(P) \coloneqq \pi_{0}(P \otimes A). \]
The classical case is recovered by letting $\pcat$ consist of free $R$-modules, in which case there is a canonical equivalence $\Rep(\pcat) \cong \Mod_{R_{*}}$. In general, $\Hsf(A)$ is a richer invariant of $A$ than $R_*(A)$, encoding the homology of $A$ with respect to a wider class of  $R$-modules. 

\begin{theorem}[{\ref{theorem:comodule_homology_theory_adapted_in_pcat_flat_case}}]
\label{theorem:ext_identification_in_general_case_in_introduction}
Let $R$ be $\pcat$-Adams-type. Then there exists a unique exact, cocontinuous comonad $\quiversteenrod$ on $\Rep(\pcat)$ such that 
\[
\quiversteenrod(\Hsf(A)) \simeq \Hsf(R \otimes A)
\]
for any spectrum $A$. Moreover, for any spectrum $A$ the homology $\Hsf(A)$ has a natural structure of a $\quiversteenrod$-comodule and there is a canonical identification
\[
{}^R E_{2}^{s, t}(\Ss, A) \cong \Ext^{s, t}_{\quiversteenrod}(\Hsf(\Ss), \Hsf(A))
\]
between the $E_{2}$-term of the $R$-based Adams spectral sequence computing $\pi_{*}A$ and $\Ext$-groups in the abelian category of $\quiversteenrod$-comodules. 
\end{theorem}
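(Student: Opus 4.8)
The plan is to build the comonad $\quiversteenrod$ via the Barr–Beck/comonadicity machinery applied to the relative Adams tower, and to identify $\Ext$-groups by comparing two resolutions of the unit.

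First I would set up the functor $\Hsf\colon \Sp \to \Rep(\pcat)$ and study the adjunction it participates in. The key structural observation is that $\pcat$-Adams-type-ness guarantees that for each simple $P$ the functor $A \mapsto \pi_0(P\otimes A)$ commutes with the relevant filtered colimits, so that $\Hsf$ is well-behaved on the $R$-cellular objects. One then shows that $\Hsf(R\otimes -)$ is computed by an endofunctor on $\Rep(\pcat)$; concretely, writing each simple $S\in\pcat$ as $\varinjlim S_\alpha$ with $R\otimes_A S_\alpha$ simple, the value $\pi_0(S\otimes R\otimes A)=\varinjlim \pi_0(S_\alpha\otimes R\otimes A)$ can be re-expressed in terms of $\Hsf(A)$ evaluated at the simples $R\otimes_A S_\alpha$, together with the comodule-structure maps $S\mapsto R\otimes_A S$. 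This produces the desired endofunctor $\quiversteenrod$ on $\Rep(\pcat)$; exactness follows from the fact that $R\otimes -$ preserves cofiber sequences and $\pi_0$ of the relevant objects is exact on the subcategory in play (the simples being built from finite spectra $S_\alpha$ with $R\otimes_A S_\alpha$ flat), and cocontinuity follows from the colimit description. Uniqueness is forced because $\Hsf$ detects enough objects: the representations $\Hsf(\Ss/\!/\alpha)$ of finite $R$-cellular spectra generate $\Rep(\pcat)$ under colimits, and $\quiversteenrod$ is determined on them.

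Next I would equip $\quiversteenrod$ with its comonad structure. The counit $\quiversteenrod \Rightarrow \Id$ comes from the unit map $\Ss \to R$ applied after $\otimes A$, i.e.\ from $\Hsf(A)\to \Hsf(R\otimes A)$—wait, that is the wrong direction; rather the comultiplication and counit come from the standard coalgebra structure on $R$ as the Amitsur resolution: the map $R\to R\otimes R$ induces $\quiversteenrod \Rightarrow \quiversteenrod^2$ and $R\to\Ss$... more precisely, one uses that $\Hsf$ is lax symmetric monoidal and that $R$ carries the structure of a coalgebra object in the relevant sense relative to the augmentation, so that $\Hsf(R\otimes-)$ inherits a comonad structure with $\Hsf(A)$ naturally a comodule via $\Hsf(A)\simeq \Hsf(\Ss\otimes A)\to\Hsf(R\otimes A)$. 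That this is a comonad, not just an endofunctor with extra maps, follows from coassociativity of the Amitsur complex, which is formal.

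Finally, for the $\Ext$-identification I would argue as follows. By construction the relative Adams resolution $\Ss \to R \to R\otimes R \to \cdots$ has, after applying $\pi_t(-\otimes A)$, associated graded the cobar complex of $\quiversteenrod$ on $\Hsf(A)$: indeed $\pi_t(R^{\otimes s+1}\otimes A)\cong$ the $s$-th cobar term $\quiversteenrod^{s+1}(\Hsf(A))$ in internal degree $t$ by iterating the defining isomorphism of $\quiversteenrod$, and the differentials match because they are all induced by coface maps of the Amitsur complex, which $\Hsf$ sends to the cobar differentials. The cobar complex computes $\Ext_{\quiversteenrod}(\Hsf(\Ss),\Hsf(A))$ provided $\Hsf(\Ss)$ is suitably cofree/has a cofree resolution, which is exactly what the Adams-type hypothesis delivers: the tower $R^{\otimes\bullet+1}\otimes\Ss$ maps to an injective-comodule resolution of $\Hsf(\Ss)$ after applying $\Hsf$. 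Running the associated spectral sequence of the tower and comparing $E_1$-pages gives the stated identification of $E_2$-terms, naturally in $A$. \textbf{The main obstacle} I expect is the exactness and well-definedness of $\quiversteenrod$ on all of $\Rep(\pcat)$: one only controls $\Hsf(R\otimes -)$ a priori on $R$-cellular spectra, so extending to a genuine cocontinuous exact comonad on the whole Grothendieck category—and checking that the comonad axioms hold strictly and not just up to the ambiguity of this extension—requires care, presumably handled by a left Kan extension argument along the inclusion of $\pcat$-representations of finite cellular spectra, using compact generation of $\Rep(\pcat)$.
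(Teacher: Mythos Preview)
Your overall strategy---build the endofunctor via the filtered colimit formula, equip it with comonad structure, then identify $E_2$ via the cobar complex---is the classical Adams-type story, but the paper takes a cleaner abstract route and your proposal has a genuine gap at the comonad step.

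The paper does not work directly on $\Sp$. Instead it starts from the adapted homology theory $\Psf \colon \Mod_R \to \Rep(\pcat)$ and the extension-of-scalars adjunction $f^* \colon \Sp \rightleftarrows \Mod_R \colon f_*$. The Adams-type hypothesis (via exactly your filtered colimit formula) shows this adjunction is \emph{$\Psf$-flat}: the comonad $f^*f_* = R \otimes -$ on $\Mod_R$ preserves $\Psf$-epimorphisms. Flatness then implies that $f_*$ (not just $f^*$) preserves the relevant epimorphism classes, so both adjoints descend to exact functors $\overline{f^*}, \overline{f_*}$ between abelian categories, and $\quiversteenrod := \overline{f^*}\,\overline{f_*}$ is an exact comonad with structure inherited \emph{from the adjunction}. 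Comonadicity identifies the target of the pushforward homology theory on $\Sp$ with $\Comod_{\quiversteenrod}$, and this pushforward is adapted by general machinery, which immediately gives the $\Ext$-description of its Adams $E_2$.

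Your gap is exactly where you wrote ``wait, that is the wrong direction.'' There is no map $R \to \Ss$, and $R$ is not a coalgebra in any sense you can use here; lax monoidality of $\Hsf$ does not produce a comultiplication on $\quiversteenrod$. The comonad on $\Mod_R$ is $R \otimes -$ with comultiplication coming from the adjunction unit $M \to R \otimes M$, and the real issue---your ``main obstacle''---is why this descends to $\Rep(\pcat)$. The paper's answer is that flatness makes the entire adjunction descend; your left Kan extension idea could in principle be made to work, but you would still owe exactness of the extension and compatibility of the comonad axioms, which is precisely what the flatness formulation packages.

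A second point: adaptedness only gives $E_2 \cong \Ext_{\quiversteenrod}$ for the $\Psf$-based Adams spectral sequence. The identification with the $R$-based (Amitsur) $E_2$ when the source is $\Ss$ requires a separate comparison, which the paper handles by a colocality argument showing that $\pcat$-finite projectives (in particular $\Ss$) see no difference between the two spectral sequences. Your cobar sketch conflates these two steps.
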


Note that the condition that $R$ is $\pcat$-Adams-type is always satisfied if $\pcat$ is sufficiently large. However, the power of \cref{theorem:ext_identification_in_general_case_in_introduction} comes from the fact that it is often possible to choose a remarkably small $\pcat$. Once this is done, the representations themselves can be described in a compact manner, allowing one to make explicit calculations.

The category $\Rep(\pcat)$ of quiver representations can be informally thought of as ``modules over a ring with many objects''. Through Morita theory, the cocontinuous functor $\quiversteenrod$ can be interpreted as tensoring with a $\pcat$-bimodule, and the comonad multiplication on $\quiversteenrod$ as endowing the bimodule with the structure of a Hopf algebroid, mimicking the classical picture. In particular, the category of comodules has a canonical symmetric monoidal structure for which the forgetful functor into representations is strongly symmetric monoidal. 

\subsection{Integral homology}
As a concrete example, let us focus on the case of $p$-local integral homology. 
It is well-known that $\pi_*(\Z_{(p)} \otimes \Z_{(p)})$ contains torsion, and hence is not flat as a graded abelian group, dooming the standard approach to identifying its $E_{2}$-term. On the other hand, the only torsion which appears is simple $p$-torsion, suggesting the following class of simples.

\begin{notation}
Let $\pcat$ denote the collection of $\Z_{(p)}$-modules which are equivalent to a finite sum of shifts of $\bbZ_{(p)}$ and $\fieldp$. This is a multiplicative class of simples in the sense of \cref{definition:introduction_multiplicative_class_of_simples}.
\end{notation}

Additivity implies that a $\pcat$-representation $\X$ can be identified with a diagram of the form 
\[
\begin{tikzcd}
	{\X(\bbZ_{(p)})} & {\X(\fieldp) }
	\arrow["\pi", bend left, from=1-1, to=1-2]
	\arrow["\delta", bend left, from=1-2, to=1-1]
\end{tikzcd}
\]
where $X(\Z_{(p)})$ is a graded abelian group, 
$X(\F_p)$ is a graded $\F_p$ vector space, 
$\pi$ has degree $0$ and
$\delta$ has degree $-1$, subject to the relation 
$\delta \circ \pi = 0$. Thus, $\pcat$-representations are graded representations of a quiver with relations, which suggests thinking of $\Rep(\pcat)$ as a category of quiver representations and of $\Hsf(-)$ as a multi-object homology theory, or a homology theory landing in representations of some quiver. The representation theory of this particular quiver is well-understood, and arises in various places, for examples as blocks in representation theory of Schur algebras, $\mathfrak{sl}_{2}(\mathbb{C})$, or the Virasoro algebra \cite[\S 6.1]{chan2019representation}, \cite{stroppel2003category}.

Concretely, the $\pcat$-homology of a spectrum $X$ is given by the quiver representation
\[
\begin{tikzcd}
	{\Hrm_{*}(X; \bbZ_{(p)})} & { \Hrm_{*}(X; \fieldp) }
	\arrow["\pi", bend left, from=1-1, to=1-2]
	\arrow["\delta", bend left, from=1-2, to=1-1]
\end{tikzcd}
\]
where $\pi$ is the reduction mod $p$ map and $\delta$ is the integral $p$-Bockstein.
In this case \cref{theorem:ext_identification_in_general_case_in_introduction} specializes to the statement that any such $\pcat$-representation has a coaction of an exact comonad $\quiversteenrod$\footnote{One can show that the comonad $\quiversteenrod$ is given by tensoring with the ``quiver integral dual Steenrod algebra'', which is the representation $\Hrm_{*}(\bbZ; \bbZ_{(p)}) \rightleftarrows \Hrm_{*}(\bbZ; \fieldp)$ attached to $\bbZ$ itself. However, to make this precise one should treat this representation as a $\pcat$-bimodule, which is probably more trouble than its worth. As our results show, it is usually easier to work directly with the comonad, omitting the language of Hopf algebroids.} and that we have an identification of the $\bbZ_{(p)}$-Adams $E_{2}$-term 
\[
{}^{\Z_{(p)}} E_{2}^{s, t}(\Ss, X) \cong \Ext^{s, t}_{\quiversteenrod} \left( \begin{tikzcd} \Z_{(p)} \ar[d, bend left, two heads] \\ \F_p \ar[u, bend left, "0"] \end{tikzcd}, \begin{tikzcd} \Hrm_{*}(X; \bbZ_{(p)}) \ar[d, bend left] \\ \Hrm_{*}(X; \fieldp) \ar[u, bend left] \end{tikzcd} \right).
\]

After the $E_{2}$-term has been given an identification in the language of homological algebra it becomes much easier to analyze how various constructions differ and comparisons with more classical notions becomes fruitful.  
In the case of integral homology, this works remarkably well.


For each spectrum $X$, the $\F_p$-homology $\Hrm_{*}(X; \fieldp)$ has a coaction of the dual Steenrod algebra $\acat_{*}$, encoding the action of the Steenrod operations. For formal reasons, this coaction is compatible with the coaction of the comonad $\quiversteenrod$ on $\Hsf(X)$ in the sense that there is a forgetful functor
$ \Comod_\quiversteenrod \to \Comod_{\acat_*} $
and the action of $\pi \circ \delta$ on $\Hsf(X)(\F_p)$ 
coincides with the action of the Bockstein, $\beta$, 
on the $\F_p$-homology of $X$.
Encoding the action of the Bockstein via the quotient Hopf algebra 
$\acat(0)_{*}$ we obtain a commuting square of comparison functors:

\begin{equation}
\label{equation:comparison_functor_between_abelian_pullback_in_introduction}
\begin{tikzcd}
\Comod_{\quiversteenrod} \ar[r] \ar[d] &
\Comod_{\acat} \ar[d] \\
\Rep(\pcat) \ar[r] &
\Comod_{\acat(0)}
\end{tikzcd}
\end{equation}


Perhaps surprisingly,
this approximation captures all of the structure of a $\quiversteenrod$-comodule, even at the derived level.

\begin{theorem}
\label{theorem:introduction_embedding_into_pullback-of_derived_infty_cats}
The comparison functors in  (\ref{equation:comparison_functor_between_abelian_pullback_in_introduction}) induce a fully faithful embedding  
\[
\dcat^{b}(\Comod_{\quiversteenrod}) \hookrightarrow \dcat^{b}(\Rep(\pcat)) \times _{\dcat^{b}(\acat(0))} \dcat^{b}(\acat)
\]
of the bounded derived category of $\Comod_{\quiversteenrod}$
into the pullback of the 
bounded derived categories of 
$\acat_*$-comodules and $\pcat$-representation over $\acat(0)_*$-comodules.

Associated with this embedding we obtain, for any $\quiversteenrod$-comodules $\X, \Y$, a Mayer--Vietoris long exact sequence of $\Ext$-groups 
\[
0 \rightarrow \Ext^{0}_{\quiversteenrod}(\X, \Y) \rightarrow \Ext^{0}_{\pcat}(\X, \Y) \oplus \Ext^{0}_{\acat}({\X}, {\Y}) \rightarrow \Ext_{\acat(0)}^{0}({\X}, {\Y}) \rightarrow  \Ext^{1}_{\quiversteenrod}(\X, \Y) \rightarrow \cdots.
\]
\end{theorem}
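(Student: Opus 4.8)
The plan is to construct the functor to the pullback, verify full faithfulness by a cohomological comparison, and then extract the Mayer--Vietoris sequence from the recollement/pullback structure. First I would observe that the commuting square \eqref{equation:comparison_functor_between_abelian_pullback_in_introduction} is one of exact functors between Grothendieck abelian categories, and that each of the three target categories $\Rep(\pcat)$, $\Comod_{\acat}$, $\Comod_{\acat(0)}$ is the heart of a (bounded) $t$-structure with enough projectives/injectives of finite cohomological dimension in the relevant directions. Deriving the square yields a square of exact functors of bounded derived categories, hence a canonical functor $F\colon \dcat^{b}(\Comod_{\quiversteenrod}) \to \dcat^{b}(\Rep(\pcat)) \times_{\dcat^{b}(\acat(0))} \dcat^{b}(\acat)$; here the homotopy pullback of stable $\infty$-categories is computed as the $\infty$-categorical limit, so an object of the right-hand side is a pair of complexes together with a chosen equivalence of their images in $\dcat^{b}(\acat(0))$.

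The key structural input is that $\Rep(\pcat)$ decomposes the abelian category $\Comod_{\quiversteenrod}$: by the discussion after \cref{theorem:ext_identification_in_general_case_in_introduction}, the comonad $\quiversteenrod$ on $\Rep(\pcat)$ sits over the dual Steenrod algebra, and the quiver bookkeeping says precisely that a $\quiversteenrod$-comodule is a $\pcat$-representation $\X(\bbZ_{(p)}) \rightleftarrows \X(\F_p)$ whose $\F_p$-part carries a compatible $\acat_*$-coaction \emph{refining} the $\acat(0)_*$-coaction determined by $\pi\circ\delta = \beta$. In other words, on the level of abelian categories the square \eqref{equation:comparison_functor_between_abelian_pullback_in_introduction} is already a pullback: $\Comod_{\quiversteenrod} \simeq \Rep(\pcat) \times_{\Comod_{\acat(0)}} \Comod_{\acat}$. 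I would prove this by unwinding the bimodule presentation of $\quiversteenrod$ (the quiver integral dual Steenrod algebra), checking that its comultiplication is assembled from the comultiplication of $\acat_*$ on the $\F_p$-vertex, the Bockstein on the off-diagonal, and the trivial coaction on the $\bbZ_{(p)}$-vertex — so that a coaction of $\quiversteenrod$ is exactly the data of the three coactions with the stated compatibility.

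Given the abelian pullback, full faithfulness at the derived level amounts to showing that $F$ induces isomorphisms on all $\Ext$-groups, i.e. that for $\X, \Y \in \Comod_{\quiversteenrod}$ the natural map $\RHom_{\quiversteenrod}(\X,\Y) \to \RHom_{\Rep(\pcat)}(\X,\Y) \times_{\RHom_{\acat(0)}(\X,\Y)} \RHom_{\acat}(\X,\Y)$ is an equivalence. I would establish this by a spectral-sequence or base-change argument: the forgetful functor $\Comod_{\quiversteenrod} \to \Rep(\pcat)$ has a right adjoint (cofree comodule) with relative injective dimension controlled by that of $\acat_*$ over $\acat(0)_*$, which is finite; combined with the analogous statement for $\Comod_{\acat} \to \Comod_{\acat(0)}$, a comparison of the associated cobar/Čech resolutions identifies the $\quiversteenrod$-cobar complex with the (totalization of the) fiber product of the $\Rep(\pcat)$- and $\acat_*$-cobar complexes over the $\acat(0)_*$-cobar complex. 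Concretely, cofree $\quiversteenrod$-comodules are sent to compatible pairs of cofree comodules, injectives resolve everything in bounded degree, and the Čech descent for the pullback square gives the desired equivalence of derived mapping spectra. The Mayer--Vietoris long exact sequence is then the long exact sequence of homotopy groups of this fiber square of mapping spectra, rotated into the stated form.

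The hard part will be the last step: proving that $F$ is fully faithful rather than merely that the abelian square is a pullback. The subtlety is that a homotopy pullback of bounded derived categories is generally \emph{larger} than the derived category of the abelian pullback — gluing data can be chosen up to quasi-isomorphism, not just up to isomorphism — so one must genuinely control the higher coherences. I expect the clean way around this is to exhibit an explicit set of compact generators of $\Comod_{\quiversteenrod}$ (the cofree comodules on the generators of $\Rep(\pcat)$), show $F$ carries them to a generating set of the pullback, and then reduce full faithfulness to the $\Ext$-comparison above on these generators, where the finiteness of the relative injective dimension of $\acat_*$ over $\acat(0)_*$ makes the fiber-product-of-cobar-complexes argument rigorous. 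Boundedness of the derived categories is essential here and should be used to truncate all resolutions.
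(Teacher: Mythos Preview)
Your approach diverges substantially from the paper's and contains a genuine gap. The paper does \emph{not} argue directly with the abelian categories and their derived categories; instead it proves the stronger synthetic statement (\cref{thm:pullback-Z}) that $\Syn_{\pcat}(\Sp)$ embeds fully faithfully into the pullback of synthetic categories, and then the derived statement and the Mayer--Vietoris sequence are read off by applying $C\tau \otimes -$ (see \cref{proposition:mv_lss_between_hsf_and_hfp_e2_terms}). The crucial nonformal input is Kochman's computation that $J \coloneqq \fib(\bbZ \otimes \bbZ \to \bbZ)$ has only simple $p$-torsion \cite{kochman1982integral}: this is what makes $\nu_{\Psf}(\fieldp)$ (co)local for the comparison functor (\cref{lem:rel-proj-zmod}), and the whole proof of \cref{thm:pullback-Z} pivots on it after a chain of reductions. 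Your proposal never invokes this fact, yet without it there is no reason the square should behave well --- the structure of $\quiversteenrod$ is exactly as complicated as $\pi_*(\bbZ \otimes \bbZ)$, and any ``unwinding of the bimodule presentation'' will have to confront this.

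The specific step that fails is your appeal to ``finiteness of the relative injective dimension of $\acat_*$ over $\acat(0)_*$''. This is false: $\acat_* /\!/ \acat(0)_*$ is infinite-dimensional and the relevant relative Ext-groups are nonzero in arbitrarily high degrees (indeed $\Ext_{\acat}^{s,t}(\fieldp,\fieldp)$ is nonzero for all $s \geq 0$). Likewise the forgetful functor $\Comod_{\quiversteenrod} \to \Rep(\pcat)$ has infinite relative injective dimension. So your proposed truncation-and-compare-cobar-resolutions argument cannot close; the cobar complexes are genuinely infinite and there is no finiteness to lean on. The paper's synthetic argument sidesteps this entirely: the reduction goes through $\tau$-completion and the $\bbZ$-module structure on injectives, ultimately landing on a single colocality check for $\nu_{\Psf}(\fieldp)$, which is where Kochman's theorem enters. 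Your abelian-pullback claim for $\Comod_{\quiversteenrod}$ may well be true, but it is not a formality and would itself require the same computational input; and even granting it, the jump to derived full faithfulness needs a different mechanism than the one you propose.
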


As the $\bbZ_{(p)}$-Adams $E_{2}$-term can be identified with $\Ext$-groups in $\quiversteenrod$-comodules by \cref{theorem:ext_identification_in_general_case_in_introduction}, the above result gives a concrete way to calculate it in more classical terms.

The Adams spectral sequence can be categorified using the theory of synthetic spectra \cite{pstrkagowski2018synthetic}, which is a deformation of stable $\infty$-categories whose special fibre can be identified with the derived $\infty$-category of comodules. In \S\ref{section:synthetic_spectra_based_on_quivers} we extend the synthetic construction to quiver-based homology theories, and show that the embedding of \cref{theorem:introduction_embedding_into_pullback-of_derived_infty_cats} can be strenghtened to an embedding of deformation $\infty$-categories. 

In more detail, in \cref{cnstr:comparison-square} we construct a commutative square of 
\begin{equation}
\label{sq:main}
  \begin{tikzcd}
    \Syn_{\pcat}(\Sp) \ar[r] \ar[d] & \Syn_{\F_p}(\Sp) \ar[d] \\
    \Syn_{\pcat}(\Mod_\Z) \ar[r] & \Syn_{\F_p}(\Mod_{\Z})
  \end{tikzcd}
\end{equation}
of synthetic $\infty$-categories associated to either $\pcat$- of $\fieldp$-homology on spectra and $\bbZ$-modules. Each arrow in the square is a symmetric monoidal left adjoint and the square itself is right adjointable by the general theory developed in \cref{sec:def}. 

\begin{theorem}[{\ref{thm:pullback-Z}}] \label{thm:integral-main}
The comparison functor 
\[
    \Syn_{\pcat}(\Sp) \hookrightarrow \Syn_{\F_p}(\Sp) \times_{\Syn_{\pcat}(\Mod_\Z)}  \Syn_{\F_p}(\Mod_{\Z})
\]
induced by (\ref{sq:main}) is fully faithful. 
\end{theorem}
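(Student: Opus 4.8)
The plan is to reduce the statement to a computation on the special fibre, using the general deformation-theoretic formalism of \cref{sec:def}. Recall that a synthetic $\infty$-category $\Syn_{\pcat}(-)$ is a deformation: it carries a distinguished object $\tau$ (or rather an endomorphism of the unit) whose cofibre produces the special fibre, which in the cases at hand is the derived $\infty$-category of $\quiversteenrod$-comodules (resp. $\acat_*$-comodules, $\pcat$-representations, $\acat(0)_*$-comodules), and whose ``generic fibre'' $\tau^{-1}\Syn$ is the original stable $\infty$-category $\Sp$ (resp. $\Mod_\Z$). The key structural input is that each arrow in the square \eqref{sq:main} is a symmetric monoidal left adjoint whose right adjoint commutes with the relevant colimits, and that the square is right adjointable; hence it induces a square on special fibres and a square on generic fibres. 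On generic fibres the square becomes
\[
\begin{tikzcd}
  \Sp \ar[r, equal] \ar[d] & \Sp \ar[d] \\
  \Mod_\Z \ar[r, equal] & \Mod_\Z,
\end{tikzcd}
\]
which is trivially a pullback. So the content is entirely concentrated at $\tau = 0$.

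First I would set $\DD \coloneqq \Syn_{\F_p}(\Sp) \times_{\Syn_{\pcat}(\Mod_\Z)} \Syn_{\F_p}(\Mod_\Z)$ and observe that, since pullbacks of stable $\infty$-categories along right adjointable squares interact well with passing to special and generic fibres (this is exactly the sort of statement the appendix is designed to provide), $\DD$ is again a deformation: its generic fibre is the pullback of the generic fibres, namely $\Sp \times_{\Mod_\Z} \Sp \simeq \Sp$, and its special fibre is the pullback of the special fibres, namely $\dcat^{b}(\acat) \times_{\dcat^{b}(\acat(0))} \dcat^{b}(\pcat)$ — or rather its large/derived version. The comparison functor $F \colon \Syn_{\pcat}(\Sp) \to \DD$ is a map of deformations: on generic fibres it is the identity $\Sp \to \Sp$, and on special fibres it is precisely the functor $\dcat^{b}(\Comod_{\quiversteenrod}) \to \dcat^{b}(\pcat) \times_{\dcat^{b}(\acat(0))} \dcat^{b}(\acat)$ of \cref{theorem:introduction_embedding_into_pullback-of_derived_infty_cats} (suitably ind-completed), which is fully faithful by that theorem.

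Next I would invoke the general principle — to be recorded in \cref{sec:def} — that a morphism of deformations which is fully faithful on both the generic fibre and the special fibre is fully faithful. Concretely, for $X, Y \in \Syn_{\pcat}(\Sp)$ one wants $\map(X,Y) \to \map(FX, FY)$ to be an equivalence. Both mapping spectra are modules over the graded ring $\pi_{*,*}$ of the sphere in the respective deformation, in particular they are complete with respect to $\tau$ (for bounded-below or otherwise suitably finite objects; one reduces to such by a colimit argument since $F$ is cocontinuous and both sides are compactly generated). A $\tau$-complete map of $\tau$-complete modules is an equivalence as soon as it is an equivalence modulo $\tau$, i.e. after applying $-\otimes_{\Syn} \Syn/\tau$, which is the induced map on special fibres; and that map is an equivalence because $F$ is fully faithful on special fibres. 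This is the standard ``deformation ⇒ check on the special fibre'' argument, and it is what makes Theorem \ref{thm:integral-main} a genuine strengthening of \cref{theorem:introduction_embedding_into_pullback-of_derived_infty_cats} rather than an independent result.

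The main obstacle, and the step deserving the most care, is verifying that the pullback $\DD$ really is a deformation with the expected special and generic fibres — i.e., that forming the synthetic pullback commutes with the fibre functors $\tau^{-1}(-)$ and $(-)/\tau$. This is where right adjointability of \eqref{sq:main} is essential: it guarantees that the pullback can be computed equivalently as a limit or as a (lax) colimit diagram, so that the exact functors $\tau^{-1}$ and $(-)/\tau$ (which preserve finite limits but in general only some colimits) can be commuted past it. One must also check a boundedness/completeness hypothesis ensuring that no $\lim^1$-type obstruction obstructs reassembling $\DD$ from its two fibres — this is the role of working with the bounded derived categories on special fibres and of the compact generation of synthetic categories. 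Once this identification of $\DD$ as a deformation is in hand, together with the description of $F$ on fibres, the fully faithfulness follows formally from \cref{theorem:introduction_embedding_into_pullback-of_derived_infty_cats} and the $\tau$-completeness argument above.
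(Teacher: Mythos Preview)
Your high-level strategy---reduce to checking fully faithfulness mod $\tau$, using that the generic fibres agree---is sound and is in fact exactly how the paper begins its proof (reductions (3) and (4) there). The gap is in the final step: you propose to conclude by invoking \cref{theorem:introduction_embedding_into_pullback-of_derived_infty_cats} (the derived-category embedding) to handle the special fibre. But in the paper's logical order that theorem is a \emph{consequence} of \cref{thm:pullback-Z}, not an input to it: the Mayer--Vietoris sequence and the embedding of $\dcat^{b}(\Comod_{\quiversteenrod})$ into the pullback are obtained precisely by passing to cofibres of $\tau$ in the synthetic pullback square. So as written your argument is circular, and there is no independent proof of \cref{theorem:introduction_embedding_into_pullback-of_derived_infty_cats} available to close the loop.

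What the paper does instead, after the same $\tau$-completion reduction, is more hands-on. It reduces further to the case $X = \monunit$ and $Y = \nu_{\pcat}(M)$ with $M$ the underlying spectrum of a $\bbZ$-module (via dualizable generators and injective resolutions). For such $M$ the square of mapping spectra is extended by a second square coming from the $\bbZ$-module multiplication $\bbZ \otimes M \to M$; right-adjointability of \eqref{sq:main} forces the outer rectangle to be a pullback, so one is left with showing the lower square is a pullback. The fibre of each vertical map in that lower square is controlled by $J \otimes_{\bbZ} M$ where $J = \fib(\bbZ \otimes \bbZ \to \bbZ)$, and the crucial topological input---Kochman's computation that $\pi_{*}(\bbZ \otimes \bbZ)$ has only simple $p$-torsion---says $J$ is a sum of shifts of $\fieldp$. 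The argument then bottoms out in the key lemma (\cref{lem:rel-proj-zmod}) that $\nu_{\Psf}(N) \in \Syn_{\pcat}(\Mod_{\bbZ})$ is (co)local for $q \colon \Syn_{\pcat}(\Mod_{\bbZ}) \to \Syn_{\fieldp}(\Mod_{\bbZ})$ precisely when $\pi_{*}N$ is simple $p$-torsion; this is proved directly using the quiver description of $\Rep(\pcat)$. In short, the substantive content you are missing is this (co)locality lemma together with the Kochman input---your abstract ``fully faithful on both fibres'' principle would still need exactly these ingredients to be made to work.
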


Stated less formally, \Cref{thm:integral-main} says that  
the $\Z_{(p)}$-Adams spectral sequence for $X$
is assembled in a straightforward way from
the $\F_p$-Adams spectral sequence for $X$, the $\F_p$-Bockstein spectral sequence computing $\Hrm_*(X; \Z_{(p)})$, and the integral homology $\Hrm_*(X; \Z_{(p)})$ itself.

As consequences of \Cref{thm:integral-main} we obtain the following results
which illuminate the relationship between the $\Z_{(p)}$-based and $\F_p$-based Adams spectral sequences.

\begin{enumerate}
\item[(\ref{cor:Fp-Z-iso})]
Let $X$ be a spectrum whose integral homology groups are simple $p$-torsion. Then, the comparison map 
\[ {}^{\Z_{(p)}}E_r^{s,t}(X) \to {}^{\F_p}E_r^{s,t}(X) \]
between the $\bbZ_{(p)}$-based Adams spectral sequence and 
the $\fieldp$-based Adams spectral sequence
is an isomorphism for each $r \geq 2$.
\item[(\ref{proposition:mv_lss_between_hsf_and_hfp_e2_terms})] For any spectrum $X$, the canonical comparison morphism
\[ {}^{\bbZ_{(p)}}{E}_2(\Ss, X) \rightarrow {}^{\fieldp}{E}_2(\Ss, X) \]
between the $\bbZ_{(p)}$-based and $\fieldp$-based Adams $E_{2}$-terms 
computing $\pi_{*}X$ 
fit into a Mayer--Vietoris long exact sequence
\[
\cdots \rightarrow {}^{\bbZ_{(p)}}{E}_2^{s, t}(X) \rightarrow \Ext^{s, t}_{\pcat}(\Hsf(X)) \oplus {}^{\fieldp}{E}_2^{s, t}(X) \rightarrow \Ext^{s, t}_{\acat(0)}(X) \rightarrow {}^{\bbZ_{(p)}}{E}_2^{s+1, t}(X)  \rightarrow \cdots.
\]
\item[(\ref{theorem:e2_page_of_the_sphere})] The $E_2$-page of the $\Z_{(p)}$-Adams spectral sequence for $\Ss$ is given by
\[
{}^{\bbZ_{(p)}}{E}_2^{s, t}(\Ss) \cong \begin{cases}
  \bbZ_{(p)}  & t-s = 0,\quad s = 0 \\
  0 & t-s = 0,\quad s \neq 0 \\
  \Ext_{\acat}^{s, t}(\fieldp; \fieldp) & t-s \neq 0
\end{cases}
\]
and the comparison map 
\[
{}^{\bbZ_{(p)}}{E}_2^{s, t}(\Ss) \rightarrow {}^{\fieldp}{E}_2^{s, t}(\Ss)
\]
between $\bbZ_{(p)}$-based and $\fieldp$-based Adams spectral sequences for the sphere is an isomorphism outside of the $0$-stem.     
\end{enumerate}







While the differences between the $\bbZ_{(p)}$-based and $\F_p$-based 
Adams spectral sequences for the sphere are slight, there are other spectra for which these two spectral sequences differ substantially.
As an illustration of this, in \S\ref{subsec:BP}
we consider the $\Z_{(p)}$-based Adams spectral sequence for $\BP$, the Brown--Peterson spectrum.

This spectral sequence differs from the $\F_p$-Adams spectral sequence for $\BP$ in two essential ways:
First, since the Hurewicz map $\pi_*\BP \rightarrow \Hrm_{*}(\BP; \bbZ_{(p)})$ is a monomorphism, all classes from $\pi_*\BP$ are detected on the $0$-line.
Second, unlike its classical counterpart which collapses on the second page, the $\bbZ_{(p)}$-Adams for $\BP$ has non-zero differentials of arbitrary length.

Despite its somewhat more involved nature, the integral Adams $E_{2}$-page for $\BP$ does have some advantages over its $\fieldp$-counterpart. Recall that the first proof of existence of this spectrum, due to Brown and Peterson themselves \cite{brown1965spectrum}, proceeds by an inductive construction based on the homology.  A natural guess is that their arguments are an instance of Toda's obstruction theory for the existence of a spectrum with prescribed mod $p$ homology \cite{toda1971spectra}, but this is not quite the case---the relevant obstruction groups in fact do \emph{not} vanish. 

In order to overcome this, Brown and Peterson go beyond the mod $p$ homology---they also maintain control over the \emph{integral} homology of the spectrum they're inductively constructing as well as the relation between the two given by the Bockstein. This is precisely the information visible from the point of view of the $\bbZ_{(p)}$-Adams spectral sequence.
Inspired by this we show that the $\Z_{(p)}$-based obstruction groups do indeed vanish, recovering the original proof of Brown and Peterson in a systematic manner. 

\begin{proposition}[{\ref{theorem:integral_todas_obstructions_groups_for_bp_vanish}}]
\label{theorem:integral_todas_obstructions_groups_for_bp_vanish_in_introduction}
The $\bbZ_{(p)}$-based Toda obstruction groups
\[
\Ext^{n+2, n}_{\quiversteenrod}(\Hsf(BP), \Hsf(BP))
\]
to the construcion of the Brown-Peterson spectrum vanish for all $n \geq 0$.
\end{proposition}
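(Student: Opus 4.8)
The plan is to combine the structure theorem for $\quiversteenrod$-comodules with the classical homological algebra of $\BP$. Concretely, I would apply the Mayer--Vietoris long exact sequence of \cref{theorem:introduction_embedding_into_pullback-of_derived_infty_cats} to the pair $\X = \Y = \Hsf(\BP)$. Since every functor in the comparison square~\eqref{equation:comparison_functor_between_abelian_pullback_in_introduction} respects the local grading, that sequence refines to one of bigraded groups, so that for each bidegree $(s,t)$ there is an exact piece
\[
\Ext^{s-1,t}_{\acat(0)} \to \Ext^{s,t}_{\quiversteenrod} \to \Ext^{s,t}_{\pcat} \oplus \Ext^{s,t}_{\acat} \to \Ext^{s,t}_{\acat(0)},
\]
all $\Ext$-groups being evaluated on $\Hsf(\BP)$ (resp.\ its images in the relevant comodule categories) in both variables. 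The Toda obstruction groups sit at $(s,t) = (n+2,n)$, that is, in the $(-2)$-stem, so it is enough to control the three ``classical'' terms in and around this stem.

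I would then dispose of those three terms as follows. First, $\Hrm_*(\BP;\bbZ_{(p)})$ is a polynomial ring, hence free over $\bbZ_{(p)}$, and its Bockstein vanishes; therefore $\Hsf(\BP)$ is, as a $\pcat$-representation, a direct sum of shifts of the projective object $(\bbZ_{(p)} \xrightarrow{\mathrm{red}} \fieldp,\ \delta = 0) = \pcat(\bbZ_{(p)},-)$, and so $\Ext^{s,t}_{\pcat}(\Hsf\BP,\Hsf\BP) = 0$ for all $s \geq 1$. Second, the image of $\Hsf(\BP)$ in $\Comod_{\acat(0)}$ is the trivial $\acat(0)_*$-comodule $\Hrm_*(\BP;\fieldp)$ (again because the Bockstein is zero), whence $\Ext^{s,t}_{\acat(0)}(\Hsf\BP,\Hsf\BP) \cong \fieldp[v_0] \otimes \underline{\Hom}(\Hrm_*(\BP;\fieldp),\Hrm_*(\BP;\fieldp))$; every one of its classes lies in a stem of the form $\deg(m') - \deg(m)$ for monomials $m,m'$ of $\Hrm_*(\BP;\fieldp)$, and such degrees are divisible by $2(p-1)$. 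In particular this term vanishes in the $(-1)$-stem for every $p$, and in the $(-2)$-stem when $p$ is odd. Third, using $\Hrm^*(\BP;\fieldp) = \acat/\!\!/E$ with $E = E(Q_0,Q_1,\dots)$ and the fact that $\Hrm^*(\BP;\fieldp)$ carries the trivial $E$-action, a change-of-rings computation identifies $\Ext_{\acat}(\Hsf\BP,\Hsf\BP)$ with a free module over $\fieldp[v_0,v_1,\dots]$ ($v_i$ dual to $Q_i$, in internal degree $2p^i-1$), and the same bookkeeping shows that it, too, is concentrated in stems divisible by $2(p-1)$; so it also vanishes in the $(-2)$-stem for odd $p$.

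For odd $p$ the displayed exact piece at $(n+2,n)$ thus reads $0 \to \Ext^{n+2,n}_{\quiversteenrod} \to 0 \oplus 0$, and we are done. For $p = 2$ the bookkeeping leaves exactly one potentially nonzero contribution, and the proposition reduces to showing that the comparison map $\Ext^{n+2,n}_{\acat}(\Hsf\BP,\Hsf\BP) \to \Ext^{n+2,n}_{\acat(0)}(\Hsf\BP,\Hsf\BP)$ is injective. The route I would take is to make this map explicit. Restriction along $\acat_* \twoheadrightarrow \acat(0)_*$ factors through the exterior quotient $E_* = E(\tau_0,\tau_1,\dots)$, and the first stage $\Ext_{\acat}(\Hsf\BP,\Hsf\BP) \to \Ext_{E_*}(\Hrm_*\BP,\Hrm_*\BP)$ is a split injection (using $\Hrm_*\BP = \acat_*\,\square_{E_*}\fieldtwo$ and the triangle identities of the coinduction adjunction). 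For the second stage one is reduced to the induced map of algebras $\fieldtwo[v_0,v_1,\dots] = \Ext_{\acat}(\fieldtwo,\Hrm_*\BP) \to \Ext_{\acat(0)}(\fieldtwo,\Hrm_*\BP) = \fieldtwo[v_0] \otimes \Hrm_*(\BP;\fieldtwo)$, which sends $v_0 \mapsto v_0$ and $v_i \mapsto v_0 \cdot \bar v_i$ for $i \geq 1$, where $\bar v_i \in \Hrm_{2(2^i-1)}(\BP;\fieldtwo)$ is the mod-$2$ reduction of the Hurewicz image of $v_i/2$. As that Hurewicz image is, up to a unit and decomposables, a polynomial generator of $\Hrm_*(\BP;\bbZ_{(2)})$, the classes $\bar v_i$ are algebraically independent in $\Hrm_*(\BP;\fieldtwo)$, so the displayed algebra map is injective; tracing this through the splitting above shows the full comparison map is injective in the relevant bidegrees. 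Plugging this back into the exact sequence gives $\Ext^{n+2,n}_{\quiversteenrod}(\Hsf\BP,\Hsf\BP) = 0$ for all $n \geq 0$.

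The only step that is not formal manipulation is this last injectivity at $p = 2$, and I expect it to be the main obstacle: it is precisely the point at which the interplay between the integral and the mod-$2$ homology of $\BP$ --- equivalently, the injectivity of the Hurewicz map $\pi_*\BP \hookrightarrow \Hrm_*(\BP;\bbZ_{(2)})$, which is exactly the extra structure Brown and Peterson were obliged to carry through their induction --- enters in an essential way. Everything else reduces to the Mayer--Vietoris sequence together with the $\pcat$-projectivity of $\Hsf(\BP)$.
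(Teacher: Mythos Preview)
Your overall strategy---apply the Mayer--Vietoris sequence of \cref{theorem:introduction_embedding_into_pullback-of_derived_infty_cats} directly to $\Ext_{\quiversteenrod}(\Hsf(\BP),\Hsf(\BP))$---is sound, and for odd primes it goes through exactly as you say. The paper, however, takes a genuinely different route: it first filters the \emph{source} $\Psf(\BP)$ by internal degree inside $\Comod_{\quiversteenrod}$, with associated graded $\Hrm_*(\BP;\bbZ)\otimes_{\bbZ}\Psf(\Ss)$, obtaining a spectral sequence
\[
\Hom_{\bbZ}\bigl(\Hrm_k(\BP;\bbZ),\ \Ext^{s,t}_{\quiversteenrod}(\Psf(\Ss),\Psf(\BP))\bigr)\ \Longrightarrow\ \Ext^{s,t-k}_{\quiversteenrod}(\Psf(\BP),\Psf(\BP)).
\]
Since $\Ext^{s,t}_{\quiversteenrod}(\Psf(\Ss),\Psf(\BP))$ was already computed (it lies in odd Adams degree for $s>0$) and $\Hrm_*(\BP;\bbZ)$ is even, a parity argument kills the obstruction bidegrees at \emph{all} primes simultaneously. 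Your approach trades this single filtration for a direct appeal to Mayer--Vietoris plus change-of-rings, at the cost of the $p=2$ casework.

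At $p=2$ there is a real gap in your sketch. You correctly note that the first stage $\Ext_{\acat}(\Hrm_*\BP,\Hrm_*\BP)\to\Ext_{E_*}(\Hrm_*\BP,\Hrm_*\BP)$ is split injective via the coinduction adjunction. But your ``second stage'' jumps to the map $\Ext_{\acat}(\fieldtwo,\Hrm_*\BP)\to\Ext_{\acat(0)}(\fieldtwo,\Hrm_*\BP)$, which is a different object: change of rings identifies $\Ext_{\acat}(\Hrm_*\BP,\Hrm_*\BP)$ with $\Ext_{E_*}(\Hrm_*\BP,\fieldtwo)\cong\Hom(\Hrm_*\BP,\fieldtwo[v_i])$, not with $\fieldtwo[v_i]=\Ext_{\acat}(\fieldtwo,\Hrm_*\BP)$. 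These differ precisely by a copy of $\Hrm_*\BP$ in the source, and the former genuinely has classes in stem $-2$ (e.g.\ $\xi_1^2\mapsto v_0^{n+2}$). To pass from one to the other you must filter the first $\Hrm_*\BP$ by degree, check that the filtration is by sub-$\acat_*$-comodules with trivial associated graded, observe that the resulting spectral sequence for $\Ext_{\acat}$ collapses (by comparison with the change-of-rings answer), and then argue that injectivity on associated graded pieces---which \emph{is} the map $\fieldtwo[v_i]\hookrightarrow\fieldtwo[v_0]\otimes\Hrm_*\BP$ you wrote down---propagates to the limit. That works, but it is exactly the paper's filtration argument, just carried out on the $\acat$- and $\acat(0)$-sides separately rather than once on the $\quiversteenrod$-side.
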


\subsection{Further examples}
In \S\ref{sec:exm} we consider some further examples of quiver homology theories for non-Adams-type ring spectra. In particular, we discuss:
\begin{enumerate}
    \item connective Morava $K$-theory $k(n)$, which similarly to the case of integral homology admits a class of simples with only two generating modules, 
    \item the truncated Brown--Peterson spectra $\BPn{n}$, where the problem of finding a suitable class of simples is closely related to the classical problem of decomposing $\BPn{n} \otimes \BPn{n}$, as studied by Mahowald, Kane and Tatum \cite{mahowald1981bo, kane1981operations, tatum2022spectrum}. 
    \item Bousfield's united $K$-theory \cite{bousfield1990classification}, which is an important historical antecedent to the current work. 
\end{enumerate}

\subsection{The calculus of deformations}

At its technical heart this paper relies on the theory of deformations of stable $\infty$-categories along homology theories developed by Patchkoria and the second author in \cite{patchkoria2021adams}.
In an appendix we provide a review of this theory as well as a further development of the $(\infty,2)$-categorical functoriality of the constructions from \cite{patchkoria2021adams}.

\subsection*{An outline of the paper}

In \S\ref{section:quiver_homology_theories} we set up the basic theory of quiver homology theories.
In \S\ref{section:synthetic_spectra_based_on_quivers} we construct categories of synthetic spectra based on quiver homology theories satisfying an Adams-type condition.
In \S\ref{sec:integral} we analyze the quiver homology theory $\Hsf(-)$ which enhances $\Z_{(p)}$-homology, proving \Cref{thm:integral-main}.
In \S\ref{sec:exm} we consider other examples where non-Adams-type ring spectra can be equipped with an Adams-type quiver homology theory.
In \Cref{sec:def} develop technical machinery for manipulating deformations of stable $\infty$-categories, this material is mainly used in \S\ref{sec:integral} where it allows us to give relatively straightforward proofs.

\subsection*{Acknowledgments}

The authors would like to thank Jeremy Hahn, Lars Hesselholt, Pengkun Huang, Sven van Nigtevecht and Dylan Wilson for helpful conversations. We would like to thank the anonymous referee for their comments and suggestions. 

During the course of this work the first author was supported by NSF grant DMS-2202992 and by the Danish National Research Foundation through the Copenhagen Centre for Geometry and Topology (DNRF151). The second author received support from NSF Grant number DMS-1926686 and Deutsche Forschungsgemeinschaft under Germany's Excellence Strategy – EXC-2047/1 – 390685813.

\subsection*{Notations and Conventions}

Throughout the body of the paper the term \emph{category} will refer to an $\infty$-category as developed by Joyal and Lurie.
In some places we will use the term $1$-category is shorthand for a $1$-truncated category.
We will generally assume the read is familiar with higher algebra as developed in \cite{higher_algebra}.

\begin{enumerate}
\item Our notation for a prestable category will typically      
use a subscript ``$\geq 0$'' with the convention that dropping the subscript corresponds to passing to the corresponding Spanier--Whitehead category.
As an example we might write $\DD_{\geq 0}$ for a prestable category and $\DD$ for its 
colimit stabilization.
\item Given an $\E_1$-algebra in category $\ccat$ we will write $\Mod_R(\ccat)$ for the category of \emph{left} $R$-modules in $\ccat$. When working with right modules we will be explicit about it. 
\end{enumerate}

\section{Quiver homology theories} 
\label{section:quiver_homology_theories}

In this section we develop the basic theory of classes of simples and their associated quiver homology theories.

\begin{notation}
Throughout this section, $R$ will denote an $\E_{1}$-algebra in spectra and the word \emph{module} will mean a left $R$-module.
\end{notation}

\subsection{Quivers of simple modules} 
\label{subsection:quiver_of_simple_modules}

In this section, we develop the basic formalism of quiver homology theories on $R$-modules. 

\begin{definition}
\label{definition:class_of_simple_modules}
We say a full subcategory, $\pcat$, of compact $R$-modules is \emph{a class of simples} if it is closed under 
finite direct sums, (de)suspensions and retracts. 
\end{definition}

\begin{example}
\label{example:all_perfects_form_a_class_of_simples}
The category $\Mod_R^{\omega}$ of all compact $R$-modules is a class of simples. 
\end{example}

\begin{example}
\label{example:finite_free_modules_form_a_class_of_simples}
The category $\Mod_{R}^{\mathrm{fp}}$ of \emph{finite projective} $R$-modules; that is, retracts of finite sums of $\Sigma^{n} R$ for varying $n$, is a class of simples.
\end{example}

Associated to any class of simples we have its category of representations. 

\begin{definition}
\label{definition:pcat_quiver}
Let $\pcat$ be a class of simples. A \emph{representation of the quiver $\pcat$} is an additive presheaf
\[
\X\colon \pcat^{op} \rightarrow \abeliangroups.
\]
We denote the category of representations of $\pcat$ and natural transformations by $\Rep(\pcat)$.
\end{definition}

\begin{construction}
\label{example:associated_quiver}
Let $\pcat$ be a class of simples and 
let $M$ be an $R$-module. 
Then, the \emph{associated representation} $\Psf(M)$ is defined by 
\[
\Psf(M)(S) \coloneqq \pi_0\Map_R(S, M)
\]
\end{construction}

The category $\Rep(\pcat)$ of representations has a natural local grading (that is, an action of $\bbZ$) 
\[
[1] \colon \Rep(\pcat) \rightarrow \Rep(\pcat)
\]
given by $ (Q[1])(S) \colonequals Q(\Sigma^{-1} S)$. 
The following two remarks now explain how objects of $\Rep(\pcat)$ can be viewed as quiver representations in graded abelian groups. 

\begin{remark}[Values in graded abelian groups]
Since the category of representations of $\pcat$ has a local grading, for any representation $\X$ we can identify $\X(S)$ with the internal degree zero part of the graded abelian group 
\[
\X(S)_{n} \coloneqq (\X[n])(S) \cong \X(\Sigma^{-n} S)
\]
We will often blur the distinction and think of $\pcat$-reps as taking values in graded abelian groups. Note that according to this convention, for an $R$-module $M$ we have 
\[
\Psf(M)(S)_{*} \colonequals \pi_*\Map_R(S,M).
\]
\end{remark}

\begin{remark}[$\pcat$-reps as quiver representations]
As the terminology suggests, $\pcat$-reps can be visualised as certain types of quivers representations valued in graded abelian groups. To see this, assume that we are in the situation where $\pcat$ is generated under finite direct sums and (de)suspensions by the simples $S_{1}, S_{2}, \ldots, S_{k}$. 

As any $\pcat$-rep $\X\colon \pcat \rightarrow \Ab$ is by definition additive, it follows that its values are determined by $\X(S_{i})$, considered as a graded abelian group. Thus, any such $\X$ determines and is uniquely determined by the following data: 
\begin{enumerate}
    \item for any $i$, a graded abelian group $\X(S_{i})_{*}$ and 
    \item for any $i, j$, a morphism of graded abelian groups 
    \[
    \X(S_{j})_{*} \otimes \pi_*\Map_R(S_{i}, S_{j}) \rightarrow \X(S_{i})_{*}.
    \]
\end{enumerate}
The latter morphisms have to be compatible under composition, and identities have to act by the identity. 

As an explicit example, suppose that we have three simples $S_{1}, S_{2}, S_{3}$ which generate $\pcat$. In this case, we can identify any $\X$ with a quiver representations of the form 
\[
\begin{tikzcd}
	& {X(S_{1})} \\
	& {} \\
	{X(S_{3})} && {X(S_{2})}
	\arrow["{[S_{1}, S_{1}]}"{description}, loop, looseness=10, from=1-2, to=1-2]
	\arrow["{[S_{3}, S_{3}]}"{description}, loop left, looseness=15, from=3-1, to=3-1]
	\arrow["{[S_{2}, S_{2}]}"{description}, loop right, looseness=15, from=3-3, to=3-3]
	\arrow["{[S_{1}, S_{3}]}"{description}, bend left=45, from=3-1, to=1-2]
	\arrow["{[S_{3}, S_{1}]}"{description}, bend left=10, from=1-2, to=3-1]
	\arrow["{[S_{3}, S_{2}]}"{description}, bend left=45, from=3-3, to=3-1]
	\arrow["{[S_{2}, S_{3}]}"{description}, bend left=10, from=3-1, to=3-3]
	\arrow["{[S_{2}, S_{1}]}"{description}, bend left=45, from=1-2, to=3-3]
	\arrow["{[S_{1}, S_{2}]}"{description}, bend left=10, from=3-3, to=1-2]
\end{tikzcd}
\]
In practice, for specific choices of $\pcat$ we can often write down a compact presentation of the relevant quiver in terms of finitely many generating maps and relations.
\end{remark}

\begin{lemma}[Yoneda lemma]
\label{lemma:yoneda_lemma_for_quivers}
For any simple $S \in \pcat$ and any $X \in \Rep(\pcat)$, we have 
\[
\Hom_{\Rep(\pcat)}(\Psf(S), X) \cong X(S).
\]
\end{lemma}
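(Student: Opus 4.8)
The plan is to apply the classical Yoneda lemma for additive functors out of a (pre)additive category, which is precisely the shape of the category $\Rep(\pcat) = \Fun^{\mathrm{add}}(\pcat^{\op}, \Ab)$ of additive presheaves on $\pcat$. The representation $\Psf(S)$ is, by \Cref{example:associated_quiver}, given by $\Psf(S)(T) = \pi_0 \Map_R(S, T) = \pi_0 \Map_R(T, S)$ — wait, I need to be careful about variance: $\Psf(M)(S) \coloneqq \pi_0\Map_R(S,M)$, so $\Psf(S)$ as a presheaf on $\pcat$ sends $T \mapsto \pi_0\Map_R(T, S)$, which is exactly the representable presheaf $\pi_0 h_S$ restricted to $\pcat$. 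So the claim is the standard statement that $\Hom_{\Rep(\pcat)}(\pi_0 h_S, X) \cong X(S)$ for $X$ an additive presheaf.

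First I would observe that the homotopy category $\mathrm{Ho}(\pcat)$ is an additive category (it is a full subcategory of $\mathrm{Ho}(\Mod_R)$ closed under finite sums and shifts, so it has finite biproducts and $\Ab$-enriched hom-sets), and that $\Rep(\pcat)$ depends only on $\mathrm{Ho}(\pcat)$ since $\Ab$ is a $1$-category (as noted in the footnote to \Cref{definition:introduction_multiplicative_class_of_simples}). The functor $\Psf(S)$ is the additive presheaf $T \mapsto \Hom_{\mathrm{Ho}(\pcat)}(T, S) = \pi_0\Map_R(T,S)$. Then I would construct the natural map $\Hom_{\Rep(\pcat)}(\Psf(S), X) \to X(S)$ sending a natural transformation $\eta$ to $\eta_S(\mathrm{id}_S) \in X(S)$, where $\mathrm{id}_S \in \pi_0\Map_R(S,S) = \Psf(S)(S)$; and the inverse, sending $x \in X(S)$ to the natural transformation whose component at $T$ is $\Psf(S)(T) = \pi_0\Map_R(T,S) \to X(T)$, $f \mapsto X(f)(x)$. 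The verification that these are mutually inverse and natural is the usual Yoneda bookkeeping: one direction uses that $\eta$ is natural with respect to any $f\colon T \to S$ applied to $\mathrm{id}_S$, and the other uses functoriality of $X$ together with $X(\mathrm{id}_S) = \mathrm{id}$. Additivity of $X$ and of $\Psf(S)$ is automatic here and plays no essential role beyond guaranteeing both sides land in $\Ab$; the argument is really just unenriched Yoneda on the underlying $\mathrm{Ho}(\pcat)$-indexed diagrams.

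I do not expect a genuine obstacle: the only point requiring a moment's care is the variance convention — making sure that $\Psf(S)$, defined via \Cref{example:associated_quiver} with $\Psf(M)(S) = \pi_0\Map_R(S,M)$, is indeed the \emph{covariant-in-$S$} representable presheaf $T \mapsto \pi_0\Map_R(T,S)$ on $\pcat^{\op}$, so that the abstract Yoneda lemma for presheaves on $\pcat$ applies verbatim. Once that is pinned down, the proof is a two-line invocation of the classical Yoneda lemma, possibly with an explicit display of the two mutually inverse maps for the reader's convenience.
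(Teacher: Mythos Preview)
Your proposal is correct; the paper does not give a proof at all, simply labeling the statement ``Yoneda lemma'' and moving on, since $\Psf(S)$ is by definition the representable presheaf $T \mapsto \pi_0\Map_R(T,S)$ on the additive category $\mathrm{Ho}(\pcat)$. Your explicit unpacking of the mutually inverse maps is more than the paper provides, but is exactly the standard argument being invoked.
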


\begin{notation}
We use caligraphic font for quivers and sans serif for representations of quivers. In the case of the representation associated to an $R$-module, we think of $\Psf$ as being a sans serif version of $\pi$, as $\Psf(M)$ is a version of $\pcat$-indexed homotopy groups. 
\end{notation}

The category of representations of a quiver has excellent categorical properties, as we now show. We recall from \cite[Definition 2.8]{patchkoria2021adams} that a \emph{homology theory} $\Hsf \colon \ccat \rightarrow \acat$ is a functor from a stable category (equipped with a local grading coming from the suspension) into a locally graded abelian category, with the property that if $c \rightarrow d \rightarrow e$ is a cofibre sequence, then $\Hsf(c) \rightarrow \Hsf(d) \rightarrow \Hsf(e)$ is exact in the middle. 

\begin{proposition}
\label{proposition:cat_of_quiveres_groth_abelian_and_psf_a_homology_theory}
The category $\Rep(\pcat)$ is Grothendieck abelian and compactly generated by objects of the form $\Psf(S)$ where $S \in \pcat$. 
Moreover, $\Psf\colon \Mod_{R} \rightarrow \Rep(\pcat)$ is a homology theory; 
that is $\Psf(-)$ is compatible with the local grading in the sense that $\Psf(\Sigma M) \cong \Psf(M)[1]$.
\end{proposition}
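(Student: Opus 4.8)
The plan is to establish the three assertions in turn: that $\Rep(\pcat)$ is Grothendieck abelian, that it is compactly generated by the objects $\Psf(S)$, and that $\Psf$ is a homology theory.

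First I would observe that $\Rep(\pcat)$ is by definition the category of additive presheaves $\pcat^{\op} \to \Ab$, equivalently the category of additive functors out of the homotopy category $h\pcat$ (which is a small additive category, since $\pcat$ is small and additive). It is standard that the category of additive presheaves on a small additive category is a Grothendieck abelian category: it is the full subcategory of $\Fun(\pcat^{\op}, \Ab)$ spanned by the product-preserving functors, and this is an accessible localization closed under limits, hence a reflective subcategory of a Grothendieck abelian category, and the localization is exact because products in $\Ab$ are exact; alternatively one checks the Grothendieck axioms (AB5, existence of a generator) directly. Kernels, cokernels, and filtered colimits are computed objectwise in $\Ab$, which immediately gives exactness of filtered colimits (AB5). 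For the generator: by the Yoneda lemma (\cref{lemma:yoneda_lemma_for_quivers}), $\Hom_{\Rep(\pcat)}(\Psf(S), X) \cong X(S)$ naturally in $X$, so the functor $X \mapsto \bigoplus_{S} \Hom(\Psf(S), X)$ (sum over a set of isomorphism classes of objects of $h\pcat$) is faithful and detects epimorphisms; hence $\bigoplus_S \Psf(S)$ is a generator. Since this isomorphism also shows $\Hom(\Psf(S), -)$ commutes with filtered colimits (these being objectwise), each $\Psf(S)$ is compact, and as they generate, $\Rep(\pcat)$ is compactly generated by them.

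Next, for the homology-theory statement: $\Psf$ is locally graded essentially by construction. For an $R$-module $M$ and simple $S$, the local grading on $\Rep(\pcat)$ is $(Q[1])(S) = Q(\Sigma^{-1}S)$, so $(\Psf(\Sigma M)[1])(S) = \Psf(\Sigma M)(\Sigma^{-1}S) = \pi_0 \Map_R(\Sigma^{-1}S, \Sigma M) \cong \pi_0 \Map_R(S, M) = \Psf(M)(S)$, naturally in $S$ and $M$; this gives $\Psf(\Sigma M) \cong \Psf(M)[1]$. It remains to check that a cofiber sequence $M' \to M \to M''$ of $R$-modules is sent to an exact sequence in the middle. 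Since exactness in $\Rep(\pcat)$ is tested objectwise, fix $S \in \pcat$; then $S \otimes_R M' \to S \otimes_R M \to S \otimes_R M''$ — or rather the mapping-spectrum version $\Map_R(S, -)$, which preserves fiber sequences since $S$ is compact — yields a fiber sequence of spectra, and the induced long exact sequence on homotopy groups shows $\pi_0\Map_R(S, M') \to \pi_0\Map_R(S,M) \to \pi_0\Map_R(S,M'')$ is exact in the middle. As this holds for every $S$, $\Psf$ is a homology theory.

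The main obstacle is purely bookkeeping rather than conceptual: one must be slightly careful that "additive presheaf" is the correct notion and that the category of such is genuinely Grothendieck abelian — in particular that it is closed under colimits inside all presheaves (it is not, but the inclusion has an exact additive-envelope left adjoint, which is what one needs), and that compact objects in the representation category are detected objectwise. I expect the cleanest writeup cites the general fact that $\Fun^{\mathrm{add}}(h\pcat^{\op}, \Ab)$ is Grothendieck abelian with compact projective generators the representables, and then spends its effort only on the last clause, the compatibility of $\Psf$ with cofiber sequences and suspension, which is where the input from the stable structure of $\Mod_R$ actually enters.
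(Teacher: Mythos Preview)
Your approach is essentially the same as the paper's, just spelled out in more detail: the paper simply invokes the Yoneda lemma to see that the $\Psf(S)$ are compact projective generators, notes that an additive presheaf category is automatically Grothendieck abelian, and declares the homology-theory clause clear.

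One small slip to fix: in your grading check you compute $(\Psf(\Sigma M)[1])(S)$ rather than comparing $\Psf(\Sigma M)(S)$ with $(\Psf(M)[1])(S)$, and the intermediate isomorphism $\pi_0\Map_R(\Sigma^{-1}S,\Sigma M)\cong \pi_0\Map_R(S,M)$ is off by a double shift. The correct (and equally short) verification is
\[
\Psf(\Sigma M)(S)=\pi_0\Map_R(S,\Sigma M)\cong \pi_0\Map_R(\Sigma^{-1}S,M)=\Psf(M)(\Sigma^{-1}S)=(\Psf(M)[1])(S),
\]
which gives $\Psf(\Sigma M)\cong \Psf(M)[1]$ directly.
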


\begin{proof}
Each $\Psf(S)$ for $S \in \pcat$ is compact projective by \cref{lemma:yoneda_lemma_for_quivers}, and since $\Rep(\pcat)$ is presentable as a presheaf category it is necessarily Grothendieck abelian. The second part is clear, as the Yoneda lemma forces the local grading to be of this form on compact projectives. 
\end{proof}

\begin{remark} \label{rmk:identify-projectives}
Since we assume that $\pcat$ is closed under retracts, any compact projective of $\Rep(\pcat)$ is isomorphic to one the form $\Psf(S)$ for a simple $S$. 
\end{remark}

We recall from \cite[{\S 2.2}]{patchkoria2021adams} that a homology theory $\Hsf \colon \ccat \rightarrow \acat$ is called \emph{adapted} if for every injective object $\Isf \in \acat$ there exists an object $\Isf_{\ccat}$ together with an isomorphism 
\begin{equation}
\label{equation:adaptedness_condition}
\pi_{0} \Map_{\ccat}(-, \Isf_{\ccat}) \cong \Hom_{\acat}(\Hsf(-), \Isf)
\end{equation}
such that the induced counit map $\Hsf(\Isf_{\ccat}) \rightarrow \Isf$ is an isomorphism. This is a natural condition (going back to the work of Moss \cite{moss1968composition}) which allows one to construct an $\Hsf$-based Adams spectral sequence of signature 
\[
\Ext_{\acat}^{s, t}(\Hsf(c_{1}), \Hsf(c_{2})) \Rightarrow \pi_{t-s} \Map_{\ccat}(c_{1}, c_{2})
\]
see \cite[{Construction 2.24}]{patchkoria2021adams}. We have the following standard result: 

\begin{proposition}
\label{proposition:psf_homology_theory_on_r_modules_is_adapted}
The homology theory $\Psf\colon \Mod_{R} \rightarrow \Rep(\pcat)$ is adapted. 
\end{proposition}

\begin{proof}
Since each $S \in \pcat$ is compact as an $R$-module and $\Psf$ preserves arbitrary direct sums, for any $\Isf \in \Rep(\pcat)$, the left hand side of (\ref{equation:adaptedness_condition}) satisfies the conditions of Brown representability, so that the needed $R$-module $\Isf_{\Mod_{R}}$ exists. To verify the counit condition, we observe that for any $S \in \pcat$ we have 
\[
\Isf(S) \cong \Hom_{\Rep(\pcat)}(\Psf(S), \Isf) \cong \pi_0\Map_R(S, \Isf_{\Mod_{R}}) \cong \Psf(\Isf_{\Mod_{R}})(S),
\]
where the first equivalence is \cref{lemma:yoneda_lemma_for_quivers}, the second one is the defining property of $\Isf_{\Mod_{R}}$ and the last one is the definition of the homology theory $\Psf$. 
\end{proof}

\begin{variant}[Covariant description of quivers]
\label{variant:covariant_description_of_quivers}
One can give a covariant description of $\pcat$-reps, under which the homology theory $\Psf$ takes the familiar tensor product formula. If $S$ is a (left) $R$-module, then its $R$-linear dual 
\[
\mathbb{D}^{R}S \colonequals \Hom_{R}(S, R)
\]
is canonically a right $R$-module. Restricted to compact objects, this yields an equivalence 
\[
(\Mod(R)^{\omega})^{\op} \cong \Mod(R^{op})^{\omega}
\]
between the opposite of compact modules over $R$ and the category of compact modules over the opposite ring. Let us write $\pcat^{\prime}$ for the essential image of $\pcat$ under this equivalence; this is the subcategory of those compact $R^{op}$-modules whose linear dual is simple. 

Through the above equivalence, we can identify $\Rep(\pcat)$ with the category of additive, covariant functors 
\[
\X\colon \pcat^{\prime} \rightarrow \abeliangroups. 
\]
Under this description, the homology theory $\Psf$ is given by the familiar formula
\[
\Psf(M)(S^{\prime}) \coloneqq \pi_{0}(S^{\prime} \otimes_{R} M).
\]
\end{variant}

The following will be useful later. 

\begin{lemma}
\label{lemma:p_epis_of_simple_modules_are_split}
Let $p \colon S \rightarrow S'$ be a map of simple $R$-modules such that $\Psf(S) \rightarrow \Psf(S')$ is an epimorphism of $\pcat$-representations. Then $p$ is a split epimorphism. 
\end{lemma}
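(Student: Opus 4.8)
The plan is to produce an explicit splitting $s\colon S' \to S$ of $p$ by using the Yoneda lemma together with projectivity of the representable objects $\Psf(S')$. First I would recall from \cref{proposition:cat_of_quiveres_groth_abelian_and_psf_a_homology_theory} and \cref{lemma:yoneda_lemma_for_quivers} that $\Psf(S')$ is a compact \emph{projective} object of $\Rep(\pcat)$, with $\Hom_{\Rep(\pcat)}(\Psf(S'), \X) \cong \X(S')$ naturally in $\X$. The map $\Psf(p)\colon \Psf(S) \to \Psf(S')$ is assumed to be an epimorphism in the abelian category $\Rep(\pcat)$, so by projectivity of $\Psf(S')$ the identity $\mathrm{id}_{\Psf(S')}$ lifts along $\Psf(p)$: there is a morphism $\sigma\colon \Psf(S') \to \Psf(S)$ in $\Rep(\pcat)$ with $\Psf(p) \circ \sigma = \mathrm{id}_{\Psf(S')}$.

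The next step is to realize $\sigma$ as coming from an actual map of $R$-modules. By the Yoneda identification, $\mathrm{Hom}_{\Rep(\pcat)}(\Psf(S'), \Psf(S)) \cong \Psf(S)(S') = \pi_0\Map_R(S', S)$, so $\sigma$ corresponds to a homotopy class of maps $s\colon S' \to S$. Naturality of the Yoneda isomorphism (i.e. functoriality of $\Psf$ on $\Mod_R$) shows that the composite $\Psf(p)\circ\sigma = \Psf(p\circ s)$ corresponds under Yoneda to $\pi_0$ of $p\circ s$, while $\mathrm{id}_{\Psf(S')}$ corresponds to the class of $\mathrm{id}_{S'}$; hence $\Psf(p)\circ\sigma = \mathrm{id}$ forces $p \circ s \simeq \mathrm{id}_{S'}$ in $\pi_0\Map_R(S', S')$. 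Therefore $p$ is a split epimorphism of $R$-modules, witnessed by $s$.

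The only point requiring any care — and the step I would expect to be the mild obstacle — is the passage from ``$\Psf(p)$ is an epimorphism of $\pcat$-representations'' to ``the identity lifts along it'': this is exactly the statement that $\Psf(S')$ is projective in $\Rep(\pcat)$, which we have from \cref{lemma:yoneda_lemma_for_quivers} (the functor $\X \mapsto \X(S')$ is exact, being evaluation in a presheaf category, hence $\Hom(\Psf(S'), -)$ is exact). After that, everything is formal bookkeeping with the Yoneda equivalence, using that $\Psf$ is a functor so that the Yoneda isomorphisms assemble into a commutative triangle. No additional input about simpleness of $S, S'$ beyond their lying in $\pcat$ is needed.
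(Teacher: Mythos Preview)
Your proof is correct and is essentially the paper's argument, just phrased more verbosely: the paper observes directly that since epimorphisms of $\pcat$-representations are pointwise, $\mathrm{id}_{S'} \in \Psf(S')(S')$ lifts to $\Psf(S)(S') \cong \pi_0\Map_R(S',S)$, and any such lift is the desired section. Your detour through projectivity of $\Psf(S')$ is equivalent (via Yoneda, projectivity of $\Psf(S')$ \emph{is} exactness of evaluation at $S'$), so the two arguments are the same.
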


\begin{proof}
By assumption, $\mathrm{id}_{S'} \in \Psf(S')(S')$ is in the image of $\Psf(S)(S') \cong \pi_{0} \Map_{\Mod_{R}}(S', S)$. A lift gives the required section. 
\end{proof}

\subsection{Homology operations} 
\label{subsection:homology_operations_for_general_quivers}

In the previous section, we associated to an $\E_{1}$-algebra $R$ together with a class of simple modules $\pcat$ an adapted homology theory 
\[
\Psf\colon \Mod_{R} \rightarrow \Rep(\pcat). 
\]
Now suppose that we have a map of $\E_1$-algebras $f \colon A \rightarrow R$. By precomposing $\Psf$ with extension of scalars along $f$ we obtain a homology theory on $A$-modules 
\begin{equation}
\label{equation:induced_homology_theory_on_a_modules}
\Psf(R \otimes_{A} -) \colon \Mod_{A} \rightarrow \Rep(\pcat).
\end{equation}
As in the classical case, $\pcat$-representations of the form $\Psf(R \otimes_{A} X)$ have additional structure
coming from homology operations. In this section, we describe this additional structure explicitly using the technology developed in the \hyperref[sec:appendix]{Appendix A}. 

For a ring spectrum $E$, the usual strategy to encode $E$-homology operations on spectra  is by giving $E_*X$ the structure of an $E_*E$-comodule. Note that in order to do this we must first assume that $E_*E$ is flat as an $E_*$-module. We follow an analogous approach for quiver-based homology theories under a corresponding flatness assumption.

We recall that a map $M \rightarrow N$ of $R$-modules is called a $\Psf$-epimorphism if the induced map $\Psf(M) \rightarrow \Psf(N)$ is an epimorphism. As we review in \cref{recollection:epimorphism_classes_and_homology_theories}, as an adapted homology theory $\Psf$ is completely determined by the associated class of $\Psf$-epimorphisms. 

\begin{definition}
\label{definition:pflat_ring_spectrum}
We say that a map $f \colon A \rightarrow R$ of $\mathbb{E}_{1}$-algebras is \emph{$\Psf$-flat} if for any $\Psf$-epimorphism $M \rightarrow N$ of $R$-modules, $R \otimes_{A} M \rightarrow R \otimes_{A} N$ is again a $\Psf$-epimorphism. 
\end{definition}

\begin{remark}[Why ``flat''?] \label{rmk:why-flat}
Consider the case $A=\Ss \to R$ and let $\pcat$ be the collection of finite projective $R$-modules, as in \cref{example:all_perfects_form_a_class_of_simples}, so that we can identify the homology theory $\Psf$ with the homotopy groups functor 
\[
\pi_{*} \colon \Mod_{R} \rightarrow \Mod_{R_{*}}.
\]
Rewriting $R \otimes M$ as $ R \otimes R \otimes_{R} M$ we can construct a K\"{u}nneth spectral sequence
\[
\textnormal{Tor}_{R_{*}}^{s, t}(R_{*}R, \pi_{*}M) \Rightarrow \pi_{s+t}(R \otimes M)
\]
From this we deduce that the unit map $\Ss \rightarrow R$ is $\Psf$-flat if and only if $R_{*}R$ is flat as a right $R_{*}$-module. 
\end{remark}

Note that a map of $\E_{1}$-algebras is flat in the sense of \cref{definition:pflat_ring_spectrum} if and only if the induced extension/restriction of scalars adjunction 
\[
R \otimes_A - : \Mod_{A} \rightleftarrows \Mod_{R} 
\]
is flat in the sense of \cref{definition:flat_adjunction}. This allows us to apply the results of \cref{subsection:induced_homology_theories} and deduce the following: 

\begin{theorem}
\label{theorem:comodule_homology_theory_adapted_in_pcat_flat_case}
\label{proposition:for_at_rings_r_otimes_descends_to_a_quiver_comonad}
\label{notation:tensoring_with_quiversteenrod_comonad}
Let $\pcat$ be a class of simples for $R$ such that $f : A \rightarrow R$ is $\Psf$-flat.  
\begin{enumerate}
\item The comonad $R \otimes_{A} -$ on $R$-modules descends 
uniquely to an exact comonad $\mathcal{Q}$ on $\Rep(\pcat)$.
Explicitly, this is the unique exact comonad for which 
\[ \Psf(R \otimes_{A} M) \cong \quiversteenrod (\Psf(M))\]
naturally in the $R$-module $M$. 
\item For any $A$-module $X$, the $R$-module $R \otimes_{A} X$ has a canonical structure of a comodule over the comonad $R \otimes_{A} -$ on $R$-modules. It follows that $\Psf(R \otimes_{A} X)$ has a canonical structure of a comodule over the comonad $\quiversteenrod$ on $\Rep(\pcat)$. 
Through this we obtain a homology theory $i_*\Psf$ and a map of homology theories
\[\begin{tikzcd}
\Mod_{A} \ar[r, "R \otimes_{A} -"] \ar[d, "i_*\Psf"] & \Mod_R \ar[d, "\Psf"] \\
\Comod_{\quiversteenrod} \ar[r] & \Rep(\pcat). 
\end{tikzcd}\]
\item The homology theory $i_*\Psf$ is adapted.
\end{enumerate}
\end{theorem}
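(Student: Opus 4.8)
The plan is to obtain all three parts as instances of the general theory of homology theories induced along flat adjunctions, set up in \cref{subsection:induced_homology_theories}. The only thing to check before invoking that machinery is the translation recorded just before the statement: by \cref{definition:pflat_ring_spectrum} and the observation following \cref{rmk:why-flat}, the hypothesis that $f\colon A \to R$ is $\Psf$-flat is exactly the assertion that the extension/restriction-of-scalars adjunction $R\otimes_{A}- \dashv f^{*}$ is flat in the sense of \cref{definition:flat_adjunction}. Combined with \cref{proposition:psf_homology_theory_on_r_modules_is_adapted} (that $\Psf\colon\Mod_{R}\to\Rep(\pcat)$ is adapted) and \cref{proposition:cat_of_quiveres_groth_abelian_and_psf_a_homology_theory} (that $\Rep(\pcat)$ is Grothendieck abelian with compact projective generators $\Psf(S)$, $S\in\pcat$), this places us in the setting of that subsection, and the three parts become applications of its main results.

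For part (1): a cocontinuous endofunctor of $\Rep(\pcat)$ is determined by its restriction to the compact projective generators $\Psf(S)$, and the required identity $\Psf(R\otimes_{A}M)\cong\quiversteenrod(\Psf M)$ forces $\quiversteenrod(\Psf S)\simeq\Psf(R\otimes_{A}S)$; one therefore \emph{defines} $\quiversteenrod$ as the cocontinuous extension along $\pcat\hookrightarrow\Rep(\pcat)$ of the additive functor $S\mapsto\Psf(R\otimes_{A}S)$, and uniqueness is then immediate. The comonad structure on $\quiversteenrod$ is transported along the compatibility isomorphism from the comonad $R\otimes_{A}f^{*}(-)$ on $\Mod_{R}$, whose comultiplication and counit come from the unit and counit of the adjunction. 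The substantive point --- and the step I expect to be the main obstacle --- is that $\quiversteenrod$ is \emph{exact} and that the compatibility isomorphism holds for every $M$, not merely for simples: a priori the descended functor is only right exact. One upgrades this by choosing an Adams resolution of an arbitrary $R$-module $M$ built from objects of $\pcat$ and using $\Psf$-flatness to see that $R\otimes_{A}-$ carries it to another such resolution (here $R\otimes_{A}-$ is exact on module categories, hence preserves cofibre sequences, while $\Psf$-flatness takes care of the $\Psf$-epimorphisms), then comparing the two resulting resolutions of $\quiversteenrod(\Psf M)$ and $\Psf(R\otimes_{A}M)$ degreewise.

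For part (2): for any $A$-module $X$ the $R$-module $R\otimes_{A}X$ carries a canonical coaction of the comonad $R\otimes_{A}f^{*}(-)$, namely the image under $R\otimes_{A}-$ of the unit $X\to f^{*}(R\otimes_{A}X)$ --- the standard comodule structure on an object in the image of a left adjoint. Applying $\Psf$, which carries the comonad $R\otimes_{A}f^{*}(-)$ to $\quiversteenrod$ compatibly with their comonad structures, produces the asserted $\quiversteenrod$-comodule structure on $\Psf(R\otimes_{A}X)$, hence the functor $i_{*}\Psf$ lifting $\Psf(R\otimes_{A}-)$, and the square commutes by construction. It is a homology theory because $R\otimes_{A}-$ is exact, $\Psf$ is a homology theory, and --- since $\quiversteenrod$ is exact --- the forgetful functor $\Comod_{\quiversteenrod}\to\Rep(\pcat)$ is exact and faithful, so exactness in the middle may be checked after forgetting. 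For part (3), adaptedness of $i_{*}\Psf$ is checked on cofree comodules $\quiversteenrod(\Isf)$ with $\Isf\in\Rep(\pcat)$ injective (every injective $\quiversteenrod$-comodule is a retract of such a one, and retracts are handled using idempotent-completeness of $\Mod_{A}$): writing $\Isf\cong\Psf(\Isf_{R})$ by adaptedness of $\Psf$, the chain of adjunctions
\[
\Hom_{\Comod_{\quiversteenrod}}\!\big(i_{*}\Psf(-),\,\quiversteenrod(\Isf)\big)\cong\Hom_{\Rep(\pcat)}\!\big(\Psf(R\otimes_{A}-),\,\Isf\big)\cong\pi_{0}\Map_{R}(R\otimes_{A}-,\,\Isf_{R})\cong\pi_{0}\Map_{A}(-,\,f^{*}\Isf_{R})
\]
exhibits $f^{*}\Isf_{R}$ as the required representing $A$-module, and the counit $i_{*}\Psf(f^{*}\Isf_{R})\to\quiversteenrod(\Isf)$ is identified with $\quiversteenrod$ applied to the counit $\Psf(\Isf_{R})\to\Isf$, hence is an isomorphism.
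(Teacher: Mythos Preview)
Your treatment of parts~(2) and~(3) is correct and essentially what any unwinding of \cref{proposition:flat_induction_yields_adjunction_of_adapted_homology_theories} would produce; in particular the chain of adjunctions in~(3) is exactly how one checks that the pushforward homology theory is adapted.

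The gap is in part~(1), specifically in the argument for exactness of $\quiversteenrod$. Your resolution argument correctly establishes the compatibility isomorphism $\quiversteenrod(\Psf(M))\cong\Psf(R\otimes_{A}M)$ for every $R$-module $M$, and even shows that $L_{i}\quiversteenrod(\Psf(M))=0$ for $i>0$: an Adams resolution $P_{\bullet}\to M$ by sums of simples gives a projective resolution $\Psf(P_{\bullet})\to\Psf(M)$, and $\Psf$-flatness says $R\otimes_{A}P_{\bullet}\to R\otimes_{A}M$ is again $\Psf$-exact. But this does not yield exactness of $\quiversteenrod$ on all of $\Rep(\pcat)$, because a general $\pcat$-representation $\X$ is \emph{not} of the form $\Psf(M)$. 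A projective resolution $\Psf(P_{\bullet})\to\X$ cannot be organised into an Adams tower in $\Mod_{R}$ (there is no $R$-module sitting under $P_{0}$), so flatness gives you no control over $H_{i}(\Psf(R\otimes_{A}P_{\bullet}))$ for $i>0$, and the dimension-shifting tricks available from knowing acyclicity on the $\Psf(M)$'s alone do not close the gap.

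The paper sidesteps this entirely. Rather than defining $\quiversteenrod$ as a left Kan extension and then fighting for left exactness, it observes that flatness means the restriction functor $f_{*}\colon\Mod_{R}\to\Mod_{A}$ also preserves the relevant epimorphism classes (this is \cref{proposition:flat_induction_yields_adjunction_of_adapted_homology_theories}(1)). Hence by \cref{lemma:extending_exact_functors_to_squares_of_homology_theories} \emph{both} $f^{*}$ and $f_{*}$ descend to exact functors $\overline{f^{*}},\overline{f_{*}}$ between the abelian targets, and one simply sets $\quiversteenrod\coloneqq\overline{f^{*}}\circ\overline{f_{*}}$, which is manifestly exact as a composite of exact functors. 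The comonad structure then comes from the adjunction $\overline{f^{*}}\dashv\overline{f_{*}}$, and comonadicity (hence the identification of $\acat'$ with $\Comod_{\quiversteenrod}$) follows from \cite[Theorem~A.1]{patchkoria2021adams}. The fix to your argument is therefore not to push the resolution idea further but to bring in $\overline{f_{*}}$.
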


\begin{proof}
    This follows from \Cref{proposition:flat_induction_yields_adjunction_of_adapted_homology_theories}.
\end{proof}

\begin{notation}
When it does not lead to ambiguity we will write $\Psf$ for $i_*\Psf$.
\end{notation}

\begin{remark}
Intuitively, the structure of a $\quiversteenrod$-comodule on $\Psf(R \otimes_{A} M)$ records the action of homology operations and the fact that the lift to comodules is  adapted tells us that we have captured \emph{all} homology operations.
\end{remark}

\begin{warning} 
The abstract formalism of \cite{patchkoria2021adams} guarantees that the composite
\[
\Mod_{A} \rightarrow \Mod_{R} \rightarrow \Rep(\pcat) 
\]
has an adapted replacement whose target is a category of comodules for a left exact comonad on $\Rep(\pcat)$, irrespective of any flatness assumptions on $A \rightarrow R$. 

In practice, when the comonad on $\Rep(\pcat)$ is not exact it becomes exceedingly difficult to work with this adapted replacement. For a classical homology theory $E$ on spectra non-exactness of this comonad corresponds to non-flatness of $E_*E$ and a large body of work exists which analyzes specific examples, see \cite{mahowald1981bo, mahowald1982image, gonzalez_regular_complex, gonzalez2000vanishing, beaudry20202, beaudry2019tmf, beaudry2021telescope, tatum2022spectrum}.  
The heart of \Cref{theorem:comodule_homology_theory_adapted_in_pcat_flat_case},
and of this paper more generally, lies in the observation that 
in many situations where $R_*R$ is not flat there is a reasonable class of simples $\pcat$ for which $\Ss \rightarrow R$ \emph{is} $\pcat$-flat, allowing one to make explicit calculations.
\end{warning} 

In the classical case, one usually establishes flatness of $R_{*}R$ 
by showing that $R$ satisfies the stronger condition of being Adams-type, see \cite[Condition 13.3]{adams1995stable} and \cite[\S3.3]{pstrkagowski2018synthetic}.  There is a similarly powerful method for quiver homology theories.

\begin{definition}
\label{definition:pcat_adams_type_ring_spectrum}
We say $A \rightarrow R$ is \emph{$\pcat$-Adams-type} if for every simple $S \in \pcat$, we can write the underlying right $A$-module of its $R$-linear dual $\mathbb{D}^{R}(S) \coloneqq \map_{R}(S, R)$ as a filtered colimit
\[
\mathbb{D}^{R}(S) \cong \colim \mathbb{D}^{A}S_{\alpha}
\]
of $A$-linear duals of left $A$-modules $S_{\alpha}$ with the property that $R \otimes_{A} S_{\alpha}$ is a simple $R$-module. 
\end{definition}

\begin{lemma}
\label{proposition:formula_for_tensoring_with_r_in_adams_type_case}
Let $A \rightarrow R$ be a $\pcat$-Adams-type morphism of ring spectra and let $M$ be an $R$-module. 
Then, for any simple $S$ and any 
filtered colimit presentation $\mathbb{D}^{R}S \cong \colim \mathbb{D}^A S_{\alpha}$ as in \cref{definition:pcat_adams_type_ring_spectrum}, we have a canonical isomorphism 
\[
\Psf(R \otimes_{A} M)(S) \cong \colim \Psf(M)(R \otimes_{A} S_{\alpha})
\]
\end{lemma}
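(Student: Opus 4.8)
The plan is to unwind both sides using the defining properties established earlier and reduce to a statement about compact objects. First recall that by \cref{variant:covariant_description_of_quivers} we may work with the covariant description, so that $\Psf(R \otimes_A M)(S)$ is identified with $\pi_0(\mathbb{D}^R S \otimes_R (R \otimes_A M)) \cong \pi_0(\mathbb{D}^R S \otimes_A M)$, where $\mathbb{D}^R S$ is regarded as a right $A$-module via the given map $A \to R$. The $\pcat$-Adams-type hypothesis provides a filtered colimit presentation $\mathbb{D}^R S \cong \colim_\alpha \mathbb{D}^A S_\alpha$ of right $A$-modules with each $R \otimes_A S_\alpha$ simple. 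Since smashing with $M$ commutes with filtered colimits and $\pi_0$ commutes with filtered colimits, we obtain
\[
\Psf(R \otimes_A M)(S) \cong \colim_\alpha \pi_0(\mathbb{D}^A S_\alpha \otimes_A M).
\]

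Next I would identify each term of this colimit. For a compact left $A$-module $S_\alpha$ we have $\pi_0(\mathbb{D}^A S_\alpha \otimes_A M) \cong \pi_0 \Map_A(S_\alpha, M)$ by the standard duality for perfect modules, and since $M$ is an $R$-module this is $\pi_0\Map_R(R \otimes_A S_\alpha, M)$ by extension–restriction adjunction. By the defining property of the comonad $\quiversteenrod$ in \cref{theorem:comodule_homology_theory_adapted_in_pcat_flat_case}, or more directly by \cref{lemma:yoneda_lemma_for_quivers} applied to the simple module $R \otimes_A S_\alpha \in \pcat$, this equals $\Psf(M)(R \otimes_A S_\alpha)$. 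Stringing these identifications together yields exactly $\Psf(R \otimes_A M)(S) \cong \colim_\alpha \Psf(M)(R \otimes_A S_\alpha)$, as desired.

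The one point requiring care — and the main technical obstacle — is ensuring that the chain of isomorphisms is \emph{canonical} and compatible with the transition maps in the filtered system, so that the colimit on the right-hand side is well-defined independently of any choices made at the level of cofinal subsystems. Concretely, one must check that the Yoneda identification $\pi_0\Map_R(R \otimes_A S_\alpha, M) \cong \Psf(M)(R\otimes_A S_\alpha)$ is natural in the $R$-module $R \otimes_A S_\alpha$, which it is by construction of $\Psf$; combined with functoriality of $\mathbb{D}^A(-)$ and of extension of scalars, this makes the displayed isomorphism a colimit of isomorphisms of functors. A secondary point is that the duality $\pi_0(\mathbb{D}^A S_\alpha \otimes_A M) \cong \pi_0\Map_A(S_\alpha, M)$ requires $S_\alpha$ to be a perfect $A$-module; this is automatic since $\mathbb{D}^A S_\alpha$ appears in a presentation of the compact module $\mathbb{D}^R S$ restricted along $A \to R$, but in any case the statement only asserts the isomorphism for presentations of the form in \cref{definition:pcat_adams_type_ring_spectrum}, where the $S_\alpha$ are finite by hypothesis. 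Thus the argument is essentially a bookkeeping exercise once the correct identifications are lined up.
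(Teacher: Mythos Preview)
Your proof is correct and follows essentially the same chain of equivalences as the paper: both arguments identify $\Psf(R \otimes_A M)(S)$ with $\pi_0(\mathbb{D}^R S \otimes_A M)$, pass the filtered colimit through, and then use duality for compact $A$-modules together with the extension--restriction adjunction to reach $\Psf(M)(R \otimes_A S_\alpha)$. The paper phrases the intermediate steps as $\pi_0\Map_{A^{\op}}(A,-)$ rather than $\pi_0(-)$, but this is cosmetic.
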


\begin{proof}
Recall that when $B$ is an $\E_{1}$-algebra, $\map_{B}$ denotes morphisms of left $B$-modules and $\map_{B^{\op}}$ denotes morphisms of right $B$-modules. We have a string of equivalences 
\begin{align*}
\Psf (R \otimes_{A} M)(S) 
&\cong \pi_0\Map_R(S, R \otimes_{A} M) 
\cong \pi_0\Map_{A^{\op}}(A, \mathbb{D}^{R}S \otimes_{R} R \otimes_{A} M) \\
&\cong \pi_0\Map_{A^{\op}}(A, \mathbb{D}^{R}S \otimes_{A} M) 
\cong \pi_0\Map_{A^{\op}}(A, (\colim \mathbb{D}^{A}S_{\alpha}) \otimes_{A} M) \\
&\cong \colim \pi_0\Map_{A^{\op}}(A, \mathbb{D}^{A}S_{\alpha} \otimes_{A} M) 
\cong \colim \pi_0\Map_{A}(S_{\alpha}, M) \\
&\cong \colim \pi_0\Map_R(R \otimes_{A} S_{\alpha}, M) 
\cong \colim \Psf(M)(R \otimes_{A} S_{\alpha}).    
\end{align*}
\end{proof}

\begin{corollary}
\label{corollary:r_otimes_preserves_psf_epis_for_at_rings}
If $A \rightarrow R$ is $\pcat$-Adams-type, then $A \rightarrow R$ is $\Psf$-flat. 
\end{corollary}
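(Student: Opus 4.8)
The plan is to deduce this directly from \cref{proposition:formula_for_tensoring_with_r_in_adams_type_case} together with the fact that a filtered colimit of surjections of abelian groups is again a surjection. Recall first that, because $\Rep(\pcat)$ is a presheaf category, a morphism of $\pcat$-representations is an epimorphism if and only if it becomes a surjection after evaluation at each object of $\pcat$; this is also transparent from \cref{lemma:yoneda_lemma_for_quivers}, which identifies evaluation at a simple $S$ with $\Hom_{\Rep(\pcat)}(\Psf(S), -)$, an exact functor since $\Psf(S)$ is compact projective. So, fixing a $\Psf$-epimorphism $g \colon M \to N$ of $R$-modules, what we must show is that for every simple $S \in \pcat$ the map $\Psf(R \otimes_A M)(S) \to \Psf(R \otimes_A N)(S)$ is surjective.

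Next I would fix a simple $S$ and choose a filtered colimit presentation $\mathbb{D}^{R}(S) \cong \colim_\alpha \mathbb{D}^{A} S_\alpha$ with each $R \otimes_A S_\alpha$ simple, as provided by \cref{definition:pcat_adams_type_ring_spectrum}. Applying \cref{proposition:formula_for_tensoring_with_r_in_adams_type_case} to $M$ and to $N$, and noting that the string of isomorphisms appearing in its proof is natural in the module variable, one obtains a commuting square identifying the map in question with the map of filtered colimits
\[
\colim_\alpha \Psf(M)(R \otimes_A S_\alpha) \longrightarrow \colim_\alpha \Psf(N)(R \otimes_A S_\alpha)
\]
induced termwise by $g$. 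For each $\alpha$ the object $R \otimes_A S_\alpha$ lies in $\pcat$, so the hypothesis that $g$ is a $\Psf$-epimorphism gives that each map $\Psf(M)(R \otimes_A S_\alpha) \to \Psf(N)(R \otimes_A S_\alpha)$ is surjective. A filtered colimit of surjections of abelian groups is a surjection, so the displayed map is surjective, and hence so is $\Psf(R \otimes_A M)(S) \to \Psf(R \otimes_A N)(S)$. Since $S$ was an arbitrary simple, $R \otimes_A M \to R \otimes_A N$ is a $\Psf$-epimorphism, i.e.\ $f \colon A \to R$ is $\Psf$-flat.

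There is essentially no obstacle here: the content is entirely in \cref{proposition:formula_for_tensoring_with_r_in_adams_type_case}. The only point meriting a word of care is the naturality in $M$ of the isomorphism from that lemma, so that one really is comparing the colimit of the maps with the map of colimits; this is immediate, as each step in its proof is either an adjunction, a coherence of the relative tensor product, or the chosen colimit presentation, all of which are natural in $M$.
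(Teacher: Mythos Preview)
Your proof is correct and follows exactly the paper's approach: the paper's own proof is a one-sentence version of yours, invoking \cref{proposition:formula_for_tensoring_with_r_in_adams_type_case} together with the observations that epimorphisms of $\pcat$-representations are detected levelwise and that epimorphisms of abelian groups are stable under filtered colimits. Your only addition is the explicit remark on naturality in $M$, which the paper leaves implicit.
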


\begin{proof}
We must verify that if $M \rightarrow N$ is a $\Psf$-epimorphism, then so is $R \otimes_{A} M \rightarrow R \otimes_{A} N$. This is immediate from \cref{proposition:formula_for_tensoring_with_r_in_adams_type_case}, as epimorphism of quivers are levelwise, and epimorphisms of abelian groups are stable under filtered colimits. 
\end{proof}

\begin{remark}
\label{remark:formula_for_values_of_quiver_tensor_q_in_adams_type_case}
Note that if $A \rightarrow R$ is $\pcat$-Adams-type, then for a general $\pcat$-representation $\X$ we have 
\[
(\quiversteenrod \X)(S) \cong \colim \X(R \otimes_{A} S_{\alpha}) \cong \colim \Hom_{\Rep(\pcat)}(\Psf(R \otimes_{A} S_{\alpha}), \X)
\]
where we pick a filtered colimit presentation 
$\mathbb{D}^{R}S \cong \colim \mathbb{D}^{A}S_{\alpha}$ as in \cref{definition:pcat_adams_type_ring_spectrum}. 
Indeed, both sides define cocontinuous, exact functors of $\X$ and agree on projectives by \cref{proposition:formula_for_tensoring_with_r_in_adams_type_case}. 
This gives an explicit formula for the comonad $\quiversteenrod $.
\end{remark}

\begin{lemma}
\label{remark:formula_for_values_of_quiver_itself_in_adams_type_case}
Let $A \rightarrow R$ be $\pcat$-Adams-type, let $\X$ be a $\quiversteenrod$-comodule and let $\mathbb{D}^{R}S \cong \colim \mathbb{D}^{A}S_{\alpha}$ be a filtered colimit presentation as in \cref{definition:pcat_adams_type_ring_spectrum}.
Then 
\[
\X(S) \cong \colim \Hom_{\quiversteenrod}(\Psf(R \otimes_{A} S_{\alpha}), \X).
\]
\end{lemma}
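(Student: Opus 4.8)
The plan is to upgrade the formula in \cref{remark:formula_for_values_of_quiver_tensor_q_in_adams_type_case} from $\pcat$-representations to $\quiversteenrod$-comodules. The key point is that a $\quiversteenrod$-comodule $\X$ is a coalgebra for the comonad, so it comes equipped with a coaction $\X \to \quiversteenrod \X$ whose composite with the counit $\quiversteenrod \X \to \X$ is the identity, exhibiting $\X$ as a retract of $\quiversteenrod \X$ (though not as comodules). This already shows that the functor $\X \mapsto \quiversteenrod \X$, viewed as landing in comodules via the cofree structure, has $\X$ as a retract in an appropriate sense, but what we actually want is a $\Hom$-description internal to $\Comod_{\quiversteenrod}$.

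First I would recall the cofree/forgetful adjunction $U \colon \Comod_{\quiversteenrod} \rightleftarrows \Rep(\pcat) \colon \quiversteenrod$, where the right adjoint sends a representation $\Y$ to the cofree comodule $\quiversteenrod \Y$ and where $\Psf(R \otimes_A M)$, with its comodule structure from \cref{theorem:comodule_homology_theory_adapted_in_pcat_flat_case}(2), is precisely the cofree comodule on $\Psf(M)$; indeed the equivalence $\Psf(R \otimes_A M) \cong \quiversteenrod(\Psf(M))$ is an equivalence of comodules, by the compatibility built into that theorem (this is the statement that $i_*\Psf$ is a homology theory together with the factorization through the cofree comodules). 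Granting this, the adjunction gives, for any $\quiversteenrod$-comodule $\X$,
\[
\Hom_{\quiversteenrod}(\Psf(R \otimes_A S_\alpha), \X) \cong \Hom_{\Rep(\pcat)}(\X, \Psf(S_\alpha))
\]
— wait, the variance is the issue: $\Psf(R\otimes_A S_\alpha)$ sits in the \emph{source}, so I should instead use that it is the cofree comodule on $\Psf(R \otimes_A S_\alpha)$ only if I want it in the target slot. Let me reorganize: the correct adjunction identity with the cofree comodule in the \emph{second} variable reads $\Hom_{\quiversteenrod}(\X, \quiversteenrod \Y) \cong \Hom_{\Rep(\pcat)}(U\X, \Y)$, so I instead observe that $\Psf(R \otimes_A S_\alpha)$ is the value of the \emph{left} adjoint is not available; rather, I use the projectivity of $\Psf(S)$ differently. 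The cleanest route: for $S' \in \pcat$, the comodule $\Psf(R \otimes_A S')$ corepresents the functor $\X \mapsto \X(S')$ on comodules, because it is cofree on $\Psf(S')$ and $\Psf(S')$ corepresents evaluation at $S'$ on $\Rep(\pcat)$; hence $\Hom_{\quiversteenrod}(\Psf(R \otimes_A S'), \X)$ is not quite $\X(S')$ but rather, by the cofree adjunction in the other variable, equals $\Hom_{\Rep(\pcat)}(\Psf(S'), U\X) = (U\X)(S')$ only if $\Psf(R\otimes_A S')$ were \emph{free}; since $\Comod_{\quiversteenrod}$ has cofree objects and $\Psf(S')$ is projective, $\Psf(R \otimes_A S')$ is in fact a \emph{projective generator pattern} dual to the injective/adapted story.

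The genuinely correct argument: by \cref{theorem:comodule_homology_theory_adapted_in_pcat_flat_case}(3), the homology theory $i_*\Psf$ is adapted, and the comodules $\Psf(R \otimes_A S_\alpha)$ realize the relevant (co)free objects. Using adaptedness together with the cofree adjunction, $\Hom_{\quiversteenrod}(-, \Psf(R \otimes_A S_\alpha))$ restricted along $U$ behaves like $\Hom_{\Rep(\pcat)}(-, \Psf(R\otimes_A S_\alpha))$, and then one applies \cref{remark:formula_for_values_of_quiver_tensor_q_in_adams_type_case} verbatim: $\colim \Hom_{\quiversteenrod}(\Psf(R \otimes_A S_\alpha), \X)$. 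Concretely, I would argue that both $\X \mapsto \X(S)$ and $\X \mapsto \colim_\alpha \Hom_{\quiversteenrod}(\Psf(R \otimes_A S_\alpha), \X)$ are exact functors $\Comod_{\quiversteenrod} \to \Ab$ that preserve filtered colimits, and that they agree on cofree comodules $\Psf(R \otimes_A M)$: on such a comodule the left side is $\Psf(R \otimes_A M)(S) = \pi_0\Map_R(S, R \otimes_A M)$, while the right side, by the cofree adjunction $\Hom_{\quiversteenrod}(\Psf(R \otimes_A S_\alpha), \quiversteenrod(\Psf(M))) \cong \Hom_{\Rep(\pcat)}(\Psf(R \otimes_A S_\alpha), \Psf(M))$ composed with \cref{proposition:formula_for_tensoring_with_r_in_adams_type_case}, gives $\colim_\alpha \Psf(M)(R \otimes_A S_\alpha) \cong \Psf(R \otimes_A M)(S)$. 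Since every $\quiversteenrod$-comodule admits a resolution by cofree comodules (the cobar resolution from the comonad), and both functors are exact, a standard resolution argument propagates the agreement from cofree comodules to all comodules.

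The main obstacle is bookkeeping the variance in the cofree adjunction correctly and checking that the relevant functors are exact on all of $\Comod_{\quiversteenrod}$ rather than just on cofree comodules — the latter requires knowing that the cobar (comonadic) resolution of a $\quiversteenrod$-comodule is exact in $\Rep(\pcat)$ and that $\X \mapsto \X(S)$ is exact, which holds because epimorphisms and the forgetful functor to $\Rep(\pcat)$ are levelwise exact and $\quiversteenrod$ is an exact comonad. Once that is in place, the colimit formula follows by comparing the two exact filtered-colimit-preserving functors on the cogenerating cofree comodules, exactly as in the proof of \cref{remark:formula_for_values_of_quiver_tensor_q_in_adams_type_case}.
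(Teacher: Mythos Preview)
Your final approach---compare the two functors $\X \mapsto \X(S)$ and $\X \mapsto \colim_\alpha \Hom_{\quiversteenrod}(\Psf(R \otimes_A S_\alpha), \X)$ on cofree comodules and then propagate via a resolution---is the right idea, but the execution has a genuine gap. You assert that both functors are \emph{exact}, and then invoke a ``standard resolution argument''. The first functor is certainly exact, but for the second you give no argument: each $\Hom_{\quiversteenrod}(\Psf(R\otimes_A S_\alpha), -)$ is only \emph{left} exact, since there is no reason for $\Psf(R\otimes_A S_\alpha) = \Hsf(S_\alpha)$ to be projective in $\Comod_{\quiversteenrod}$ (in many comodule categories there are no nonzero projectives at all). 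A filtered colimit of left exact functors remains left exact but need not be right exact, so your ``resolution argument'' does not go through as stated. The exactness claim does turn out to be true, but only as a \emph{consequence} of the lemma, so invoking it is circular.

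The fix is exactly what the paper does: left exactness suffices. Every comodule $\X$ sits in the split equalizer $\X \to \quiversteenrod\X \rightrightarrows \quiversteenrod^2\X$, and any left exact functor preserves this. Apply your verification on cofree comodules (which is correct---via the cofree adjunction $\Hom_{\quiversteenrod}(-, \quiversteenrod\Y) \cong \Hom_{\Rep(\pcat)}(U(-), \Y)$ and \cref{remark:formula_for_values_of_quiver_tensor_q_in_adams_type_case}) to the two terms $\quiversteenrod\X$ and $\quiversteenrod^2\X$, then swap the filtered colimit with the equalizer in $\Ab$; the inner equalizer $\mathrm{eq}\bigl(\Hom_{\Rep(\pcat)}(\Psf(R\otimes_A S_\alpha), \X) \rightrightarrows \Hom_{\Rep(\pcat)}(\Psf(R\otimes_A S_\alpha), \quiversteenrod\X)\bigr)$ is by definition $\Hom_{\quiversteenrod}(\Psf(R\otimes_A S_\alpha), \X)$. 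So once you trade ``exact'' for ``left exact'' and trade the full cobar resolution for its first two terms, your argument becomes the paper's proof. The extended detour through the cofree/forgetful adjunction variances at the start of your proposal is unnecessary and can be dropped.
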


\begin{proof}
We write $X$ as an equalizer
    $\X \rightarrow \quiversteenrod \X \rightrightarrows \quiversteenrod \quiversteenrod \X$
    in $\quiversteenrod$-comodules.
    Then, using \cref{remark:formula_for_values_of_quiver_tensor_q_in_adams_type_case} we have isomorphisms
    \begin{align*}
    \X(S) 
    &\cong \mathrm{eq} \left( \Hom_{\Rep(\pcat)}(\Psf(S), \quiversteenrod \X) \rightrightarrows \Hom_{\Rep(\pcat)}(\Psf(S), \quiversteenrod \quiversteenrod X) \right) \\    
    &\cong \mathrm{eq} \left(\colim \Hom_{\Rep(\pcat)}(\Psf(R \otimes_A S_{\alpha}), \X) \rightrightarrows \colim \Hom_{\Rep(\pcat)}(\Psf(R \otimes_A S_{\alpha}), \quiversteenrod X) \right) \\
    &\cong \colim \Hom_{\quiversteenrod}(\Psf(R \otimes_A S_{\alpha}), \X)    
    \end{align*}
\end{proof}

One useful consequence of the Adams condition is the existence of enough finite spectra whose homology is projective. This is often used in constructions of synthetic spectra-like categories. 

\begin{corollary}
\label{corollary:in_the_adams_type_case_enough_finite_projectives}
Let $A \rightarrow R$ be $\pcat$-Adams-type. Then $\quiversteenrod$-comodules of the form $\Hsf(P)$, where $P$ is a compact $A$-module such that $\Hsf(P)$ is projective as a $\pcat$-rep, generate the category $\Comod_{\quiversteenrod}$ under colimits.
\end{corollary}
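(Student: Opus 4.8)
The plan is to produce, for an arbitrary $\quiversteenrod$-comodule $\X$, an epimorphism onto it from a (possibly infinite) direct sum of comodules of the form $\Hsf(P)$ with $P$ a compact $A$-module and $\Hsf(P)$ projective in $\Rep(\pcat)$; iterating and using that $\Comod_{\quiversteenrod}$ is abelian then shows these objects generate under colimits. The key input is \cref{remark:formula_for_values_of_quiver_tensor_q_in_adams_type_case} together with the adjunction between the forgetful functor $\Comod_{\quiversteenrod}\to\Rep(\pcat)$ and the cofree functor $\quiversteenrod(-)$: maps of comodules $\Hsf(P)\to \quiversteenrod\Y$ are the same as maps of representations $\Hsf(P)\to\Y$.

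\medskip

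First I would reduce to the cofree case. Every $\quiversteenrod$-comodule $\X$ embeds into the cofree comodule $\quiversteenrod\X$ via its coaction, so it suffices to find the desired epimorphisms onto cofree comodules $\quiversteenrod\Y$ for arbitrary $\Y\in\Rep(\pcat)$ — actually, since $\quiversteenrod$ is exact and cocontinuous, it is enough to do this when $\Y$ is a single generating projective $\Psf(S)$, $S\in\pcat$, and then take sums and colimits. Second, fix $S\in\pcat$ and choose a filtered colimit presentation $\mathbb{D}^{R}S\cong\colim\mathbb{D}^{A}S_{\alpha}$ as in \cref{definition:pcat_adams_type_ring_spectrum}, so each $R\otimes_{A}S_{\alpha}$ is a simple $R$-module and each $P_{\alpha}\coloneqq S_{\alpha}$ is a compact $A$-module whose $\quiversteenrod$-comodule homology $\Hsf(P_{\alpha})=\Psf(R\otimes_{A}S_{\alpha})$ is projective in $\Rep(\pcat)$ by \cref{remark:identify-projectives}. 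By \cref{remark:formula_for_values_of_quiver_tensor_q_in_adams_type_case} we have $(\quiversteenrod\Psf(S))(T)\cong\colim_{\alpha}\Hom_{\Rep(\pcat)}(\Psf(R\otimes_{A}S_{\alpha}),\Psf(S))$ for every $T$; unwinding, the canonical maps $\Psf(R\otimes_{A}S_{\alpha})\to\quiversteenrod\Psf(S)$ (adjoint to the structure maps of the colimit, viewed as comodule maps via the forgetful/cofree adjunction) assemble to a map $\bigoplus_{\alpha}\Psf(R\otimes_{A}S_{\alpha})\to\quiversteenrod\Psf(S)$ which is an epimorphism of $\quiversteenrod$-comodules, since epimorphisms in $\Comod_{\quiversteenrod}$ are detected on underlying representations, epimorphisms of representations are levelwise, and the displayed colimit formula exhibits each value as a filtered colimit of the values of the summands.

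\medskip

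Putting this together: given arbitrary $\X$, pick an epimorphism $\bigoplus_{i}\Psf(S_{i})\twoheadrightarrow\X$ in $\Rep(\pcat)$ from generating projectives (possible by \cref{proposition:cat_of_quiveres_groth_abelian_and_psf_a_homology_theory}), apply the exact cocontinuous functor $\quiversteenrod$ to get an epimorphism $\bigoplus_{i}\quiversteenrod\Psf(S_{i})\twoheadrightarrow\quiversteenrod\X$, precompose with $\bigoplus_{i}\bigl(\bigoplus_{\alpha}\Psf(R\otimes_{A}S_{i,\alpha})\bigr)\twoheadrightarrow\bigoplus_{i}\quiversteenrod\Psf(S_{i})$ from the previous paragraph, and finally compose with the comonad counit $\quiversteenrod\X\twoheadrightarrow\X$ — which is an epimorphism because $\X$ is a retract of $\quiversteenrod\X$ via the coaction. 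This exhibits $\X$ as a quotient of a direct sum of objects $\Hsf(P)$ of the required type; since such an epimorphism exists for every $\X$, these objects generate $\Comod_{\quiversteenrod}$ under colimits.

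\medskip

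I expect the main obstacle to be bookkeeping rather than conceptual: namely, checking carefully that the natural transformations coming out of \cref{remark:formula_for_values_of_quiver_tensor_q_in_adams_type_case} really are maps of $\quiversteenrod$-comodules (not merely of underlying representations) and that the resulting map out of the direct sum is genuinely an epimorphism of comodules. Both points follow from the general formalism — the forgetful functor $\Comod_{\quiversteenrod}\to\Rep(\pcat)$ is exact, conservative, and preserves colimits, so it detects epimorphisms, and the adjunction identifies comodule maps into a cofree comodule with representation maps into its underlying object — but one must be a little careful that the colimit in \cref{remark:formula_for_values_of_quiver_tensor_q_in_adams_type_case} is compatible with the comodule structures, which ultimately comes down to naturality of the isomorphism in that remark.
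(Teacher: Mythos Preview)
Your approach has a genuine gap at the final step. The counit $\epsilon_{\X}\colon \quiversteenrod\X \to \X$ is \emph{not} a map of $\quiversteenrod$-comodules in general: writing out the compatibility condition, one would need $\rho_{\X}\circ\epsilon_{\X} = \quiversteenrod(\epsilon_{\X})\circ\Delta_{\X}$, and the right-hand side equals $\mathrm{id}_{\quiversteenrod\X}$ by the comonad counit axiom, whereas the left-hand side is typically only an idempotent. So the composite you construct at the end lives only in $\Rep(\pcat)$, not in $\Comod_{\quiversteenrod}$, and you have not produced a comodule epimorphism onto $\X$. More conceptually, cofree comodules are injective cogenerators rather than projective generators: every comodule embeds \emph{into} a cofree one via the coaction, but there is no natural comodule map \emph{from} the cofree comodule back. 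Your ``reduction to cofree'' therefore runs in the wrong direction for building epimorphisms.

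The paper's proof avoids this entirely. It uses the immediately preceding \cref{remark:formula_for_values_of_quiver_itself_in_adams_type_case}, which says
\[
\X(S)\;\cong\;\colim_{\alpha}\Hom_{\quiversteenrod}\bigl(\Psf(R\otimes_{A}S_{\alpha}),\,\X\bigr),
\]
i.e.\ there are enough comodule maps \emph{from} the $\Hsf(S_{\alpha})$ directly into any comodule $\X$, without passing through cofree objects. From this it follows instantly that if $\Hom_{\quiversteenrod}(\Hsf(P),\X)=0$ for all such $P$, then $\X(S)=0$ for every simple $S$, hence $\X=0$; this is the detection criterion for generation under colimits in a Grothendieck abelian category. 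If you prefer your explicit-epimorphism style, the same lemma lets you build it directly: summing over all $S$, all $\alpha$, and all comodule maps $f\colon\Hsf(S_{\alpha})\to\X$ gives a comodule map $\bigoplus_{S,\alpha,f}\Hsf(S_{\alpha})\to\X$ which is surjective on each $\X(S)$ by the displayed isomorphism. Either way, the key input is \cref{remark:formula_for_values_of_quiver_itself_in_adams_type_case}, not \cref{remark:formula_for_values_of_quiver_tensor_q_in_adams_type_case}: the former describes $\X$ itself, the latter only $\quiversteenrod\X$.
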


\begin{proof}
It's enough to check that if $\X$ is a comodule such that 
\[
\Hom_{\quiversteenrod}(\Hsf(P), \X) = 0 
\]
for each such $P$, then $\X = 0$. This is immediate from \cref{remark:formula_for_values_of_quiver_itself_in_adams_type_case}.
\end{proof}

\subsection{Ind-perfect $\quiversteenrod$-comodules}
\def\IndPerf{\mathrm{IndPerf}}

As a Grothendieck abelian category, the category of comodules  $\Comod_{\quiversteenrod}(\Rep(\pcat))$ has an associated derived category. However, for some applications it is more convenient to work with a renormalized variant of the derived category first introduced by Hovey \cite{hovey2003homotopy}, which we recall here. 

\begin{definition}
\label{definition:relatively_perfect_object_of_Derived_infty_category} 
Let us say that an object $X \in \dcat(\Comod_{\quiversteenrod})$ is \emph{relatively perfect} if it belongs to the thick subcategory 
\[
\dcat(\Comod_{\quiversteenrod})^{\mathrm{rel. perf}} \subseteq \dcat(\Comod_{\quiversteenrod})
\]
which contains all objects of
\[
\dcat(\Comod_{\quiversteenrod})^{\heartsuit} \cong \Comod_{\quiversteenrod}
\]
whose underlying $\pcat$-representation is compact projective as an object of $\Rep(\pcat)$.
\end{definition}

\begin{definition}
\label{definition:hoveys_stable_infinity_category} 
Let $A \rightarrow R$ be $\pcat$-Adams-type. 
We define the category of  \emph{ind-perfect $\quiversteenrod$-comodules}   
\[
\IndPerf_{\quiversteenrod} \colonequals \Ind(\dcat(\Comod_{\quiversteenrod})^{\mathrm{rel. perf}}),
\]
to be given by the $\Ind$-completion of the category of relative perfect $\quiversteenrod$-comodules.
\end{definition}

\begin{remark}
As a consequence of \cref{corollary:in_the_adams_type_case_enough_finite_projectives}, the subcategory of relative perfects generates the derived category under colimits. It follows that the inclusion of relative perfects induces a reflective localization 
\[
\IndPerf_{\quiversteenrod} \rightleftarrows \dcat(\Comod_{\quiversteenrod}).
\]
Thus, one can think of $\IndPerf_{\quiversteenrod}$ as a particular decompletion of the derived category, obtained by forcing all relative perfects to be compact.
\end{remark}

\begin{remark}
\label{remark:hoveys_stable_infinity_category_as_sheaves} 
Let $\Comod_{\quiversteenrod}^{\mathrm{fp}}$ denote the subcategory of those $\quiversteenrod$-comodules whose underlying $\pcat$-representation is compact projective. Then as a consequence of \cite[Theorem 2.58]{pstrkagowski2018synthetic}, the inclusion of this subcategory induces an equivalence 
\[
\sheaves_{\Sigma}(\Comod_{\quiversteenrod}^{\mathrm{fp}}; \abeliangroups) \cong \Comod_{\quiversteenrod},
\]
where on the left hand side we have sheaves of abelian groups with respect to the epimorphism topology. By \cite[Theorem 3.7]{pstrkagowski2018synthetic}, this extends to an equivalence
\[
\sheaves_{\Sigma}(\Comod_{\quiversteenrod}^{\mathrm{fp}}; \spectra) \cong \IndPerf_{\quiversteenrod},
\]
between sheaves of spectra and the ind-perfect category of \cref{definition:hoveys_stable_infinity_category}. 
\end{remark}

\subsection{Monoidal structure on quivers}
\label{subsection:monoidal_structure_on_quivers}

Let $R$ be an $\E_{2}$-algebra, so that the category $\Mod_{R}$ of left $R$-modules has a canonical monoidal structure given by the tensor product. In this section we explore the induced additional structure on the category $\Rep(\pcat)$ of representations. 

\begin{definition}
\label{definition:multiplicative_class_of_simples}
We say that a class of simples $\pcat$ is \emph{multiplicative} if 
it is closed under tensor products and it contains the monoidal unit, ie. $R \in \pcat$.
\end{definition}

\begin{example}
Both the class of all perfect modules and the class of finite projective modules of 
\cref{example:finite_free_modules_form_a_class_of_simples} are multiplicative for any $\E_{2}$-algebra $R$. 
\end{example}

\begin{proposition}
\label{proposition:quivers_have_a_monoidal_structure}
Let $R$ be an $\E_{n}$-algebra and $\pcat$ a multiplicative class of simples. Then, $\Rep(\pcat)$ has a canonical $\E_{n-1}$-monoidal structure commuting with colimits seperately in each variable, uniquely specified by the property that
\[
\Psf\colon \Mod_{R} \rightarrow \Rep(\pcat)
\]
is lax $\E_{n-1}$-monoidal and 
$\E_{n-1}$-monoidal when restricted to simples. 
\end{proposition}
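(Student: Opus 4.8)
The plan is to obtain the monoidal structure on $\Rep(\pcat)$ by Day convolution, using that $\Rep(\pcat)$ is the category of additive presheaves on $\pcat$ (equivalently, on $\pcat^{\prime}$ in the covariant description of \cref{variant:covariant_description_of_quivers}), and that $\pcat$ inherits an $\E_{n}$-monoidal structure from $\Mod_R$ by \cref{definition:multiplicative_class_of_simples}. The key input is the universal property of Day convolution: for a small $\E_n$-monoidal category $\pcat$ with finite direct sums, the category $\mathrm{Fun}^{\mathrm{add}}(\pcat^{\op}, \Ab)$ of additive presheaves carries a canonical $\E_n$-monoidal structure, preserving colimits in each variable, for which the Yoneda embedding $\pcat \hookrightarrow \mathrm{Fun}^{\mathrm{add}}(\pcat^{\op}, \Ab)$ is $\E_n$-monoidal, and this is universal among colimit-preserving-in-each-variable $\E_n$-monoidal functors out of presheaves; see \cite[\S 2.2.6]{higher_algebra}. (In fact Day convolution on presheaves of spaces/spectra is $\E_n$-monoidal when the source is $\E_n$; after restricting to the additive, discrete-valued presheaves one lands in an $\E_{n-1}$-monoidal — or rather still $\E_n$, but we only claim $\E_{n-1}$ to be safe about the interaction with the abelian/$1$-categorical truncation, exactly as in the proposition's statement.)

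\smallskip

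\textbf{Step 1.} Record that $\pcat^{\prime} \subseteq \Mod_{R^{\op}}^{\omega}$, the essential image of $\pcat$ under $R$-linear duality, is a small $\E_n$-monoidal subcategory closed under finite direct sums: multiplicativity of $\pcat$ gives that $\pcat$ contains $R$ and is closed under $\otimes_R$, and $R$-linear duality is (anti)monoidal on compact modules, so $\pcat^{\prime}$ contains the unit $R$ and is closed under $\otimes_R$. \textbf{Step 2.} Apply Day convolution to $\pcat^{\prime}$ to equip $\Rep(\pcat) \cong \mathrm{Fun}^{\mathrm{add}}(\pcat^{\prime}, \Ab)$ with an $\E_{n-1}$-monoidal structure, colimit-preserving separately in each variable, under which the (co)Yoneda embedding $\pcat^{\prime} \to \Rep(\pcat)$, $S^{\prime} \mapsto \Psf(\mathbb{D}^R S^{\prime})$, is $\E_{n-1}$-monoidal. \textbf{Step 3.} Identify $\Psf$ with the left Kan extension of this Yoneda embedding along $\pcat^{\prime} \hookrightarrow \Mod_{R^{\op}}^{\omega} \hookrightarrow \Mod_R^{\omega, \op}$, i.e. with the unique colimit-preserving extension; by the universal property of Day convolution (the extension of an $\E_{n-1}$-monoidal functor to presheaves along a monoidal inclusion is lax $\E_{n-1}$-monoidal, and strictly monoidal on the generating subcategory) this makes $\Psf$ lax $\E_{n-1}$-monoidal, and $\E_{n-1}$-monoidal on $\pcat$. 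Concretely, on simples the formula $\Psf(M)(S^{\prime}) = \pi_0(S^{\prime} \otimes_R M)$ of \cref{variant:covariant_description_of_quivers} together with $\Psf(R) = \Psf$ of the unit $= $ the unit representation makes the lax structure maps $\Psf(M) \otimes \Psf(N) \to \Psf(M \otimes_R N)$ explicit, and they are isomorphisms when $M, N \in \pcat$ since there the target is represented and the Yoneda embedding is monoidal.

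\smallskip

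\textbf{Step 4 (uniqueness).} Suppose $\Rep(\pcat)$ carries any $\E_{n-1}$-monoidal structure, colimit-preserving in each variable, for which $\Psf$ is lax $\E_{n-1}$-monoidal and strictly so on simples. By \cref{proposition:cat_of_quiveres_groth_abelian_and_psf_a_homology_theory} the objects $\Psf(S)$, $S \in \pcat$, are compact projective generators, so every object is canonically a colimit of such; since the tensor product preserves colimits in each variable, it is determined by its restriction to the $\Psf(S)$, and there it is forced to be $\Psf(S) \otimes \Psf(T) \cong \Psf(S \otimes_R T)$ by the "strictly monoidal on simples" hypothesis. Likewise the associativity/commutativity coherence data are determined, so the $\E_{n-1}$-monoidal structure is unique up to contractible choice; this is a standard argument and I would cite the relevant uniqueness statement for colimit-preserving monoidal structures (e.g. \cite[Remark 4.8.1.9, Corollary 4.8.1.12]{higher_algebra}).

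\smallskip

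\textbf{The main obstacle} is the bookkeeping around which operadic level survives: Day convolution on presheaves of \emph{spaces} preserves the full $\E_n$-structure, but $\Rep(\pcat)$ is a $1$-category of abelian-group-valued additive presheaves, and one must check that the monoidal structure descends through the reflective localization $\mathrm{Fun}(\pcat^{\prime}, \Sp) \to \mathrm{Fun}^{\mathrm{add}}(\pcat^{\prime}, \Ab)$ compatibly — i.e. that the localization is monoidal (the tensor of additive discrete presheaves is again additive and discrete, using that $\pi_0$ is symmetric monoidal on connective spectra and that filtered colimits and finite products of discrete objects are discrete). Keeping the coherences straight through this truncation is exactly why one only claims $\E_{n-1}$ rather than $\E_n$, matching the classical phenomenon (e.g. $\Ab$ itself is only symmetric monoidal, not $\E_2$-monoidal, as the special fibre of a deformation). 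Once that compatibility is in hand, every remaining step is formal consequence of the universal property of Day convolution and the generation statement of \cref{proposition:cat_of_quiveres_groth_abelian_and_psf_a_homology_theory}.
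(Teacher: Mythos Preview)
Your approach is the same as the paper's: the paper's entire proof is the single sentence ``This follows from the universal property of Day convolution.'' Your proposal is a correct elaboration of that one line.

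One point of confusion worth flagging: you misidentify the source of the drop from $\E_n$ to $\E_{n-1}$. In Step 1 you claim $\pcat$ (or $\pcat'$) is an $\E_n$-monoidal subcategory, and in your ``main obstacle'' paragraph you attribute the loss of one operadic level to the truncation from spectral presheaves to abelian-group-valued ones. That is not what is happening. When $R$ is an $\E_n$-algebra, the category $\Mod_R$ is only $\E_{n-1}$-monoidal to begin with (this is the standard Dunn additivity / \cite[\S 4.8.5]{higher_algebra} phenomenon), so $\pcat \subseteq \Mod_R$ is $\E_{n-1}$-monoidal from the outset, and Day convolution then produces an $\E_{n-1}$-monoidal structure on $\Rep(\pcat)$ with no further loss. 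The passage to the $1$-category $\Rep(\pcat)$ causes no trouble: since $\Rep(\pcat)$ only depends on the homotopy category $h\pcat$, one can run Day convolution entirely at the $1$-categorical level on the ordinary additive $\E_{n-1}$-monoidal category $h\pcat$, and no localization or truncation argument is needed. This also explains \cref{remark:monoidality_of_quivers_depending_on_en_structure} cleanly.
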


\begin{proof}
This follows from the universal property of Day convolution.
\end{proof}

\begin{remark}
\label{remark:monoidality_of_quivers_depending_on_en_structure}
Since $\Rep(\pcat)$ is a $1$-category, the above can be phrased using more classical language as saying that the category of representations is canonically:
\begin{enumerate}
    \item[(a)] monoidal if $R$ is $\E_{2}$,
    \item[(b)] braided monoidal if $R$ is $\E_{3}$
    \item[(c)] symmetric monoidal when $R$ is $\E_{4}$. 
\end{enumerate}
\end{remark}

\section{Synthetic spectra based on a quiver homology theory} 
\label{section:synthetic_spectra_based_on_quivers}
A recent development in stable homotopy theory is the ability to encode various Adams spectral sequences through categorical deformations, first constructed in the Adams-type case in \cite{pstrkagowski2018synthetic}. In this section, we generalize this construction to the setting of quiver homology theories considered in the current work. 

While categorical deformations have proven very useful in various analyzing spectral sequences in general, our primary motivation is to set up ground for the description of the Adams spectral sequence based on integral homology given in \S\ref{sec:integral}. One of our main results says that the integral Adams spectral sequence can be essentially decomposed in terms of classical Adams spectral sequences. The appropriate context for making such a statement precise is a square of deformation categories, see \cref{thm:pullback-Z}. The purpose of this section is to construct the needed deformations. 

This section is somewhat technical and a reader already familiar with synthetic spectra can safely skip it on the first reading. In more detail, in \S\ref{subsection:construction_of_synthetic_spectra}, we generalize the construction of synthetic spectra in the following ways:
\begin{enumerate}
    \item we allow the Adams-type condition in the setting of quivers, as in \cref{definition:pcat_adams_type_ring_spectrum}, 
    \item we work relative to an arbitrary map $A \rightarrow R$ of $\mathbb{E}_{1}$-rings, constructing an category of \emph{$R$-based synthetic $A$-modules}. The case of synthetic \emph{spectra} is recovered by specializing to the unit map $\Ss \rightarrow R$.
\end{enumerate}
Both generalizations are mild, as the arguments of \cite{pstrkagowski2018synthetic} go through essentially verbatim. In \S\ref{subsection:comparison_of_deformations}, we compare our construction with the various deformations of \cite{patchkoria2021adams}. 

\subsection{Construction of synthetic modules} 
\label{subsection:construction_of_synthetic_spectra}

To keep things short, we will assume that the reader is familiar with the main properties and the broad construction of synthetic spectra in the classical case. The main reference is \cite{pstrkagowski2018synthetic}. 

\begin{notation}
Let $f \colon A \rightarrow R$ be a map of $\mathbb{E}_{1}$-rings and let $\pcat$ be a class of simples such that $f$ is $\pcat$-Adams-type. It follows that we have the push-forward adapted homology theory
\[
f_{*} \Psf: \Mod_{A} \rightarrow \Comod_{\quiversteenrod}(\Rep(\pcat))
\]
whose underlying quiver representation is given by
\[
(f_{*}\Psf)(X)(S) \colonequals \pi_{0} \Map_{\Mod_{R}}(S, R \otimes_{A} X).
\]

If there is no risk of confusion, we will write $\Psf \colonequals f_{*}\Psf$. Note that with this convention we can distinguish $\Psf \colon \Mod_{A} \rightarrow \Comod_{\quiversteenrod}$ and $\Psf \colon \Mod_{R} \rightarrow \Rep(\pcat)$ by type-checking; one is defined on $A$-modules and the other on $R$-modules. 
\end{notation}

\begin{definition}
We say that an $A$-module $M$ is \emph{finite $\pcat$-projective} if it is compact and $R \otimes_{A} M$ is simple. We denote the full subcategory spanned by finite projectives by 
\[
(\Mod_{A})^{\mathrm{fp}}_{\pcat} \subseteq \Mod_{A}.
\]

\end{definition}

\begin{lemma}
\label{lemma:conditions_for_epis_of_pcat_projectives}
Let $M_{1} \rightarrow M_{2}$ be a map of finite $\pcat$-projectives. Then the following two conditions are equivalent: 
\begin{enumerate}
    \item $\Psf(M_{1}) \rightarrow \Psf(M_{2})$ is an epimorphism in $\Rep(\pcat)$. 
    \item $R \otimes_{A} M_{1} \rightarrow R \otimes_{A} M_{2}$ is a split epimorphism of $R$-modules. 
\end{enumerate}
\end{lemma}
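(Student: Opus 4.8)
The plan is to prove the equivalence of the two conditions for a map $p \colon M_1 \to M_2$ of finite $\pcat$-projective $A$-modules by treating each implication separately, with the heart of the argument being a reduction to \cref{lemma:p_epis_of_simple_modules_are_split} after extending scalars to $R$.

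First I would prove $(2) \Rightarrow (1)$. Suppose $R \otimes_A M_1 \to R \otimes_A M_2$ is a split epimorphism of $R$-modules. Since $\Psf \colon \Mod_R \to \Rep(\pcat)$ is a functor (indeed a homology theory, by \cref{proposition:cat_of_quiveres_groth_abelian_and_psf_a_homology_theory}), it carries split epimorphisms to split epimorphisms, so $\Psf(R \otimes_A M_1) \to \Psf(R \otimes_A M_2)$ is a (split) epimorphism in $\Rep(\pcat)$. Now unwinding the notational convention, this map is exactly $\Psf(M_1) \to \Psf(M_2)$ for the pushforward homology theory $f_* \Psf$ on $A$-modules; that is, $(f_*\Psf)(M_i)(S) = \pi_0 \Map_{\Mod_R}(S, R\otimes_A M_i) = \Psf(R\otimes_A M_i)(S)$. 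Hence condition (1) holds. This direction is essentially formal.

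The substance is in $(1) \Rightarrow (2)$. Assume $\Psf(M_1) \to \Psf(M_2)$ is an epimorphism in $\Rep(\pcat)$; by the identification above this says $\Psf(R \otimes_A M_1) \to \Psf(R \otimes_A M_2)$ is an epimorphism of $\pcat$-representations. Now $R \otimes_A M_1$ and $R \otimes_A M_2$ are simple $R$-modules, by the definition of finite $\pcat$-projective. Therefore \cref{lemma:p_epis_of_simple_modules_are_split}, applied to the map $R \otimes_A M_1 \to R \otimes_A M_2$ of simple $R$-modules, tells us precisely that it is a split epimorphism of $R$-modules, which is condition (2).

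The main obstacle, such as it is, is purely bookkeeping: one must be careful about which $\Psf$ is in play at each point (the homology theory on $A$-modules versus the one on $R$-modules) and verify that the pushforward convention identifies $(f_*\Psf)(M_i)$ with $\Psf(R \otimes_A M_i)$ as $\pcat$-representations — which is exactly the defining formula recalled in the Notation preceding \cref{definition:pcat_adams_type_ring_spectrum}, namely $(f_*\Psf)(X)(S) = \pi_0\Map_{\Mod_R}(S, R \otimes_A X)$. Once this identification is made, both implications reduce immediately to already-established facts: functoriality of $\Psf$ on split epimorphisms for $(2)\Rightarrow(1)$, and \cref{lemma:p_epis_of_simple_modules_are_split} for $(1)\Rightarrow(2)$. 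No flatness or Adams-type hypothesis is actually needed for this particular lemma beyond what is used to make sense of the setup.
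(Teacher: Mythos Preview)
Your proof is correct and follows exactly the paper's approach: the backward implication is clear (functors preserve split epimorphisms), and the forward implication is precisely \cref{lemma:p_epis_of_simple_modules_are_split} applied to the simple $R$-modules $R \otimes_A M_i$. Your unpacking of the notational convention $\Psf(M_i) = \Psf(R \otimes_A M_i)$ is accurate and makes explicit what the paper's one-line proof leaves implicit.
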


\begin{proof}
The backward implication is clear and the forward one is \cref{lemma:p_epis_of_simple_modules_are_split}. 
\end{proof}

\begin{definition}
\label{definition:coverings_of_a_modules}
We say that a morphism $M_{1} \rightarrow M_{2}$ of finite $\pcat$-projectives is a \emph{covering} if it satisfies the equivalent conditions of \cref{lemma:conditions_for_epis_of_pcat_projectives}. 
\end{definition}

Note that since coverings of \cref{definition:coverings_of_a_modules} are stable under base-change, the category $(\Mod_{A})^{\mathrm{fp}}_{\pcat}$ becomes an an additive site in the sense of \cite[Definition 2.3]{pstrkagowski2018synthetic} with respect to this class of coverings. 

\begin{definition}
\label{definition:hsf_based_synthetic_spectra}
A $\pcat$-based \emph{synthetic $A$-module} is an additive sheaf 
\[
X \colon (\Mod_{A})_{\pcat}^{\mathrm{fp}, op} \rightarrow \spectra
\]
of spectra on finite $\pcat$-projectives. We denote the category of synthetic $A$-modules by 
\[
\Syn_{\pcat}(\Mod_{A})
\]
\end{definition}

Note that by construction, the category of synthetic $A$-modules is stable and presentable.

\begin{example}
If $M$ is an $A$-module, we denote by $\nu(M)$ the synthetic $A$-module given by the sheafification of the presheaf 
\[
P \mapsto \tau_{\geq 0} \map_{\Mod_{A}}(P, M),
\]
where the right hand side is the connective part of the mapping spectrum. We call $\nu(M)$ the \emph{synthetic analogue}.
\end{example}

\begin{remark}
If $P$ is finite projective, the Yoneda lemma shows that for any $X \in \Syn_{\pcat}(\Mod_{A})$ we have a canonical equivalence of spectra
\[
\map(\nu(P), X) \cong X(P).
\]
It follows that synthetic $A$-modules of the form $\nu(P)$ generate $\Syn_{\pcat}(\Mod_{A})$ as a stable, presentable category. These generators are compact as a consequence of \cite[Corollary 2.9]{pstrkagowski2018synthetic}.
\end{remark}

\begin{construction}[$t$-structure]
As a subcategory of sheaves of spectra closed under cofibers, $\Syn_{\pcat}(\Mod_{A})$ inherits a canonical $t$-structure in which a synthetic $A$-module $X$ is coconnective if and only if $X(P) \in \spectra_{\leq 0}$ for all finite $\pcat$-projectives $P$. 
\end{construction}

\begin{construction}[Local grading and $\tau$]
The category of synthetic $A$-module has a local grading (that is, an autoequivalence) 
\[
[1] \colon \Syn_{\pcat}(\Mod_{A}) \rightarrow \Syn_{\pcat}(\Mod_{A})
\]
defined by 
\[
(X[1])(P) \colonequals X(\Sigma^{-1}P).
\]
Note that this is distinct from the categorical suspension, but there is a canonical comparison map 
\[
\tau \colon \Sigma X \rightarrow X[1].
\]
\end{construction}

As in case of synthetic spectra of \cite{pstrkagowski2018synthetic}, the map $\tau$ presents $\Syn_{\pcat}(\Mod_{A})$ as an appropriate categorical deformation, interpolating between $A$-modules and the category of ind-perfect $\quiversteenrod$-comodules introduced in \cref{definition:hoveys_stable_infinity_category}. 

\begin{theorem}[Special fiber]
\label{thm:syn props}
The category of $\pcat$-based synthetic $A$-modules has the following properties: 
\begin{enumerate}
    \item There exists a canonical equivalence $\Syn_{\pcat}(\Mod_{A})^{\heartsuit} \cong \Comod_{\quiversteenrod}$. 
    \item For any $M \in \Mod_{A}$, there is a natural isomorphism $\pi^{\heartsuit}_{0}(\nu(M)) \cong \Psf(M)$.
    \item There exists a canonical structure of a monad on the endofunctor of $\Syn_{\pcat}$
    \[
    C\tau(X):=\mathrm{cofib}(\tau: X \rightarrow X)
    \]
    and a fully faithful embedding 
    \[
    \Mod_{C\tau}(\Syn_{\pcat}) \hookrightarrow \IndPerf_{\quiversteenrod}
    \]
    into the category of ind-perfect $\quiversteenrod$-comodules in $\Rep(\pcat)$. This embedding becomes an equivalence of categories after hypercompletion. 
\end{enumerate}
\end{theorem}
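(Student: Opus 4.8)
The plan is to establish the three parts in order, each time reducing to the corresponding statement for ordinary synthetic spectra from \cite{pstrkagowski2018synthetic} by checking that the category $(\Mod_A)^{\mathrm{fp}}_\pcat$, equipped with the covering topology of \cref{definition:coverings_of_a_modules}, is an additive site whose sheaf category behaves formally like the one built from finite free $R$-modules. For part (1), I would argue that the heart of the $t$-structure on $\Syn_\pcat(\Mod_A)$ is equivalent to additive sheaves of abelian groups on $(\Mod_A)^{\mathrm{fp}}_\pcat$; since $\Psf$ carries coverings to split epimorphisms by \cref{lemma:conditions_for_epis_of_pcat_projectives}, the functor $M \mapsto \Psf(M)$ from finite $\pcat$-projectives into $\Comod_\quiversteenrod^{\mathrm{fp}}$ (using the push-forward adapted homology theory) is exact for the respective topologies and, by \cref{corollary:in_the_adams_type_case_enough_finite_projectives}, essentially surjective onto a generating subcategory. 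One then invokes \cite[Theorem 2.58]{pstrkagowski2018synthetic} together with the identification recalled in \cref{remark:hoveys_stable_infinity_category_as_sheaves} to conclude $\sheaves_\Sigma((\Mod_A)^{\mathrm{fp}}_\pcat;\abeliangroups) \cong \sheaves_\Sigma(\Comod_\quiversteenrod^{\mathrm{fp}};\abeliangroups) \cong \Comod_\quiversteenrod$, matching the heart.

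For part (2), the computation is local: $\pi_0^\heartsuit \nu(M)$ is the sheafification of $P \mapsto \pi_0\map_{\Mod_A}(P, M)$, and one checks this presheaf, transported along the equivalence of part (1), is precisely $\Psf(R\otimes_A -)$ evaluated on $M$ — i.e. $S \mapsto \pi_0\Map_{\Mod_R}(S, R\otimes_A M)$ — using adjunction $\pi_0\Map_{\Mod_A}(P,M)\cong \pi_0\Map_{\Mod_R}(R\otimes_A P, R\otimes_A M)$ when $R\otimes_A P$ is simple, together with the fact (\cref{remark:formula_for_values_of_quiver_itself_in_adams_type_case}) that a $\quiversteenrod$-comodule is recovered from its values on $\Psf(R\otimes_A S_\alpha)$. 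This mirrors \cite[Lemma 4.23 / Proposition 4.21]{pstrkagowski2018synthetic} verbatim.

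For part (3), the monad structure on $C\tau$ comes formally from $\tau$ being a natural transformation of endofunctors whose cofiber inherits an algebra structure exactly as in \cite[\S4.4]{pstrkagowski2018synthetic}; no new input is needed. The embedding $\Mod_{C\tau}(\Syn_\pcat) \hookrightarrow \IndPerf_\quiversteenrod$ is then constructed by identifying $\Mod_{C\tau}$ with the subcategory of $\tau$-complete-free — equivalently $\tau$-torsion — synthetic objects, which is a stable category generated under colimits by $\{C\tau\,\nu(P)\}$ for $P$ finite $\pcat$-projective; one sends $C\tau\,\nu(P) \mapsto \Psf(P) \in \dcat(\Comod_\quiversteenrod)^{\mathrm{rel.\,perf}}$ and extends cocontinuously, using \cref{remark:hoveys_stable_infinity_category_as_sheaves} to present $\IndPerf_\quiversteenrod \cong \sheaves_\Sigma(\Comod_\quiversteenrod^{\mathrm{fp}};\spectra)$. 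Full faithfulness on generators reduces to comparing mapping spectra $\map_{\Syn}(C\tau\,\nu(P), C\tau\,\nu(Q))$ with $\map_{\dcat(\Comod_\quiversteenrod)}(\Psf(P),\Psf(Q))$, both of which unwind to the same $\Ext$-computation since $\Psf(P)$ is projective in the heart. The hypercompletion statement follows because the only discrepancy between the two sheaf categories is the failure of $\Mod_{C\tau}$ to be hypercomplete, exactly as in \cite[Theorem 4.37]{pstrkagowski2018synthetic}.

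\textbf{Main obstacle.} The genuinely non-formal point is verifying that $\Mod_{C\tau}(\Syn_\pcat)$ is compactly generated by the objects $C\tau\,\nu(P)$ with $\Psf(P)$ projective, and that these map to a \emph{generating} set of compact objects of $\IndPerf_\quiversteenrod$ under the comparison functor; this is where the Adams-type hypothesis is essential (via \cref{corollary:in_the_adams_type_case_enough_finite_projectives}, ensuring enough finite $\pcat$-projectives), and where one must be careful that the $R$-module $R\otimes_A P$ being simple is exactly the condition making $\Psf(P)$ compact projective in $\Rep(\pcat)$. Everything else is a transcription of \cite{pstrkagowski2018synthetic} with $\Mod_{R_*}$ replaced by $\Rep(\pcat)$ and finite free modules replaced by finite $\pcat$-projectives.
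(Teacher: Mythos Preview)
Your overall strategy---set up the morphism of sites $\Psf \colon (\Mod_A)^{\mathrm{fp}}_\pcat \to \Comod_\quiversteenrod^{\mathrm{fp}}$ and transcribe the arguments of \cite{pstrkagowski2018synthetic}---is exactly what the paper does, and parts (1) and (3) go through essentially as you describe. However, your argument for part (2) contains a genuine error. The claimed ``adjunction''
\[
\pi_0\Map_{\Mod_A}(P,M)\;\cong\;\pi_0\Map_{\Mod_R}(R\otimes_A P,\, R\otimes_A M)
\]
is false: the extension-of-scalars adjunction gives $\Map_{\Mod_R}(R\otimes_A P, N) \cong \Map_{\Mod_A}(P, N)$ for $N$ an $R$-module, so the right-hand side above is $\pi_0\Map_{\Mod_A}(P, R\otimes_A M)$, not $\pi_0\Map_{\Mod_A}(P,M)$. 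Already for $A=\Ss$, $R=\fieldp$, $P=M=\Ss$ the two sides are $\bbZ$ and $\fieldp$. The identification of $\pi_0^\heartsuit \nu(M)$ with $\Psf(M)$ is \emph{not} a levelwise isomorphism of the underlying presheaves; it goes through the equivalence of hearts induced by the site morphism, which is implemented by the left adjoint $\Psf^*$ rather than by precomposition.

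The paper's route here is cleaner: it constructs the adjunction $\Psf^* \colon \Syn_\pcat \rightleftarrows \IndPerf_\quiversteenrod \colon \Psf_*$ from the morphism of sites, notes that \cite[Theorem~3.27]{pstrkagowski2018synthetic} gives the equivalence on hearts, and then (2) falls out by unwinding. For (3), rather than building the functor by hand on generators, the paper identifies the monad $C\tau$ directly with $\Psf_*\Psf^*$ via the cofiber sequence $\Sigma\nu(P)[-1] \to \nu(P) \to \Psf(P)$, so that the comparison functor and its full faithfulness come for free from the Barr--Beck description; your approach to (3) works but is more laborious.
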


\begin{proof}
The arguments are the same as in the case of a ring spectrum which is Adams-type in the classical sense, as in \cite[\S 4.5]{pstrkagowski2018synthetic}. For the convenience of the reader, we sketch the arguments here. Taking $\Psf$-homology induces a morphism of sites 
\[
(\Mod_{A})^{\mathrm{fp}}_{\pcat} \rightarrow \Comod_{\quiversteenrod}^{\mathrm{fp}},
\]
where the target is the category of comodules whose underlying $\pcat$-representation is compact projective equipped with the epimorphism topology. This yields an adjunction between categories of sheaves which by \cref{remark:hoveys_stable_infinity_category_as_sheaves} we can identify with an adjunction
\[
\Psf^{*} \colon \Syn_{\pcat}(\Mod_{A}) \rightleftarrows \IndPerf_{\quiversteenrod} \colon \Psf_{*} .
\]
This induces an equivalence on hearts by the argument of \cite[Theorem 3.27]{pstrkagowski2018synthetic}, which shows $(1)$ and, after unwrapping the definitions, $(2)$. For $(3)$, we observe that for any $P \in (\Mod_{A})^{\mathrm{fp}}_{\pcat}$ we have a cofiber sequence
\[
\Sigma \nu(P)[-1] \rightarrow \nu(P) \rightarrow \Psf(P)
\]
where the first map is $\tau$ and we consider $\Psf(P)$ as an object of the heart and thus 
\[
\C\tau(\nu(P)) \cong \Psf_{*} \Psf^{*} \nu(P)
\]
since $\Psf^{*}(\nu(P)) \cong \Psf(P) \in \Comod_{\quiversteenrod}$ by construction. This allows one to identify the monad $\Psf_{*} \Psf^{*}$ with $C\tau$, giving a cocontinuous functor $\Mod_{C\tau}(\Syn_{\pcat}) \rightarrow \IndPerf_{\quiversteenrod}$. This is fully faithful since both sides are generated under colimits by elements of the heart, and the functor is an equivalence between the hearts by $(1)$. 
\end{proof}
The description of the generic fiber is somewhat more involved, since we need to take into account that finite projectives need not generate $\Mod_{A}$ under colimits. 

\begin{notation}
We write $\Mod_{A}^{\pcat} \subseteq \Mod_{A}$ for the subcategory of $A$-modules generated under colimits by $(\Mod_{A})^{\mathrm{fp}}_{\pcat}$. We call its objects the \emph{$\pcat$-generated} $A$-modules. 
\end{notation}

\begin{theorem}[Generic fiber]
\label{theorem:generic_fiber_of_synthetic_spectra}
The category of $\pcat$-based synthetic $A$-modules has the following properties: 
\begin{enumerate}
    \item the composite $\tau^{-1}(-) \circ \nu$ induces an equivalence $\Mod^{\pcat}_{A}~\cong~\Syn_{\pcat}(\Mod_{A})^{\tau = 1}$ between $\pcat$-generated $A$-modules and the subcategory of those synthetic $A$-modules on which $\tau$ acts invertibly, 
    \item the functor $\nu \colon \Mod^{\pcat}_{A} \rightarrow \Syn_{\pcat}(\Mod_{A})$ is fully faithful when restricted to $\pcat$-generated $A$-modules. 
\end{enumerate}
\end{theorem}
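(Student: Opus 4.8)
The plan is to follow the strategy of \cite[\S 4.1--4.4]{pstrkagowski2018synthetic} verbatim, which is legitimate given the remark in the text that ``the arguments go through essentially verbatim''. The whole statement rests on understanding the localization functor $\tau^{-1}(-) \colon \Syn_{\pcat}(\Mod_{A}) \rightarrow \Syn_{\pcat}(\Mod_{A})^{\tau=1}$ and identifying the target. First I would observe that inverting $\tau$ is a smashing localization: by construction $\tau$ is a map in the stable presentable category $\Syn_{\pcat}(\Mod_{A})$, so $\Syn_{\pcat}(\Mod_{A})^{\tau=1}$ is the full subcategory of $\tau$-local objects, and the localization is the colimit $X \mapsto \colim(X \xrightarrow{\tau} X[-1] \xrightarrow{\tau} X[-2] \to \cdots)$, which commutes with colimits. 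This makes $\Syn_{\pcat}(\Mod_{A})^{\tau=1}$ a compactly generated stable category whose compact generators are the images $\tau^{-1}\nu(P)$ of the generating $\nu(P)$ for $P \in (\Mod_{A})^{\mathrm{fp}}_{\pcat}$.

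Next I would construct the comparison functor in the other direction. Restriction of a synthetic $A$-module along the Yoneda embedding $(\Mod_{A})^{\mathrm{fp}}_{\pcat} \hookrightarrow \Mod_{A}^{\pcat}$, followed by the observation that a sheaf on the site evaluated after inverting $\tau$ only sees the underlying spectrum-valued information, should produce a cocontinuous functor $\Syn_{\pcat}(\Mod_{A})^{\tau=1} \rightarrow \Mod_{A}^{\pcat}$; concretely this is $\tau^{-1}X \mapsto$ (the object representing $P \mapsto \tau^{-1}X(P)$ on $\pcat$-generated modules). On the other side, $\nu$ followed by $\tau^{-1}$ gives $\Mod_{A}^{\pcat} \rightarrow \Syn_{\pcat}(\Mod_{A})^{\tau=1}$. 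The key computation, exactly as in \cite{pstrkagowski2018synthetic}, is that for $P, P' \in (\Mod_{A})^{\mathrm{fp}}_{\pcat}$ one has $\map(\tau^{-1}\nu(P), \tau^{-1}\nu(P')) \cong \tau^{-1}\nu(P')(P) \cong \colim_n \tau_{\geq -n}\map_{\Mod_{A}}(\Sigma^{-n}P, P') \cong \map_{\Mod_{A}}(P, P')$, the last step using that the mapping spectra between finite objects are bounded below and the suspensions eventually exhaust. This shows $\tau^{-1}\nu$ is fully faithful on $(\Mod_{A})^{\mathrm{fp}}_{\pcat}$; since both sides are generated under colimits by these objects and both functors are cocontinuous, the adjoint functor argument upgrades this to an equivalence $\Mod_{A}^{\pcat} \cong \Syn_{\pcat}(\Mod_{A})^{\tau=1}$, proving (1). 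For (2), fully faithfulness of $\nu$ restricted to $\Mod_{A}^{\pcat}$, one checks that $\nu$ is an equivalence onto its image: $\map(\nu(M), \nu(N))$ sits in a fiber sequence relating it to $\map(\nu(M), \nu(N)[-1])$ via $\tau$ and to $\map_{\Comod}(\Psf(M), \Psf(N))$-type terms, and one runs the standard convergence/connectivity bookkeeping; alternatively, once (1) is known, fully faithfulness of $\nu$ on the $\tau$-invertible part follows because $\tau^{-1}$ is a localization and $\tau^{-1}\nu$ is already known to be fully faithful, combined with the fact that $\nu(M)$ for $\pcat$-generated $M$ is ``$\tau$-complete enough'' that no information is lost—more precisely, $\map(\nu(M),\nu(N)) \to \map(\tau^{-1}\nu(M), \tau^{-1}\nu(N))$ is an equivalence on $\pi_0$ in the relevant degrees because of the $t$-structure.

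The main obstacle I expect is (2): fully faithfulness of $\nu$ on all $\pcat$-generated $A$-modules, not merely on the finite projectives. The subtlety, flagged in the text preceding the theorem, is that $(\Mod_{A})^{\mathrm{fp}}_{\pcat}$ need not generate $\Mod_{A}$, so one cannot simply invoke cocontinuity; one must work inside $\Mod_{A}^{\pcat}$ and verify that $\nu$ preserves the relevant colimits and that the mapping-spectrum spectral sequence (filtration by powers of $\tau$, with associated graded given by $\Ext$-groups in $\quiversteenrod$-comodules computed via \cref{theorem:comodule_homology_theory_adapted_in_pcat_flat_case}) converges. This is the one place where the ``verbatim'' claim needs genuine checking, and it amounts to reproducing the conditional-convergence argument of \cite[Lemma 4.23 ff.]{pstrkagowski2018synthetic} in the quiver setting, using that $\Rep(\pcat)$ has enough compact projectives of the form $\Psf(S)$ and that $\Psf$ is adapted by \cref{proposition:psf_homology_theory_on_r_modules_is_adapted}.
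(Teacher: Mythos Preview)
Your approach to (1) is essentially correct and matches the paper: show $\tau^{-1}\nu$ is fully faithful on finite projectives via the Yoneda computation, then use that both sides are compactly generated by these. But you are treating (2) as a separate, harder problem, and your proposed attacks on it (spectral sequence convergence, ``$\tau$-complete enough'' heuristics) are both vague and unnecessary.

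The paper's key observation, which you are missing, is that (1) and (2) are \emph{the same statement}. Rather than localizing and then trying to recover information, the paper works with the subcategory $\Syn_{\pcat}(\Mod_A)_{\geq 0}^{per}$ of \emph{$\tau$-periodic} connective synthetic modules: those $X$ for which $\tau \colon \Sigma X[-1] \to X$ is a $1$-connective cover. By \cite[Proposition 2.22]{pstrkagowski2021abstract}, connective cover and $\tau$-localization give an equivalence $\Syn_{\pcat}(\Mod_A)_{\geq 0}^{per} \cong \Syn_{\pcat}(\Mod_A)^{\tau=1}$. The point is that $\nu$ already lands in this periodic subcategory \emph{before} any localization, and factors through it as an exact functor. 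So proving $\nu \colon \Mod_A^{\pcat} \to \Syn_{\geq 0}^{per}$ is an equivalence gives (1) and (2) at once. This reduces to full faithfulness on finite projectives (which is immediate from Yoneda) plus the compact-generation argument you already have.

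Your flagged ``main obstacle'' thus dissolves: there is no separate convergence argument needed for (2), and the issue about $(\Mod_A)^{\mathrm{fp}}_{\pcat}$ not generating all of $\Mod_A$ is handled simply by working inside $\Mod_A^{\pcat}$ throughout, as the paper does.
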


\begin{proof}
Let $\Syn_{\pcat}(\Mod_{A})_{\geq 0}^{per}$ denote the subcategory of \emph{$\tau$-periodic} connective synthetic $A$-modules; that is, those $X$ for which the map $\tau \colon \Sigma X[-1] \rightarrow X$ is a $1$-connective cover. By \cite[Proposition 2.22]{pstrkagowski2021abstract}, taking connective covers and $\tau$-localization induces an equivalence 
\[
\Syn_{\pcat}(\Mod_{A})_{\geq 0}^{per} \cong \Syn_{\pcat}(\Mod_{A})^{\tau = 1}.
\]
In particular, $\Syn_{\pcat}(\Mod_{A})_{\geq 0}^{per}$ is stable. Since $\nu$ factors through an exact functor into periodic synthetic $A$-modules, we can prove both $(1)$ and $(2)$ at once by showing that $\nu$ induces an equivalence 
\[
\Mod^{\pcat}_{A} \cong \Syn_{\pcat}(\Mod_{A})_{\geq 0}^{per}.
\]
By construction, the map is fully faithful on the subcategory of finite projectives. As both sides are compactly generated by finite projectives, the result follows. 
\end{proof}

\begin{example}
If $R \in \pcat$, then $\Mod_{A}^{\pcat} = \Mod_{A}$. In practice, this is the most common situation, in which case in the context of \cref{theorem:generic_fiber_of_synthetic_spectra} the assumption of $\pcat$-generation is superfluous. 
\end{example}

\begin{example}
Let $E$ be a Lubin-Tate spectrum, and let $\pcat$ be the class of \emph{molecular} $E$-modules of Hopkins and Lurie \cite[Definition 3.6.11]{lurie_hopkins_brauer_group}. In this case, the $K(n)$-completion functor identifies $\Mod_{E}^{\pcat}$ with the subcategory of $K(n)$-local $E$-modules. Moreover, we can identify connective $\pcat$-based synthetic $E$-modules of \cref{definition:hsf_based_synthetic_spectra} with synthetic $E$-modules of \cite[\S 4]{lurie_hopkins_brauer_group}. This example is the origin of the word ``synthetic'' being used to describe objects of deformation categories. 
\end{example}

One main advantage of working with synthetic $A$-modules as in \cref{definition:hsf_based_synthetic_spectra} is the ability to introduce symmetric monoidal structures related to the derived tensor product, as we now explain. For simplicity, we will work with $\mathbb{E}_{\infty}$-rings, but the results have analogs for less structured multiplications. 

\begin{construction}[Tensor product of synthetic $A$-modules]
\label{construction:tensor_product_of_synthetic_spectra}
Let $A \rightarrow R$ be a map of $\mathbb{E}_{\infty}$-rings and assume that $\pcat$ is a multiplicative class of simples\footnote{More generally, one can work with $\mathbb{E}_{n}$-rings, in which case \cref{construction:tensor_product_of_synthetic_spectra} will yield a $\mathbb{E}_{n-1}$-monoidal structure on $\Syn$}. We will construct a canonical presentable symmetric monoidal structure on $\Syn_{\pcat}(\Mod_{A})$.  

Observe that since $\pcat$ is multiplicative and for any $P_{1}, P_{2} \in (\Mod_{A})_{\pcat}^{\mathrm{fp}}$ we have 
\[
R \otimes_{A} (P_{1} \otimes_{A} P_{2}) \cong (R \otimes_{A} P_{1}) \otimes_{R} (R \otimes_{A} P_{2}),
\]
$(\Mod_{A})_{\pcat}^{\mathrm{fp}} \subseteq \Mod_{A}$ is a symmetric monoidal subcategory. We claim that the symmetric monoidal structure on $(\Mod_{A})_{\pcat}^{\mathrm{fp}}$ induces through left Kan extension a symmetric monoidal structure on $\Syn$ compatible with colimits. Since $\Syn$ is defined as the category of additive sheaves, to check this, we have to verify that for any $P \in (\Mod_{A})_{\pcat}^{\mathrm{fp}}$, the functor 
\[
P \otimes_{A} - \colon (\Mod_{A})_{\pcat}^{\mathrm{fp}} \rightarrow (\Mod_{A})_{\pcat}^{\mathrm{fp}}
\]
has the following two properties: 
\begin{enumerate}
    \item it is additive,
    \item it preserves $\Psf$-epimorphism. 
\end{enumerate}
The first part is clear and the second one follows from \cref{lemma:p_epis_of_simple_modules_are_split}, since if $P_{1} \rightarrow P_{2}$ is a $\Psf$-epimorphism in $(\Mod_{A})_{\pcat}^{\mathrm{fp}}$, then $R \otimes_{A} P_{1} \rightarrow R \otimes_{A} P_{2}$ admits a section and so will stay a $\Psf$-epimorphism after tensoring with an arbitrary $R$-module. 
\end{construction}

\begin{proposition}
\label{proposition:properties_of_the_symmetric_monoidal_structure_on_syn}
Let $A \rightarrow R$ be a map of $\mathbb{E}_{\infty}$-rings and let $\pcat$ be multiplicative a class of simples on $R$. Then the symmetric monoidal structure on $\Syn_{\pcat}(\Mod_{A})$ of \cref{construction:tensor_product_of_synthetic_spectra} has the following properties: 
\begin{enumerate}
\item the functor $\nu \colon \Mod_{A} \rightarrow \Syn$ is lax symmetric monoidal, and strongly symmetric monoidal when restricted to $A$-modules which are are filtered colimits of objects in $(\Mod_{A})_{\pcat}^{\mathrm{fp}}$, 
\item the equivalence $\Mod_{A}^{\pcat} \cong \Syn^{\tau=1}$ induced by $\tau$-localization is strongly symmetric monoidal, 
\item the inclusion $\Mod_{C\tau}(\Syn) \hookrightarrow \IndPerf_{\quiversteenrod}$ is strongly symmetric monoidal. 
\end{enumerate}
\end{proposition}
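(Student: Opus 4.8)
The plan is to prove each part by reducing to the universal property of the symmetric monoidal structure on $\Syn_{\pcat}(\Mod_A)$ constructed in \cref{construction:tensor_product_of_synthetic_spectra} via Day convolution/left Kan extension from $(\Mod_A)^{\mathrm{fp}}_{\pcat}$, combined with the identifications of the special and generic fibers from \cref{thm:syn props} and \cref{theorem:generic_fiber_of_synthetic_spectra}. For part (1), I would first observe that $\nu$ is, essentially by construction, the composite of the Yoneda-type embedding $\Mod_A \to \mathrm{Sh}((\Mod_A)^{\mathrm{fp}}_{\pcat};\spectra)$ (restriction of the connective mapping-spectrum presheaf, then sheafify). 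The restriction-of-presheaves functor to $\mathrm{PSh}((\Mod_A)^{\mathrm{fp}}_{\pcat})$ is always lax symmetric monoidal once the target has Day convolution; it is strongly symmetric monoidal precisely on those objects $M$ for which the canonical map $\map_{\Mod_A}(P_1,M)\otimes \map_{\Mod_A}(P_2,M) \to \map_{\Mod_A}(P_1\otimes_A P_2, M\otimes_A M)$ becomes an equivalence after restricting — which happens when $M$ is itself a filtered colimit of finite $\pcat$-projectives (so that Yoneda for compact objects turns the tensor product into the Day convolution formula). Sheafification is symmetric monoidal, and the connective truncation $\tau_{\geq 0}$ is lax symmetric monoidal and strong on connective objects; chasing these through gives (1).

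For part (2), I would use \cref{theorem:generic_fiber_of_synthetic_spectra}: the equivalence $\Mod_A^{\pcat}\cong \Syn^{\tau=1}$ is induced by $\tau^{-1}(-)\circ\nu$. Since $\tau$-inversion is a smashing localization (it kills the $\otimes$-ideal generated by $\mathrm{fib}(\tau\colon \Sigma\monunit[-1]\to\monunit)$, or more precisely is localization at the idempotent $\tau$-periodic unit), $\Syn^{\tau=1}$ inherits a symmetric monoidal structure for which the localization functor is strong symmetric monoidal. On the generating finite $\pcat$-projectives $\nu$ is already strong monoidal by (1), and both sides are generated under colimits by these; since a colimit-preserving functor between presentable symmetric monoidal categories that is strong monoidal on a set of $\otimes$-generators is strong monoidal, we conclude. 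Part (3) is analogous but at the special fiber: $C\tau$ is an idempotent-like monad (cofiber of $\tau$), $\Mod_{C\tau}(\Syn)$ is again a localization/colocalization carrying an induced symmetric monoidal structure, and the fully faithful embedding $\Mod_{C\tau}(\Syn)\hookrightarrow \IndPerf_{\quiversteenrod}$ of \cref{thm:syn props}(3) is strong monoidal because on the heart it recovers the monoidal structure on $\Comod_{\quiversteenrod}$ from \cref{proposition:quivers_have_a_monoidal_structure}, and both sides are generated under colimits (and shifts) by objects of the heart of the form $\Psf(P)$ with $P$ finite $\pcat$-projective, on which the monoidal structures visibly agree via $R\otimes_A(P_1\otimes_A P_2)\cong (R\otimes_A P_1)\otimes_R(R\otimes_A P_2)$.

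The step I expect to be the main obstacle is part (1) — specifically, pinning down \emph{exactly} which $A$-modules $M$ make $\nu$ strong monoidal and verifying it carefully. The claim is that it suffices for $M$ to be a filtered colimit of objects in $(\Mod_A)^{\mathrm{fp}}_{\pcat}$, and the subtlety is that the tensor product $\otimes_A$ does not commute with the connective truncation $\tau_{\geq 0}$ in general, nor does the mapping-spectrum presheaf send $\otimes_A$ to Day convolution for arbitrary targets. One has to argue that when $M=\colim M_\alpha$ with $M_\alpha$ finite $\pcat$-projective, the presheaf $P\mapsto \tau_{\geq 0}\map_{\Mod_A}(P,M)$ is the filtered colimit of the corepresentable-up-to-duality presheaves attached to the $M_\alpha$ (using compactness of $P$ and connectivity, after possibly shifting), and that Day convolution commutes with this filtered colimit; then strong monoidality reduces to the evident isomorphism $R\otimes_A(M_\alpha\otimes_A M_\beta)\cong (R\otimes_A M_\alpha)\otimes_R (R\otimes_A M_\beta)$ at the level of finite $\pcat$-projectives. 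The remaining parts (2) and (3) are then fairly formal localization/colocalization arguments, modulo checking that $\tau$-inversion and $C\tau$-modules carry symmetric monoidal structures compatibly — which is standard for smashing localizations and for the cofiber-of-$\tau$ construction, exactly as in \cite[\S 4.5--4.6]{pstrkagowski2018synthetic}.
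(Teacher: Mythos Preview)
Your proposal is correct and follows essentially the same approach as the paper, which simply cites \cite[Lemma 4.4, Lemma 4.24, Theorem 4.37, Theorem 4.46]{pstrkagowski2018synthetic}; you are sketching out the content of those arguments adapted to the relative and quiver setting. One small wording issue in your part (1): the relevant lax monoidal comparison is $\nu(M_1)\otimes\nu(M_2)\to\nu(M_1\otimes_A M_2)$ for a \emph{pair} of modules, not a single $M$, but your subsequent reduction to filtered colimits of finite $\pcat$-projectives and the identity $R\otimes_A(M_\alpha\otimes_A M_\beta)\cong (R\otimes_A M_\alpha)\otimes_R(R\otimes_A M_\beta)$ is exactly the right mechanism.
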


\begin{proof}
This is the same as \cite[Lemma 4.4, Lemma 4.24, Theorem 4.37, Theorem 4.46]{pstrkagowski2018synthetic}.
\end{proof}

\begin{remark}
The symmetric monoidal structure on the $\IndPerf$-category appearing in \cref{proposition:properties_of_the_symmetric_monoidal_structure_on_syn} is the one induced by the tensor product of \S \ref{subsection:monoidal_structure_on_quivers}. Note that this is the derived tensor product; that is, the unique one which coincides with the ordinary tensor product on comodules whose underlying quiver representation is projective and which preserves colimits in each variable. 
\end{remark}

\subsection{Comparison of deformations}
\label{subsection:comparison_of_deformations}

The construction of synthetic spectra given in \cite{pstrkagowski2018synthetic}, which requires the homology theory to be Adams-type, was generalized in the work of Patchkoria and the second author to cover the case of an arbitrary adapted homology theory \cite{patchkoria2021adams}. Restricting to the case of homology theories on spectra for simplicity, this means that to a ring spectrum $R$ together with a class of simples $\pcat$ and an associated quiver-based homology theory $\Psf \colon \spectra \rightarrow \Comod_{\quiversteenrod}$ we can associate a variety of closely related deformations, in particular: 
\begin{enumerate}
    \item $\dcat_{R}^{\omega}(\Sp)$ of \cite[\S 5.2]{patchkoria2021adams}, the perfect deformation of $\Sp$ along the epimorphism class of $R \otimes -$-split epis, which categorifies the descent spectral sequence based on $R$, 
    \item $\dcat_{\Psf}^{\omega}(\Sp)$ \cite[\S 5.2]{patchkoria2021adams}, the perfect deformation of $\Sp$ along $\Psf$-epis, which categorifies the spectral sequence associated to $\Psf$-injective resolutions,
    \item $\widecheck{\dcat}_{\Psf}(\Sp)$ of \cite[\S 6.4]{patchkoria2021adams}, the (unseparated) Grothendieck deformation of $\Sp$ along $\Psf$-epis, which categorifies the spectral sequence associated to $\Psf$-injective resolutions and is a Grothendieck prestable category in the sense of \cite[Appendix C]{lurie_spectral_algebraic_geometry}, 
    \item $\Syn_{\pcat}$, the category of $\pcat$-synthetic spectra of \cref{definition:hsf_based_synthetic_spectra} attached to $\Ss \rightarrow R$, which is a renormalization of $\widecheck{\dcat}_{\Psf}(\Sp)$.
\end{enumerate}

\begin{remark}
Note that the second and third deformations depend only on the homology theory $\Psf$ (equivalently, on the associated epimorphism class on spectra), but the category of synthetic spectra depends on the class of simples $\pcat$. In \cref{cor:def-equals-syn}, we give a condition for $(3)$ and $(4)$ to coincide, in which case $\Syn_{\pcat}$ is also an invariant of $\Psf$. 
\end{remark}

Our goal in this section is to describe the relationships between these different deformation categories. It will be convenient to use the language of \S\ref{subsection:local_and_colocal_objects}; in particular, we recall from \cref{definition:local_and_colocal_objects_for_a_functor} that given a functor $F \colon \ccat \to \dcat$ we say an object $X$ is \emph{$F$-colocal} if  
    \[ \Map_{\ccat}(X, Y) \to \Map_{\dcat}(F(X), F(Y)) \]
is an equivalence for every $Y \in \ccat$.

\begin{construction}
\label{cnstr:comparison-functors}
We construct comparison functors between the above deformations:
\begin{enumerate}
\item If a map of $R$-modules is split epi, then it is in particular $\pcat$-epi, thus by \cite[Thm 5.35]{patchkoria2021adams} we have a comparison functor 
  \[ \dcat_{R}^{\omega}(\Sp) \to \dcat_{\Psf}^{\omega}(\Sp) \]
  which is compatible with $\nu$.
\item The universal property of $\dcat_{\Psf}^{\omega}(\Sp)$ also allows us to construct a comparison functor
  \[ \dcat_{\Psf}^{\omega}(\Sp) \to \widecheck{\dcat}_{\Psf}(\Sp) \]
  which is compatible with $\nu$.
\item From \Cref{thm:syn props} and the definition of $\widecheck{\dcat}_{\Psf}(\Sp)$ we know that
  \[ \widecheck{\dcat}_{\Psf}(\Sp)^{\heartsuit} \cong \mathrm{Comod}_{\mathcal{Q}} \cong \Syn_{\pcat}^\heartsuit. \]
  In particular we can apply the universal property of \cite[Thm 6.40]{patchkoria2021adams}
  to the identity functor on $\widecheck{\dcat}_{\Psf}(\Sp)^{\heartsuit}$ together with the enhancement given by $\nu : \Sp \to (\Syn_{\pcat})_{\geq 0}$ to produce a $t$-exact, colimit preserving functor 
  \[ \widecheck{\dcat}_{\Psf}(\Sp) \to \Syn_{\pcat} \]
  compatible with $\nu$.
\end{enumerate}
\end{construction}
In summary, we have comparison functors which fit into the following commutative diagram
\[ 
\begin{tikzcd}
& \Sp \ar[dl, "\nu_R"'] \ar[d, "\nu_{\pcat}"'] \ar[dr, "\nu_{\pcat}"'] \ar[drr, "\nu_{\pcat}"] & &  \\ 
\dcat_{R}^{\omega}(\Sp) \ar[r] &
\dcat_{\Psf}^{\omega}(\Sp) \ar[r] & 
\widecheck{\dcat}_{\Psf}(\Sp) \ar[r] & 
\Syn_{\pcat}.
\end{tikzcd} 
\]
We will not name these comparison functors. Since they only go left to right, this should not lead to any confusion. We now investigate how close these functors are to being equivalences; this is a categorified version of study of how the different spectral sequences interact. 

\begin{lemma}
\label{lemma:P_colocal_from_amitsur_to_quiver}
    If $X$ is $\pcat$-finite projective, then $\nu_R(X)$ is colocal for
    the comparison functor
    \[ \dcat_{R}^{\omega}(\Sp) \to \dcat_{\Psf}^{\omega}(\Sp). \]
\end{lemma}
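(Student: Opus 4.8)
The plan is to unwind the definition of colocality for the comparison functor $\dcat_{R}^{\omega}(\Sp) \to \dcat_{\Psf}^{\omega}(\Sp)$ and reduce the claim to a statement about split epimorphisms of $R$-modules, which is exactly the content of \cref{lemma:p_epis_of_simple_modules_are_split} (and its packaging in \cref{lemma:conditions_for_epis_of_pcat_projectives}). Recall that both deformations are perfect deformations of $\Sp$ built from an epimorphism class: $\dcat_{R}^{\omega}(\Sp)$ uses the class of $R\otimes-$-split epimorphisms, while $\dcat_{\Psf}^{\omega}(\Sp)$ uses the larger class of $\Psf$-epimorphisms. The comparison functor exists precisely because the first class is contained in the second. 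An object $Y \in \dcat_{R}^{\omega}(\Sp)$ is \emph{colocal} for this functor if the unit map $Y \to F^{*}F_{!}Y$ (equivalently, if the functor detects equivalences out of $Y$, or the appropriate dual formulation in \cite{patchkoria2021adams}) is an equivalence; concretely, this amounts to checking that the two deformations ``see the same thing'' on $Y$, which for an object of the form $\nu_R(X)$ with $X$ compact reduces to a statement about the cosimplicial/injective resolutions computing the respective $\Ext$-groups.

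First I would reduce to the level of hearts and finite projectives. By the universal property of perfect deformations, $\dcat_{R}^{\omega}(\Sp)$ is compactly generated by the objects $\nu_R(X)$ with $X$ finite (finite spectra), and the subcategory of $\pcat$-finite projectives among these is distinguished by the fact that $R\otimes X$ is simple. The key observation is that for such $X$, by \cref{lemma:p_epis_of_simple_modules_are_split}, a map $\nu_R(X') \to \nu_R(X)$ of $\pcat$-finite projectives is a $\Psf$-epimorphism if and only if $R\otimes X' \to R\otimes X$ is a \emph{split} epimorphism of $R$-modules — that is, the two covering topologies agree on the relevant site. Therefore the Čech/cobar resolutions used to compute mapping spectra out of $\nu_R(X)$ in the two deformations are cofinal in each other, so the comparison functor induces an equivalence on mapping spectra out of $\nu_R(X)$, which is precisely the colocality condition.

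More precisely, the key steps in order: (1) unwind the definition of colocal from \cite{patchkoria2021adams}, phrasing it as: $\nu_R(X)$ is colocal iff for every $Z$ the map $\Map_{\dcat_{R}^{\omega}}(\nu_R(X), Z) \to \Map_{\dcat_{\Psf}^{\omega}}(F\nu_R(X), FZ)$ is an equivalence, or the dual statement about the right adjoint; (2) reduce to $Z$ ranging over a generating set, i.e. objects $\nu_R(W)$ for $W$ finite, using that both sides are cocontinuous; (3) resolve: express the mapping spectrum via the appropriate bar resolution associated to the epimorphism class, using that $R\otimes X$ is simple so the resolution lives among $\pcat$-finite projectives; (4) invoke \cref{lemma:conditions_for_epis_of_pcat_projectives} to identify $R\otimes-$-split epimorphisms with $\Psf$-epimorphisms on this subcategory, hence identify the two resolutions; (5) conclude the comparison map of mapping spectra is an equivalence. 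The main obstacle I anticipate is step (1)–(3): correctly matching the abstract notion of colocality (and whether it is phrased via the left or right adjoint to the comparison functor, and relative to which grading/$t$-structure) with a concrete statement about resolutions, and making sure that the simplicity of $R\otimes X$ really does confine the whole resolution to $\pcat$-finite projectives rather than merely its zeroth stage — this is where the hypothesis that $X$ is $\pcat$-finite projective is essential and must be used with care.
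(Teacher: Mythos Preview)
Your intuition is sound—the simplicity of $R\otimes X$ is precisely what makes the two Adams towers agree when mapping out of $X$—but the mechanism you invoke in steps (3)–(4) does not work as written, and the paper's proof takes a different path.

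The gap: you argue that since the $R\otimes-$-split and $\Psf$-epi topologies agree on the site of $\pcat$-finite projectives (true, by \cref{lemma:conditions_for_epis_of_pcat_projectives}), the resolutions computing maps out of $\nu_R(X)$ can be identified. But the relevant resolutions are injective (Amitsur) resolutions of the \emph{target} $W$, whose terms $R^{\otimes s+1}\otimes W$ are not finite spectra at all, let alone $\pcat$-finite projectives. So the agreement of topologies on that small site gives you nothing here. You correctly flag this obstacle in your final paragraph but do not resolve it. A hands-on salvage exists: the free–forgetful adjunction gives
\[
[X,\, R^{\otimes s+1}\otimes W]_{\Sp}\;\cong\;[R\otimes X,\, R^{\otimes s+1}\otimes W]_{\Mod_R},
\]
and since $R\otimes X$ is simple, the Yoneda lemma in $\Rep(\pcat)$ identifies this with $(\quiversteenrod^{s}\Psf(W))(R\otimes X)$, which is exactly the cobar complex for $\Ext_{\quiversteenrod}(\Psf(X),\Psf(W))$. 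Note this uses Yoneda for simples, not \cref{lemma:conditions_for_epis_of_pcat_projectives}.

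The paper instead uses the appendix machinery. It applies \cref{prop:colocal-to-abelian} to reduce colocality of $\nu_R(X)$ to colocality of $(i_*y)(X)$ at the level of bounded derived categories of the hearts; then uses the right adjointable square coming from the adjunction $i^*\colon\Sp\rightleftarrows\Mod_R$ together with \cref{lem:square-colocal} (and comonadicity of $(i^*,i_*)$ on hearts) to reduce to showing $y(R\otimes X)$ is colocal for $\dcat^b(A(\Mod_R))\to\dcat^b(\Rep(\pcat))$. Finally \cref{exm:H projective} handles this last step, since $R\otimes X$ simple means $\Psf(R\otimes X)$ is projective and corepresents $\pi_0\Map_{\Mod_R}(R\otimes X,-)$. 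This is morally the same Yoneda step as the salvage above, but reached through the $(\infty,2)$-categorical functoriality of deformations rather than an explicit cobar comparison.
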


\begin{proof}
Let $i \colon \Ss \to R$ denote the unit map, $y \colon \Mod_{R} \rightarrow A(\Mod_{R})$ the universal homology theory (cf. \cite[Section 2.3]{patchkoria2021adams}) and $\Psf \colon \Mod_{R} \rightarrow \Rep(\pcat)$ the homology theory valued in quiver representations. 

We will use results on functoriality of deformations from the appendix \S\ref{subsection:functoriality_of_deformations} as well as the corresponding notation. In particular, if $\Hsf \colon \ccat \rightarrow \acat$ is an adapted homology theory, which we can identify with a class of epimorphisms, we will write $\acat(\ccat; \Hsf) \colonequals \acat$ for its target. Note that in this notation we have $\acat(\Mod_{R}, y) \cong A(\Mod_{R})$, $\acat(\Mod_{R}, \Psf) \cong \Rep(\pcat)$ and $\acat(\spectra, i_{*} \Psf) \cong \Comod_{\quiversteenrod}$. 

We have a right adjointable square of adapted homology theories of the form 
  \[ \begin{tikzcd}
    (\Sp, i_*y) \ar[r, "i^*"] \ar[d, "i_*q"] & (\Mod(R), y) \ar[d, "q"] \\
    (\Sp, i_*\pcat) \ar[r, "i^*"] & (\Mod(R), \pcat). 
  \end{tikzcd} \]
The comparison functor we wish to study can now be described as the result of applying $\dcat^{\omega}(-)$ to the left vertical arrow $i_*q$.

We start by using \Cref{prop:colocal-to-abelian} to reduce to showing that $(i_*y)(X)$ is $i_*q$-colocal. Next, after applying $\dcat^b(\acat(-))$ to the right adjointable square above we may use \Cref{lem:square-colocal} to reduce to showing that $y(R \otimes X)$ is $q$-colocal. Here we have used the comonadicity of the $(i^*, i_*)$ adjunction between $\acat(\Sp, i_*y)$ and $\acat(\Mod(R), y)$ to ensure that $\dcat^b(\acat(\Sp, i_*y))$ is cogenerated by the image of $i_*$. 

Finally, \Cref{exm:H projective} lets us check that $y(R \otimes X)$ is $q$-colocal by observing that since $X$ was assumed $\pcat$-finite projective $\pcat(R \otimes X)$ is a projective object of $\acat(\Mod(R), \pcat)$ and that there are equivalences
\[ 
\Hom_{\acat(\Mod(R), \pcat)}(\pcat(R \otimes X), \pcat(Y)) \cong \pcat(Y)(R \otimes X) \cong \pi_{0} \Map_{\Mod_{R}}(R \otimes X, Y).
\] 
\end{proof}

\begin{remark}
The comparison functor
\[
\dcat_{R}^{\omega}(\Sp) \to \dcat_{\Psf}^{\omega}(\Sp) 
\]
produced in \Cref{cnstr:comparison-functors} is exact, therefore the colocality statement of \cref{lemma:P_colocal_from_amitsur_to_quiver} implies that there are equivalences
\[ 
\Map_{\dcat_R^{\omega}(\Sp)}(Y,Z) \to \Map_{\dcat_{\Psf}^{\omega}(\Sp)}(Y,Z) 
\]
whenever $Y$ is in the full subcategory of $\dcat_R^{\omega}(\Sp)$ generated under finite colimits and desuspensions by objects of the form $\nu_R X$ with $X \in \Sp_{\pcat}^{\mathrm{fp}}$.
\end{remark}

\begin{remark}
Reformulating \cref{lemma:P_colocal_from_amitsur_to_quiver} in terms of spectral sequences we learn that the natural comparison map 
\[ 
{}^RE_r^{s,t}(X,Y) \to {}^{\pcat}E_r^{s,t}(X,Y)
\]
from the $R$-based Adams spectral sequence computing calculating $\pi_{*}\Map(X, Y)$  to the $\pcat$-based Adams spectral sequence is an equivalence whenever $X$ is $\pcat$-finite projective.
\end{remark}

\begin{lemma}[{\cite[Prop. 6.51]{patchkoria2021adams}}] \label{lem:easy-comparison}
The comparison functor 
\[ \dcat_{\Psf}^{\omega}(\Sp) \to \widecheck{\dcat}_{\Psf}(\Sp)\]
is fully faithful.
\end{lemma}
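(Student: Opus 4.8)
The plan is to check fully faithfulness on a generating set and then propagate. Recall from \Cref{cnstr:comparison-functors} that the comparison functor $F\colon \dcat_{\Psf}^{\omega}(\Sp)\to\widecheck{\dcat}_{\Psf}(\Sp)$ is exact and compatible with $\nu$, with the local grading, and with the deformation parameter $\tau$. By construction of the perfect deformation, $\dcat_{\Psf}^{\omega}(\Sp)$ is generated as an idempotent-complete stable category by the synthetic analogues $\nu_{\pcat}(P)$ of finite $\pcat$-projective spectra $P$ together with their local-grading twists. Since local-grading twists only shift mapping spectra and $F$ commutes with them, and since the full subcategory of pairs $(X,Y)$ for which $\Map(X,Y)\to\Map(FX,FY)$ is an equivalence is closed under finite colimits, retracts and shifts in each variable (because $F$ preserves these), it suffices to prove that
\[
\theta_{P,Q}\colon \Map_{\dcat_{\Psf}^{\omega}(\Sp)}(\nu_{\pcat}(P),\nu_{\pcat}(Q)) \longrightarrow \Map_{\widecheck{\dcat}_{\Psf}(\Sp)}(\nu_{\pcat}(P),\nu_{\pcat}(Q))
\]
is an equivalence for all finite $\pcat$-projective $P,Q$.

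To handle the generators I would exploit that both mapping spectra carry a $\tau$-self-map coming from the deformation structure, that $F$ is $\tau$-equivariant, and hence that $\mathrm{cofib}(\theta_{P,Q})$ is a spectrum equipped with a self-map $\tau$. I would then compute $\theta_{P,Q}$ after the two degenerations. Modulo $\tau$, both deformations degenerate to (possibly renormalized) derived categories of $\quiversteenrod$-comodules, $F$ sends $\nu_{\pcat}(P)/\tau$ to $\Psf(P)$ compatibly, and both mapping spectra compute the derived mapping object $\mathrm{RHom}_{\quiversteenrod}(\Psf(P),\Psf(Q))$; this collapses to the discrete group $\Hom_{\quiversteenrod}(\Psf(P),\Psf(Q))$ since $\Psf(P)$ is compact projective in $\Rep(\pcat)$ by \Cref{proposition:cat_of_quiveres_groth_abelian_and_psf_a_homology_theory}, so $\theta_{P,Q}/\tau$ is an equivalence. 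Inverting $\tau$, both deformations recover (perfect) spectra and both mapping spectra recover $\map_{\Sp}(P,Q)$ compatibly, so $\theta_{P,Q}[\tau^{-1}]$ is an equivalence. Now $\mathrm{cofib}(\theta_{P,Q})/\tau\simeq 0$ forces $\tau$ to act invertibly on $\mathrm{cofib}(\theta_{P,Q})$, hence $\mathrm{cofib}(\theta_{P,Q})\simeq\mathrm{cofib}(\theta_{P,Q})[\tau^{-1}]\simeq 0$, so $\theta_{P,Q}$ is an equivalence.

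The hard part is not the formal bookkeeping above but the two fiber computations: one needs to know precisely how $\dcat_{\Psf}^{\omega}(\Sp)$ and $\widecheck{\dcat}_{\Psf}(\Sp)$ behave under reduction mod $\tau$ and under $\tau$-inversion, and that $F$ is compatible with both identifications; these are exactly the structural facts about perfect and Grothendieck deformations established in \cite{patchkoria2021adams}, and I would ultimately cite \cite[Prop.~6.51]{patchkoria2021adams} rather than reprove them. One subtlety worth flagging is that $\widecheck{\dcat}_{\Psf}(\Sp)$ is the \emph{unseparated} Grothendieck deformation and so is not $\tau$-complete; this is why the argument must be run through the conservative pair $\{C\tau,\,\tau^{-1}\}$ acting on mapping spectra rather than through a naive reduction modulo $\tau$.
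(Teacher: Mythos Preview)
The paper gives no proof of this lemma; it is stated with a direct citation to \cite[Prop.~6.51]{patchkoria2021adams} and nothing more. So there is no ``paper's approach'' to compare against beyond that citation, and your final remark that you would ultimately cite that result is exactly what the paper does.

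That said, your sketched argument contains a genuine error. You claim that $\dcat_{\Psf}^{\omega}(\Sp)$ is generated as an idempotent-complete stable category by $\nu_{\pcat}(P)$ for finite $\pcat$-projective spectra $P$. This is false: the perfect deformation $\dcat^{\omega}_{\Hsf}(\ccat)$ is constructed as a localization of the perfect prestable Freyd envelope $A^{\omega}_{\infty}(\ccat)$ (see \cref{remark:right_exact_variant_of_the_universal_property_of_def}) and is generated under finite colimits by $\nu(c)$ for \emph{all} $c\in\ccat$, not just the $\Hsf$-projectives. You have confused $\dcat^{\omega}_{\Psf}(\Sp)$ with $\Syn_{\pcat}(\Sp)$, which \emph{is} generated by projectives by construction. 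Indeed, the entire point of \cref{cor:def-equals-syn} and \cref{lem:syn-def-equiv} later in the paper is that nontrivial hypotheses are needed to identify the deformation categories built from the homology theory with the ones built from projectives.

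Your overall strategy (compare mapping spectra on generators via the conservative pair $\{C\tau,\ \tau^{-1}\}$) is sound and is essentially how \cref{prop:colocal-to-abelian} is proved in the appendix; but to make it work here you must take $\nu(X)$ for arbitrary $X\in\Sp$ as your generators, and then the mod-$\tau$ step identifies both sides with the same $\Ext$-groups in $\Comod_{\quiversteenrod}$ without any projectivity assumption. The remark about $\Psf(P)$ being projective is therefore both unnecessary and (for the correct generating set) unavailable.
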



\begin{lemma} \label{lem:fp-colocal}
The comparison functor 
\[ 
\widecheck{\dcat}_{\Psf}(\Sp) \to \Syn_{\pcat} 
\]
between the unseparated deformation and synthetic spectra is an equivalence after hypercompletion. Moreover, $\nu_{\pcat}(X)$ is colocal for any $X$ which $\pcat$-finite projective. 
\end{lemma}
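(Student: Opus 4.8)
```latex
\begin{proof}[Proof sketch]
The plan is to treat the two assertions separately, with the colocality statement doing the heavy lifting and the hypercompletion equivalence following from it together with the comparison of special and generic fibers.

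First I would establish the colocality of $\nu_{\pcat}(X)$ for $X$ finite $\pcat$-projective. The strategy mirrors the proof of \cref{lemma:P_colocal_from_amitsur_to_quiver}: the comparison functor $\widecheck{\dcat}_{\Psf}(\Sp) \to \Syn_{\pcat}$ is the one produced in \cref{cnstr:comparison-functors}(3) via the universal property of \cite[Thm 6.40]{patchkoria2021adams}, so I would use the functoriality package of \S\ref{subsection:functoriality_of_deformations} to recognize it as $\dcat^{b}(-)$ (or its renormalization) applied to a right adjointable square of adapted homology theories, one side of which relates $(\spectra, i_{*}\Psf)$ to itself with differing renormalizations. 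By \cref{prop:colocal-to-abelian} it suffices to check colocality at the abelian level, and there \cref{exm:H projective} applies directly: since $X$ is finite $\pcat$-projective, $\Psf(X) = \Psf(R \otimes_{A} X)$ is a projective object of $\Comod_{\quiversteenrod}$ by \cref{corollary:in_the_adams_type_case_enough_finite_projectives} (it is $\Hsf(P)$ for $P$ a compact module with $\pcat$-representation projective), and the relevant $\Hom$-groups are computed by the Yoneda lemma \cref{lemma:yoneda_lemma_for_quivers}. The point is that both $\widecheck{\dcat}_{\Psf}(\Sp)$ and $\Syn_{\pcat}$ have hearts canonically identified with $\Comod_{\quiversteenrod}$ (by the definition of the unseparated deformation and by \cref{thm:syn props}(1)), so the abelian colocality condition is the same on both sides and is witnessed by projectivity of $\Psf(X)$.

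For the hypercompletion statement, I would argue that the comparison functor is $t$-exact and colimit preserving (both by construction in \cref{cnstr:comparison-functors}(3)), induces an equivalence on hearts (again \cref{thm:syn props}(1) matched against the defining property of $\widecheck{\dcat}_{\Psf}(\Sp)$), and sends the generating finite projectives $\nu_{\pcat}(P)$ to compact generators $\nu_{\pcat}(P)$ on the synthetic side. Since $\Syn_{\pcat}$ is by \cref{definition:hsf_based_synthetic_spectra} a category of \emph{sheaves} while $\widecheck{\dcat}_{\Psf}(\Sp)$ is the unseparated Grothendieck deformation, the comparison functor is precisely the localization that imposes hyperdescent; thus after hypercompletion on the source it becomes an equivalence, exactly as in the identification of $\Syn_{\pcat}$ with the renormalization of $\widecheck{\dcat}_{\Psf}(\Sp)$. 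Concretely: a $t$-exact colimit preserving functor between Grothendieck prestable categories that is an equivalence on hearts and matches a set of compact generators is an equivalence onto the hypercomplete part of the target, and here the target is already hypercomplete (sheaves of spectra in the sense of \cref{definition:hsf_based_synthetic_spectra}), so one only needs to hypercomplete the source.

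The main obstacle I anticipate is bookkeeping around renormalization and hypercompletion: $\widecheck{\dcat}_{\Psf}(\Sp)$ is defined as an unseparated, non-hypercomplete Grothendieck deformation, whereas $\Syn_{\pcat}$ as defined here via additive \emph{sheaves} of spectra is implicitly the hypercomplete (or at least separated-and-renormalized) object, and one must be careful that the ``renormalization'' producing $\Syn_{\pcat}$ from $\widecheck{\dcat}_{\Psf}(\Sp)$ in \cref{cnstr:comparison-functors}(4) does not interfere with the colocality computation. Concretely the subtlety is that colocality of $\nu_{\pcat}(X)$ must be checked against \emph{all} objects of $\Syn_{\pcat}$, not just bounded ones, so I would want to reduce to the bounded derived categories via the functoriality results and then appeal to \cref{prop:colocal-to-abelian} and \cref{lem:square-colocal} as in the proof of \cref{lemma:P_colocal_from_amitsur_to_quiver}, checking carefully that the comonadicity hypothesis needed to cogenerate the relevant derived category from the image of $i_{*}$ is in force here as well.
\end{proof}
```
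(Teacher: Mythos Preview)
Your approach to colocality has a genuine gap. You invoke \cref{prop:colocal-to-abelian}, but that proposition is specifically about a comparison functor $\dcat^{\omega}_{\Hsf_1}(\ccat) \to \dcat^{\omega}_{\Hsf_2}(\ccat)$ between \emph{perfect} deformations arising from a map $\Hsf_1 \to \Hsf_2$ of adapted homology theories. The comparison $\widecheck{\dcat}_{\Psf}(\Sp) \to \Syn_{\pcat}$ is not of this form: both sides categorify the \emph{same} homology theory $\Psf$, and neither is a perfect deformation. The proof of \cref{prop:colocal-to-abelian} relies on boundedness (every object in $\dcat^{\omega}$ is eventually $\tau$-local in each direction), which fails in the Grothendieck/synthetic setting. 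Worse, since the induced functor on hearts is the identity, your abelian colocality criterion would be trivially satisfied for \emph{every} $X$, not just finite projectives; that would force the comparison to be fully faithful and hence an equivalence before hypercompletion, contradicting the statement you are trying to prove. Relatedly, your claim that $\Syn_{\pcat}$ ``is already hypercomplete'' is incorrect: it is a category of additive sheaves (not hypersheaves), and \cref{thm:syn props}(3) explicitly says the special-fiber embedding becomes an equivalence only \emph{after} hypercompletion.

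The paper's argument runs in the opposite order. It first obtains the hypercompletion equivalence by citing \cite[Proposition 6.57]{patchkoria2021adams}. It then proves colocality of $\nu_{\pcat}(X)$ by a direct argument: compactness of $\nu_{\pcat}(X)$ on both sides (this is exactly where the finite-projective hypothesis enters, via the Yoneda description of compact generators of $\Syn_{\pcat}$) reduces the test objects to $\nu_{\pcat}(Y)$; then one splits into the $\tau$-local part, where both sides give $\Sp$, and the mod-$\tau$ part, where $C\tau(\nu_{\pcat}(Y))$ lies in the heart and is therefore hypercomplete, so the first part applies. The key idea you are missing is that the hypercompletion equivalence is the input to the colocality argument, not a consequence of it.
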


\begin{proof}
The first part follows from the argument given in \cite[Proposition 6.57]{patchkoria2021adams}, so we move tot he second part. Since $X$ is finite, $\nu_\pcat(X)$ is a compact object in $\widecheck{\dcat}_{\Psf}(\Sp)$. Since it is $\pcat$-finite projective, its image in $\nu_\pcat(X)$ in $\Syn_\pcat$ is also compact. This allows us to reduce the proof of the lemma to the case of mapping to $\nu_\pcat(Y)$.

Next we observe that in order to show that the comparison of mapping spectra
\[ 
  \map_{\widecheck{\dcat}_{\Psf}(\Sp)}(\nu_\pcat(X), \nu_\pcat(Y)) \to \map_{\Syn_\pcat}(\nu_\pcat(X), \nu_\pcat(Y)) 
\]
is an equivalence it suffices to show that we have equivalences 
\[ 
\map_{\widecheck{\dcat}_{\Psf}(\Sp)}(\nu_\pcat(X), \nu_\pcat(Y)^{\tau=1}) \cong \map_{\Syn_\pcat}(\nu_\pcat(X), \nu_\pcat(Y)^{\tau=1}) 
\]
and
\[ 
\map_{\widecheck{\dcat}_{\Psf}(\Sp)}(\nu_\pcat(X), C\tau(\nu_\pcat(Y))) \cong \map_{\Syn_\pcat}(\nu_\pcat(X), C\tau(\nu_\pcat(Y))).
\]
 The former follows from the equivalences. $\widecheck{\dcat}_\pcat(\Sp)^{\tau=1} \cong \Sp \cong \Syn_\pcat^{\tau=1}$ For the latter we use the equivalences $C\tau(\nu_\pcat(Y)) \cong \pi_0^\heartsuit(Y) \cong \pcat(Y)$ to recognize that $C\tau(\nu_\pcat(Y))$ is hypercomplete, hence in this case the result follows from the first part. 
\end{proof}

\begin{corollary} \label{cor:def-equals-syn}
If every finite spectrum has a finite, $\Psf$-exact resolution by $\pcat$-finite projective spectra, then the comparison map
\[ 
\widecheck{\dcat}_{\Psf}(\Sp) \to \Syn_{\pcat}(\Sp) 
\]
is an equivalence.
\end{corollary}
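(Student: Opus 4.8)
The plan is to deduce \Cref{cor:def-equals-syn} from \Cref{lem:fp-colocal} together with the chain of comparison functors assembled in \Cref{cnstr:comparison-functors}. By \Cref{lem:fp-colocal} the comparison functor $\widecheck{\dcat}_{\Psf}(\Sp) \to \Syn_{\pcat}(\Sp)$ is always an equivalence after hypercompletion, so it suffices to show that under the stated hypothesis the source $\widecheck{\dcat}_{\Psf}(\Sp)$ is already hypercomplete; equivalently, that the comparison functor is itself an equivalence. The hypothesis---that every finite spectrum admits a finite $\Psf$-exact resolution by $\pcat$-finite projective spectra---is precisely what is needed to control the $\nu_{\pcat}(X)$ for $X$ a general finite spectrum, not merely a $\pcat$-finite projective one.

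First I would record that $\nu_{\pcat}$ of a $\pcat$-finite projective spectrum is colocal for the comparison functor, which is the second assertion of \Cref{lem:fp-colocal}. Next I would argue that colocality is inherited along finite colimits and (de)suspensions: the colocal objects form a thick subcategory, since the comparison functor is exact and preserves these. The point of the resolution hypothesis is then that for an arbitrary finite spectrum $X$, the $\Psf$-exact resolution $P_{\bullet} \to X$ by $\pcat$-finite projectives is finite, hence $\nu_{\pcat}(X)$ is built from the $\nu_{\pcat}(P_{i})$ by a finite iterated (co)fiber sequence --- here one uses that $\nu_{\pcat}$ of a $\Psf$-exact sequence of $\pcat$-finite projectives is a (co)fiber sequence in $\widecheck{\dcat}_{\Psf}(\Sp)$, which follows from the construction of the deformation and the fact that $\Psf$-epis split on $\pcat$-finite projectives (\cref{lemma:conditions_for_epis_of_pcat_projectives}). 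Therefore $\nu_{\pcat}(X)$ is colocal for every finite spectrum $X$. Since finite spectra generate $\Sp$ under colimits, the $\nu_{\pcat}(X)$ for $X$ finite generate $\widecheck{\dcat}_{\Psf}(\Sp)$ under colimits, and a colimit-preserving functor which is an equivalence on a generating set of compact colocal objects is an equivalence.

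More precisely, I would phrase the last step as follows: colocality of a set of compact generators for the comparison functor $F \colon \widecheck{\dcat}_{\Psf}(\Sp) \to \Syn_{\pcat}(\Sp)$ means that $F$ is fully faithful on the thick subcategory they generate, and since both sides are compactly generated and $F$ preserves colimits and compact objects, $F$ is fully faithful; it is essentially surjective because its image is a colimit-closed subcategory containing a set of generators of the target (the $\nu_{\pcat}(X)$, $X$ finite, also generate $\Syn_{\pcat}(\Sp)$). Hence $F$ is an equivalence.

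The main obstacle I anticipate is the bookkeeping in the second paragraph: one must verify carefully that a finite $\Psf$-exact resolution of $X$ by $\pcat$-finite projectives is taken by $\nu_{\pcat}$ to a finite filtered object whose associated graded recovers $\nu_{\pcat}(X)$ --- in other words that $\nu_{\pcat}$ is suitably exact on the relevant (co)fiber sequences. This is where the hypothesis that the resolution is \emph{finite} is essential (an infinite resolution would only give a convergence statement, i.e. hypercompleteness, which is exactly the weaker conclusion of \Cref{lem:fp-colocal}), and where one should be attentive to the distinction between $\nu_{\pcat}$ applied to a $\Psf$-short exact sequence and the cofiber in the stable category. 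Once that compatibility is in hand, the thick-subcategory and generation arguments are formal.
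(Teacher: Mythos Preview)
Your proposal is correct and takes essentially the same approach as the paper. The paper phrases the key step slightly differently---rather than showing that every $\nu_{\pcat}(X)$ with $X$ finite is colocal, it observes that the resolution hypothesis implies the $\nu_{\pcat}(P)$ with $P$ a $\pcat$-finite projective already generate $\widecheck{\dcat}_{\Psf}(\Sp)$ under colimits---but this is just the other side of the same thick-subcategory argument, and the remaining steps (colocality from \Cref{lem:fp-colocal}, full faithfulness, essential surjectivity via generation of $\Syn_{\pcat}$) are identical. Your concern about whether $\nu_{\pcat}$ takes $\Psf$-exact sequences to cofiber sequences is well-placed but not an obstacle: this is exactly the content of \cite[Proposition 5.34]{patchkoria2021adams}, invoked in the paper's \Cref{prop:universal-property}.
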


\begin{proof}
Observe that $\widecheck{\dcat}_{\Psf}(\Sp)$ is generated as a stable, presentable category by the collection of objects $\nu_{\pcat}(X)$ with $X \in \Sp^{\omega}$. Since $\nu_{\pcat}$ sends finite ${\pcat}$-exact resolutions to resolutions, therefore the given condition implies that $\widecheck{\dcat}_{\Psf}(\Sp)$ is generated under colimits by the collection of objects $\nu_{\pcat}(X)$ with $X$ a $\pcat$-finite projective. From \Cref{lem:fp-colocal} we know that each such $\nu_{\pcat}(X)$ is colocal for the comparison map. In particular, the comparison functor is fully faithful. To finish the proof we observe that by construction $\Syn_{\pcat}$ is generated under colimits by images of $\nu_{\pcat}(X)$.
\end{proof}

\section{Integral homology} 
\label{sec:integral}
In this section, we will specialize the formalism of quiver homology theories to the case of integral homology. The two main results are 

\begin{enumerate}
    \item \cref{theorem:hbbz_and_hsf_ass_are_iso_when_hsfx_projective} which identifies the second page of the $\bbZ$-Adams spectral sequence in terms of homological algebra of representations of a very explicit quiver, which we call Bockstein couples, 
    \item \cref{thm:pullback-Z}, which proves that a certain natural square involving synthetic spectra based on $\bbZ$- and $\fieldp$-homology is a pullback square of $\infty$-categories. 
\end{enumerate}
Both of these are somewhat abstract statements, and the reader is invited to first look at \S\ref{subsection:comparison_of_adams_sseqs}, where we derive concrete calculational consequences, in particular a complete description of the $\bbZ$-Adams spectral sequence for the sphere.

\def\op{\mathrm{op}}

\begin{notation}
We work $p$-locally and we write $\bbZ \colonequals \bbZ_{(p)}$ for the $p$-local integers. 
We will not distinguish in notation between $\bbZ$ the abelian group and the Eilenberg-MacLane spectrum $\bbZ \in \spectra^{\heartsuit}$. 
\end{notation}

\subsection{Bockstein couples}
\label{subsection:couples}

It is well-known that $\pi_{*}(\bbZ \otimes \bbZ)$ contains torsion in positive degrees, and so is not flat as a graded abelian group \cite{cartan1955seminaire}. Thus, the standard methods of identifying the Adams $E_{2}$-term fail. By analyzing the Bockstein spectral sequence, one can show that all of this torsion is simple $p$-torsion \cite{kochman1982integral}, so that we have an isomorphism 
\[
\pi_{*}(\bbZ \otimes \bbZ) \cong \bbZ \oplus W
\]
where $W$ is a graded $\fieldp$-vector space. Since a $\bbZ$-module is determined up to equivalence by its homotopy groups, this isomorphism of graded abelian groups can be lifted to an equivalence
\[
\bbZ \otimes \bbZ \cong \bbZ \oplus V
\]
of $\bbZ$-modules, where $V \in \Mod_{\fieldp}(\spectra)$. 

\begin{definition}
\label{definition:simple_hz_module}
We say that a $\bbZ$-module in spectra is \emph{simple} if it is equivalent to a finite sum of (de)suspensions of $\bbZ$ and $\fieldp$. We denote the full subcategory of simple $\bbZ$-modules by 
\[
\pcat \subseteq \Mod_{\bbZ}(\spectra).
\]
\end{definition}

\begin{definition}
\label{definition:bockstein_couple}
A \emph{Bockstein couple} is a representation of $\pcat$; that is, an additive functor
\[
\X \colon \pcat^{op} \rightarrow \abeliangroups.
\]
We denote the category of Bockstein couples by $\Rep(\pcat)$. 
\end{definition}

Note that since $\bbZ$ is a commutative algebra, the category of $\bbZ$-mdoules has a canonical symmetric monoidal structure. As $\bbZ$ is the monoidal unit and
$ \mathbb{F}_{p} \otimes _{\bbZ} \mathbb{F}_{p} \cong \mathbb{F}_{p} \oplus \Sigma \mathbb{F}_{p},$
which is again simple, $\pcat$ is multiplicative class of simples in the sense of \cref{definition:multiplicative_class_of_simples}. Moreover, since $ \map_{\bbZ}(\fieldp; \bbZ) \cong \Sigma^{-1} \fieldp$,
$\pcat$ is also closed under taking linear duals, which gives the following: 

\begin{corollary}
\label{corollary:simple_modules_are_dualizable}
Taking $\bbZ$-linear duals gives a canonical symmetric monoidal equivalence
$\pcat \cong \pcat^{op}$.
\end{corollary}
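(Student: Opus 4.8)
The plan is to realize the claimed equivalence as a restriction of the $\bbZ$-linear duality functor $\mathbb{D}^{\bbZ}(-) \colonequals \map_{\bbZ}(-, \bbZ)$, and to invoke the general fact that duality is a symmetric monoidal self-equivalence on the subcategory of dualizable objects.

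First I would record that every $M \in \pcat$ is a perfect $\bbZ$-module, hence dualizable: $\bbZ$ is the monoidal unit and $\fieldp$ is the cofiber of $\bbZ \xrightarrow{p} \bbZ$, so both are finite cell $\bbZ$-modules, and $\pcat$ is built from them by finite sums, (de)suspensions and retracts. In particular $\mathbb{D}^{\bbZ}(M)$ computes the monoidal dual $M^{\vee}$ for $M \in \pcat$, and the biduality map $M \to M^{\vee\vee}$ is an equivalence.

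Next I would check that $\pcat$ is closed under $\mathbb{D}^{\bbZ}$ inside $\Mod_{\bbZ}$. Since $\mathbb{D}^{\bbZ}$ is exact, additive, and carries $\Sigma^{n} M$ to $\Sigma^{-n}\mathbb{D}^{\bbZ}(M)$, it is enough to evaluate it on the two generating simples: $\mathbb{D}^{\bbZ}(\bbZ) \simeq \bbZ$ since $\bbZ$ is the unit, and $\mathbb{D}^{\bbZ}(\fieldp) \simeq \Sigma^{-1}\fieldp$ by dualizing the cofiber sequence above (as already noted before the statement). Hence the dual of any finite sum of (de)suspensions of $\bbZ$ and $\fieldp$ is again of this form, so $\mathbb{D}^{\bbZ}$ restricts to a functor $\pcat^{op} \to \pcat$, which is an equivalence by biduality.

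It remains to upgrade this to a symmetric monoidal equivalence. On the full subcategory $\Mod_{\bbZ}^{\mathrm{dbl}} \subseteq \Mod_{\bbZ}$ of dualizable objects, the duality functor underlies a symmetric monoidal functor $(\Mod_{\bbZ}^{\mathrm{dbl}})^{op} \to \Mod_{\bbZ}^{\mathrm{dbl}}$, the structure maps being the canonical equivalences $\monunit^{\vee} \simeq \monunit$ and $(X \otimes Y)^{\vee} \simeq X^{\vee} \otimes Y^{\vee}$ (the symmetry of the tensor product being used to match the two sides), and it is an equivalence via biduality. The inclusion $\pcat \hookrightarrow \Mod_{\bbZ}^{\mathrm{dbl}}$ is symmetric monoidal because $\pcat$ is a multiplicative class of simples, so restricting along it and using the closure established above yields the desired symmetric monoidal equivalence $\pcat^{op} \simeq \pcat$. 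The only genuinely delicate point is this last step --- packaging duality as a coherently symmetric monoidal functor rather than merely a functor that happens to send tensor products to tensor products --- but this is a standard fact about dualizable objects in a symmetric monoidal $\infty$-category (see \cite{higher_algebra}), and no input specific to $\bbZ$ enters beyond the two values of $\mathbb{D}^{\bbZ}$ already computed.
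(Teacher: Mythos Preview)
Your proposal is correct and follows the same approach as the paper, which simply records (in the sentence preceding the corollary) that $\pcat$ is multiplicative and closed under linear duals via the computation $\map_{\bbZ}(\fieldp,\bbZ)\simeq\Sigma^{-1}\fieldp$, leaving the corollary as an immediate consequence. You have just spelled out in detail what the paper leaves implicit: dualizability of objects of $\pcat$, closure under duals checked on generators, and the standard fact that duality is a symmetric monoidal self-equivalence on dualizable objects.
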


\begin{convention}
As a consequence of \cref{corollary:simple_modules_are_dualizable},  Bockstein couples of \cref{definition:bockstein_couple} can be canonically identified with additive \emph{covariant} functors
\[
\X \colon \pcat \rightarrow \Ab.
\]
As it is somewhat easier to visualize, in what follows we will think of Bockstein couples as covariant rather than contravariant functors. 
\end{convention}

The category of Bockstein couples can be described explicitly, as we now explain. As $\pcat$ is generated under direct sums by (de)suspensions of $\bbZ$ and $\mathbb{F}_{p}$, the category of Bockstein couples is equivalent to the category of $\abeliangroups_{*}$-valued covariant functors out of the $\abeliangroups_{*}$-enriched category 
\[
\begin{tikzcd}
	{} & \bullet &&& \bullet & {}
	\arrow["{\pi_*\Map_{\Z}^{\Sp}(\bbZ, \mathbb{F}_{p})}", bend left, from=1-2, to=1-5]
	\arrow["{\pi_*\Map_{\Z}^{\Sp}(\mathbb{F}_{p}; \bbZ)}", bend left, from=1-5, to=1-2]
	\arrow["{\pi_*\Map_{\Z}^{\Sp}(\bbZ,\bbZ)}", loop left, looseness=35, from=1-2, to=1-2]
	\arrow["{\pi_*\Map_{\Z}^{\Sp}(\mathbb{F}_{p}, \mathbb{F}_{p})}", loop right, looseness=35, from=1-5, to=1-5]
\end{tikzcd}
\]
The homotopy groups of these mapping spectra are easy to identify using the cofiber sequence
\[\begin{tikzcd}
	{\bbZ} & {\bbZ} & {\fieldp} & {\Sigma \bbZ}
	\arrow["p", from=1-1, to=1-2]
	\arrow["\pi", from=1-2, to=1-3]
	\arrow["\delta", from=1-3, to=1-4]
\end{tikzcd}\]
where $\delta$ is the connecting homomorphism. 
\begin{enumerate}
    \item $\pi_*\Map_{\Z}^{\Sp}(\mathbb{F}_{p}; \bbZ) \cong \F_p[-1]$ with generator $\delta$.
    \item $\pi_*\Map_{\Z}^{\Sp}(\bbZ, \mathbb{F}_{p}) \cong \F_p[0]$ with generator $\pi$.
    \item $\pi_*\Map_{\Z}^{\Sp}(\mathbb{F}_{p}, \mathbb{F}_{p}) \cong \F_p[0] \oplus \F_p[-1]$ is an $\fieldp$-exterior algebra with generator $\beta \colonequals \pi \circ \delta$.
 \end{enumerate}
 This analysis yields the following result: 
 
\begin{proposition}
\label{proposition:explicit_description_of_augmented_bockstein_modules}
A Bockstein couple $\X : \pcat \rightarrow \Ab$ determines and is uniquely determined by the following data:
\begin{enumerate}
    \item a graded abelian group $A_{*} \colonequals \X(\Sigma^{-*} \bbZ)$,
    \item a graded $\mathbb{F}_{p}$-vector space $V_{*} \coloneqq \X( \Sigma^{-*} \mathbb{F}_{p})$, 
    \item an additive morphism $\pi : A_{*} \rightarrow V_{*}$ of degree zero and 
    \item an additive morphism $\delta : V_{*} \rightarrow A_{*}$ of degree minus one. 
\end{enumerate}
This data is subject to the following condition: 
\begin{enumerate}
    \item $\delta \circ \pi = 0$ as an endomorphism of $A_{*}$.
\end{enumerate}
\end{proposition}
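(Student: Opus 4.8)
The plan is to unwind the abstract definition of a $\pcat$-representation using the explicit generators-and-relations picture already set up in the excerpt. Since $\pcat$ is generated under finite direct sums and (de)suspensions by the two simples $\bbZ$ and $\fieldp$, additivity of $\X$ means $\X$ is determined by its restriction to the full $\abeliangroups_{*}$-enriched subcategory on these two objects, together with the action of morphisms. Concretely, by the discussion preceding \cref{proposition:explicit_description_of_augmented_bockstein_modules} (in particular the covariant reformulation via \cref{corollary:simple_modules_are_dualizable}, which lets us treat $\X$ as a covariant additive functor), an additive covariant functor out of this enriched category is the same as a graded module over the $\abeliangroups_{*}$-enriched path category. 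Thus the data of $\X$ is exactly: the graded abelian group $A_{*} = \X(\Sigma^{-*}\bbZ)$, the graded $\fieldp$-vector space $V_{*} = \X(\Sigma^{-*}\fieldp)$ (the $\fieldp$-linearity is automatic since $\pi_{0}\map_{\bbZ}(\fieldp,\fieldp)$ surjects onto $\fieldp$), and the action maps indexed by the generators of the relevant $\pi_{*}$-mapping groups.

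The key computational input is the identification of the four mapping spectra's homotopy groups via the cofiber sequence $\bbZ \xrightarrow{p} \bbZ \xrightarrow{\pi} \fieldp \xrightarrow{\delta} \Sigma\bbZ$, already recorded in the excerpt: $\pi_{*}\map_{\bbZ}(\bbZ,\bbZ) \cong \fieldp[0]$ hmm — actually $\pi_{*}\map_{\bbZ}(\bbZ,\bbZ)\cong\bbZ$ concentrated in degree $0$ (the identity), which acts as the identity and so contributes no extra data; $\pi_{*}\map_{\bbZ}(\bbZ,\fieldp)\cong\fieldp$ in degree $0$ with generator $\pi$, giving a degree-zero map $\pi\colon A_{*}\to V_{*}$; $\pi_{*}\map_{\bbZ}(\fieldp,\bbZ)\cong\fieldp$ in degree $-1$ with generator $\delta$, giving a degree $-1$ map $\delta\colon V_{*}\to A_{*}$; and $\pi_{*}\map_{\bbZ}(\fieldp,\fieldp)$ is the exterior algebra on $\beta=\pi\circ\delta$ in degree $-1$, which by functoriality forces no new generator — the self-map $\beta$ of $\fieldp$ acts as $\pi\circ\delta$ on $V_{*}$, so it is determined. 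The only relation among the generating morphisms in the path category (beyond unitality and the already-noted factorizations) is $\delta\circ\pi = 0$ as a self-map of $\bbZ$ in $\Mod_{\bbZ}$, which holds because $\delta\circ\pi$ is the composite $\bbZ\xrightarrow{\pi}\fieldp\xrightarrow{\delta}\Sigma\bbZ$ shifted, equivalently the two-fold composite in the cofiber sequence, which is null; hence $\X$ must send it to zero, giving $\delta\circ\pi = 0$ on $A_{*}$. Conversely, any such quadruple $(A_{*},V_{*},\pi,\delta)$ with $\delta\pi=0$ manifestly assembles into an additive functor, since these are precisely the relations defining the path category.

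The one point requiring a little care — and the main potential obstacle — is verifying that the $\abeliangroups_{*}$-enriched path category on $\{\bbZ,\fieldp\}$ really is freely generated by $\pi$, $\delta$ (and the ring structure on the $\fieldp$-endomorphisms) subject only to $\delta\pi = 0$: i.e., that there are no further relations, e.g.\ no unexpected identity $\pi\circ\delta = 0$ or relation in $\pi_{*}\map_{\bbZ}(\fieldp,\fieldp)$. This is exactly the content of the three bullet-point computations already in the excerpt: $\pi\circ\delta = \beta \neq 0$ since $\pi_{*}\map_{\bbZ}(\fieldp,\fieldp)$ is a rank-two exterior algebra, and all composites are accounted for by the listed generators. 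So the proof will consist of (i) invoking additivity to reduce $\X$ to its values on the two simples and the action of morphisms, (ii) quoting the four $\pi_{*}$-computations to name the generating morphisms and their degrees, (iii) extracting the relation $\delta\pi = 0$ from nullity of the two-fold cofiber composite and checking the $\beta$-action is redundant, and (iv) observing the converse assignment is manifestly functorial. I expect steps (i) and (iv) to be formal, step (ii) to be a citation of what precedes the proposition, and step (iii) — pinning down that $\delta\pi=0$ is the \emph{only} relation — to be the crux, though it too is essentially immediate from the exterior-algebra computation already displayed.
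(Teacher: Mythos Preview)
Your proposal is correct and follows exactly the approach the paper takes: the paper does not give a separate proof but simply writes ``This analysis yields the following result'' after computing the four mapping spectra and identifying $\pi$, $\delta$, and $\beta = \pi \circ \delta$, treating the proposition as an immediate unwinding of that enriched-category description. Your write-up is in fact more detailed than what appears in the paper, but the logic (additivity reduces to the two simples, the three bullet computations give the generators and the sole relation $\delta\pi = 0$) is identical.
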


\begin{convention}
Keeping \cref{proposition:explicit_description_of_augmented_bockstein_modules} in mind, we will alternate between thinking of a Bockstein couple as a covariant functor $\X$ or as the associated quadruple $(A, V, \pi, \delta)$, which we will often write as
\[
\X = (A \rightleftarrows V).
\]
\end{convention}

\begin{definition}
If $\X = (A \rightleftarrows V)$ is a Bockstein couple, we call the self-map of $V$ defined by 
$ \beta \coloneqq \pi \circ \delta $
the \emph{Bockstein}. 
\end{definition}

\begin{remark}
Since $\delta \circ \pi = 0$, we have $\beta^{2} = 0$, and so $V$ is a \emph{Bockstein module}; that is, a graded vector space with a differential . This motivates the terminology of \cref{definition:bockstein_couple}: the graded abelian group $A$ can be thought of as giving a lift of the pair $(V, \beta)$ to an exact couple with differential the Bockstein---except that we do not require exactness. Hence the term "Bockstein couple".
\end{remark}

A prototypical example of a Bockstein couple is one associated to a $\bbZ$-module.
\begin{example}
Let $M$ be an $\bbZ$-module in spectra. Then, we have an associated Bockstein couple given by 
\[
\Psf(M)(S) \colonequals \pi_{*}(S \otimes_{\bbZ} M).
\]
Through \cref{proposition:explicit_description_of_augmented_bockstein_modules} we can identify this with the diagram 
\[
\begin{tikzcd}
	{\pi_{*}M} & { \pi_{*} (\F_p \otimes_{\Z} M)}
	\arrow["\pi", bend left, from=1-1, to=1-2]
	\arrow["\delta", bend left, from=1-2, to=1-1]
\end{tikzcd}
\]
\end{example}

Bockstein couples are by definition representations of the quiver $\pcat$, and $\Psf(M)$ can be identified with the associated quiver of \cref{example:associated_quiver}. Thus, the following is a summary of \cref{proposition:cat_of_quiveres_groth_abelian_and_psf_a_homology_theory} and \cref{proposition:quivers_have_a_monoidal_structure} in the present context: 

\begin{proposition}
The category $\Rep(\pcat)$ of Bockstein couples has the following properties: 
\begin{enumerate}
    \item It is a Grothendieck abelian category. 
    \item It has a canonical local grading for which 
\[
\Psf \colonequals \Mod_{\bbZ}(\spectra) \rightarrow \Rep(\pcat)
\]
    is a homology theory, explicitly given by $(\X[1])(S) \colonequals \X(\Sigma S)$. 
    \item The objects $\Psf(\bbZ), \Psf(\fieldp)$ and their shifts under the local grading are compact projective generators. 
    \item It has a has a canonical symmetric monoidal structure for which $\Psf$ is lax symmetric monoidal, and symmetric monoidal when restricted to simples. 
\end{enumerate}
\end{proposition}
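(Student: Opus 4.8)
The plan is to observe that every assertion is a direct specialization of the general theory of \S\ref{section:quiver_homology_theories} to the class of simples $\pcat \subseteq \Mod_{\bbZ}(\spectra)$ of \cref{definition:simple_hz_module}, so that the work consists entirely in checking that the hypotheses of the relevant general statements are met and in translating them through the covariant convention we have adopted. First I would record that $\bbZ = \bbZ_{(p)}$ is an $\E_{\infty}$-algebra, hence in particular $\E_{1}$, and that $\pcat$ is a class of simples in the sense of \cref{definition:class_of_simple_modules}: it is closed under finite direct sums and (de)suspensions by its very definition, and it is closed under retracts because the homotopy category of $\bbZ$-modules is idempotent-complete and the indecomposable generators $\bbZ$ and $\fieldp$ have local endomorphism rings ($\mathrm{End}(\bbZ) = \bbZ_{(p)}$ and, from the computation preceding \cref{proposition:explicit_description_of_augmented_bockstein_modules}, $\pi_{0}\map_{\bbZ}(\fieldp,\fieldp) = \fieldp$), so that Krull--Schmidt applies.

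With this in hand, parts (1), (2) and (3) are exactly \cref{proposition:cat_of_quiveres_groth_abelian_and_psf_a_homology_theory}: that result gives that $\Rep(\pcat)$ is Grothendieck abelian, that $\Psf$ is a homology theory with $\Psf(\Sigma M) \simeq \Psf(M)[1]$, and that the objects $\Psf(S)$ for $S \in \pcat$ are compact projective generators (compact projectivity being the Yoneda lemma, \cref{lemma:yoneda_lemma_for_quivers}). Since every $S \in \pcat$ is a finite direct sum of shifts of $\bbZ$ and $\fieldp$ and representations are additive, $\Psf(S)$ is a finite direct sum of shifts of $\Psf(\bbZ)$ and $\Psf(\fieldp)$; hence $\Psf(\bbZ)$, $\Psf(\fieldp)$ and their shifts under the local grading already generate, giving the sharper form of (3). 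For the explicit formula in (2) one passes through the symmetric monoidal self-duality $\pcat \simeq \pcat^{\op}$ of \cref{corollary:simple_modules_are_dualizable}: the local grading is $(\X[1])(S) = \X(\Sigma^{-1}S)$ in the contravariant model of \cref{definition:pcat_quiver}, and since $\bbZ$-linear duality carries $\Sigma^{-1}$ to $\Sigma$, in the covariant model this becomes $(\X[1])(S) = \X(\Sigma S)$.

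For part (4) I would invoke \cref{proposition:quivers_have_a_monoidal_structure} together with \cref{remark:monoidality_of_quivers_depending_on_en_structure}(c). The hypotheses needed are that $\bbZ$ is $\E_{4}$ --- true since it is $\E_{\infty}$ --- and that $\pcat$ is a multiplicative class of simples in the sense of \cref{definition:multiplicative_class_of_simples}; this holds because $\bbZ \in \pcat$ is the monoidal unit and $\pcat$ is closed under $\otimes_{\bbZ}$, the only nontrivial case being $\fieldp \otimes_{\bbZ}\fieldp \simeq \fieldp \oplus \Sigma\fieldp$, which is again simple. Then \cref{proposition:quivers_have_a_monoidal_structure} furnishes the canonical (symmetric, since $n=\infty$) monoidal structure on $\Rep(\pcat)$ commuting with colimits separately in each variable, for which $\Psf$ is lax symmetric monoidal and symmetric monoidal when restricted to simples.

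There is no substantive obstacle here: the only points not immediate from the cited results are the bookkeeping around the contravariant-versus-covariant description of $\Rep(\pcat)$ --- in particular making sure the sign of the shift in the local grading formula comes out correctly under the duality $\pcat \simeq \pcat^{\op}$ --- together with the small Krull--Schmidt argument ensuring $\pcat$ is closed under retracts. Both are routine, so the proof will amount to little more than citing \cref{proposition:cat_of_quiveres_groth_abelian_and_psf_a_homology_theory} and \cref{proposition:quivers_have_a_monoidal_structure}.
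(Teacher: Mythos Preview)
Your proposal is correct and takes essentially the same approach as the paper, which simply states that the proposition is a summary of \cref{proposition:cat_of_quiveres_groth_abelian_and_psf_a_homology_theory} and \cref{proposition:quivers_have_a_monoidal_structure} in the present context. You supply more detail than the paper does---in particular the Krull--Schmidt check for retract closure and the contravariant-to-covariant translation of the local grading---but this is elaboration of the same argument rather than a different route.
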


As we have an explicit presentation of Bockstein couples given in \cref{proposition:explicit_description_of_augmented_bockstein_modules} as certain quiver representations, we can make both the compact generators and the symmetric monoidal structure quite explicit. 

\begin{remark}[Explicit presentation the compact projective generators]
\label{remark:explicit_presentation_of_cp_generators_of_wvectbeta}
The category $\Rep(\pcat)$ is compactly generated 
by the projective objects $\Psf(\bbZ)$ and $\Psf(\F_p)$ (and their shifts).  
The Bockstein couple $\Psf(\Z)$ can be identified with the diagram
\[
\Bigg(
\begin{tikzcd}
	{\pi_{*} \bbZ} & { \pi_{*} \mathbb{F}_{p}}
	\arrow["\pi", bend left, from=1-1, to=1-2]
	\arrow["\delta", bend left, from=1-2, to=1-1]
\end{tikzcd}
\Bigg)\cong \Bigg(
\begin{tikzcd}
	{\bbZ} & {\mathbb{F}_{p}}
	\arrow[two heads, bend left, from=1-1, to=1-2]
	\arrow["0", bend left, from=1-2, to=1-1]
\end{tikzcd} \Bigg),
\]
where $\pi$ is the natural quotient map. 
By the Yoneda lemma, it is the free Bockstein couple on an integral variable of degree zero in the sense that 
\[
\Hom_{\Rep(\pcat)}(\Psf(\bbZ), A \rightleftarrows V) \cong A_{0}.
\]
for any other Bockstein couple $A \rightleftarrows V$. The other generator, $\Psf(\mathbb{F}_{p})$, can be identified with the diagram 
\[
\Bigg(
\begin{tikzcd}
	{\pi_{*} \F_p} & { \pi_{*} (\F_p \otimes_{\Z} \mathbb{F}_{p})}
	\arrow["\pi", bend left, from=1-1, to=1-2]
	\arrow["\delta", bend left, from=1-2, to=1-1]
\end{tikzcd}
\Bigg)\cong \Bigg(
\begin{tikzcd}
	{\mathbb{F}_{p}} & {\mathbb{F}_{p} \langle \beta \rangle}
	\arrow[hook, bend left, from=1-1, to=1-2]
	\arrow[hook, bend left, from=1-2, to=1-1]
\end{tikzcd} \Bigg),
\]
where $\pi$ is injective and $\delta$ is surjective. This $\pcat$-rep is freely generated in the degree zero $\fieldp$-variable in the sense that we have a canonical isomorphism
\[
\Hom_{\Rep(\pcat)}(\Psf(\fieldp), A \rightleftarrows V) \cong V_{0}.
\]
\end{remark}

\begin{warning}
Observe that all Bockstein couples of the form $\Psf(M)$, where $M$ is a $\bbZ$-module, have the special property that the sequence
\[
\begin{tikzcd}
	\ldots & {\Psf(M)(\bbZ)} & {\Psf(M)(\bbZ)} & {\Psf(M)(\mathbb{F}_{p})} & {\Psf(M)(\Sigma \bbZ)} & \ldots
	\arrow["p", from=1-2, to=1-3]
	\arrow["\pi", from=1-3, to=1-4]
	\arrow[from=1-1, to=1-2]
	\arrow["\delta", from=1-4, to=1-5]
	\arrow[from=1-5, to=1-6]
\end{tikzcd}
\]
is a long exact sequence of abelian groups. This is \emph{not} true for all Bockstein couples, as the property is not stable under taking quotients.  

Note that since this sequence is exact for the compact projective generators of \cref{remark:explicit_presentation_of_cp_generators_of_wvectbeta}, any Bockstein couple is a quotient of one with this property. 
\end{warning}

\begin{lemma}
\label{lemma:symmetric_monoidality_of_functors_from_quivers_to_vect_and_ab}
The symmetric monoidal structure on Bockstein couples has the following properties: 
\begin{enumerate}
    \item The functor $\Rep(\pcat) \rightarrow \Vect_{*}(\fieldp)$ given by
    \[
    (A \rightleftarrows V) \mapsto V
    \]
    is canonically symmetric monoidal. 
    \item The functor $\Rep(\pcat) \rightarrow \Ab_{*}$ given by 
    \[
    (A \rightleftarrows V) \mapsto A
    \]
    is canonically lax symmetric monoidal. 
\end{enumerate}
\end{lemma}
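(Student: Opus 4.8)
The plan is to deduce both statements from the universal property of Day convolution used to construct the monoidal structure in \cref{proposition:quivers_have_a_monoidal_structure}, together with the explicit computation of the tensor unit and the values of $\Psf$ on simples. Recall from \cref{corollary:simple_modules_are_dualizable} that $\pcat \cong \pcat^{\op}$ symmetric monoidally, so we work with the covariant description, under which $\Psf\colon \Mod_{\Z}(\spectra) \to \Rep(\pcat)$ is lax symmetric monoidal and strongly symmetric monoidal on simples, and every object of $\Rep(\pcat)$ is a colimit of the compact projective generators $\Psf(\Z)$, $\Psf(\F_p)$ and their shifts.

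For part $(1)$, I would first identify the functor $(A \rightleftarrows V) \mapsto V$ with the representable-corepresented functor $\Hom_{\Rep(\pcat)}(\Psf(\F_p), -)$, using \cref{remark:explicit_presentation_of_cp_generators_of_wvectbeta}; more precisely, it is $\bigoplus_n \Hom_{\Rep(\pcat)}(\Psf(\Sigma^{-n}\F_p), -)$ assembled into a graded $\F_p$-vector space. Since $\F_p$ is an idempotent $\Z$-algebra in the sense that $\F_p \otimes_{\Z} \F_p \cong \F_p \oplus \Sigma\F_p$ is again simple, the object $\Psf(\F_p)$ is an idempotent-like coalgebra object, but the cleanest route is to observe that $V_* = \pi_*(\F_p \otimes_{\Z} M)$ when $\X = \Psf(M)$, and $M \mapsto \F_p \otimes_{\Z} M$ together with $\pi_*\colon \Mod_{\F_p}(\spectra) \to \Vect_*(\F_p)$ are both symmetric monoidal (the first because $\F_p$ is $\E_\infty$ over $\Z$, the second because $\pi_*$ of a field spectrum is monoidal via the Künneth isomorphism, there being no higher Tor). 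Thus the composite $\Rep(\pcat) \to \Vect_*(\F_p)$ is symmetric monoidal on the subcategory of representations of the form $\Psf(M)$; since both sides are cocontinuous in $\X$ and such $\Psf(M)$ include all compact projective generators, the monoidal structure extends uniquely by Day convolution's universal property, giving a symmetric monoidal structure on the whole of $\Rep(\pcat) \to \Vect_*(\F_p)$.

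For part $(2)$, the functor $(A \rightleftarrows V) \mapsto A$ is $\bigoplus_n \Hom_{\Rep(\pcat)}(\Psf(\Sigma^{-n}\Z), -)$, and since $\Psf(\Z)$ is the monoidal unit of $\Rep(\pcat)$ (as $\Z$ is the unit of $\Mod_{\Z}$ and $\Psf$ is monoidal on simples), this is exactly the functor corepresented by the unit, hence canonically lax symmetric monoidal with structure maps $\X(\Z) \otimes \Y(\Z) \to (\X \otimes \Y)(\Z \otimes \Z) \to (\X \otimes \Y)(\Z)$ coming from the lax structure on any corepresentable functor out of a symmetric monoidal category. Alternatively, and perhaps more transparently, $A_* = \pi_* M$ when $\X = \Psf(M)$ and $\pi_*\colon \Mod_{\Z}(\spectra) \to \Ab_*$ is lax symmetric monoidal (the Künneth spectral sequence supplies the lax map $\pi_* M \otimes_{\Z} \pi_* N \to \pi_*(M \otimes_{\Z} N)$, which is not an isomorphism since $\pi_*(\Z \otimes_{\Z} \Z)$ need not be flat—indeed it is not, which is the whole point of the paper), and this lax monoidal transformation on the generators extends by the universal property to all of $\Rep(\pcat)$.

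The main obstacle is bookkeeping: one must check that the extension-by-Day-convolution argument genuinely applies, i.e.\ that the symmetric monoidal (resp.\ lax symmetric monoidal) structure defined on the generators $\Psf(M)$ is compatible with the Day convolution structure and hence extends uniquely, rather than merely giving some ad hoc compatible-on-objects functor. The point is that \cref{proposition:quivers_have_a_monoidal_structure} already pins down the monoidal structure on $\Rep(\pcat)$ by its restriction along $\Psf$ to simples; both functors $V(-)$ and $A(-)$ are determined by their restriction to simples by cocontinuity, and on simples the claimed (lax) monoidality is a direct computation with $\F_p \otimes_{\Z} \F_p \cong \F_p \oplus \Sigma\F_p$ and the exterior algebra structure on $\pi_*\Map_{\Z}^{\Sp}(\F_p,\F_p)$. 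So really the proof is: check monoidality on simples by hand, then invoke the universal property.
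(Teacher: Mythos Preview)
Your proposal is correct and follows essentially the same approach as the paper: both functors are cocontinuous, hence left Kan extended from their restriction to simples $\pcat$, where they are given by $M \mapsto \pi_*(\fieldp \otimes_{\bbZ} M)$ and $M \mapsto \pi_*(M)$; the first is symmetric monoidal by K\"unneth over a field and the second is lax symmetric monoidal, and these structures propagate along the left Kan extension.

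Two minor slips in your asides, neither of which affects the argument: (i) $\fieldp$ is not an idempotent $\bbZ$-algebra in any reasonable sense, since $\fieldp \otimes_{\bbZ} \fieldp \not\simeq \fieldp$; you correctly abandon this line for the K\"unneth argument anyway. (ii) Your parenthetical ``$\pi_*(\bbZ \otimes_{\bbZ} \bbZ)$ need not be flat'' is false as written, since $\bbZ \otimes_{\bbZ} \bbZ \simeq \bbZ$; the non-flatness relevant to the paper is $\pi_*(\bbZ \otimes_{\Ss} \bbZ)$, and the failure of strong monoidality for $(A \rightleftarrows V) \mapsto A$ is witnessed instead by $\fieldp \otimes_{\bbZ} \fieldp$, as the warning immediately following the lemma in the paper explains.
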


\begin{proof}
The first functor is cocontinuous, and hence a left Kan extension of the functor
\[
\pcat \rightarrow \Vect_{*}(\fieldp)
\]
given by 
\[
M \mapsto \pi_{*}(\mathbb{F}_{p} \otimes_{\bbZ} M)
\]
This is symmetric monoidal by the K\"{u}nneth formula in $\fieldp$-homology and hence so is its left Kan extension. The second statement follows from the same argument applied to 
\[
M \mapsto \pi_{*}(M)
\]
which is lax symmetric monoidal when viewed as a functor valued in graded abelian groups. 
\end{proof}

\begin{warning}
The integral part of \cref{lemma:symmetric_monoidality_of_functors_from_quivers_to_vect_and_ab} cannot be improved due to the failure of the K\"{u}nneth isomorphism in integral homology; that is, the association
\[
(A \rightleftarrows V) \mapsto A
\]
is not (strongly) symmetric monoidal. 
To see this, note that 
\[
(\Psf(\fieldp) \otimes \Psf(\fieldp))(\Z) \cong \Psf(\fieldp \otimes \fieldp)(\Z) \cong \Psf(\fieldp \oplus \Sigma \fieldp)(\Z) \cong \F_p \oplus \F_p[1] 
\]
while on the other hand 
\[
 \Tor^{0}_{\mathbb{Z}}(\Psf(\fieldp)(\bbZ), \Psf(\fieldp)(\bbZ))
 \cong \Tor^{0}_{\mathbb{Z}}(\fieldp; \fieldp) \cong \fieldp. 
\]
\end{warning}

\subsection{Integral homology cooperations}
\label{subsection:couples}

As we've shown in \cref{proposition:psf_homology_theory_on_r_modules_is_adapted},  the homology theory 
\[
\Psf \colon \Mod_{\bbZ}(\spectra) \rightarrow \Rep(\pcat)
\]
valued in the Bockstein couples is adapted and so leads to an Adams spectral sequence computing homotopy classes of maps between $\bbZ$-modules. To instead obtain an adapted homology theory on spectra, we apply the theory of \S\ref{subsection:homology_operations_for_general_quivers}.

\begin{definition}
\label{definition:pcat_integral_homology_of_a_spectrum}
For a spectrum $X$, we define
\[
\Psf(X) \colonequals \Psf(H\bbZ \otimes X).
\]
This yields a homology theory 
\[
\Psf \colon \spectra \rightarrow \Rep(\pcat)
\]
valued in Bockstein couples of \cref{definition:bockstein_couple}. 
\end{definition}

\begin{warning}
Note that the notation of \cref{definition:pcat_integral_homology_of_a_spectrum} is potentially abusive, as we denote both $\Psf \colon \spectra \rightarrow \Rep(\pcat)$ and $\Psf \colon \Mod_{\bbZ}(\spectra) \rightarrow \Rep(\pcat)$ with the same letter. Usually, it is possible to dintinguish between the two on the basis of whether the object we apply it to is a $\bbZ$-module or a spectrum. In the cases where it is useful to disambiguate, we will distinguish between the two functors using subscripts.
We will write $\Psf_{\spectra}$ for $\Psf \colon \spectra \rightarrow \Rep(\pcat)$ and
$\Psf_{\Mod_{\bbZ}}$ or $\Psf_{\bbZ}$ for $\Psf \colon \Mod_{\bbZ}(\spectra) \rightarrow \Rep(\pcat)$.
\end{warning}

\begin{remark}
Using \cref{proposition:explicit_description_of_augmented_bockstein_modules}, we see that the Bockstein couple $\Psf(X)$ attached to a spectrum can be identified with the diagram
\[
\begin{tikzcd}
	{\Hrm_{*}(X; \bbZ)} & {\Hrm_{*}(X, \mathbb{F}_{p})}
	\arrow["\pi", bend left, from=1-1, to=1-2]
	\arrow["\delta", bend left, from=1-2, to=1-1]
\end{tikzcd}
\]
and so should be thought of as encoding the integral and mod $p$ homology at the same time. 
\end{remark}

To have a good theory of homology cooperations, we need a flatness assumption, which in this case is easy to verify. 

\begin{lemma}
\label{lemma:finite_spectrum_is_projective_if_homology_has_only_zs_and_fps}
Let $X$ be a finite spectrum. Then the following are equivalent: 
\begin{enumerate}
    \item $X$ is $\pcat$-projective; that is, $\bbZ \otimes X$ is simple, 
    \item $\Hrm_{*}(X; \fieldp)$ has only $\bbZ$ and $\fieldp$ summands. 
\end{enumerate}
\end{lemma}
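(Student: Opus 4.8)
The plan is to prove the two conditions equivalent by relating the integral and mod $p$ homology of a finite spectrum $X$. The key structural fact is the cofiber sequence $\bbZ \xrightarrow{p} \bbZ \xrightarrow{\pi} \fieldp$, which gives a long exact sequence
\[
\cdots \rightarrow \Hrm_{n}(X; \bbZ) \xrightarrow{p} \Hrm_{n}(X; \bbZ) \xrightarrow{\pi} \Hrm_{n}(X; \fieldp) \xrightarrow{\delta} \Hrm_{n-1}(X; \bbZ) \rightarrow \cdots
\]
so that $\Hrm_{*}(X; \fieldp)$ is built out of $\Hrm_{*}(X; \bbZ)/p$ and the $p$-torsion in $\Hrm_{*-1}(X;\bbZ)$. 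Since $X$ is finite, $\bbZ \otimes X$ is a perfect $\bbZ$-module, hence equivalent to a finite sum of shifts of cyclic modules $\bbZ/p^{k}$ (using that we work $p$-locally and $\bbZ = \bbZ_{(p)}$, so $\bbZ$-modules decompose by the structure theorem applied to the finitely generated homotopy groups, and a $\bbZ$-module is determined by its homotopy). This decomposition is the workhorse of both implications.

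First I would prove $(1) \Rightarrow (2)$. If $\bbZ \otimes X$ is simple, it is by definition a finite sum of shifts of $\bbZ$ and $\fieldp$; applying $\fieldp \otimes_{\bbZ} -$ and using $\fieldp \otimes_{\bbZ} \bbZ \cong \fieldp$ together with $\fieldp \otimes_{\bbZ} \fieldp \cong \fieldp \oplus \Sigma \fieldp$, we see that $\Hrm_{*}(X; \fieldp)$ is a finite sum of shifts of $\fieldp$ — in particular it has only $\bbZ$ and $\fieldp$ summands as a $\pcat$-representation... but more to the point, the decomposition of $\bbZ \otimes X$ into $\bbZ$'s and $\fieldp$'s visibly exhibits the claimed summand structure. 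For $(2) \Rightarrow (1)$, I would argue contrapositively via the cyclic decomposition: write $\bbZ \otimes X \simeq \bigoplus_{i} \Sigma^{n_i} \bbZ/p^{k_i}$ with $k_i \in \{1, 2, \dots, \infty\}$ (where $p^{\infty}$ means $\bbZ$ itself). A summand $\Sigma^{n}\bbZ/p^{k}$ with $2 \le k < \infty$ contributes to $\Hrm_*(X; \fieldp)$ a two-step extension $\fieldp \xrightarrow{\pi} \fieldp \xrightarrow{\delta}\fieldp$ in which the Bockstein $\beta = \pi \circ \delta$ vanishes but $\delta \ne 0$, $\pi \ne 0$ — this is precisely a copy of $\Psf(\Sigma^{n}\fieldp)$-like piece only if $k=1$; for $k \ge 2$ it is a genuinely new indecomposable $\pcat$-representation (the "$p$-Bockstein of length $k$"), so $\Hrm_*(X;\fieldp)$ has a summand that is neither $\bbZ$ nor $\fieldp$. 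Hence if $(2)$ holds, all $k_i \in \{1, \infty\}$, which says exactly that $\bbZ \otimes X$ is simple, i.e. $(1)$.

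The main obstacle I anticipate is making the $(2) \Rightarrow (1)$ direction clean: one must be careful that ``$\Hrm_{*}(X;\fieldp)$ has only $\bbZ$ and $\fieldp$ summands'' is being read as a statement about the $\pcat$-representation $\Psf(X)$ decomposing into copies of $\Psf(\bbZ)$ and $\Psf(\fieldp)$ (the compact projective generators of \cref{remark:explicit_presentation_of_cp_generators_of_wvectbeta}), rather than as a bare statement about the graded vector space $\Hrm_*(X;\fieldp)$, which would carry no information. Once the statement is correctly interpreted, the point is that $\Psf(\bbZ/p^{k})$ for $k \ge 2$ is indecomposable and not isomorphic to any sum of $\Psf(\bbZ)[j]$ and $\Psf(\fieldp)[j]$ — one sees this by comparing the ranks of the maps $\pi$ and $\delta$, since for $\Psf(\bbZ/p^{k})$ with $k \geq 2$ the composite $\delta \circ \pi$ on the integral part is zero yet neither $\pi$ nor $\delta$ is — and then invoke Krull--Schmidt for finitely generated $\pcat$-representations to conclude that no such summand can occur, forcing $k_i \in \{1,\infty\}$ for all $i$.
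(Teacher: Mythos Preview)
The statement as written contains a typo: condition (2) should read ``$\Hrm_{*}(X; \bbZ)$ has only $\bbZ$ and $\fieldp$ summands'' rather than $\Hrm_{*}(X; \fieldp)$. This is confirmed both by the paper's own proof and by every subsequent use of the lemma (e.g.\ \cref{lemma:hbbz_is_adams_type} and \cref{theorem:hbbz_and_hsf_ass_are_iso_when_hsfx_projective}, which invoke it for spectra whose \emph{integral} homology has no $\bbZ/p^{k}$ summands with $k>1$). You correctly noticed that the literal statement about $\Hrm_{*}(X;\fieldp)$ as a graded vector space carries no information, but your fix --- reinterpreting (2) as a decomposition of the $\pcat$-representation $\Psf(X)$ --- is not the intended one.

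Once the typo is corrected, the paper's proof is a single line: $\Hrm_{*}(X;\bbZ) = \pi_{*}(\bbZ \otimes X)$, and a $\bbZ$-module in spectra is determined up to equivalence by its homotopy groups, so $\bbZ \otimes X$ is a finite sum of shifts of $\bbZ$ and $\fieldp$ if and only if its homotopy groups are. Your structure-theorem decomposition of $\bbZ \otimes X$ into cyclic pieces is exactly this observation, but the subsequent machinery --- computing $\Psf(\bbZ/p^{k})$, checking indecomposability, and invoking Krull--Schmidt for $\pcat$-representations --- is unnecessary. It arose only because you were trying to salvage the misprinted statement; with (2) read as a condition on integral homology, none of it is needed.
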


\begin{proof}
This follows from the fact that $\mathrm{H}_{*}(X; \bbZ) \coloneqq \pi_{*}(\bbZ \otimes X)$ and that a $\bbZ$-module in spectra is determined up to equivalence by its homotopy groups. 
\end{proof}

\begin{lemma}
\label{lemma:hbbz_is_adams_type}
The map $\Ss \rightarrow \bbZ$ is $\pcat$-Adams-type. 
\end{lemma}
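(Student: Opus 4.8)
The plan is to verify the definition of $\pcat$-Adams-type (\cref{definition:pcat_adams_type_ring_spectrum}) directly for the unit map $\Ss \to \bbZ$. By \cref{corollary:simple_modules_are_dualizable} the class $\pcat$ of simple $\bbZ$-modules is closed under $\bbZ$-linear duality, so it suffices to treat each of the two generating simples $S = \bbZ$ and $S = \fieldp$ (together with their shifts, which are handled identically since everything in sight is compatible with suspension). For each such $S$ I must exhibit the underlying spectrum $\mathbb{D}^{\bbZ}(S) = \map_{\bbZ}(S, \bbZ)$ as a filtered colimit $\colim \mathbb{D}^{\Ss} S_\alpha = \colim \Sigma^{?}\mathbb{D} S_\alpha$ of Spanier--Whitehead duals of finite spectra $S_\alpha$ such that $\bbZ \otimes S_\alpha$ is again a simple $\bbZ$-module; equivalently, by dualizing, to write $S$ itself as a filtered colimit of finite spectra $T_\alpha$ with $\bbZ \otimes T_\alpha$ simple, and then appeal to \cref{lemma:finite_spectrum_is_projective_if_homology_has_only_zs_and_fps} to recognize which finite spectra have simple $\bbZ$-homology.

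The key observation is that both generating $\bbZ$-modules are themselves realized (up to the bookkeeping of duals/shifts) by filtered colimits of finite spectra with the required property. First, $\bbZ = \map_{\bbZ}(\bbZ,\bbZ)$ is the colimit of the skeletal filtration $\Sigma^\infty_+ (\text{skeleta of a CW model of } H\bbZ)$; since $\bbZ \otimes \Ss = \bbZ$ is simple, even the constant diagram on $\Ss$ works in the dual picture, but to match the definition literally one takes a filtered presentation $\bbZ \simeq \colim_\alpha F_\alpha$ of the Eilenberg--MacLane spectrum by finite spectra whose integral homology consists only of $\bbZ$'s and $\fieldp$'s in each degree — this can be arranged because $H_*(F_\alpha;\bbZ)$ for the skeleta of $H\bbZ$ is finitely generated in each degree, hence a finite sum of cyclic groups, and (after refining the filtration) one only sees $\bbZ$ and $\bbZ/p$ summands $p$-locally. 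By \cref{lemma:finite_spectrum_is_projective_if_homology_has_only_zs_and_fps}, $\bbZ \otimes F_\alpha$ is simple for each such $\alpha$. Second, for $S = \fieldp$ one uses the cofiber sequence $\Ss \xrightarrow{p} \Ss \to \Ss/p$: the mod $p$ Moore spectrum $\Ss/p$ is a finite spectrum with $H_*(\Ss/p;\bbZ) = \fieldp$ concentrated in degree $0$, so $\bbZ \otimes \Ss/p \simeq \fieldp$ is simple, and $\fieldp = \bbZ \otimes \Ss/p$ exhibits $\fieldp$ as obtained from $\Ss/p$ by base change, again fitting the Adams-type pattern after dualizing ($\mathbb{D}^{\bbZ}(\fieldp) \simeq \Sigma^{-1}\fieldp$ by the computation $\map_\bbZ(\fieldp,\bbZ)\simeq \Sigma^{-1}\fieldp$ recorded before \cref{corollary:simple_modules_are_dualizable}, and $\Sigma^{-1}\fieldp \simeq \bbZ\otimes \Sigma^{-1}\Ss/p$).

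Putting these together: every simple $S \in \pcat$ is a finite sum of shifts of $\bbZ$ and $\fieldp$, so by additivity of the constructions above $\mathbb{D}^{\bbZ}(S)$ is a finite sum of shifts of $\mathbb{D}^{\bbZ}(\bbZ) = \bbZ$ and $\mathbb{D}^{\bbZ}(\fieldp) = \Sigma^{-1}\fieldp$, each of which is a filtered colimit $\colim \mathbb{D}^{\Ss}S_\alpha$ with $\bbZ\otimes_{\Ss} S_\alpha$ simple by \cref{lemma:finite_spectrum_is_projective_if_homology_has_only_zs_and_fps}; taking the (levelwise) direct sum of these filtered diagrams gives the required presentation for $S$. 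The main obstacle is the bookkeeping in the first case — producing an explicit filtered presentation of $H\bbZ$ by finite spectra all of whose integral homology groups are, $p$-locally, sums of $\bbZ$'s and $\bbZ/p$'s — but this is classical ($H\bbZ$ is connective with degreewise finitely generated homotopy, and one can build it by attaching finite complexes; alternatively it follows from the standard fact that $H\bbZ$ is an Adams-type spectrum in the classical sense, which is exactly the statement that $\Ss\to H\bbZ$ is $\Mod^{\mathrm{fp}}$-Adams-type, combined with the observation that the relevant finite spectra can be chosen with homology of the stated shape $p$-locally). No step presents a genuine difficulty beyond this.
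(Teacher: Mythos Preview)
There is a genuine gap in your treatment of the simple $S = \fieldp$. The equivalence $\fieldp \simeq \bbZ \otimes \Ss/p$ is an identification of $\bbZ$-modules, but the $\pcat$-Adams-type condition asks you to write the \emph{underlying spectrum} of $\mathbb{D}^{\bbZ}(\fieldp) \simeq \Sigma^{-1}\fieldp$ as a filtered colimit $\colim \mathbb{D}^{\Ss}S_\alpha$. The Eilenberg--MacLane spectrum $\Sigma^{-1}\fieldp$ is not a finite spectrum, so a constant diagram on $\Sigma^{-1}\Ss/p$ does not present it; its colimit is $\Sigma^{-1}\Ss/p$, not $\Sigma^{-1}\fieldp$. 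This case therefore requires exactly the same kind of filtered-colimit argument as the case $S = \bbZ$, and in particular needs the nontrivial input that $\Hrm_*(\fieldp;\bbZ)$ has only simple $p$-torsion.

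Your handling of the $\bbZ$ case is closer but still incomplete. The phrase ``after refining the filtration one only sees $\bbZ$ and $\bbZ/p$ summands'' hides the real content: a priori the skeleta of $H\bbZ$ could pick up $\bbZ/p^{k}$ summands for $k>1$ in their top homology, and you give no mechanism to rule this out or to refine it away. Your fallback appeal to ``$H\bbZ$ is Adams-type in the classical sense'' is false---indeed that failure is the motivation for the paper. The paper's proof supplies exactly the missing observation: for any connective spectrum $X$ whose integral homology has only simple $p$-torsion (this holds for both $H\bbZ$ and $H\fieldp$ by Kochman), the skeleta $X_k$ of a CW decomposition automatically inherit the same property, because $\Hrm_k(X_k;\bbZ)$ is torsion-free while $\Hrm_*(X_k;\bbZ) \cong \Hrm_*(X;\bbZ)$ for $* < k$. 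This single argument handles both generating simples uniformly, with no refinement needed.
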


\begin{proof}
By \cref{lemma:finite_spectrum_is_projective_if_homology_has_only_zs_and_fps}, we have to show that we can write both $\bbZ$ and $\fieldp$ as filtered colimits of finite spectra $P$ with the property that $\Hrm_{*}(P, \mathbb{Z})$ has no $\mathbb{Z} / p^{s}$ summands for $s > 1$. We claim that any connective spectrum $X$ whose integral homology has this property can be written as a filtered colimit of the needed form. Since both $\Hrm_{*}(\bbZ, \mathbb{Z})$ and $\Hrm_{*}(\fieldp, \mathbb{Z})$ have no $\mathbb{Z} / p^{s}$ summands for $s > 1$ \cite{kochman1982integral}, the claim will end the argument. 

Let $X_{0} \subseteq X_{1} \subseteq \ldots \subseteq X$ be a CW-filtration. Since $X \cong \varinjlim X_{k}$, it's enough to show that for each $k$, there are no $\mathbb{Z} / p^{s}$ summands inside $\Hrm_{*}(X_{k}, \mathbb{Z})$. However, $\Hrm_{k}(X_{k}, \mathbb{Z})$ is necessarily torsion-free and $\Hrm_{*}(X_{k}, \mathbb{Z}) \cong \Hrm_{*}(X, \mathbb{Z})$ for $* < k$, ending the argument. 
\end{proof}

\begin{theorem}
\label{theorem:integral_homology_in_q_comodules_is_adapted}
We have that
\begin{enumerate}
    \item there exists an exact, cocontinuous comonad $\quiversteenrod$ on $\Rep(\pcat)$, uniquely determined by the property that 
    \[
\quiversteenrod(\Psf_{\bbZ}(M)) \cong \Psf_{\bbZ}(\bbZ \otimes M) 
    \]
    naturally in $M \in \Mod_{\bbZ}$, 
    \item for any spectrum $A$, the $(\bbZ \otimes -)$-comodule structure on $\bbZ \otimes A$ induces a $\quiversteenrod$-comodule structure on  $\Psf_{\spectra}(A) \coloneqq \Psf_{\bbZ}(\bbZ \otimes A)$ and the resulting homology theory
\begin{equation}
\label{equation:comodule_valued_quiver_homology_theory}
\Psf \colon \spectra \rightarrow \Comod_{\quiversteenrod}(\Rep(\pcat))
\end{equation}
is adapted. 
\end{enumerate}
\end{theorem}

\begin{proof}
This is a combination of \cref{theorem:comodule_homology_theory_adapted_in_pcat_flat_case}, \cref{corollary:r_otimes_preserves_psf_epis_for_at_rings} and \cref{lemma:hbbz_is_adams_type}. 
\end{proof}

Since the homology theory of (\ref{equation:comodule_valued_quiver_homology_theory}) is adapted, the second page of the corresponding Adams spectral sequence (cf. \cite[Construction 2.24]{patchkoria2021adams}) is given by $\Ext$-groups in the target category: 
\begin{corollary}
\label{corollary:the_hsf_adams_has_signature_based_on_comodules}
For any pair of spectra $X, Y$, the $\Psf$-Adams spectral sequence is of signature 
\[
E_{2}^{s, t} \colonequals \Ext_{\quiversteenrod}(\Psf(X), \Psf(Y)) \Rightarrow \pi_{t-s}\map(X, Y),
\]
where the left hand side denotes the $\Ext$-groups in the category of $\quiversteenrod$-comodules. 
\end{corollary}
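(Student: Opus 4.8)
The plan is to deduce this directly from the adaptedness established in \Cref{theorem:integral_homology_in_q_comodules_is_adapted}, together with the general theory of Adams spectral sequences attached to adapted homology theories developed in \cite{patchkoria2021adams} (equivalently, packaged via the perfect deformation $\dcat^{\omega}_{\Psf}(\spectra)$). Concretely, \Cref{theorem:integral_homology_in_q_comodules_is_adapted}(2) asserts that
\[
\Psf \colon \spectra \rightarrow \Comod_{\quiversteenrod}(\Rep(\pcat))
\]
is an adapted homology theory, and every adapted homology theory $\Hsf \colon \ccat \rightarrow \acat$ gives rise to a modified Adams spectral sequence: one builds an $\Hsf$-Adams tower of $Y$ out of $\Hsf$-injective objects --- objects whose $\Hsf$-homology is injective in $\acat$, which exist in sufficient supply precisely because $\Hsf$ is adapted --- maps $X$ into this tower, and forms the resulting exact couple.

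First I would recall that applying $\Hsf$ to such a tower produces an injective resolution $\Hsf(Y) \to I^{\bullet}$ in $\acat$; this is the homological content of the adaptedness condition. Consequently the $E_{1}$-page of the tower spectral sequence is identified with the complex $\Hom_{\acat}(\Hsf(X), I^{\bullet})$ (with the internal grading $t$ coming from the local grading), and its cohomology --- the $E_{2}$-page --- is $\Ext^{s,t}_{\acat}(\Hsf(X), \Hsf(Y))$. Specializing $\acat = \Comod_{\quiversteenrod}(\Rep(\pcat))$ and $\Hsf = \Psf$ yields the stated $E_{2}$-term. Convergence of the spectral sequence to $\pi_{t-s}\map(X,Y)$, conditionally in general and strongly under the usual finiteness and boundedness hypotheses, is likewise part of the same general package.

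There is really no hard step here: the corollary is a formal consequence of \Cref{theorem:integral_homology_in_q_comodules_is_adapted}. If anything, the only point requiring care is bookkeeping --- making sure that ``the modified Adams spectral sequence based on $\Psf$'' is unambiguously the one attached to the \emph{push-forward} homology theory into $\quiversteenrod$-comodules, rather than the plain $\Rep(\pcat)$-valued theory on $\bbZ$-modules, so that the $\Ext$-groups are computed in comodules and thus record the homology cooperations. This is exactly what \Cref{theorem:comodule_homology_theory_adapted_in_pcat_flat_case}(3), the adaptedness of $i_{*}\Psf$, provides.
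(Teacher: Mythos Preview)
Your proposal is correct and matches the paper's approach: the paper states this corollary immediately after \Cref{theorem:integral_homology_in_q_comodules_is_adapted} with no proof, treating it as a formal consequence of adaptedness via the general machinery of \cite{patchkoria2021adams}. Your write-up simply makes that deduction explicit.
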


\subsection{Comparison with the \texorpdfstring{$\bbZ$-Adams}{integral Adams} spectral sequence}

In the previous section, we constructed the homology theory 
\[
\Psf \colon \spectra \rightarrow \Comod_{\quiversteenrod}
\]
which encodes integral and mod $p$ homology at once as a representation of a quiver, together with all of the relevant homology operations. This homology theory has an associated $\Psf$-Adams spectral sequence as in \cref{corollary:the_hsf_adams_has_signature_based_on_comodules}, obtained by mapping into an $\Psf$-injective resolution. 

The goal of this section is to compare this spectral sequence to the classical $\bbZ$-Adams spectral sequence obtained by mapping into
\[
\bbZ \otimes Y \rightrightarrows \bbZ \otimes \bbZ \otimes Y \triplerightarrow \ldots, 
\]
the $\bbZ$-based Amitsur resolution. Since the $\Psf$-based Adams spectral sequence has an $E_{2}$-term given by $\Ext$-groups in $\quiversteenrod$-comodules, this gives a concrete description of the second page of the classical $\bbZ$-based Adams spectral sequence in terms of homological algebra. 

\begin{proposition}
\label{theorem:hbbz_and_hsf_ass_are_iso_when_hsfx_projective}
Let $X, Y$ be spectra and assume that $X$ is finite and $H_*(X;\Z)$ is a sum of copies of $\Z$ and $\F_p$. Then the canonical comparison map 
\[
    {}^{\Z}{E}_r^{s,t}(X, Y) \rightarrow {}^{\Psf}E_r^{s,t}(X, Y)
\]
from the $\Z$-Adams spectral sequence to the $\Psf$-Adams spectral sequence is an isomorphism. In particular,
\[
{}^{\Z}E_2^{s,t}(X, Y) \cong \Ext^{s, t}_{\quiversteenrod}(\Psf(X), \Psf(Y)).
\]
\end{proposition}

\begin{proof}
It will be most natural to work with the language of deformations of categories, as studied in \S\ref{section:synthetic_spectra_based_on_quivers} and \S\ref{sec:def}. In these terms, 
\begin{enumerate}
\item $\Psf$-Adams is encoded by the class 
\[
\Psf\dashepi \subseteq \Fun(\Delta^{1}, \spectra)
\]
of maps of spectra which become epimorphisms after applying the functor  $\Psf$, 
\item the classical $\bbZ$-Adams spectral sequence is encoded by the class
\[
(\bbZ \otimes -)\dashepi \subseteq \Fun(\Delta^{1}, \spectra)
\]
of maps of spectra which are split epimorphisms after tensoring with $\bbZ$. 
\end{enumerate}
Any map of the second type is also of the first type, so that applying \cref{construction:deformation_as_a_2functor} to the identity endofunctor of $\spectra$ induces a comparison functor 
\begin{equation}
\label{equation:comparison_functor_from_z_derived_cat_to_psf_derived_cat}
F \colon \dcat_{\bbZ \otimes \dashepi}^{\omega}(\Sp) \to \dcat^{\omega}_{\Psf}(\Sp).
\end{equation}
between the corresponding derived categories.  

We move to the proof of the proposition. As a consequence of \cref{lemma:finite_spectrum_is_projective_if_homology_has_only_zs_and_fps}, the assumptions on $X$ guarantee that it is $\pcat$-finite projective. By a combination of This is a combination of \Cref{lemma:P_colocal_from_amitsur_to_quiver} and \Cref{lem:easy-comparison}, it is colocal for the functor of (\ref{equation:comparison_functor_from_z_derived_cat_to_psf_derived_cat}), so that in particular the induced map 
\[
\pi_{s} \map_{\dcat_{\bbZ \otimes \dashepi}^{\omega}(\Sp)}(\nu(X), \nu(\Sigma^{t} Y) / \tau) \rightarrow \pi_{s} \map_{\dcat^{\omega}_{\Psf}(\spectra)}(\nu(X), \nu(\Sigma^{t} Y) / \tau) 
\]
is an isomorphism for any $s, t \in \mathbb{Z}$. Up to reindexing, this can be identified with the $E_{2}$-terms of the corresponding Adams spectral sequences. 
\end{proof}

\subsection{Synthetic spectra as a derived $\infty$-category} 
Since $\Psf \colon \spectra \rightarrow \Comod_{\quiversteenrod}$ is Grothendieck, the perfect derived category admits a fully faithful embedding 
\[
\dcat^{\omega}_{\Psf}(\spectra) \rightarrow \widecheck{\dcat}_{\Psf}(\spectra)
\]
into the unseparated Grothendieck derived category of \cite[\S 6.4]{patchkoria2021adams}. Often, instead of working with the latter, it is more convenient to work with synthetic spectra constructed in \cref{section:synthetic_spectra_based_on_quivers}, as they are canonically symmetric monoidal.  In this short section we show that  in our case the two are equivalent: 

\begin{proposition} 
\label{lem:syn-def-equiv}
The natural comparison map
\[
\widecheck{\dcat}_{\Psf}(\Sp) \to \Syn_{\pcat}(\spectra)
\]
between the unseparated Grothendieck deformation along $\Psf$ and $\pcat$-synthetic spectra is an equivalence. In particular, the left hand side inherits a symmetric monoidal structure from the target. 
\end{proposition}

\begin{proof}
Recall that in \Cref{cor:def-equals-syn} we gave the following sufficient conditions for this comparison map to be an equivalence of categories, namely that every finite spectrum has a finite, $\Psf$-exact resolution by $\pcat$-finite projective spectra. In the case at hand, we know from \cref{lemma:finite_spectrum_is_projective_if_homology_has_only_zs_and_fps} that a finite spectrum $X$ is $\pcat$-finite projective if and only if its integral homology has only simple $p$-torsion. Given a finite spectrum $X$, we construct a $\Psf$-exact resolution of $X$ by $\pcat$-finite projectives in three steps:
\begin{enumerate}
    \item Using the Hurewicz theorem we pick a skeletal filtration of $X$ such that 
    \[
    \mathrm{H}_*(\gr_n(X); \Z) \cong \mathrm{H}_n(X;\Z).
    \]
    As every $\Z$-module in spectra is formal, this filtration becomes split on inducing up to $\Mod_{\bbZ}$. In particular, this filtration is $\Psf$-exact.
    \item We observe that a finite spectrum whose integral homology is concentrated in a single degree splits as a sum of spheres, Moore spectra $\Ss/m$ where $m$ is coprime to $p$, and Moore spectra $\Ss/p^k$. This splitting and invariance under suspension reduces the lemma to constructing resolutions for $\Ss$, $\Ss/m$ and each of 
    $\Ss/p^k$.
    \item  We now construct explicit $\Psf$-exact resolutions of the three types of spectra from the second step. Note that $\Ss$, $\Ss/m$ and $\Ss/p$ are already $\Psf$-finite projective, in the second case since according to our convention that $\bbZ$ denotes the $p$-local integers, so that
    \[
    \bbZ \otimes \Ss/m \cong \bbZ_{(p)} \otimes \Ss/m = 0.
    \]
    
    For $\Ss/p^{k}$ with $k > 1$, we use the following cofiber sequence as our resolution:
\[ 
\Ss \xrightarrow{\begin{pmatrix} p^{k-1} \\ -1 \end{pmatrix}} \Ss \oplus \Ss/p \xrightarrow{\begin{pmatrix} 1 & p^{k-1} \end{pmatrix}} \Ss/p^k. 
\]
We claim that this is $\Psf$-exact.  Indeed, on integral homology this cofiber sequence gives the short exact sequence of graded abelian groups
  \[ 0 \to \Z \xrightarrow{\begin{pmatrix} p^{k-1} \\ -1 \end{pmatrix}} \Z \oplus \Z/p \xrightarrow{\begin{pmatrix} 1 & p^{k-1} \end{pmatrix}} \Z/p^k \to 0. \]
  and on $\F_p$-homology this cofiber sequence gives the short exact sequence of graded $\F_p$-modules
  \[ 0 \to \F_p \xrightarrow{\begin{pmatrix} 0 \\ -1 \\ 0 \end{pmatrix}} \F_p \oplus \F_p \oplus \F_p[1] \xrightarrow{\begin{pmatrix} 1 & 0 & 0 \\ 0 & 0 & 1 \end{pmatrix}} \F_p \oplus \F_p[1] \to 0. \]
  Exactness on integral and mod $p$ homology implies that this is a $\pcat$-exact resolution as desired.
\end{enumerate}
\end{proof}

\begin{remark}
\label{remark:synthetic_pcat_z_modules_is_an_unseparated_derived_infty_cat}
Since every perfect $\bbZ$-module is a direct sum of (de)suspensions of $\bbZ$, $\bbZ/p^{k}$ and $\bbZ/m$ for $m$ coprime to $p$, the $\bbZ$-linear analogues of resolutions in \cref{lem:syn-def-equiv} show that the natural comparison map 
\[
\widecheck{\dcat}_{\pcat}(\Mod_{\bbZ}) \rightarrow \Syn_{\pcat}(\Mod_{\bbZ})
\]
is also an equivalence of categories. This equips the left hand side with a canonical symmetric monoidal structure. 
\end{remark}

\subsection{A pullback of synthetic categories}
\label{subsection:pullback_decomposition_of_integral_homology}

In this subsection, which lies at the technical heart of the current work, we compare the categories of synthetic spectra based on $\pcat$ and $\fieldp$. This encodes a relationship between the corresponding Adams spectral sequences, which we will explore in more detail in \S\ref{subsection:comparison_of_adams_sseqs}. 

Before we begin proving the comparison, it will be convenient to have a description of the $\fieldp$-Adams spectral sequence in terms of quiver representations. 

\begin{definition}
We say that a $\fieldp$-module in spectra is $\hcat$-simple if it is a direct sum of (de)suspensions of $\fieldp$. 
\end{definition}
The class of simples determines an induced adapted homology theory 
\[
\Hsf \colon \Mod_{\fieldp}(\spectra) \rightarrow \Vect_{*}
\]
valued in graded $\fieldp$-vector spaces. Since $\fieldp$ is a field, any perfect $\fieldp$-module is simple, so that both $i \colon \Ss \rightarrow \fieldp$ or $j \colon \bbZ \rightarrow \fieldp$ are $\hcat$-Adams-type. It follows that we have associated categories of synthetic spectra and synthetic $\bbZ$-modules. 

\begin{notation}[Induced homology theory on $\bbZ$-modules]
We will  write 
\[
\Hsf_{\bbZ} \colon \Mod_{\bbZ}(\spectra) \rightarrow \Comod_{\acat(0)_{*}}(\Vect_{*})
\]
for the induced adapted homology theory on $\bbZ$-modules given by $\Hsf_{\bbZ}(M) \colonequals \Hsf(\fieldp \otimes_{\bbZ} M)$. This homology theory corresponds to the pushforward class of epimorphisms $j_{*} \Hsf$ in the notation of \cref{construction:class_of_epis_induced_by_an_adjunction}. The target is given by the category of comodules over 
\[
\acat(0)_{*} \colonequals \Hsf_{\bbZ}(\fieldp) \cong \pi_{*}(\fieldp \otimes_{\bbZ} \fieldp)
\]
in graded vector spaces. Since any perfect $\bbZ$-module is $\hcat$-projective, by \cref{cor:def-equals-syn} there is a canonical equivalence
\[
\Syn_{\hcat}(\Mod_{\bbZ}) \cong \widecheck{\dcat}_{j_{*}\Hsf}(\Mod_{\bbZ})
\]
and the unseparated derived category associated to $j_{*} \Hsf$. 
\end{notation}

\begin{notation}[Induced homology theory on spectra]
We will write 
\[
\Hsf_{\spectra} \colon \Mod_{\bbZ}(\spectra) \rightarrow \Comod_{\acat_{*}}(\Vect_{*})
\]
for the induced adapted homology theory on spectra. The target is given by the category of comodules over the dual Steenrod algebra
\[
\acat_{*} \colonequals \Hsf_{\spectra}(\fieldp) \cong \pi_{*}(\fieldp \otimes_{\Ss} \fieldp)
\]
in graded vector spaces. Since any finite spectrum of $\hcat$-projective, by \cref{cor:def-equals-syn} there is a canonical equivalence
\[
\Syn_{\hcat}(\spectra) \cong \widecheck{\dcat}_{i_{*} j_{*}\Hsf}(\spectra)
\]
and the unseparated derived category. Here, the left hand side is just the category of $\fieldp$-synthetic spectra of \cite{pstrkagowski2018synthetic}, and we will denote it simply by $\Syn_{\fieldp}(\spectra)$. 
\end{notation}

In addition to the two homology theories on spectra and $\bbZ$-modules given above, we also have the homology theories based on the class of simples $\pcat$ studied in \S\ref{subsection:couples} and \S\ref{subsection:integral_dual-steenrod_algebra}. Since any $\pcat$-epimorphism of $\bbZ$-modules is an $\hcat$-epimorphism, we have a map
\[
q \colon (\Mod_{\bbZ}, \Psf) \rightarrow (\Mod_{\bbZ}, j_{*} \Hsf)
\]
of adapted homology theories, which induces a comparison square as follows. 

\begin{construction}
\label{cnstr:comparison-square}
By \cref{lemma:morphism_of_adjunctions_induced_by_map_of_flat_homology_theories}, the pushforward construction of \cref{definition:homology_theory_induced_by_an_adjunction} yields a right adjointable square of adapted Grothendieck homology theories
\[ 
\begin{tikzcd}
    (\Sp, i_*\Psf) \ar[r, "i^*"] \ar[d, "i_*q"] &
    (\Mod_{\Z}, \Psf) \ar[d, "q"] &
    \widecheck{\dcat}_{i_*\Psf}(\Sp) \ar[r, "i^*"] \ar[d, "i_*q"] &
    \widecheck{\dcat}_{\Psf}(\Mod_{\Z}) \ar[d, "q"] \\
    (\Sp, j_{*} i_*\Hsf) \ar[r, "i^*"] &
    (\Mod_{\Z}, j_*\Hsf) &
    \widecheck{\dcat}_{i_{*} j_* \Hsf}(\Sp) \ar[r, "i^*"] &      
    \widecheck{\dcat}_{j_*\Hsf}(\Mod_{\Z}) 
  \end{tikzcd} 
\]
and an associated right adjointable square of their unseperated derived categories. As a consequence of \cref{lem:syn-def-equiv} and \cref{remark:synthetic_pcat_z_modules_is_an_unseparated_derived_infty_cat}, we can identify the square on the right with the square of left adjoints 
\[
\begin{tikzcd}
	{\Syn_{\pcat}(\spectra)} & {\Syn_{\fieldp}(\spectra)} \\
	{\Syn_{\pcat}(\Mod_{\bbZ})} & {\Syn_{\fieldp}(\Mod_{\bbZ})}
	\arrow[from=2-1, to=2-2]
	\arrow[from=1-2, to=2-2]
	\arrow[from=1-1, to=1-2]
	\arrow[from=1-1, to=2-1]
\end{tikzcd}
\]
between the categories of $\pcat$- and $\hcat$-based synthetic spectra and $\bbZ$-modules. This promotes the square between unseparated derived categories to a square of symmetric monoidal categories. 

\end{construction}

\begin{remark}
\label{remark:functors_in_pullback_square_preserve_compactness}
As a consequence of \cref{lemma:unseparated_def_of_cpt_gen_homology_theory_is_cpt_gen}, the four derived categories in the square of \Cref{cnstr:comparison-square} are compactly generated and that in each case the corresponding synthetic analogue functor $\nu(-)$ preserves compactness. Since objects of the form $\nu(X)$ with $X$ compact are compact generators for each of these categories, this implies that all four arrows in the square preserve compactness.
\end{remark}

\begin{remark}
\label{remark:hearts_in_hz_pullback_diagram}  
Passing to the hearts in the diagram of categories of \cref{cnstr:comparison-square}, we obtain a diagram 
\begin{equation}
\label{equation:hearts_in_hz_pullback_diagram}  
\begin{tikzcd}
	{\Comod_{\quiversteenrod}(\Rep(\pcat))} & {\Comod_{\acat}(\Vect)} \\
	{\Rep(\pcat)} & {\Comod_{\acat(0)}(\Vect)}
	\arrow[from=2-1, to=2-2]
	\arrow[from=1-2, to=2-2]
	\arrow[from=1-1, to=1-2]
	\arrow[from=1-1, to=2-1]
\end{tikzcd}.
\end{equation}
After unwrapping the definitions, we see that the vertical arrows forget the comodule structure, while the horizontal arrows forget the integral variable in the sense that they are given by the association
\[
(A \rightleftarrows V) \mapsto V,
\]
where we visualize representations of $\pcat$ as diagrams as in \cref{proposition:explicit_description_of_augmented_bockstein_modules}. The $\acat(0)$-comodules structure is given by the Bockstein, and a structure of a $\quiversteenrod$-comodule upgrades that to a comodule structure over $\acat$. 
\end{remark}

\begin{theorem} \label{thm:pullback-Z}
The natural symmetric monoidal comparison functor from $\Syn_{\pcat}(\Sp)$ to the pullback $(P)$ displayed below is fully faithful.
  \[ \begin{tikzcd}
    \Syn_{\pcat}(\Sp) \ar[rrd, bend left] \ar[dr] \ar[ddr, bend right] \\
    & (P) \ar[r] \ar[d] \pullback & \Syn_{\pcat}(\Mod_{\bbZ}) \ar[d]  \\
    & \Syn_{\F_p}(\Sp) \ar[r] & \Syn_{\fieldp}(\Mod_{\bbZ})
  \end{tikzcd} \]
\end{theorem}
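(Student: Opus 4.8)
The plan is to reduce this to the already understood pullback statement for the \emph{special fibres} of these deformations, exploiting the interplay between inverting $\tau$ and killing $\tau$. First I would record the formal reductions. The comparison functor $F\colon \Syn_{\pcat}(\Sp)\to(P)$ is a symmetric monoidal left adjoint of presentable stable $\infty$-categories; moreover $(P)$ is compactly generated, being a pullback of compactly generated $\infty$-categories along the compact‑object‑preserving functors of \cref{remark:functors_in_pullback_square_preserve_compactness}, and $F$ itself preserves compact objects. Hence its right adjoint $G$ preserves all colimits, so $GF$ does, and it suffices to check that the unit $\mathrm{id}\to GF$ is an equivalence on the compact generators $\nu_{\pcat}(A)$, $A\in\Sp^{\omega}$. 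Since $\map_{\Syn_{\pcat}(\Sp)}(\nu_{\pcat}(A),-)$ and $\map_{(P)}(F\nu_{\pcat}(A),F(-))$ both preserve colimits, and both $\map(-,\nu_{\pcat}(B))$-variants send colimits to limits, this reduces to showing that, for all finite spectra $A$ and $B$, the square of mapping spectra obtained by applying $\map(\nu(-),\nu(-))$ in each of the four synthetic $\infty$-categories of \cref{cnstr:comparison-square} --- to the pair $(A,B)$ in the two categories of synthetic spectra and to $(\Z\otimes A,\Z\otimes B)$ in the two categories of synthetic $\Z$-modules --- is cartesian.

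Next I would use that every functor in \cref{cnstr:comparison-square}, and $G$, is compatible with the deformation parameter $\tau$, so this square of mapping spectra is $\tau$-equivariant; and a $\tau$-equivariant map $T_{1}\to T_{2}$ of spectra is an equivalence as soon as it is one after inverting $\tau$ and after applying $\cofib(\tau)$, because if $\cofib(\tau)$ kills $T\colonequals\cofib(T_{1}\to T_{2})$ then $\tau$ acts invertibly on $T$, whence $\tau^{-1}T\cong T$ and this also vanishes. So it is enough to make the square cartesian after inverting $\tau$ and after $-\otimes C\tau$. After inverting $\tau$, \cref{theorem:generic_fiber_of_synthetic_spectra} turns \cref{cnstr:comparison-square} into a square with corners $\Sp,\Sp,\Mod_{\Z},\Mod_{\Z}$ in which the two functors $\Syn_{\pcat}(\Sp)\to\Syn_{\fieldp}(\Sp)$ and $\Syn_{\pcat}(\Mod_{\Z})\to\Syn_{\fieldp}(\Mod_{\Z})$ become the identities of $\Sp$ and of $\Mod_{\Z}$; the corresponding square of mapping spectra then has two parallel edges that are equivalences and so is cartesian.

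After $-\otimes C\tau$, I would invoke \cref{thm:syn props}(3), \cref{proposition:properties_of_the_symmetric_monoidal_structure_on_syn}(3) and \cref{remark:hearts_in_hz_pullback_diagram} to identify $\Mod_{C\tau}$ of the four synthetic $\infty$-categories with (renormalisations of) the derived $\infty$-categories of $\Comod_{\quiversteenrod}(\Rep(\pcat))$, $\Rep(\pcat)$, $\Comod_{\acat}$ and $\Comod_{\acat(0)}$, and the $C\tau$-linearisation of $F$ with the derived comparison functor attached to the square of abelian categories \eqref{equation:hearts_in_hz_pullback_diagram} (the functors on hearts being computed in \cref{remark:hearts_in_hz_pullback_diagram}). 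Under this identification our square of mapping spectra becomes the one computing morphisms between $\Psf(A)$, $\Psf(B)$ and their images under the three forgetful functors --- and since $\Psf(A),\Psf(B)$ are bounded, this agrees with the computation in the bounded derived categories. Its being cartesian for all finite $A,B$ is precisely the content of the derived-level pullback \cref{theorem:introduction_embedding_into_pullback-of_derived_infty_cats} (equivalently, of the Mayer--Vietoris long exact sequence of $\Ext$-groups recorded there). With both fibres cartesian, the original square is cartesian, and $F$ is fully faithful.

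I expect the main obstacle to be this last identification: verifying that $-\otimes C\tau$ applied to the deformation-level square of \cref{cnstr:comparison-square} degenerates to exactly the derived square of \cref{remark:hearts_in_hz_pullback_diagram}, so that \cref{theorem:introduction_embedding_into_pullback-of_derived_infty_cats} applies verbatim. This is where the right-adjointability of the square and the bookkeeping of \cref{thm:syn props} do the work, and it is the analogue here of the colocality computations of \cref{lemma:P_colocal_from_amitsur_to_quiver} and \cref{lem:fp-colocal}. If one prefers not to assume \cref{theorem:introduction_embedding_into_pullback-of_derived_infty_cats}, the genuine difficulty migrates into that statement: on hearts, \eqref{equation:hearts_in_hz_pullback_diagram} is already a pullback of abelian categories --- a $\quiversteenrod$-comodule is a Bockstein couple whose Bockstein is refined to a full $\acat_{*}$-coaction on the $\fieldp$-part --- but promoting this to a fully faithful functor of derived categories requires the non-formal input that an injective $\quiversteenrod$-comodule stays acyclic under the forgetful functors to $\Comod_{\acat}$ and to $\Rep(\pcat)$, which uses the explicit form of the comonad $\quiversteenrod$ together with the Adams-type hypotheses on $\Ss\to\Z$ and $\Z\to\fieldp$.
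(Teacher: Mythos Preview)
Your overall reduction strategy---check the square of mapping spectra after inverting $\tau$ and after killing $\tau$---is sound and is essentially what the paper does in its reduction steps (3)--(4). The genuine gap is in the special-fibre step: you invoke \cref{theorem:introduction_embedding_into_pullback-of_derived_infty_cats} to handle the mod-$\tau$ case, but in the paper's logical order that theorem is a \emph{consequence} of \cref{thm:pullback-Z}, not an input to it. The Mayer--Vietoris sequence of \cref{theorem:introduction_embedding_into_pullback-of_derived_infty_cats} is exactly \cref{proposition:mv_lss_between_hsf_and_hfp_e2_terms}, whose proof is ``take $C\tau$ in the pullback square of \cref{thm:pullback-Z}''. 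So as written your argument is circular. You flag this yourself in the final paragraph, but the sketch you offer there (``injective $\quiversteenrod$-comodules stay acyclic under the forgetful functors'') is not carried out and is not obviously easier than the paper's route.

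The paper avoids this circularity by a different manoeuvre at the mod-$\tau$ stage: rather than appealing to a derived-category pullback, it reduces (steps (5)--(6)) to the case $Y=\nu_{\pcat}(I)/\tau$ with $I$ a $\Psf$-injective, observes that any such $I$ underlies a $\bbZ$-module $M$, and then \emph{extends} the square by the multiplication map $\bbZ\otimes M\to M$. Right-adjointability makes the outer composite square trivially cartesian, so one is left with showing the bottom square---which lives entirely over $\Mod_{\bbZ}$---is cartesian. Its vertical fibre is controlled by $J\otimes_{\bbZ}M$ where $J=\fib(\bbZ\otimes\bbZ\to\bbZ)$, and Kochman's computation that $J$ has simple $p$-torsion homotopy feeds into \cref{lem:rel-proj-zmod} to finish. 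This is where the real arithmetic input enters; your proposal never touches it.
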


Before we can prove the theorem we will need a preliminary lemma.

\begin{lemma}
\label{lem:rel-proj-zmod}
Given an $M \in \Mod_{\bbZ}$, its synthetic analogue $\nu_{\Psf}(M) \in \Syn_{\pcat}(\Mod_{\bbZ}) \cong \widecheck{\dcat}_{\Psf}(\Mod_{\bbZ})$
is (co)local along
\[
q \colon \Syn_{\pcat}(\Mod_{\bbZ}) \rightarrow \Syn_{\fieldp}(\Mod_{\bbZ})
\]
if and only if the homotopy groups of $M$ are simple $p$-torsion.
\end{lemma}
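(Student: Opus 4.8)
The plan is to push the problem down to the special fibre of the deformation, where colocality of $\nu_{\Psf}(M)$ becomes a concrete computation with the $p$-Bockstein long exact sequence. First I would apply \Cref{prop:colocal-to-abelian} to reduce colocality of $\nu_{\Psf}(M)$ along $q$ to colocality of the homology $\Psf(M) = \bigl( \pi_*M \rightleftarrows \pi_*(\fieldp \otimes_\bbZ M) \bigr) \in \dcat^b(\Rep(\pcat))$ along the functor induced by $q$ on the associated derived categories. By \Cref{remark:hearts_in_hz_pullback_diagram} this induced functor is the (exact, $t$-exact) functor restricting on hearts to $\bar q \colon \Rep(\pcat) \to \Comod_{\acat(0)}$, $(A \rightleftarrows V) \mapsto (V, \beta)$, which forgets the integral variable and keeps the Bockstein.

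Next I would identify the right adjoint of $\bar q$ on hearts explicitly as $\bar q^R(V, \beta) = \bigl( \ker\beta \hookrightarrow V \xrightarrow{\beta} \ker\beta \bigr)$, with $\pi$ the inclusion and $\delta$ given by $\beta$ corestricted to $\ker\beta$; this is well-defined since $\beta^2 = 0$, and one checks directly that $\delta\pi = 0$ and $\pi\delta = \beta$. Since $\bar q \bar q^R \cong \mathrm{id}$, the functor $\bar q$ is a reflective localization, and the unit $\Psf(M) \to \bar q^R\bar q\Psf(M)$ is the identity on the $\fieldp$-part and the reduction-mod-$p$ map $\pi \colon \pi_*M \to \ker(\beta) \subseteq \pi_*(\fieldp \otimes_\bbZ M)$ on the integral part. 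The $p$-Bockstein sequence $\cdots \to \pi_*M \xrightarrow{p} \pi_*M \xrightarrow{\pi} \pi_*(\fieldp \otimes_\bbZ M) \xrightarrow{\delta} \pi_{*-1}M \to \cdots$ shows $\ker\pi = p \cdot \pi_*M$ and $\mathrm{image}(\pi) = \ker\delta$, so this unit is an isomorphism of Bockstein couples precisely when $p$ annihilates $\pi_*M$: injectivity forces $p\,\pi_*M = 0$, and then $\ker\delta = \ker(\pi\delta) = \ker\beta$ automatically, giving the rest.

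Finally I would upgrade the abelian statement to the derived one. When $\pi_*M$ is simple $p$-torsion, $M \simeq \bigoplus_i \Sigma^{n_i}\fieldp$ as a $\bbZ$-module, hence $\Psf(M) \cong \bigoplus_i \Sigma^{n_i}\Psf(\fieldp)$ is projective in $\Rep(\pcat)$ while $\bar q\Psf(M) \cong \bigoplus_i \Sigma^{n_i}\acat(0)_*$ is injective in $\Comod_{\acat(0)}$ (a coproduct of shifts of the cofree comodule $\acat(0)_*$, which is injective since $\acat(0)_*$ is finite-dimensional); so $R\bar q^R\bar q\Psf(M)$ has no higher cohomology, the derived unit coincides with the abelian one, and $\Psf(M)$ is colocal. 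Conversely, if $\pi_*M$ is not simple $p$-torsion, then $M$ has a nonzero summand that is a shift of $\bbZ$ or of $\bbZ/p^k$ with $k \ge 2$; for each of these the map $\pi$ already fails to be injective, so $H^0$ of the derived unit is not an isomorphism, and since colocal objects are closed under retracts $\Psf(M)$ is not colocal.

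The step I expect to demand the most care is the first reduction: checking that \Cref{prop:colocal-to-abelian} applies to $\Psf(M)$ for an arbitrary, possibly non-perfect, $\bbZ$-module $M$ --- that is, that the generic fibre genuinely contributes nothing to colocality --- together with the verification in the last step that no higher derived functors of $\bar q^R$ intervene in the colocal case. The Bockstein bookkeeping itself is routine.
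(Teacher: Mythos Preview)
Your reduction to the derived level and your identification of the right adjoint $\bar q^{R}(V,\beta)=(\ker\beta\hookrightarrow V\xrightarrow{\beta}\ker\beta)$ are both correct, and the check that $\bar q\bar q^{R}\cong\mathrm{id}$ really does make $\bar q$ a reflective localization. The gap is in the inference you draw from this. With a fully faithful \emph{right} adjoint, the condition ``derived unit $X\to R\bar q^{R}\bar q X$ is an isomorphism'' characterizes $q$-\emph{locality} of $X$, not $q$-colocality: writing $\Map(\bar q Z,\bar q X)\cong\Map(Z, R\bar q^{R}\bar q X)$ shows the relevant comparison is governed by the unit at $X$ exactly when $X$ sits in the \emph{target} slot. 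Colocality, where $X$ sits in the source slot, is instead governed by a fully faithful \emph{left} adjoint (image of $L$ with $\bar qL\cong\mathrm{id}$ gives colocal objects). So your argument, as written, establishes the ``local'' half of the lemma and then mislabels it; the ``colocal'' half is not addressed. The paper gets colocality by identifying $\bar q$ with restriction along $h\pcat_{p}\hookrightarrow h\pcat$ and using that left Kan extension is a fully faithful left adjoint whose image contains $\Psf(\fieldp)$; it then deduces locality from colocality via dualizability of $\nu_{\Psf}(\fieldp)$ and \cref{lem:dualizing-co-local}. Either of these moves would close your gap.

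The concern you flag about \cref{prop:colocal-to-abelian} is also real and is not handled by your outline. That proposition lives in the \emph{perfect} deformation $\dcat^{\omega}$, while the lemma is about the unseparated $\widecheck{\dcat}\cong\Syn$; the bridge is \cref{lem:big-to-small}, which requires compactness of $\nu_{\Psf}(M)$. The paper therefore does not attempt to pass through $\dcat^{b}$ for a general $M$: it proves (co)locality for the single compact object $\nu_{\Psf}(\fieldp)$ via \cref{lem:big-to-small} and \cref{prop:colocal-to-abelian}, and then uses that (co)local objects are closed under colimits (since all categories are compactly generated and the comparison functors preserve compacts) to handle an arbitrary $M$ with simple $p$-torsion homotopy as a sum of shifts of $\fieldp$. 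Your attempt to treat $\Psf(M)$ directly in $\dcat^{b}$ for infinite $M$ runs into both this issue and the question of whether $R\bar q^{R}$ even lands in $\dcat^{b}$. For the necessity direction, the paper gives a one-line argument that avoids derived functors entirely: if $\nu_{\Psf}(M)$ is either local or colocal, then $\Hom_{\Rep(\pcat)}(\Psf(M),\Psf(M))\cong\Hom_{\acat(0)}(\Hsf(M),\Hsf(M))$ forces the identity of $\Psf(M)$ to be $p$-torsion.
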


\begin{proof}
Throughout the proof, we will make use of \cref{sec:def},  identifying the categories of synthetic objects with the unseparated derived categories. First, note that if $\nu_{\Psf}(M)$ is either $q$-colocal or $q$-local, then we have 
\[ 
\pi_{0} \map(\nu_{\Psf}(M), \nu_{\Psf}(M)/\tau) \cong \pi_{0} \map(\nu_{\fieldp}(M), \nu_{\fieldp}(M)/\tau)
\]
which using that $\nu(-)/\tau$ is in the heart and the description of hearts given in \cref{remark:hearts_in_hz_pullback_diagram} is equivalent to 
\[
\Hom_{\Rep(\pcat)}(\Psf(M), \Psf(M)) \cong \Hom_{\acat(0)}(\Hsf(M), \Hsf(M))
\]
Since every $\acat(0)$-comodule is simple $p$-torsion, it follows that the identity of
\[
\Psf(M) \cong (\pi_{*}(M) \rightleftarrows \pi_{*}(M; \fieldp))
\]
must have order $p$. In particular, this implies that $\pi_*(M)$ consists of simple $p$-torsion.

Next we show that $\nu_{\Psf}(\fieldp)$ is both $q$-local and $q$-colocal. Since the functor is symmetric monoidal, and $\nu_{\Psf}(\fieldp)$ is dualizable, by applying \Cref{lem:dualizing-co-local} we find that it will suffice to show that $\nu_{\Psf}(\F_p)$ is $q$-colocal. Using the fact that $\nu_{\Psf}(\F_p)$ is compact, \Cref{lem:big-to-small} and \cref{prop:colocal-to-abelian} imply that it will suffice to show that $\Psf(\F_p)$ is colocal for the induced functor 
\[
\dcat^b(\Rep(\pcat)) \rightarrow \dcat^b(\Comod_{A(0)})
\]
between bounded derived categories. 

If we write $\pcat_{p} \subseteq \pcat$ for the full subcategory spanned by direct sums of $\fieldp$, then the induced functor on the hearts can be identified with the restriction 
\[
\Rep(\pcat) \rightarrow \Rep(\pcat_{p}).
\]
It follows that the induced functor between unbounded categories can be identified with the restriction 
\[
\Fun_{\Sigma}(h \pcat, \dcat(\bbZ)) \rightarrow \Fun_{\Sigma}(h \pcat_{p}, \dcat(\bbZ)).
\]
This has a fully faithful left adjoint given by left Kan extension along the fully faithful embedding $h \pcat_{p} \hookrightarrow h \pcat$ of homotopy categories and therefore any object in the image of this adjoint, such as $\Psf(\fieldp)$, is colocal. 

We now return to the case of a general $M$ whose homotopy groups are simple $p$-torsion. Since any $\bbZ$-module is formal, any such $M$ is a sum of suspensions of copies of $\F_p$ and since $\nu_{\Psf}(-)$ preserves sums and sends suspensions to shifts, $\nu_{\Psf}(M)$ is a sum of shifts of copies of $\nu_{\Psf}(\F_p)$. Finally, we note since both of the synthetic categories are compactly-generated and the functors preserve compactness by \cref{remark:functors_in_pullback_square_preserve_compactness}, both local and colocal objects are closed under colimits. As a consquence $\nu_{\Psf}(M)$, being a sum of shifts of $\nu_{\Psf}(\F_p)$,
  is $q$-(co)local.
  \qedhere
\end{proof}

\begin{proof}[Proof (of \Cref{thm:pullback-Z}).]
In order to prove the theorem we have to show that for $X, Y \in \Syn_{\Psf}(\Sp)$, the square of mapping spectra
    \[ \begin{tikzcd}
      \map_{\Syn_{\pcat}(\Sp)}(X,Y) \ar[r] \ar[d] &
      \map_{\Syn_{\pcat}(\Mod_{\Z)}}(X,Y) \ar[d] \\
      \map_{\Syn_{\fieldp}(\Sp)}(X,Y) \ar[r] &
      \map_{\Syn_{\fieldp}(\Mod_{\Z)}}(X,Y)
    \end{tikzcd} \]
is a pullback. Here we slightly abusively write $X, Y$ for their images in the corresponding categories. We first make a series of reductions:
\begin{enumerate}
    \item Using the fact that $\Syn_{\pcat}(\Sp)$ is generated under colimits by dualizable objects and that our comparison functors are all symmetric monoidal it suffices to prove the square above is a pullback when $X$ is the unit.
    \item Using the fact that its monoidal unit is compact and that $\Syn_{\pcat}(\Sp)$ is generated under colimits and desuspensions by objects of the form $\nu_{\pcat}(P)$, where $P \in \spectra$ is finite $\pcat$-projective, it will suffice to prove the square is a pullback when $Y$ is in the image of $\nu_{\pcat}$.
    \item Since the vertical arrows in this square are equivalences $\tau$-locally it suffices to prove the square is a pullback after $\tau$-completion.
    \item In order to prove the square is a pullback after $\tau$-completion we only need to show that it is a pullback mod $\tau$.
    \item We can write $\nu_{\pcat}(P)/\tau$ as the limit of an injective resolution, therefore it suffices to argue the square is a pullback when $Y$ is of the form $\nu_{\pcat}(I)/\tau$ where $I$ is a $\Psf$-injective.
    \item Every $\Psf$-injective is the underlying spectum of a $\bbZ$-module.
\end{enumerate}
With all the reduction steps in place, we've now reduced to proving the following: Given a spectrum $M$ which admits a structure of a $\bbZ$-module, the diagram of mapping spectra
    \begin{center}
      \begin{tikzcd}
        \map_{\Syn_{\pcat}(\Sp)}(\monunit, \nu_{\pcat}(M)) \ar[r] \ar[d] & \map_{\Syn_{\F_p}(\Sp)}(\monunit, \nu_{\fieldp}(M)) \ar[d] \\
        \map_{\Syn_{\pcat}(\Z)}(\monunit, \nu_{\pcat}(\Z \otimes M)) \ar[r] & \map_{\Syn_{\F_p}(\Z)}(\monunit, \nu_{\fieldp}(\Z \otimes M))
      \end{tikzcd}
    \end{center}
is a pullback. A choice of a $\bbZ$-module structure on $M$, extends this diagram to
    \begin{center}
      \begin{tikzcd}
        \map_{\Syn_{\pcat}(\Sp)}(\monunit, \nu_{\pcat}(M)) \ar[r] \ar[d] & \map_{\Syn_{\F_p}(\Sp)}(\monunit, \nu_{\fieldp}(M)) \ar[d] \\
        \map_{\Syn_{\pcat}(\Z)}(\monunit, \nu_{\pcat}(\Z \otimes M)) \ar[r] \ar[d] & \map_{\Syn_{\F_p}(\Z)}(\monunit, \nu_{\fieldp}(\Z \otimes M)) \ar[d] \\
        \map_{\Syn_{\pcat}(\Z)}(\monunit, \nu_{\pcat}(M)) \ar[r] & \map_{\Syn_{\F_p}(\Z)}(\monunit, \nu_{\fieldp}(M))
      \end{tikzcd},
    \end{center}
where the vertical arrows in the bottom square are given by multiplication. To show that the top square is a pullback, we will show that the vertical composites are equivalences and then show that the bottom square is a pullback. The first part follows from the right-adjointability of the square of \Cref{cnstr:comparison-square}. 

In order to show the bottom square is a pullback we examine the induced map between vertical fibers. We will write $J \colonequals \fib(\Z \otimes \Z \to \Z)$, which is simple $p$-torsion by \cite{kochman1982integral}. Using the fact the multiplication $\Z \otimes M \to M$ is a split epimorphism of spectra, the map of vertical fibers takes the form
\[ 
\map_{\Syn_{\pcat}(\Mod_{\Z)}}(\monunit, \nu(J \otimes_\Z M)) \to \map_{\Syn_{\fieldp}(\Mod_{\Z})}(\monunit, \nu(J \otimes_\Z M)).
\]
As $J$ is simple $p$-torsion, $V \otimes_\Z M$ is as well, therefore $\nu_{\Psf}(V \otimes_{\Z} M)$ is local by \Cref{lem:rel-proj-zmod}, ending the argument. 
\end{proof}

\subsection{Calculational consequences}
\label{subsection:comparison_of_adams_sseqs}

In \Cref{thm:pullback-Z}, we showed that a certain natural square of synthetic $\infty$-categories is a pullback. In this section, we describe consequences of this statement in concrete calculational terms, describing the relationship between the between the various Adams spectral sequences. In particular, we give a complete description of the $\bbZ$-based Adams spectral sequence for the sphere. 

\begin{corollary}
\label{cor:colocal Z}
Let $X$ be a spectrum whose integral homology groups are simple $p$-torsion. Then, $\nu_{\pcat}(X)$ is (co)local for 
\[
\Syn_{\pcat}(\Sp) \rightarrow \Syn_{\fieldp}(\Sp). 
\]
\end{corollary}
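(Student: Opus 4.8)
The plan is to deduce the statement from the full faithfulness of $\Syn_{\pcat}(\Sp) \hookrightarrow (P)$ proved in \Cref{thm:pullback-Z}, applied together with \Cref{lem:rel-proj-zmod} to the $\bbZ$-module $\bbZ \otimes X$, whose homotopy groups $\pi_{*}(\bbZ \otimes X) \cong \Hrm_{*}(X;\bbZ)$ are simple $p$-torsion by hypothesis.

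First I would unwind what it means for $\nu_{\pcat}(X)$ to be (co)local for $G \colon \Syn_{\pcat}(\Sp) \to \Syn_{\fieldp}(\Sp)$ in terms of the pullback $(P) = \Syn_{\fieldp}(\Sp) \times_{\Syn_{\fieldp}(\Mod_{\bbZ})} \Syn_{\pcat}(\Mod_{\bbZ})$. By \Cref{thm:pullback-Z}, for $Y, Z \in \Syn_{\pcat}(\Sp)$ the spectrum $\map_{\Syn_{\pcat}(\Sp)}(Y, Z)$ is the pullback of the square of mapping spectra in the remaining three corners of \Cref{cnstr:comparison-square}, where the images of $Y$ and $Z$ are computed via the base-change left adjoints of that square. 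Since each of those functors is a morphism of deformations it commutes with the $\nu$-constructions, so $\nu_{\pcat}(X)$ is carried to $\nu_{\fieldp}(X)$, to $\nu_{\pcat}(\bbZ \otimes X)$, and to $\nu_{\fieldp}(\bbZ \otimes X)$ respectively, and the bottom edge $q \colon \Syn_{\pcat}(\Mod_{\bbZ}) \to \Syn_{\fieldp}(\Mod_{\bbZ})$ is the functor of \Cref{lem:rel-proj-zmod}. Writing out this pullback square of mapping spectra with $Z = \nu_{\pcat}(X)$ (for colocality) or with $Y = \nu_{\pcat}(X)$ (for locality), one sees that the edge induced by $G$ is an equivalence exactly when the parallel edge involving $\nu_{\pcat}(\bbZ \otimes X)$ — namely the map induced by $q$ on mapping spectra into or out of $\nu_{\pcat}(\bbZ \otimes X)$ — is an equivalence; that is, exactly when $\nu_{\pcat}(\bbZ \otimes X)$ is $q$-(co)local.

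Then I would simply invoke \Cref{lem:rel-proj-zmod}, which says that $\nu_{\pcat}(\bbZ \otimes X)$ is both $q$-local and $q$-colocal precisely because $\pi_{*}(\bbZ \otimes X)$ is simple $p$-torsion; combining this with the previous paragraph yields the corollary. The corollary is genuinely a formal consequence, so there is no deep obstacle: the only thing requiring care is the bookkeeping in the middle step — verifying that the base-change functors identify the images of $\nu_{\pcat}(X)$ as claimed, and that the three auxiliary mapping-spectrum pullback squares assemble compatibly, so that an equivalence on the ``$\Syn_{\pcat}(\Mod_{\bbZ}) \to \Syn_{\fieldp}(\Mod_{\bbZ})$'' edge genuinely forces an equivalence on the ``$\Syn_{\pcat}(\Sp) \to \Syn_{\fieldp}(\Sp)$'' edge. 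Everything else is just quoting \Cref{thm:pullback-Z} and \Cref{lem:rel-proj-zmod}.
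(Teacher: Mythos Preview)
Your proposal is correct and takes essentially the same approach as the paper, whose proof is the single sentence ``This is immediate from \cref{thm:pullback-Z} and \cref{lem:rel-proj-zmod}.'' You have simply (and correctly) unpacked the implicit bookkeeping: full faithfulness into the pullback turns mapping spectra in $\Syn_{\pcat}(\Sp)$ into pullback squares, so an equivalence along the $q$-edge for $\nu_{\pcat}(\bbZ \otimes X)$ forces an equivalence along the parallel $G$-edge for $\nu_{\pcat}(X)$.
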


\begin{proof}
This is immediate from \cref{thm:pullback-Z} and \cref{lem:rel-proj-zmod}. 
\end{proof}

\begin{corollary} \label{cor:Fp-Z-iso}
Let $X$ be a spectrum whose integral homology groups are simple $p$-torsion. Then the canonical comparison maps induce isomorphisms 
\[ 
{}^{\Z}E_r^{s,t}(X) \cong {}^{\Psf}E_r^{s,t}(X) \cong {}^{\F_p}E_r^{s,t}(X) 
\]
between, respectively, the classical $\bbZ$-based Adams spectral sequence, the Adams spectral sequence based on $\Psf \colon \spectra \rightarrow \Comod_{\quiversteenrod}(\Rep(\pcat)$, and the $\fieldp$-based Adams spectral sequence. 
\end{corollary}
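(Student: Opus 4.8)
The plan is to split the two isomorphisms in the statement and handle them by different means, exploiting the factorizations of comparison functors from \cref{cnstr:comparison-functors} and \cref{cnstr:comparison-square}. The identification ${}^{\Z}E_r^{s,t}(X) \cong {}^{\Psf}E_r^{s,t}(X)$ requires no hypothesis on $X$ at all: it is \cref{theorem:hbbz_and_hsf_ass_are_iso_when_hsfx_projective} applied with the sphere as \emph{source}. Indeed $\Ss$ is finite and $\mathrm{H}_*(\Ss;\bbZ)=\bbZ$ is (trivially) a sum of copies of $\bbZ$ and $\fieldp$, so that corollary compares the $\bbZ$-Adams and $\Psf$-Adams spectral sequences computing $\pi_*\map(\Ss,X)=\pi_*X$ and supplies the first isomorphism on every page $r\ge 2$. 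So this half is essentially a citation.

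The real content is ${}^{\Psf}E_r^{s,t}(X)\cong{}^{\fieldp}E_r^{s,t}(X)$, and this is where the hypothesis that $\mathrm{H}_*(X;\bbZ)$ is simple $p$-torsion is used. I would first recall that ${}^{\Psf}E_r(X)$ is the $\tau$-Bockstein spectral sequence of the mapping spectrum $\map_{\Syn_\pcat(\Sp)}(\mathbf{1},\nu_\pcat(X))$, via \cref{lem:syn-def-equiv} and \cref{corollary:the_hsf_adams_has_signature_based_on_comodules}, while ${}^{\fieldp}E_r(X)$ is the $\tau$-Bockstein spectral sequence of $\map_{\Syn_{\fieldp}(\Sp)}(\mathbf{1},\nu_{\fieldp}(X))$, i.e.\ the classical mod $p$ Adams spectral sequence by \cite{pstrkagowski2018synthetic}. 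The comparison map of spectral sequences is the one induced by the symmetric monoidal left adjoint $q\colon\Syn_\pcat(\Sp)\to\Syn_{\fieldp}(\Sp)$ of \cref{cnstr:comparison-square}, which sends $\mathbf{1}$ to $\mathbf{1}$, $\nu_\pcat(X)$ to $\nu_{\fieldp}(X)$, and fixes $\tau$. The key input is \cref{cor:colocal Z}: it tells us $\nu_\pcat(X)$ is (co)local for $q$, and combined with the fact that $q$ fixes the unit this yields that
\[
\map_{\Syn_\pcat(\Sp)}(\mathbf{1},\nu_\pcat(X))\longrightarrow\map_{\Syn_{\fieldp}(\Sp)}(\mathbf{1},\nu_{\fieldp}(X))
\]
is an equivalence. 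As $q$ is symmetric monoidal this map is $\tau$-linear, hence an equivalence of $\tau$-filtered spectra, and therefore induces an isomorphism on every page of the associated spectral sequences. Composing the two halves proves the corollary.

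I do not expect a genuine obstacle here; the only point requiring attention is bookkeeping — verifying that the comparison maps of spectral sequences referred to in the statement are literally the ones induced by $q$ and by \cref{theorem:hbbz_and_hsf_ass_are_iso_when_hsfx_projective}, which amounts to unwinding the compatibilities with $\nu$ built into \cref{cnstr:comparison-functors} and \cref{cnstr:comparison-square}. Alternatively, one can bypass the $\tau$-linearity step by invoking the dictionary used throughout \S\ref{subsection:comparison_of_adams_sseqs}, whereby (co)locality of a synthetic analogue along a comparison functor \emph{is} the assertion that the induced map of Adams spectral sequences is an isomorphism, and apply it directly to \cref{cor:colocal Z}.
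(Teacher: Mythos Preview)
Your proposal is correct and follows essentially the same approach as the paper: the first isomorphism is obtained by applying \cref{theorem:hbbz_and_hsf_ass_are_iso_when_hsfx_projective} with source $\Ss$, and the second by invoking the (co)locality of $\nu_{\pcat}(X)$ from \cref{cor:colocal Z} to identify the relevant mapping spectra in $\Syn_{\pcat}(\Sp)$ and $\Syn_{\fieldp}(\Sp)$. Your treatment of the $\tau$-linearity bookkeeping is more explicit than the paper's, which simply records the induced isomorphism on homotopy groups of the synthetic mapping spectra, but the underlying argument is the same.
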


\begin{proof}
The first isomorphism is \cref{theorem:hbbz_and_hsf_ass_are_iso_when_hsfx_projective}. The second comparison map can be identified up to regrading with 
\[
\pi_{s} \map_{\Syn_{\pcat}(\Sp)}(\nu_{\pcat}(S^{t}), \nu_{\pcat}(X)) \rightarrow \pi_{s} \map_{\Syn_{\fieldp}(\Sp)}(\nu_{\fieldp}(S^{t}), \nu_{\fieldp}(X)) 
\]
which is an isomorphism since $\nu_{\pcat}(X)$ is colocal. 
\end{proof}

The assumption that $\Hrm_{*}(X; \bbZ)$ is simple $p$-torsion is quite strong; the following result shows how to calculate the $E_{2}$-term in the general case. 

\begin{proposition}
\label{proposition:mv_lss_between_hsf_and_hfp_e2_terms}
For any spectra $X, Y$, the canonical comparison morphism 
\begin{equation}
\label{equation:comparison_between_hsf_and_hfp_ass_in_mv_seq}
\tensor[^\Psf]{E}{_2}(X, Y) \rightarrow \tensor[^{\fieldp}]{E}{_2}(X, Y)
\end{equation}
of Adams spectral sequences and the maps between $\Ext$-groups 
\begin{equation}
\label{equation:comparison_between_abm_and_bm_ext_in_mv_seq}
\Ext_{\Rep(\pcat)}(\Psf(X), \Psf(Y)) \rightarrow \Ext_{\acat(0)}(\Hrm_{*}(X; \fieldp), \Hrm_{*}(Y; \fieldp))
\end{equation}
induced by the forgetful functor $\Rep(\pcat) \rightarrow \Comod_{\A(0)}$ fit into a Mayer--Vietoris long exact sequence of the form 
\begin{align*}
\cdots \rightarrow \tensor[^\Psf]{E}{_2^{s, t}}(X,Y)  \rightarrow \Ext^{s, t}_{\Rep(\pcat)}(\Psf(X),\Psf(Y)) \oplus \tensor[^{\fieldp}]{E}{_2^{s, t}}(X,Y) \rightarrow \\ \rightarrow \Ext^{s, t}_{\acat(0)}(H_*(X),H_*(Y)) \rightarrow  \tensor[^\Psf]{E}{_2^{s+1, t}}(X,Y) \rightarrow \cdots.
\end{align*}
In particular, if (\ref{equation:comparison_between_abm_and_bm_ext_in_mv_seq}) are isomorphisms, then so is (\ref{equation:comparison_between_hsf_and_hfp_ass_in_mv_seq}).
\end{proposition}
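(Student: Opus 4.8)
The plan is to deduce the proposition formally from the fully faithful embedding of \cref{thm:pullback-Z}. Since the comparison functor there lands in a pullback of $\infty$-categories, and mapping spectra in such a pullback are computed as the corresponding pullback of mapping spectra, full faithfulness means precisely that applying $\map(-,-)$ to the square of synthetic categories of \cref{cnstr:comparison-square} at the pair $(\nu_{\pcat}(X),\nu_{\pcat}(Y))$ produces a pullback square of spectra. Here I use that the left adjoints in that square carry $\nu_{\pcat}(X)$ to $\nu_{\pcat}(\Z\otimes X)$ along the vertical map and to $\nu_{\fieldp}(X)$ along the horizontal map, i.e.\ that these comparison functors are compatible with the synthetic analogue functor.

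Then I would pass to the special fibres. The endofunctor $M\mapsto M/\tau=\cofib(\tau\colon M\to M)$ of $\tau$-linear spectra agrees, up to a shift, with $M\mapsto\fib(\tau\colon M\to M)$, so it preserves finite limits; applying it to the square above yields a pullback square of the mod-$\tau$ mapping spectra. By \cref{thm:syn props} and \cref{corollary:the_hsf_adams_has_signature_based_on_comodules}, the bigraded homotopy groups of the four corners are, respectively, $\Ext^{s,t}_{\quiversteenrod}(\Psf(X),\Psf(Y))={}^{\Psf}E_2^{s,t}(X,Y)$; $\Ext^{s,t}_{\Rep(\pcat)}(\Psf(X),\Psf(Y))$; $\Ext^{s,t}_{\acat}(\Hrm_*(X;\fieldp),\Hrm_*(Y;\fieldp))={}^{\fieldp}E_2^{s,t}(X,Y)$; and $\Ext^{s,t}_{\acat(0)}(\Hrm_*(X;\fieldp),\Hrm_*(Y;\fieldp))$. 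For the $\Syn_{\pcat}(\Mod_{\Z})$ corner one uses that the comonad on $\Rep(\pcat)$ induced by $\Z\otimes_{\Z}-=\Id$ is trivial, so that $\Syn_{\pcat}(\Mod_{\Z})^{\heartsuit}\cong\Rep(\pcat)$ and $\Psf_{\Mod_{\Z}}(\Z\otimes X)\cong\Psf(X)$. The two horizontal maps in the resulting square are the comparison morphisms \eqref{equation:comparison_between_abm_and_bm_ext_in_mv_seq} and \eqref{equation:comparison_between_hsf_and_hfp_ass_in_mv_seq}, since those are by construction the maps the comparison functors of \cref{cnstr:comparison-square} induce on $E_2$-pages.

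Finally, a pullback square $A\simeq B\times_D C$ of spectra sits in a fibre sequence $A\to B\oplus C\to D$; taking homotopy groups and reindexing via $\pi_{t-s}\leftrightarrow E_2^{s,t}$ — so that the connecting map lowers the topological degree by one, hence raises the Adams filtration by one — turns this into exactly the asserted Mayer--Vietoris long exact sequence. For the last sentence: if \eqref{equation:comparison_between_abm_and_bm_ext_in_mv_seq} is an isomorphism in every bidegree, then $\Ext_{\Rep(\pcat)}\oplus{}^{\fieldp}E_2\to\Ext_{\acat(0)}$ is already surjective through its first summand, so every connecting homomorphism vanishes; the sequence breaks into short exact sequences $0\to{}^{\Psf}E_2\to\Ext_{\Rep(\pcat)}\oplus{}^{\fieldp}E_2\to\Ext_{\acat(0)}\to0$, and combined with the isomorphism $\Ext_{\Rep(\pcat)}\cong\Ext_{\acat(0)}$ this forces the remaining projection ${}^{\Psf}E_2\to{}^{\fieldp}E_2$ — which is \eqref{equation:comparison_between_hsf_and_hfp_ass_in_mv_seq} — to be an isomorphism.

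The mathematical substance is carried entirely by \cref{thm:pullback-Z}; the steps requiring genuine care are the identification of the four corners of the square with the named $\Ext$-groups — especially remembering that the special fibre of $\Syn_{\pcat}(\Mod_{\Z})$ is $\Rep(\pcat)$ itself, not a nontrivial comodule category, since $\Z\otimes_{\Z}-$ is the identity — and the verification that the maps induced on special fibres are the advertised comparison morphisms rather than something else. I expect this bookkeeping, rather than any new idea, to be the only real obstacle.
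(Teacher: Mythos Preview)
Your proposal is correct and follows essentially the same route as the paper: deduce a pullback square of mapping spectra from \cref{thm:pullback-Z}, pass to $\nu(Y)/\tau$ in the target (equivalently, take mod-$\tau$ mapping spectra), identify the four corners with the stated $\Ext$-groups, and read off the Mayer--Vietoris sequence. The paper's proof is terser and omits the explicit verification of the ``In particular'' clause, but the substance is identical.
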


\begin{proof}
After identifying the $\Ext$-groups with the homotopy of the cofiber of $\tau$, this is the long exact sequence associated to the pullback diagram 
\[\begin{tikzcd}
	{\map_{\Syn_{\pcat}(\Sp)}(\nu(X), \nu(Y) / \tau)} & {\map_{\Syn_{\fieldp}(\Sp)}(\nu(X), \nu(Y)/ \tau)} \\
	{\map_{\Syn_{\pcat}(\Mod_{\bbZ})}(\nu(\Z \otimes X), \nu(\Z \otimes Y) /\tau} & {\map_{\Syn_{\fieldp}(\Mod_{\bbZ})}(\nu(\Z \otimes X), \nu(\Z \otimes Y) / \tau)}
	\arrow[from=2-1, to=2-2]
	\arrow[from=1-1, to=2-1]
	\arrow[from=1-2, to=2-2]
	\arrow[from=1-1, to=1-2]
\end{tikzcd}\]
\end{proof}

The Mayer--Vietoris long exact sequence of \cref{proposition:mv_lss_between_hsf_and_hfp_e2_terms} is a powerful tool for computations. For example, we are able to elegantly determine the $E_{2}$-term of the $\Psf$-based Adams spectral sequence for the sphere, which by \cref{theorem:hbbz_and_hsf_ass_are_iso_when_hsfx_projective} coincides with the classical $\bbZ$-Adams spectral sequence associated to the cobar resolution.

\begin{theorem}
\label{theorem:e2_page_of_the_sphere}
We have a canonical isomorphism 
\[
\tensor[^{\Psf}]{E}{_2^{s, t}}(\Ss, \Ss) \cong \Ext^{s, t}_{\Rep(\pcat)}(\Psf(\Ss), \Psf(\Ss)) \times_{\Ext^{s, t}_{\acat(0)}(\fieldp, \fieldp)} \Ext^{s, t}_{\acat}(\fieldp, \fieldp) \cong \mathbb{Z} \times_{\fieldp[h_{0}]} \Ext^{s, t}_{\acat}(\fieldp, \fieldp)
\]
Explicitly, we have 
\[
\tensor[^{\Psf}]{E}{_2^{s, t}}(\Ss, \Ss) = \begin{cases}
  \bbZ  & s-t = 0, s = 0 \\
  0 & s-t = 0, s \neq 0 \\
  \Ext_{\acat}^{s, t}(\fieldp, \fieldp) & s-t \neq 0
\end{cases}
\]
and the canonical comparison map 
\[
\tensor[^{\Psf}]{E}{_2^{s, t}}(\Ss, \Ss) \rightarrow \tensor[^{\fieldp}]{E}{_2^{s, t}}(\Ss, \Ss)
\]
between $\Psf$- and $\fieldp$-Adams spectral sequences is an isomorphism outside of the line $s-t = 0$. 
\end{theorem}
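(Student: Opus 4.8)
The plan is to combine the Mayer--Vietoris long exact sequence of \cref{proposition:mv_lss_between_hsf_and_hfp_e2_terms} for $X = Y = \Ss$ with three elementary identifications of its terms, and then observe that all connecting homomorphisms vanish for degree reasons. First I would compute the left-hand vertex: since $\Psf(\Ss) = \Psf(\bbZ \otimes \Ss) = \Psf(\bbZ)$ is the free Bockstein couple on a degree-zero integral generator (\cref{remark:explicit_presentation_of_cp_generators_of_wvectbeta}), it is compact projective in $\Rep(\pcat)$; hence $\Ext^{s,t}_{\Rep(\pcat)}(\Psf(\Ss),\Psf(\Ss))$ vanishes for $s > 0$ and, by the Yoneda lemma, equals $\pi_t\bbZ$ for $s = 0$, i.e. $\bbZ$ concentrated in bidegree $(0,0)$. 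Next, $\Ext^{s,t}_{\acat(0)}(\fieldp,\fieldp) \cong \fieldp[h_0]$ is one-dimensional on the line $s = t$ and zero elsewhere, while ${}^{\fieldp}E_2^{s,t}(\Ss,\Ss) = \Ext^{s,t}_{\acat}(\fieldp,\fieldp)$ by definition. Finally, since $\Hrm_*(\Ss;\bbZ) = \bbZ$, \cref{theorem:hbbz_and_hsf_ass_are_iso_when_hsfx_projective} identifies ${}^{\Psf}E_2^{s,t}(\Ss,\Ss)$ with the classical $\bbZ$-Adams $E_2$-term, which is what lets us phrase the result for the cobar-resolution spectral sequence.

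The heart of the matter is to show that, in the Mayer--Vietoris sequence, the map
\[
\Ext^{s,t}_{\Rep(\pcat)}(\Psf(\Ss),\Psf(\Ss)) \oplus \Ext^{s,t}_{\acat}(\fieldp,\fieldp) \longrightarrow \Ext^{s,t}_{\acat(0)}(\fieldp,\fieldp)
\]
is surjective for every $(s,t)$. Granting this, each connecting map is zero, the sequence splits into short exact sequences, and ${}^{\Psf}E_2^{s,t}(\Ss,\Ss)$ is identified with the fibre product $\Ext^{s,t}_{\Rep(\pcat)}(\Psf(\Ss),\Psf(\Ss)) \times_{\Ext^{s,t}_{\acat(0)}(\fieldp,\fieldp)} \Ext^{s,t}_{\acat}(\fieldp,\fieldp)$, which is the first displayed isomorphism; the second display is the same statement with the two outer $\Ext$-groups renamed $\bbZ$ and $\fieldp[h_0]$. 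Since the target vanishes unless $s = t$, surjectivity need only be checked there. In bidegree $(0,0)$ it holds because the first summand contributes the reduction map $\bbZ \twoheadrightarrow \fieldp$ --- this is the description of the horizontal functor on hearts recorded in \cref{remark:hearts_in_hz_pullback_diagram}. For $s = t \geq 1$ the first summand vanishes, and surjectivity reduces to the classical fact that under corestriction $\Ext_{\acat}(\fieldp,\fieldp) \to \Ext_{\acat(0)}(\fieldp,\fieldp)$ the map $\Ext^{s,s}_{\acat}(\fieldp,\fieldp) \to \Ext^{s,s}_{\acat(0)}(\fieldp,\fieldp)$ is an isomorphism, i.e. the $h_0$-tower survives to $\acat(0)$-cohomology.

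With the short exact sequences established, the explicit table follows by inspecting the fibre product bidegree by bidegree. Outside the $0$-stem ($s - t \neq 0$) both $\Ext_{\Rep(\pcat)}$ and $\Ext_{\acat(0)}$ vanish, so the fibre product is simply $\Ext^{s,t}_{\acat}(\fieldp,\fieldp)$, and at the same time the comparison map ${}^{\Psf}E_2^{s,t}(\Ss,\Ss) \to {}^{\fieldp}E_2^{s,t}(\Ss,\Ss)$, being the projection onto this factor, is an isomorphism. In bidegree $(0,0)$ the fibre product is $\bbZ \times_{\fieldp} \fieldp = \bbZ$, using that $\Ext^{0,0}_{\acat}(\fieldp,\fieldp) \to \Ext^{0,0}_{\acat(0)}(\fieldp,\fieldp)$ is an isomorphism. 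For $s = t \geq 1$ the fibre product is $0 \times_{\fieldp} \fieldp = 0$, since the map $\Ext^{s,s}_{\acat}(\fieldp,\fieldp) \to \Ext^{s,s}_{\acat(0)}(\fieldp,\fieldp)$ is injective (indeed the isomorphism used above).

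I expect the only genuine friction to lie in the middle paragraph: one must trace through the identifications of \cref{remark:hearts_in_hz_pullback_diagram} to be sure that the two structure maps into $\Ext_{\acat(0)}(\fieldp,\fieldp)$ in the Mayer--Vietoris sequence are precisely mod-$p$ reduction on $\Ext^0$ and the usual corestriction on the $h_0$-tower; once this is pinned down, surjectivity is a standard computation with $\acat(0)$-cohomology. The remaining steps are formal bookkeeping with a long exact sequence and an elementary fibre-product calculation.
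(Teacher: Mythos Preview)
Your proposal is correct and follows essentially the same route as the paper: both invoke the Mayer--Vietoris long exact sequence of \cref{proposition:mv_lss_between_hsf_and_hfp_e2_terms}, compute $\Ext_{\Rep(\pcat)}(\Psf(\Ss),\Psf(\Ss))$ via projectivity of $\Psf(\Ss)$, identify $\Ext_{\acat(0)}(\fieldp,\fieldp)\cong\fieldp[h_0]$, and then establish surjectivity onto this term so that the long exact sequence breaks into short exact sequences yielding the fibre-product description. The only difference is in how surjectivity is argued: the paper observes once and for all that $h_0$ lifts to $\Ext_{\acat}$ because the defining extension $0\to\Hrm_*(\Ss)\to\Hrm_*(\Ss/p)\to\Hrm_*(\Sigma\Ss)\to 0$ arises from a cofibre sequence of spectra, whereas you split into the case $(0,0)$ (handled by $\bbZ\twoheadrightarrow\fieldp$) and $s=t\geq 1$ (handled by the classical fact that the $h_0$-tower maps isomorphically under corestriction); both are valid and amount to the same underlying computation.
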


\begin{proof}
We will use the Mayer-Vietoris long exact sequence of \cref{proposition:mv_lss_between_hsf_and_hfp_e2_terms}. Since $\acat(0)_{*}$ is dual to an exterior algebra, we have 
\[
\Ext_{\acat(0)}^{s, t}(\fieldp) \cong \fieldp[h_{0}],
\]
where $|h_{0}| = (1, 1)$ is the class of the extension
\[
0 \rightarrow \Hrm_{*}(\Ss) \rightarrow \Hrm_{*}(\Ss/p) \rightarrow \Hrm_{*}(\Ss^{1}) \rightarrow 0,
\]
As we presented this short extension as coming from a cofiber sequence of spectra, it lifts to one of $\acat$-comodules, so that the map 
\[
\Ext_{\acat}(\fieldp; \fieldp) \rightarrow \Ext_{\acat(0)}(\fieldp; \fieldp)
\]
is surjective. It follows that the Mayer-Vietoris long exact sequence of \cref{proposition:mv_lss_between_hsf_and_hfp_e2_terms} is a collection of short exact sequences. 

We are only left with calculating $\Ext_{\Psf}^{*}(\Psf(\Ss), \Psf(\Ss))$. However, since $\Psf(\Ss)$ is freely generated as a Bockstein couple in the integral variable by \cref{remark:explicit_presentation_of_cp_generators_of_wvectbeta}, the groups $\Ext_{\Psf}^{s}$ vanish for $s > 0$ and we have
\[
\Ext_{\Psf}^{0}(\Psf(\Ss), \Psf(\Ss) \cong \Hrm_{*}(\Ss; \bbZ) \cong \bbZ. 
\]
This ends the argument. 
\end{proof}

\subsection{An example: the Brown--Peterson spectrum}
\label{subsec:BP}

The moprhism of rings $\bbZ \to \fieldp$ induces a map of spectral sequences from the $\bbZ$-based Adams spectral sequence to the $\fieldp$-based Adams spectral sequence. As $\bbZ$ is closer to the sphere spectrum than $\fieldp$, it is perhaps natural to expect that the integral Adams spectral sequence is ``better behaved'' (or at least ``more convergent'') than the mod $p$ one. This is indeed the case in the case of the sphere  described in \cref{theorem:e2_page_of_the_sphere}, but as we now explain, it is not the case in general. 

Let $\BP$ denote the Brown-Peterson spectrum, whose classical Adams spectral sequence is of signature 
\[
\Ext_{\acat}^{t, s}(\fieldp, \Hrm_{*}(\BP; \fieldp)) \cong \fieldp[v_{0}, v_{1}, \ldots] \Rightarrow \pi_{t-s}\BP
\]
with $|v_{i}| = (2p^{i}-1, 1)$. Since all of these elements are of even Adams degree, there are no possible differentials and the spectral sequence collapses on the second page \cite[\S 3.1]{ravenel_complex_cobordism}. We will show that this is not what happens in the integral case; in fact, we will see that the $\bbZ$-based Adams spectral sequence for $\BP$ does not collapse on any finite page. 

\begin{lemma}
\label{theorem:hz_adams_e2_page_for_bp}
The $E_{2}$-page of the $\bbZ$-Adams spectral sequence for the Brown-Peterson spectrum is given by 
\[
E_{2}^{s, *} \cong \begin{cases} \bbZ[t_{1}, t_{2}, \ldots] \times_{\fieldp[t_{1}, t_{2}, \ldots]} \fieldp & s=0 \\ 0 & s=1 \\ \mathrm{coker}(\fieldp[v_{0}, v_{1}, \ldots] \rightarrow \fieldp[v_{0}, t_{1}, t_{2}, \ldots])_{s-1, *} & s \geq 2 \end{cases}
\]
where $|v_i| = (1, 2p^i - 1)$, $|t_{i}| = (0, 2p^{i}-2)$ and $v_i \mapsto v_0t_i$. As a consequence: 
\begin{enumerate}
    \item $E_2^{0,*}$ is freely generated as an abelian group by generators $1$ and $p \cdot t_I$, where $t_I$ runs through all monomials in the $t_i$.
    \item for $s \geq 2$, $E_2^{s,*}$ is an $\fieldp$-vector space with basis given by elements of the form $\delta(v_{0}^{s-1} \cdot t_I)$, where $t_I$ runs through monomials in the $t_{i}$ of degree at least $s$.
    \end{enumerate}
\end{lemma}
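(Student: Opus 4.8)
The plan is to compute the $E_2$-term of the $\bbZ$-Adams spectral sequence for $\BP$ by feeding the Mayer--Vietoris long exact sequence of \cref{proposition:mv_lss_between_hsf_and_hfp_e2_terms} (with $X = \Ss$, $Y = \BP$) together with explicit knowledge of the three input $\Ext$-groups. First I would identify the three corners of the pullback. The $\fieldp$-Adams $E_2$-term for $\BP$ is the classical $\Ext_{\acat}^{s,t}(\fieldp, \Hrm_*(\BP;\fieldp)) \cong \fieldp[v_0, v_1, \dots]$ recalled above. For the $\acat(0)$-corner, since $\acat(0)_*$ is dual to an exterior algebra on one generator and $\Hrm_*(\BP;\fieldp)$ is free over it, one computes $\Ext_{\acat(0)}^{s,t}(\fieldp, \Hrm_*(\BP;\fieldp))$; using the standard change-of-rings identification $\Hrm_*(\BP;\fieldp) \cong \acat(0)_* \otimes \fieldp[t_1, t_2, \dots]$ (with $|t_i| = (0, 2p^i-2)$ via the usual $\BP$ generators), this reduces to $\fieldp[v_0, t_1, t_2, \dots]$ where the $v_0 \in \Ext^1$ records the Bockstein. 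For the $\Rep(\pcat)$-corner, I would compute $\Ext_{\Rep(\pcat)}^{s,t}(\Psf(\Ss), \Psf(\BP))$ using \cref{lemma:yoneda_lemma_for_quivers}: since $\Psf(\Ss)$ is the free Bockstein couple on a degree-zero integral variable (as in \cref{remark:explicit_presentation_of_cp_generators_of_wvectbeta}), $\Ext^s$ vanishes for $s > 0$ and $\Ext^0 = \Hrm_*(\BP; \bbZ)$. The integral homology of $\BP$ is well-known: it is $\bbZ[m_1, m_2, \dots]$ tensored appropriately, and in the notation above it is precisely the fiber product $\bbZ[t_1, t_2, \dots] \times_{\fieldp[t_1, t_2, \dots]} \fieldp$ sitting inside $\fieldp[t_1,\dots]$ — i.e., it is generated by $1$ together with $p \cdot t_I$ for all monomials $t_I$, which immediately gives part (1).

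Next I would run the Mayer--Vietoris sequence. The key structural point is that the comparison map $\Ext_{\acat}(\fieldp, \Hrm_*(\BP;\fieldp)) \to \Ext_{\acat(0)}(\fieldp, \Hrm_*(\BP;\fieldp))$, i.e.\ $\fieldp[v_0, v_1, \dots] \to \fieldp[v_0, t_1, t_2, \dots]$, sends $v_0 \mapsto v_0$ and $v_i \mapsto v_0 t_i$. I need to check this is the correct formula — this follows from the fact that the $v_i$ are detected by the Bockstein on the corresponding homology classes, or equivalently from the standard relation between the $v_i$ and the $t_i$ modulo decomposables together with the multiplicativity of the change-of-rings map; this is exactly the claim "$v_i \mapsto v_0 t_i$" in the statement. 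Since on the $\acat(0)$-side everything in positive filtration is a $\fieldp$-vector space while the $\Rep(\pcat)$-corner is concentrated in filtration $0$, the Mayer--Vietoris sequence splits into: in degree $s = 0$, a short exact sequence $0 \to E_2^{0,*} \to \Hrm_*(\BP;\bbZ) \oplus \fieldp[v_i]^{0} \to \fieldp[t_i]^{0} \to 0$ whose solution is the stated fiber product (and forces $E_2^{1,*} = 0$ because the map $\fieldp[v_i] \to \fieldp[t_i]$ in degree $0$ is surjective onto the target's degree-$0$ part, namely $\fieldp$); and for $s \geq 1$ the sequence becomes $0 \to E_2^{s,*} \to {}^{\fieldp}E_2^{s,*} \to \Ext_{\acat(0)}^{s,*} \to E_2^{s+1,*} \to 0$ since the $\Rep(\pcat)$-corner contributes nothing.

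Finally I would extract the cokernel description. Reindexing $s \mapsto s-1$ on the $\fieldp$-side (the Mayer--Vietoris connecting map raises filtration by one, which is the source of the shift in the statement), the four-term exact sequences assemble to say that $E_2^{s,*}$ for $s \geq 2$ is the cokernel of $\fieldp[v_0, v_1, \dots] \to \fieldp[v_0, t_1, t_2, \dots]$ in internal filtration degree $s-1$, exactly as claimed; part (2) is then the observation that a monomial basis for this cokernel is given by $v_0^{s-1} t_I$ with $t_I$ of degree at least $s$ (the image of the polynomial ring consists of monomials $v_0^a v_{i_1}\cdots v_{i_k} \mapsto v_0^{a+k} t_{i_1}\cdots t_{i_k}$, so a monomial $v_0^j t_I$ is hit iff $j \geq \deg(t_I)$, equivalently $v_0^{s-1} t_I$ survives to the cokernel iff $\deg(t_I) \geq s$), and the Bockstein $\delta$ records the filtration shift. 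I expect the main obstacle to be verifying the change-of-rings identification $\Ext_{\acat(0)}(\fieldp, \Hrm_*(\BP;\fieldp)) \cong \fieldp[v_0, t_1, \dots]$ with the correct map $v_i \mapsto v_0 t_i$ from $\Ext_{\acat}$ — one must either cite the standard computation of $\Hrm_*(\BP;\fieldp)$ as a comodule and chase the cobar differential, or invoke the known structure of the $\fieldp$-Adams spectral sequence for $\BP$ together with its $v_0$-Bockstein; keeping the bookkeeping of internal degrees and the filtration shift from the Mayer--Vietoris boundary map consistent is the other fiddly point.
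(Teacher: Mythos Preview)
Your approach is essentially the paper's: both feed the Mayer--Vietoris long exact sequence of \cref{proposition:mv_lss_between_hsf_and_hfp_e2_terms} with the three known $\Ext$-groups, identify the comparison map $\fieldp[v_0,v_1,\ldots]\to\fieldp[v_0,t_1,\ldots]$ as $v_i\mapsto v_0t_i$ (the paper cites \cite[\S3.1]{ravenel_complex_cobordism} for this), note it is injective, and read off the answer. There are, however, two slips in your execution.

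First, $\Hrm_*(\BP;\bbZ)$ is \emph{not} the fiber product $\bbZ[t_1,\ldots]\times_{\fieldp[t_1,\ldots]}\fieldp$; it is $\bbZ[t_1,t_2,\ldots]$ on the nose. The fiber product is $E_2^{0,*}$, and it only appears after you run Mayer--Vietoris in degree $s=0$, exactly as you do correctly in your second paragraph. These groups are genuinely different: the fiber product is a proper subgroup.

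Second, your claimed four-term exact sequences $0\to E_2^{s,*}\to{}^{\fieldp}E_2^{s,*}\to\Ext_{\acat(0)}^{s,*}\to E_2^{s+1,*}\to 0$ for $s\geq 1$ are internally inconsistent: since the ring map $\fieldp[v_0,v_1,\ldots]\hookrightarrow\fieldp[v_0,t_1,\ldots]$ is injective, left exactness would force $E_2^{s,*}=0$ for all $s\geq 1$, contradicting the nonzero cokernel you want for $s\geq 2$. The correct deduction (and what the paper does, tersely) is: injectivity of ${}^{\fieldp}E_2^s\to\Ext_{\acat(0)}^s$ forces the map $E_2^s\to{}^{\fieldp}E_2^s$ to be \emph{zero} for $s\geq 1$, so the connecting map $\Ext_{\acat(0)}^{s-1}\to E_2^s$ is surjective with kernel the image of ${}^{\fieldp}E_2^{s-1}$ (plus $\Hrm_*(\BP;\bbZ)$ when $s=1$). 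This gives $E_2^s\cong\coker\bigl({}^{\fieldp}E_2^{s-1}\to\Ext_{\acat(0)}^{s-1}\bigr)$ for $s\geq 2$, and $E_2^1=0$ because $\bbZ[t_i]\twoheadrightarrow\fieldp[t_i]$ is already surjective at $s=0$ (not, as you wrote, because of the $\fieldp[v_i]$ contribution). Your basis analysis of the cokernel is correct.
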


\begin{proof}
Since the sphere is $\pcat$-finite projective, the $\bbZ$-Adams is isomorphic to the $\Psf$-Adams whose $E_{2}$-term is given by $\Ext$-groups in $\quiversteenrod$-comodules in $\Rep(\pcat)$. By \Cref{proposition:mv_lss_between_hsf_and_hfp_e2_terms}, we have a Mayer-Vietoris long exact sequence of the form 
\begin{equation}
\label{equation:mv_sequence_for_hz_term_for_bp}
0 \rightarrow \Ext^{0}_{\quiversteenrod}(\Psf(\BP)) \rightarrow \Ext^{0}_{\Rep(\pcat)}(\Psf(\BP)) \oplus \Ext^{0}_{\acat}(\Hrm_{*}(\BP; \fieldp)) \rightarrow \Ext_{\acat(0)}^{0}(\Hrm_{*}(\BP; \fieldp)) \rightarrow \ldots,
\end{equation}
where we have omitted the source of each $\Ext$-group, which is always the homology of the sphere. The $\Ext$-groups over the Steenrod algebra are well known and we have
\[ \Ext_{\acat}(\fieldp, \Hrm_{*}(\BP; \fieldp)) \cong \fieldp[v_{0}, v_{1}, v_{2}, \ldots] \]
with $|v_{i}| = (1, 2p^i-1)$. Using the fact that $H_*(\BP;\F_p)$ is even we can compute that
\begin{align*}
\Ext_{\acat(0)}(\fieldp, \Hrm_{*}(\BP; \fieldp)) \cong \fieldp[v_{0}, t_{1}, t_{2}, \ldots]
\end{align*}
with $|v_{0}| = (1, 1)$ and $|t_{i}| = (2p^{i}-2, 0)$; this is the $E_{2}$-term of the classical Adams spectral sequence computing $\Hrm_{*}(\BP; \bbZ) \cong \bbZ[t_{1}, t_{2}, \ldots]$, with $v_{0}$ detecting $p$ and $t_{i}$ detecting $t_{i}$. 
The comparison map 
\begin{equation}
\label{equation:comparison_map_for_ass_for_pibp_and_hbp}
\Ext_{\acat}(\fieldp, \Hrm_{*}(\BP; \fieldp)) \rightarrow \Ext_{\acat(0)}(\fieldp, \Hrm_{*}(\BP; \fieldp)) 
\end{equation}
is determined by 
\[ v_{0} \mapsto v_{0} \]
and
\[ v_{i} \mapsto v_{0} a_{i} \] 
for $i \geq 1$, see \cite[\S 3.1]{ravenel_complex_cobordism}. This is an injective ring homomorphism.

Since $\bbZ \otimes \BP$ is a direct sum of $\bbZ$, by \cref{remark:explicit_presentation_of_cp_generators_of_wvectbeta} $\Psf(\BP)$ is projective as a Bockstein couple and 
\[
\Ext^{*}_{\Rep(\pcat)}(\Psf(\BP)) \cong \Hrm_{*}(\BP; \bbZ) \cong \bbZ[t_{1}, t_{2}, \ldots],
\]
concentrated in $\Ext$-degree zero. Together with the injectivity from (\ref{equation:comparison_map_for_ass_for_pibp_and_hbp}),
the result follows.
\end{proof}

\begin{remark}[Differentials in the $\bbZ$-Adams for $\BP$] 
Note that since $\pi_{*}\BP \rightarrow \Hrm_{*}(\BP; \bbZ)$ is injective, with image the subring generated by $p \cdot t_{i}$, all elements in the homotopy groups are of Adams filtration zero, so that none of the positive $s$-degree elements appearing in \cref{theorem:hz_adams_e2_page_for_bp} can survive the spectral sequence.

Informally, $E_{2}$-page correctly detects that $t_{i} \in \Hrm_{*}(\BP; \bbZ)$ itself is not in the image of the Hurewicz map (only $p \cdot t_{i}$ is), but makes the mistake of thinking that $p \cdot t_I$, where $t_I$ is a monomial of degree at least $2$, might be. However, since such an element is not in the image of the Hurewicz map, it must support a non-trivial differential---the elements of positive $\Ext$-degree arise as targets of these differentials. One can show that the differentials are determined by 
\[
p^{k} \cdot t_I \mapsto \partial(v_{0}^{k} \cdot t_I),
\]
Note that such a differential is of length $k-1$. In particular, there are non-zero differentials of arbitrary length. 
\end{remark}




We now describe how our calculation of the integral Adams $E_{2}$-term for the Brown-Peterson spectrum is related to the obstruction theory to its existence. In their fundamental paper titled ``A spectrum whose $\bbZ/p$-cohomology is the algebra of reduced $p$-th powers'' \cite{brown1965spectrum}, Brown and Peterson construct the spectrum we now denote by $\BP$ by a variation on Toda's method of constructing an Adams resolution of the required spectrum. This was the first known construction of $\BP$ (which is now more commonly constructed using Quillen's idempotent on $MU_{(p)}$) and a major advance in stable homotopy theory at the time. 

Toda's method, first introduced in \cite{toda1971spectra}, gives an obstruction to realizing an $\acat_{*}$-comodule $M$ as a homology of a spectrum. The obstructions are defined inductively and lie in 
\begin{equation}
\label{equation:todas_obstructions_groups_for_mod_p_homology}
\theta_{n} \in \Ext_{\acat}^{n+2}(M, M),
\end{equation}
where $n \geq 1$. One can view Toda's work as the linear precursor to Goerss-Hopkins theory, which can more generally give obstructions to existence of spectra with additional structure, such as $\mathbb{E}_{n}$-multiplication. 

Brown and Peterson work using similar methods to those of Toda, and it is natural to expect their existence statement is an immediate application of Toda's obstruction theory. However, as was pointed to out by Dylan Wilson, this is not the case. One can compute the obstruction groups based on mod $p$ homology and show that they do \emph{not} vanish.

The reason Brown and Peterson do not run into the potential non-triviality of this obstruction is that they also keep track of the integral homology of the spectrum they're trying to construct, and its interaction with mod $p$ homology via the Bocksteins, see \cite[Lemma 3.1]{brown1965spectrum}.

Thus, it is natural to expect that what Brown and Peterson are really doing is setting up a variant of Toda's obstruction theory based on integral homology. However, as the relevant obstructions groups are expected to be the same as those appearing on the $E_{2}$-page of the corresponding Adams spectral sequence (as in (\ref{equation:todas_obstructions_groups_for_mod_p_homology}) above), and there was no known general formula for these $E_{2}$-pages, their work was never phrased in this language. 

Feeding $\pcat$-based synthetic spectra of \S\ref{section:synthetic_spectra_based_on_quivers} into the general machinery of Goerss-Hopkins theory \cite{moduli_spaces_of_commutative_ring_spectra}, \cite{pstrkagowski2021abstract}, one obtains an obstruction theory to the existence of a spectrum with prescribed integral homology, with obstructions living on an appropriate $E_{2}$-page of the $\Psf$-based Adams spectral sequence. The latter can be identified with $\Ext$-groups in $\quiversteenrod$-comodules, which are readily computable. 

The following result explains why Brown and Peterson do not run into obstructions in their proof of the existence of of a spectrum with mod $p$ homology isomorphic to $\Hrm_{*}\BP$. 

\begin{theorem}
\label{theorem:integral_todas_obstructions_groups_for_bp_vanish}
The $\bbZ$-based Toda's obstruction groups
\[
\Ext^{n+2, n}_{\quiversteenrod}(\Psf(\BP), \Psf(\BP))
\]
to the existence of the Brown-Peterson spectrum vanish for all $n \geq 0$.
\end{theorem}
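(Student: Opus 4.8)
The plan is to read off the obstruction groups from the computation of the $E_2$-page of the $\bbZ$-Adams spectral sequence for $\BP$ already carried out in \cref{theorem:hz_adams_e2_page_for_bp}. Recall that $\Ext^{s,*}_{\quiversteenrod}(\Psf(\BP),\Psf(\BP))$ is computed there, using that the sphere is $\pcat$-finite projective and the Mayer--Vietoris sequence of \cref{proposition:mv_lss_between_hsf_and_hfp_e2_terms}; for $s\ge 2$ it is identified with the cokernel of the injection $\fieldp[v_0,v_1,\dots]\hookrightarrow\fieldp[v_0,t_1,t_2,\dots]$ sending $v_i\mapsto v_0t_i$, in internal degree $*$ and homological degree $s-1$. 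Concretely, $\Ext^{s,*}$ for $s\ge 2$ has an $\fieldp$-basis consisting of elements $\delta(v_0^{s-1}\cdot t_I)$ where $t_I$ is a monomial in the $t_i$ of degree $\ge s$ (and $\Ext^{1}=0$, $\Ext^0$ is the pullback $\bbZ[t_i]\times_{\fieldp[t_i]}\fieldp$).

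Now I would simply substitute $s = n+2$ and internal degree $t = n$ (matching the $(n+2,n)$ bidegree appearing in the statement, in the indexing convention used in \cref{theorem:hz_adams_e2_page_for_bp} where $|v_i|=(1,2p^i-1)$ and $|t_i|=(0,2p^i-2)$). Since $n\ge 0$ we have $s=n+2\ge 2$, so the relevant group is spanned by classes $\delta(v_0^{n+1}\cdot t_I)$ with $t_I$ a monomial of degree $\ge n+2$ in the $t_i$, of total internal degree $n$. The key numerical observation is that each generator $t_i$ has \emph{positive} internal degree $2p^i-2\ge 2p-2 > 0$ (and $v_0$ has internal degree $1>0$ but is not available with positive power without also contributing to filtration), so a class in homological degree $n+2$ with at least $n+1$ factors of $v_0$ contributing at least $n+1$ to the internal degree — combined with at least two factors of $t_i$, each of positive internal degree — forces the internal degree to be strictly greater than $n$. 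Hence there are no such basis elements and the group vanishes.

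More carefully: an element of $\Ext^{n+2,*}$ corresponds to $\delta(v_0^{n+1}t_I)$ where the monomial $v_0^{n+1}t_I$ has homological degree $n+1$ and $|t_I|\ge$ (the constraint "$t_I$ of degree at least $s=n+2$" means $t_I$ involves at least $n+2$ of the $t_i$'s counted with multiplicity). The internal degree of $v_0^{n+1}t_I$ is $(n+1)\cdot 1 + |t_I|$ where $|t_I| = \sum (2p^{i_j}-2) \ge (n+2)(2p-2)$ since there are at least $n+2$ factors each of internal degree $\ge 2p-2\ge 2$. The $\delta$ drops internal degree by $1$, so the internal degree of the basis element is $(n+1) + |t_I| - 1 = n + |t_I| \ge n + 2(n+2) > n$. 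Therefore no basis element lives in internal degree $n$, and $\Ext^{n+2,n}_{\quiversteenrod}(\Psf(\BP),\Psf(\BP)) = 0$ for all $n\ge 0$.

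The main obstacle — really the only subtlety — is bookkeeping the two different bidegree conventions: the statement writes the obstruction group as $\Ext^{n+2,n}$ (homological degree $n+2$, internal degree $n$, following Toda's indexing $\theta_n\in\Ext^{n+2}$ with the stem shift built in), whereas \cref{theorem:hz_adams_e2_page_for_bp} is stated with a $(s,*)$ convention and $|t_i|=(0,2p^i-2)$. One must check that after the reindexing the relevant internal degree is indeed small (here, equal to $n$) while the homological degree forces both a large power of $v_0$ and at least two $t_i$-factors, each strictly raising the internal degree; once the conventions are pinned down the vanishing is a one-line degree count, as above. No further input is needed beyond \cref{theorem:hz_adams_e2_page_for_bp}.
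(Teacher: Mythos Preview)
There is a genuine gap: you have misidentified which $\Ext$-group \cref{theorem:hz_adams_e2_page_for_bp} computes. That lemma gives the $E_{2}$-page of the $\bbZ$-Adams spectral sequence \emph{for the sphere mapping into $\BP$}, i.e.
\[
\Ext^{s,t}_{\quiversteenrod}\bigl(\Psf(\Ss),\Psf(\BP)\bigr),
\]
not $\Ext^{s,t}_{\quiversteenrod}\bigl(\Psf(\BP),\Psf(\BP)\bigr)$. (The proof of \cref{theorem:hz_adams_e2_page_for_bp} even says explicitly that the omitted source is ``always the homology of the sphere''.) So the entire explicit basis you quote, and the degree count you perform on it, pertains to the wrong group. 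The Toda obstruction groups have $\Psf(\BP)$ in the \emph{first} variable as well, and that is a much larger comodule than $\Psf(\Ss)$.

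What is missing is precisely the reduction step the paper supplies: one filters $\Psf(\BP)$ by internal degree so that the associated graded is $\Hrm_{*}(\BP;\bbZ)\otimes_{\bbZ}\Psf(\Ss)$, a direct sum of shifts of $\Psf(\Ss)$. Mapping this filtration into $\Psf(\BP)$ gives a spectral sequence whose $E_{1}$-term is built out of $\Hom_{\bbZ}\bigl(\Hrm_{k}(\BP;\bbZ),\,\Ext^{s,t}_{\quiversteenrod}(\Psf(\Ss),\Psf(\BP))\bigr)$. Only \emph{after} this reduction can one invoke \cref{theorem:hz_adams_e2_page_for_bp}; the vanishing then follows from a parity argument (everything is in even Adams degree $t-s$, while the obstruction bidegree $(n+2,n)$ has odd Adams degree $-2$\dots actually, even, so one uses that the nonzero groups sit in even stem and $s=0$, combined with the even-degree shifts from $\Hrm_*(\BP;\bbZ)$). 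Your degree-count instinct is fine for the final step, but you cannot skip the filtration argument that replaces $\Psf(\BP)$ by copies of $\Psf(\Ss)$ in the first variable.
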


\begin{proof}
The idea is to reduce to the case of the groups $\Ext_{\quiversteenrod}(\Psf(\Ss), \Psf(\BP))$, which we computed in  \cref{theorem:hz_adams_e2_page_for_bp}. The $\quiversteenrod$-comodule $\Psf(\BP)$, of the form 
\[
\begin{tikzcd}
	{\Hrm_{*}(X; \bbZ)} & {\Hrm_{*}(X; \mathbb{F}_{p})}
	\arrow["\pi", bend left, from=1-1, to=1-2]
	\arrow["\delta", bend left, from=1-2, to=1-1]
\end{tikzcd}
\]
has a natural increasing filtration where $F_{k} \Psf(\BP)$ is a subquiver of elements of internal degree at most $k$, in both integral and mod $p$ homology. Each associated graded piece is concentrated in a single internal degree and can be identified with a direct sum of homologies of the sphere, indexed by the integral homology of $\BP$. Thus, we can write 
\[
\mathrm{gr}_{\bullet} \Psf(\BP) \cong \Hrm_{*}(\BP; \bbZ) \otimes_{\bbZ} \Psf(\Ss),
\]
where on the right hand side we have a tensor product of an abelian group with an object of an abelian category. 

Mapping this filtration into $\Psf(\BP)$ in the derived category gives a spectral sequence of signature 
\begin{equation}
\label{equation:spectral_sequence_computing_todas_groups_for_bp}
E_{1}^{s, t, k} \cong \Hom_{\bbZ}(\Hrm_{k}(\BP; \bbZ), \Ext^{s, t}_{\quiversteenrod}(\Psf(\Ss), \Psf(\BP))) \Rightarrow \Ext^{s, t-k}(\Psf(\BP), \Psf(\BP))
\end{equation}
Here, the expression $t-k$ on the right hand side comes from the fact that $H_{k}(\BP; \bbZ)$ is in internal degree $k$. 

By \cref{theorem:hz_adams_e2_page_for_bp}, the groups $\Ext^{s, t}_{\quiversteenrod}(\Psf(\Ss), \Psf(\BP))$ are zero for $s > 0$ and in even Adams degree $t-s$. Since $\Hrm_{*}(\BP; \bbZ)$ is concentrated in even degree, the same is true for the first page of (\ref{equation:spectral_sequence_computing_todas_groups_for_bp}), where the trigraded Adams degree is $t-k-s$. It follows that the same is true for $\Ext^{s, t}(\Psf(\BP), \Psf(\BP))$; in particular, these groups vanish for $s=n+2$, $t=n$ and $n \geq 0$, as claimed.
\end{proof}

\section{Further examples}
\label{sec:exm}

In this section we explore several examples of homology theories that are not Adams-type, but do admit a well behaved Adams-type quiver homology theory. 


\subsection{Connective Morava $K$-theory} 

Connective Morava $K$-theory $k(n)$ is a complex oriented homology theory with coefficients $k(n)_{*} \cong \mathbb{F}_{p}[v_{n}]$. The latter is a local ring in a graded sense (that is, it is a local Dirac ring in the sense of \cite[Definition 2.7]{diracgeometry1}) and the Quillen formal group of $k(n)$ is of infinite height at the special point, corresponding to $ \mathbb{F}_{p}$, and of height $n$ at the generic point corresponding to periodic Morava $K$-theory, $K(n)$. In this sense, connective Morava $K$-theories are the natural positive height analogue of the $p$-local integers. 

This suggests that a natural way to approach the $k(n)$-Adams spectral sequence is through a quiver homology thoery with a class of simples analogous to the one we employed in the integral case. 

\begin{definition}
\label{definition:simple_kn_module}
We say that a $k(n)$-module is simple if it is a finite direct sum of (de)suspensions of $k(n)$ and $\mathbb{F}_{p}$ where we give $\F_p$ the $k(n)$-module structure coming from the isomorphism $\F_p \cong \tau_{\leq 0}k(n)$. 
\end{definition}

\begin{notation}
In this section, we denote the full subcategory of simple $k(n)$-modules by $\pcat \subseteq \Mod_{k(n)}$. 
\end{notation}

A $\pcat$-representation $\X$ for the above class of simples can be readily described analogously to the case of Bockstein couples of \S\ref{subsection:couples}, with the role of $p$ being played by $v_{n}$.
$\pcat$-representations can be visualized as diagrams
\begin{equation}
\label{equation:vn_bockstein_module}
\begin{tikzcd}
	{A \colonequals \X(k(n))} & {\X(\mathbb{F}_{p}) \equalscolon V}
	\arrow["\delta_{v_n}", bend left, from=1-2, to=1-1]
	\arrow["\pi", bend left, from=1-1, to=1-2]
\end{tikzcd},
\end{equation}
where 
\begin{enumerate}
    \item $A$ is a graded $k(n)_{*}$-module, 
    \item $V$ is a graded $\fieldp$-vector space,
    \item the maps $\pi$ and $\delta_{v_n}$
    satisfy relations
    $\pi \circ v_n = 0$,
    $v_n \circ \delta_{v_n} = 0$ and
    $\delta_{v_n} \circ \pi = 0$. 
\end{enumerate}
Thus, one could refer to $\pcat$-reps as "$v_{n}$-Bockstein couples".



\begin{proposition}
Connective Morava $k$-theory $k(n)$ is Adams-type with respect to the class of simples of \cref{definition:simple_kn_module}. In particular, it is $\pcat$-flat. 
\end{proposition}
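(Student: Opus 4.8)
The plan is to verify the $\pcat$-Adams-type condition of \cref{definition:pcat_adams_type_ring_spectrum} directly for the unit map $\Ss \to k(n)$, and then invoke \cref{corollary:r_otimes_preserves_psf_epis_for_at_rings} to conclude $\pcat$-flatness. Concretely, I must exhibit both generating simples $k(n)$ and $\fieldp$ (equivalently, by \cref{corollary:simple_modules_are_dualizable}-type reasoning, their $k(n)$-linear duals) as filtered colimits of $A$-linear duals of finite spectra $S_\alpha$ with $k(n) \otimes S_\alpha$ simple. For $k(n)$ itself this is automatic: $k(n) \otimes \Ss = k(n)$ is simple, so the constant diagram on $\Ss$ works. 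The real content is the module $\fieldp = \tau_{\leq 0} k(n)$.

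First I would reduce, as in the proof of \cref{lemma:hbbz_is_adams_type}, to showing that $\fieldp$ (as a $k(n)$-module, or rather its dual) can be written as a filtered colimit of duals of finite spectra $P$ such that $k(n) \otimes P$ is a simple $k(n)$-module. Since $k(n)_* \cong \fieldp[v_n]$ with $v_n$ in positive degree, a finite spectrum $P$ has $k(n) \otimes P$ simple precisely when $k(n)_* P$ is a finite sum of shifts of $\fieldp[v_n]$ and $\fieldp$ — that is, when $k(n)_* P$ has no higher $v_n$-torsion (no summands $\fieldp[v_n]/v_n^s$ with $s > 1$). The strategy is then to build $\fieldp$ as the realization of the $v_n$-Bockstein tower: writing $S^0/v_n^k$ informally, one takes a skeletal/cellular exhaustion of a connective model of $\fieldp$ and argues, exactly as in \cref{lemma:hbbz_is_adams_type}, that each finite stage $P_m$ has $k(n)_* P_m$ with no higher $v_n$-torsion — because $k(n)_m(P_m)$ is free over $\fieldp$ in the top cell range and $k(n)_* P_m \cong k(n)_* \fieldp$ below the dimension of $P_m$, and $k(n)_* \fieldp$ is known (from the cofiber sequence $\Sigma^{|v_n|} k(n) \xrightarrow{v_n} k(n) \to \fieldp$) to have only $\fieldp[v_n]$ and $\fieldp$ summands.

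The main obstacle is the input computation that $k(n)_*(\fieldp)$ — i.e. $\pi_*(k(n) \otimes H\fieldp)$ as a $k(n)_*$-module — has no higher $v_n$-torsion, which plays the role that \cite{kochman1982integral} plays in the integral case. This should follow from analyzing the $v_n$-Bockstein spectral sequence (the spectral sequence of the tower $\{k(n)/v_n^k\}$) converging to $\fieldp$-homology, or alternatively from the known structure of $H\fieldp \otimes H\fieldp$ and a base-change/Künneth argument relating $k(n) \otimes H\fieldp$ to it; the key point is that the only torsion that can appear is simple $v_n$-torsion, because the $E_1$-page of the Bockstein spectral sequence is free over $\fieldp$ and the differentials are $v_n$-linear. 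Once that structural fact is in hand, the cellular-filtration argument of \cref{lemma:hbbz_is_adams_type} transfers essentially verbatim, and $\pcat$-flatness is then immediate from \cref{corollary:r_otimes_preserves_psf_epis_for_at_rings}.
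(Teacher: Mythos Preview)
There are two genuine gaps in your proposal.

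\medskip

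\textbf{The case $S = k(n)$ is not automatic.} The Adams-type condition requires writing $\mathbb{D}^{k(n)}(k(n)) \cong k(n)$, as a spectrum, as a filtered colimit of duals of finite spectra $S_\alpha$ with $k(n) \otimes S_\alpha$ simple. The constant diagram on $\Ss$ has colimit $\Ss$, not $k(n)$, so it does not do the job. What you actually need is to exhaust $k(n)$ by finite skeleta $P$ with $k(n)_* P$ a sum of shifts of $k(n)_*$ and $\fieldp$, and for this the key input is that $k(n)_* k(n)$ itself has only simple $v_n$-torsion. This is a nontrivial theorem of Yagita and is the true height-$n$ analogue of Kochman's computation in the integral case; by contrast, the input you flag as the ``main obstacle'', namely the structure of $k(n)_* \fieldp$, is immediate since $v_n$ acts by zero on $\fieldp$.

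\medskip

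\textbf{The skeletal argument does not transfer verbatim.} In the integral case, $\Hrm_*(X_k;\bbZ)$ is supported in degrees $\leq k$, is torsion-free in the top degree, and agrees with $\Hrm_*(X;\bbZ)$ below $k$; these three facts together rule out higher $p$-torsion in one line. For $k(n)$ the picture is different: since $|v_n| > 0$, the groups $k(n)_* X_k$ extend well above degree $k$, so ``the top cell range is free'' does not directly bound the $v_n$-torsion order of a class in low degree. The paper instead runs a contradiction argument: take the minimal $k$ for which some $x \in k(n)_l X_k$ has $v_n x \neq 0$ but $v_n^s x = 0$, lift $x$ to $X_{k-1}$ using that $\coker(k(n)_* X_{k-1} \to k(n)_* X_k)$ embeds in the torsion-free module $k(n)_*(X_k/X_{k-1})$, and chase degrees to force $k = l + sq + 1$; this pins down $k$ large enough that $k(n)_* X_k \to k(n)_* X$ is an isomorphism in degrees $\leq l + sq$, contradicting the hypothesis on $X$. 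Your sketch does not supply this degree-chase, and the naive version you state is not correct as written.
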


\begin{proof}
We have $\Map_{k(n)}^{\Sp}(k(n), k(n)) \cong k(n)$ and $\Map_{k(n)}^{\Sp}(\fieldp, k(n)) \cong \Sigma^{-1-q} \fieldp$ as spectra, where $q = |v_{n}| = 2p^{n}-2$. Thus, we just have to check that $k(n)$ and $\fieldp$ can be written as filtered colimits of finite spectra $P$ such that $k(n)_{*}DP \cong k(n)^{-*}P$ is a direct sum of shifts of $k(n)_{*}$ and $k(n)_{*}/(v_{n})$. Note that the latter is equivalent to the same condition on $k(n)_{*}P$, as the coefficient ring is of cohomological dimension one and so the universal coefficient spectral sequence collapses. 

Recall that there exists an isomorphism of $k(n)_{*}$-modules
\[
k(n)_{*}k(n) \cong F \oplus V, 
\]
where $F$ is a free $k(n)_{*}$-module and where $V$ is a $\fieldp \cong k(n)_{*}/(v_{n})$-vector space \cite[Theorem 4.3]{yagita1980steenrod}. Similarly, since $v_{n}$ acts by zero on $\fieldp$, we have 
\[
k(n)_{*}\fieldp \cong 0 \oplus V',
\]
where $V'$ is a $k(n)_{*}/v_{n}$-vector space. 
To complete the proof we will show the following sub-claim:

\begin{itemize}
    \item Let $X$ be a connective finite-type spectrum and let
    \[
    X_{0} \subseteq X_{1} \subseteq \ldots \subseteq X 
    \]
    be a skeletal filtration for $X$.
    If $k(n)_*(X) \cong F \oplus V$ where $F$ is a free $k(n)_*$-module and $V$ is an $\F_p$-vector space (i.e. $v_n$ acts by zero), 
    then the same is true for $X_{k}$ for all $k \geq 0$.
\end{itemize}


Suppose by contradiction that there exists a $k \geq 0$ and $l \geq 0$ and a class $x \in k(n)_{l}X_{k}$ such that $v_{n} x \neq 0$ but $v_{n}^{s} x = 0$ for some $s > 1$. Choose the smallest $k$ possible. Since 
\[
\mathrm{coker}(k(n)_{*}X_{k-1} \rightarrow k(n)_{*}X_{k}) \subseteq k(n)_{*}(X_{k}/X_{k-1})
\]
is torsion-free, $x$ lifts to an element $\widetilde{x} \in k(n)_{l}X_{k-1}$ where necessarily $v_{n}^{s} \widetilde{x} \neq 0$ as we chose $k$ to be the smallest. It follows that $v_{n}^{s} \widetilde{x}$ is in the image of a non-$v_{n}$-divisible element of 
\[
k(n)_{l+sq}(\Sigma^{-1} X_{k} / X_{k-1})
\]
and thus that $k = l+sq+1$. It follows that $k(n)_{*}X_{k} \rightarrow k(n)_{*}X$ is an isomorphism in degrees $* \leq l+sq$ so that we also have $v_{n} x \neq 0$ and $v_{n}^{s} x = 0$ inside $k(n)_{*}X$. The latter contradicts our assumption about $X$, ending the argument. 
\end{proof}

It follows from \cref{proposition:for_at_rings_r_otimes_descends_to_a_quiver_comonad} that the comonad 
\[
k(n) \otimes - \colon \Mod_{k(n)} \rightarrow \Mod_{k(n)}
\]
descends to a unique exact comonad 
$ \quiversteenrod \colon \Rep(\pcat) \rightarrow \Rep(\pcat) $
on the category of $\pcat$-reps. The latter has the property that the homology theory
\[
\Hsf \colon \spectra \rightarrow \Rep(\pcat) \quad \text{given by} \quad \Hsf(X)(S) \coloneqq \pi_0\Map_{k(n)}(S, k(n) \otimes X)
\]
has a canonical, adapted lift to $\quiversteenrod$-comodules. 

By a colocality argument, the $\Hsf$-based Adams spectral sequence coincides with the classical $k(n)$-based Adams when the source $X$ is a sphere (or more generally, a spectrum whose $k(n)_{*}$-homology has only simple $v_{n}$-torsion). Thus, we deduce the following: 

\begin{theorem}
Let $X$ be a spectrum whose $k(n)_{*}$-homology has only torsion free and simple $v_{n}$-torsion summands. Then, the $k(n)$-based Adams spectral sequence 
computing maps from $X$ to $Y$ has $E_{2}$-page given by 
\[
E_{2}^{s,t}(X, Y) \cong \Ext^{s, t}_{\quiversteenrod}(\Hsf(X), \Hsf(Y)),
\]
where the $\Ext$-groups are taken in $\quiversteenrod$-comodules in $\pcat$-representations with respect to class of simples of \cref{definition:simple_kn_module}.
\end{theorem}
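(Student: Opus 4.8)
The strategy is to deduce this statement from the abstract comparison results of Section~\ref{section:synthetic_spectra_based_on_quivers} in exactly the same way that \cref{theorem:hbbz_and_hsf_ass_are_iso_when_hsfx_projective} and \cref{cor:Fp-Z-iso} were deduced in the integral case; connective Morava $K$-theory plays the role of $\bbZ$, and $\F_p \cong \tau_{\leq 0} k(n)$ plays the role of $\fieldp$. First I would record that we are in the $\pcat$-Adams-type situation for the unit map $\Ss \to k(n)$, which has just been verified, so that by \cref{theorem:comodule_homology_theory_adapted_in_pcat_flat_case} there is an exact cocontinuous comonad $\quiversteenrod$ on $\Rep(\pcat)$ together with an adapted homology theory $\Hsf\colon \spectra \to \Comod_{\quiversteenrod}(\Rep(\pcat))$, and hence (by the general theory) an associated Adams spectral sequence with $E_2$-term $\Ext_{\quiversteenrod}(\Hsf(X),\Hsf(Y))$. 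What remains is to identify this with the classical $k(n)$-based Adams spectral sequence under the stated hypothesis on $X$.

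The key step is a colocality argument. The classical $k(n)$-based Adams spectral sequence is encoded by the class $(k(n)\otimes -)\dashepi$ of maps of spectra that split after tensoring with $k(n)$; any such map is a $\Psf$-epimorphism, so there is a comparison functor $\dcat^{\omega}_{(k(n)\otimes-)\dashepi}(\Sp) \to \dcat^{\omega}_{\Psf}(\Sp)$, which (composed with the fully faithful comparison $\dcat^{\omega}_{\Psf}(\Sp)\hookrightarrow\widecheck{\dcat}_{\Psf}(\Sp)$ of \cref{lem:easy-comparison}) lets us reduce to showing that $\nu(X)$ is colocal. By \cref{lemma:P_colocal_from_amitsur_to_quiver}, applied with $A = \Ss$ and $R = k(n)$, the synthetic analogue $\nu(X)$ is colocal whenever $X$ is $\pcat$-finite projective, i.e.\ whenever $k(n)\otimes X$ is a simple $k(n)$-module. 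Under our hypothesis that $k(n)_*X$ has only free and simple $v_n$-torsion summands, the $k(n)$-module $k(n)\otimes X$ has homotopy groups a direct sum of copies of $k(n)_*$ and $k(n)_*/v_n \cong \fieldp$; since a $k(n)$-module is determined by its homotopy (the coefficient ring has global dimension one, so there are no extension problems) this module is simple in the sense of \cref{definition:simple_kn_module}. Hence $X$ is $\pcat$-finite projective and $\nu(X)$ is colocal, so the comparison map on mapping spectra into $\nu(Y)$, and therefore between the two spectral sequences, is an isomorphism; reading off the $E_2$-page gives $\Ext^{s,t}_{\quiversteenrod}(\Hsf(X),\Hsf(Y))$.

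The main obstacle is the module-theoretic bookkeeping needed to promote the hypothesis ``$k(n)_*X$ has only torsion-free and simple $v_n$-torsion summands'' to the statement ``$k(n)\otimes X$ is a simple $k(n)$-module'' in the precise sense of \cref{definition:simple_kn_module}: one must check that the relevant indecomposables are exactly $\Sigma^i k(n)$ and $\Sigma^i\fieldp$ (with the module structure coming from $\fieldp\cong\tau_{\leq 0}k(n)$), and that finite-type/finiteness issues do not obstruct realizing the abstract decomposition of homotopy groups by an honest splitting of $k(n)$-modules. Once that identification is in hand, everything else is a formal application of the already-established functoriality and colocality machinery, exactly mirroring the integral case.
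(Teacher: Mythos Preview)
Your approach is essentially the same as the paper's: the paper does not give a detailed proof but simply remarks that a colocality argument identifies the $\Hsf$-based and classical $k(n)$-based Adams spectral sequences when the source is $\pcat$-finite projective, and then states the theorem. You have correctly unpacked this into an application of \cref{lemma:P_colocal_from_amitsur_to_quiver} together with the observation that the hypothesis on $k(n)_*X$ forces $k(n)\otimes X$ to be simple (using that $k(n)_*$ is a graded PID so $k(n)$-modules are determined by their homotopy groups).

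One small point worth flagging: the colocality lemma you invoke requires $X$ to be $\pcat$-\emph{finite} projective, i.e.\ compact as a spectrum with $k(n)\otimes X$ a \emph{finite} sum of shifts of $k(n)$ and $\fieldp$. The theorem as stated does not explicitly assume $X$ is finite, and the paper is equally silent on this; your proof (like the paper's sketch) really establishes the result for finite $X$. This is not a flaw in your strategy so much as an ambiguity already present in the paper's formulation.
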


The data of the $\pcat$-rep associated to the Morava $K$-theory of a spectrum, which we depicted in (\ref{equation:vn_bockstein_module}), is relatively straightforward. Instead, the complexity of the $k(n)$-based Adams spectral sequence is encoded in the comonad $\quiversteenrod$ and the associated category of comodules. 

It is natural to ask whether it is possible to describe the category of $\quiversteenrod$-comodules directly in terms of familiar structures, similarly to what we have done in the \S\ref{subsection:pullback_decomposition_of_integral_homology} in the case of integral homology. We will not pursue this in this paper, but we believe this is a natural and interesting question, as this decomposition would naturally describe the interaction between homotopy of finite spectra and their $K(n)$-local analogues.

\begin{remark}[Decomposing the comodule category for connective Morava $K$-theories] 
The category of $\quiversteenrod$-comodules is the universal target of a homology theory which sends all $k(n)_{*}$- and $\Hrm_{*}(-, \mathbb{F}_{p})$-surjective maps of spectra to epimorphisms. Since both mod $p$ homology and non-connective Morava $K$-theory homology $K(n)_{*}(-) \coloneqq v_{n}^{-1}k(n)_{*}(-)$ have this property, we deduce that we have exact comparison functors 
\[
\Comod_{\quiversteenrod} \rightarrow \Comod_{A_{*}}
\]
and 
\[
\Comod_{\quiversteenrod} \rightarrow \Comod_{K(n)_{*}K(n)}.
\]
These are compatible with the forgetful functors into $\pcat$-reps, and we deduce that there is an exact comparison functor 
\begin{equation}
\label{equation:comparison_functor_out_of_q_modules_for_connective_kn}
\Comod_{\quiversteenrod} \rightarrow \Comod_{\acat_{*}} \times_{\Vect_{Q_{n}}} \Rep(\pcat) \times_{\Vect_{\fieldp[v_{n}^{\pm 1}]}} \Comod_{K(n)_{*}K(n)}.
\end{equation}
encoding that the $\pcat$-rep
\[
k(n)_{*}X \leftrightarrows \Hrm_{*}(X, \fieldp)
\]
associated to a spectrum $X$ has an $\acat_{*}$-comodule structure on the right factor and an $K(n)_{*}K(n)$-comodule structure on the $v_{n}$-localization of the left factor
and that the composite $\pi \circ \delta_{v_n}$ is equal to $Q_n \in \acat$.
\end{remark}

However, we do \emph{not} believe that (\ref{equation:comparison_functor_out_of_q_modules_for_connective_kn}) is an equivalence (or at least, we do not believe that the corresponding functor into the pullback of derived categories is an equivalence, and that's what's truly relevant to the Adams spectral sequence, which concerns $\Ext$-groups). The reason is that there are further relations which objects of the pullback category have to satisfy to lift to a structure of a $\quiversteenrod$-comodule.  

For a simple example, suppose that $X$ has only even cells, so that 
\[
k(n)_{*}X \cong k(n)_{*} \otimes_{\BP_{*}} \BP_{*}X
\]
and similarly for $K(n)$- and $\fieldp$-homology. In this case, the coactions of $K(n)_{*}K(n)$ on $K(n)_{*}X$ and of $\acat_{*}$ on $\Hrm_{*}(X, \fieldp)$ are not independent from each other. In fact, they are both uniquely determined by the coaction of
\[
\sigma(n) := k(n)_{*} \otimes _{\BP_{*}} \BP_{*}\BP \otimes _{\BP_{*}} k(n)
\]
induced by the unit map $\Ss \rightarrow \BP$ applied to 
\[
k(n)_{*}X \rightarrow k(n)_{*}(\BP \otimes X) \cong \sigma(n) \otimes_{k(n)_{*}} k(n)_{*}X
\]
Similar relations necessarily hold in any $\quiversteenrod$-comodule whose $k(n)_{*}$-module term is free and concentrated in even degrees, and these relations become even more involved outside of this case. 

\subsection{Truncated Brown--Peterson spectra} 

Let $\BPn{n}$ denote a truncated Brown-Peterson spectrum with 
\[
\pi_{*} \BPn{n} \cong \bbZ_{(p)}[v_{1}, \ldots, v_{n}].
\]
The corresponding Adams spectral sequence has been the subject of extensive study, see \cite{mahowald1981bo, burklund2019boundaries, beaudry2021telescope, gonzalez2000vanishing, gonzalez_regular_complex}. By a result of Hahn and Wilson \cite{hahn2022redshift}, there exists a form of $\BPn{n}$ which admits an $\mathbb{E}_{3}$-algebra structure (at all primes and heights). It follows that the current work provides another approach to this spectral sequence based on a (braided monoidal, by \cref{remark:monoidality_of_quivers_depending_on_en_structure}) category of quiver representations. 

To get off the ground, we need to pick an appropriate class of simples $\pcat \subseteq \Mod_{\BPn{n}}^\omega$, large enough so that $\BPn{n}$ is $\pcat$-Adams-type. 
This naturally leads to the question 
of which $\BPn{n}$-modules show up in decompositions of tensor powers of $\BPn{n}$.

We believe this is an interesting problem, but we will not try choose such a class of simples here. Instead, let us record that study of decompositions of tensor powers of $\BPn{n}$ has a long history in stable homotopy theory. At $p = 2$, $\BPn{1}$ is closely related to connective real $K$-theory $bo$, and a decomposition of $bo \otimes bo$ was constructed by Mahowald \cite{mahowald1981bo}. At $p > 2$, a decomposition of $\BPn{1} \otimes \BPn{1}$ was given by Kane \cite{kane1981operations}. At $n = 2$ and $p \geq 5$, we have a recent thesis of Tatum \cite{tatum2022spectrum}.

These decomposition results are a starting point to the study of the Adams spectral sequence based on the corresponding homology theory.
In \cite{mahowald1981bo, mahowald1982image}, Mahowald used the $bo$-Adams to prove the telescope conjecture at height $n = 1$ and $p = 2$. This work was subsequently extended in \cite{beaudry20202}. At odd primes, the $\BPn{1}$-Adams is studied in detail in the work of Gonzalez \cite{gonzalez_regular_complex, gonzalez2000vanishing}. At height two and $p = 2$, the closely related $\mathrm{tmf}$-resolutions were studied by B\`{e}audry, Behrens, Bhattacharya, Culver and Xu \cite{beaudry2019tmf, beaudry2021telescope}. 

We believe that it would be interesting to try to understand these results in terms of the quiver approach outlined in the current work. 

\subsection{United K-theory and algebraicity at small primes}

The quiver method presented in the current work has an important antecedent in Bousfield's \emph{united K-theory} \cite{bousfield1990classification}. In this short section, we briefly summarize Bousfield's work from our perspective, and suggest a possible generalization to higher heights. 

Let $\KO$, $\KU$ and $\KT$ denote, respectively, (periodic, topological) real $\K$-theory, complex $\K$-theory and self-conjugate $\K$-theory. In \cite[\S 1]{bousfield1990classification}, Bousfield considers "CRT-modules", which are triples of graded abelian groups 
\[
(M_{*}^{\KO}, M^{\KU}_{*}, M^{\KT}_{*})
\]
with a family of maps between those indexed by homotopy classes of maps of $\KO$-modules between these three spectra. It is not difficult to see that this notion is equivalent to that of a $\pcat$-rep, where $\pcat$ is the class of simple $\KO$-modules generated as an additive category by shifts of $\KO$, $\KU$ and $\KT$. 

The association 
\begin{equation}
\label{equation:bousfield_united_k_theory_functor}
X \in \spectra \mapsto \K_{*}^{\CRT}X := (\KO_{*}X, \KU_{*}X, \KT_{*}X)
\end{equation}
defines a homology theory valued in $\CRT$-modules, which can be identified with $\pcat$-homology as defined in the current work. Bousfield observes that $\K_{*}^{CRT}X$ has a compatible action by Adams operations, which he makes explicit \cite[\S 5]{bousfield1990classification}. 

Bousfield terms CRT-modules with Adams operations "ACRT-modules" and shows that if we consider (\ref{equation:bousfield_united_k_theory_functor}) as valued in ACRT-modules, then the resulting homology theory is adapted \cite[\S 9]{bousfield1990classification}. In the language of the current work, Bousfield's calculation is the determination of the comonad $\quiversteenrod (-)$ appearing in the adapted factorization of (\ref{equation:bousfield_united_k_theory_functor}). He then develops the corresponding Adams spectral sequence, universal coefficient theorem, and obstruction theory. 

Bousfield's ultimate goal is to classify homotopy types of $\KO$-local spectra, generalizing his classification of $\KU_{(p)}$-local spectra for $p > 2$ given in \cite{bousfield1985homotopy}. The main obstacle to extending this work to $p = 2$ is that the category of comodules over the cooperations algebra $\KU_{*}\KU$ has finite cohomological dimension $2$ when $p > 2$, but is of infinie dimension when $p = 2$ due to the existence of the $2$-torsion subgroup $C_{2} \subseteq \mathbb{Z}_{(2)}^{\times}$. 

Since $\KO \cong \KU^{h C_{2}}$, Bousfield's plan of attack is to encode homology groups with respect to the fixed points as part of the data, in the hope that this leads to more amenable homological algebra. This is indeed the case, as he shows that ACRT-modules of the form $\K_{*}^{CRT}X$ for $X$ a spectrum are of injective dimension at most $2$. He then applies this observation to prove a variety of realization results for spectra with prescribed $\K_{*}^{CRT}$-homology \cite[\S 10]{bousfield1990classification}.

Note that Bousfield's work at odd primes was generalized to higher chromatic heights in the thesis of the second author \cite{pstrkagowski2021chromatic} (later improved in joint work with Patchkoria \cite{patchkoria2021adams}) the main result being that the Bousfield localization of spectra at height $n$ Morava $E$-theory admits an algebraic description in terms of $E_{*}E$-comodules when $2p - 2 > n^{2}+n$. The strategy used to prove this fails at smaller primes due to the existence of $p$-torsion in the Morava stabilizer group $\mathbb{G}_{n}$. Keeping in mind Bousfield's work at $p=2$, it is natural to ask if it is possible to prove small prime partial algebraicity results for $E$-local spectra by using quivers indexed by $E^{h G}$, as $G$ runs through finite subgroup of the Morava stabilizer group.

\appendix
\label{sec:appendix} 

\section{Calculus of deformations} 
\label{sec:def}

In this appendix, which can be read independently from the body of the paper,
we collect some material which is complementary to the contents of \cite{patchkoria2021adams}.
In that paper Patchkoria and the second author constructed categories $\dcat^{\omega}_{\Hsf}(\ccat)$ which deform a category $\ccat$ along a homology theory $\Hsf$ in a way which categorifies the $\Hsf$-based Adams spectral sequence.

The unifying theme of this appendix is that whereas in \cite{patchkoria2021adams}, $\dcat^{\omega}_{\Hsf}(\ccat)$ was primarily studied \emph{internally} (that is, through the details of its specific construction), here we study $\dcat^{\omega}_{\Hsf}(\ccat)$ \emph{externally} (that is, through the relationships between the various deformations). Our aims are: 

\begin{enumerate}
\item In \S\ref{subsection:functoriality_of_deformations}, we develop naturality  of deformation $\infty$-categories and various related constructions, showing that they define functors on an appropriate $(\infty, 2)$-category of adapted homology theories, see \cref{construction:deformation_as_a_2functor} and 
\cref{construction:functors_between_deformations_from_universal_property}.
\item In \S\ref{subsection:induced_homology_theories}, we study what we call \emph{induced} homology theories, which are informally obtained by composing a homology theory with an exact functor of stable $\infty$-categories. The main result is \cref{proposition:flat_induction_yields_adjunction_of_adapted_homology_theories} which describes the target of an induced homology theory in terms of comodules. 
\item In \S\ref{subsection:local_and_colocal_objects}, we study (co)local objects for a comparison functor between different $\infty$-categories. This technical section is used in the main body of the paper to prove comparison results between various Adams spectral sequences, whose second pages can be identified with homotopy classes of maps in the appropriate deformations. 
\end{enumerate}

The appendix is written at a somewhat high level of generality with an eye towards future applications. To make it more understandable, before proceeding we give a brief reminder of some of the core results of \cite{patchkoria2021adams} about homology theories and the associated deformations.
 
\begin{recollection}[Epimorphism classes and homology theories]
\label{recollection:epimorphism_classes_and_homology_theories}
\hfill
 \begin{enumerate}
\item Given a stable category $\ccat$, an \emph{epimorphism class} consists of a collection of arrows $\ecat \subseteq \Fun(\Delta^{1}, \ccat)$ closed under composition, (de)suspensions, pullbacks along arbitrary morphisms and such that if $g \circ f$ is in $\ecat$ then $g$ is in $\ecat$. 
\item Given a stable category $\ccat$ and an epimorphism class $\ecat$ on $\ccat$, we say that an object $j \in \ccat$ is \emph{$\Hsf$-injective} if 
\[ 
\pi_{0} \Map_{\ccat}(d,j) \to \pi_{0} \Map_{\ccat}(c,j) 
\]
is injective for every $c \to d$ in $\ecat$. If every object $c \in \ccat$ admits a map $c \rightarrow j$ into an $\ecat$-injective such that $j \rightarrow \mathrm{cofib}(c \rightarrow j)$ is in $\ecat$, then we say that \emph{$\ecat$ has enough injectives}.
\item A \emph{homology theory} on a stable category $\ccat$ consists of a locally graded functor $\Hsf : \ccat \to \acat$ into a locally graded abelian category which sends cofiber sequences to sequences exact in the middle. Here, a \emph{local grading} on a category is an action of $\mathbb{Z}$, and we consider $\ccat$ as locally graded through its suspension functor. 
\item We say that a homology theory $\Hsf$ has \emph{lifts of injectives} if $\acat$ has enough injectives and for every injective $i \in \acat$ there exists an object $c_{i}$ representing the functor $(h\ccat)^{op} \rightarrow \Ab$
\[
c \mapsto \Hom_{\acat}(\Hsf(c), i).
\]
in the homotopy category. We say that $\Hsf$ is \emph{adapted} if moreover the canonical map $\Hsf(c_{i}) \rightarrow i$ is an isomorphism for any injective $i \in \acat$. 
\item Any homology theory $\Hsf$ determines an epimorphism class
\[
\ecat_{\Hsf} \colonequals \{ \ f \ | \ \Hsf(f) \text{ is an epimorphism } \}
\]
If $\ccat$ is idempotent complete\footnote{If $\ccat$ is stable, then any homology theory on $\ccat$ factors uniquely through its idempotent completion, so that for our purposes we can always assume that $\ccat$ is idempotent-complete}, this yields an equivalence of categories 
\[
\{ \text{ adapted homology theories on } \ccat \ \} \cong \{ \text{ epimorphism classes with enough injectives } \},
\]
where the left hand side is the full subcategory of $\Fun(\mathrm{B}\mathbb{Z}, \catinfty)_{\ccat /}$ spanned by adapted homology theories and the right hand side is the poset of epimorphism classes. In particular, the category of adapted homology theories is equivalent to a poset. 
\end{enumerate}
\end{recollection}

\begin{notation}
Taking into account the correspondence between homology theories and epimorphism classes, we will usually not distinguish notationally between the two. That is, if $\Hsf \colon \ccat \rightarrow \acat$ is an adapted homology theory, we will usually denote the corresponding epimorphism class also as $\Hsf$. Conversely, if $\Hsf$ is an epimorphism class, we will denote the target of the associated adapted homology theory by $\acat(\ccat, \Hsf)$. 
\end{notation}

\begin{recollection}[The deformation associated to a homology theory]
\label{recollection:perfect_deformation_associated_to_a_homology_theory}
\hfill
\begin{enumerate}
\item Associated to an adapted homology theory $\Hsf \colon \ccat \rightarrow \acat$ there is a prestable category $\dcat^{\omega}_{\Hsf}(\ccat)_{\geq 0}$, the \emph{perfect derived category}.
\item There is a canonical equivalence $\dcat^{\omega}_{\Hsf}(\ccat)^{\heartsuit} \cong \acat$.
\item The perfect derived category has a canonical local grading 
\[
(-)[1] \colon \dcat^{\omega}_{\Hsf}(\ccat)_{\geq 0} \rightarrow \dcat^{\omega}_{\Hsf}(\ccat)_{\geq 0}
\]
Moreover, there is a canonical natural transformation
\[
\tau \colon \Sigma(-) \rightarrow (-)[1]
\]
of endofunctors of the perfect derived category. 
\item There is a fully faithful locally graded functor $\nu \colon \ccat \rightarrow \dcat^{\omega}_{\Hsf}(\ccat)_{\geq 0}$, where we consider $\ccat$ as locally graded using the suspension functor and a natural isomorphism
\[
\pi_{0}^{\heartsuit}(\nu(-)) \cong \Hsf(-).
\]
\item We say that an object $X \in \dcat_{\Hsf}^{\omega}(\ccat)_{\geq 0}$ is \emph{$\tau$-local} if $\tau \colon X \to \Omega X(1)$ is an equivalence.
\item The essential image of $\nu \colon \ccat \rightarrow \dcat^{\omega}_{\Hsf}(\ccat)_{\geq 0}$ is the subcategory spanned by $\tau$-local objects. 
\item The endofunctor $C\tau(X) \colonequals \mathrm{cofib}(\tau \colon \Sigma X[-1] \rightarrow X)$ admits a canonical structure of a monad on the perfect derived category and identification between the hearts extends to an equivalence
\[
\Mod_{C\tau}(\dcat^{\omega}_{\Hsf}(\ccat)_{\geq 0}) \cong \dcat^{b}(\acat)_{\geq 0}
\]
between the category of modules over this monad and the connective bounded derived category of $\acat$. 
\item Let $\dcat^{\omega}_{\Hsf}(\ccat)$ denote the Spanier--Whitehead stabilization of $\dcat^{\omega}_{\Hsf}(\ccat)_{\geq 0}$.
Then for any pair of objects $c, d \in \ccat$, the spectral sequence associated to the filtered spectrum 
\[
\ldots \rightarrow \map_{\dcat^{\omega}_{\Hsf}(\Hsf)}(d, \Sigma \nu(c)[-1]) \rightarrow \map_{\dcat^{\omega}_{\Hsf}(\Hsf)}(d, \nu(c)) \rightarrow \map_{\dcat^{\omega}_{\Hsf}(\Hsf)}(d, \Sigma^{-1} \nu(c)[1]) \rightarrow \ldots
\]
can be identified with the $\Hsf$-based Adams spectral sequence which is of signature 
\[
\Ext_{\acat}^{s, t}(\Hsf(d), \Hsf(c)) \Rightarrow \pi_{t-s} \map_{\ccat}(d, c).
\]
\item Given a commutative diagram of locally graded categories\footnote{Note that since adapted homology theories on $\ccat$ form a poset by \cref{recollection:perfect_deformation_associated_to_a_homology_theory}, a locally graded functor $q$ making this diagram commute is unique when it exists.}
\[ \begin{tikzcd}
	& \ccat \ar[dl, "{\Hsf_{1}}"'] \ar[dr, "{\Hsf_{2}}"] & \\ 
    {\acat_{1}} \ar[rr, "q"] & & {\acat_{2}}
\end{tikzcd}\]
where $\Hsf_{1}, \Hsf_{2}$ are adapted homology theories, the associated comparison functor 
\[  \dcat_{\Hsf_1}^{\omega}(\ccat)_{\geq 0} \to \dcat_{\Hsf_2}^{\omega}(\ccat)_{\geq 0} \]
is a localization. 
\end{enumerate}
\end{recollection}

\subsection{Functoriality of deformations} 
\label{subsection:functoriality_of_deformations}

We begin with a discussion of the $(\infty, 2)$-categorical functoriality of the construction of the deformation associated to an adapted homology theory.

\begin{construction} \label{dfn:adapted}
Let $\adapted$ denote the $(\infty, 2)$-category with
    \begin{itemize}
        \item objects given by pairs $(\ccat, \Hsf)$ of an idempotent-complete stable category $\ccat$ and an epimorphism class $\Hsf$ on $\ccat$ with enough injectives and
        \item mapping categories are given by the full subcategory
        \[ \Map_{\adapted}((\ccat_1, \Hsf_1), (\ccat_2, \Hsf_2)) \subseteq \Fun(\ccat_1,\ccat_2) \] 
        spanned by the exact functors that send $\Hsf_1$-epis to $\Hsf_2$-epis.
    \end{itemize}
\end{construction}

\begin{remark}
By \cref{recollection:perfect_deformation_associated_to_a_homology_theory}, the fiber of the forgetful functor $\iota : \adapted \to \stableinftycats $ over $\ccat$ is a \emph{poset}. Specifically, it is the poset of epimorphism classes with enough injectives. 
\end{remark}

\begin{lemma}
\label{lemma:extending_exact_functors_to_squares_of_homology_theories}
Let $\Hsf: \ccat \rightarrow \acat$ and $\Hsf': \ccat \rightarrow \acat'$ be adapted homology theories. Then an exact functor  $f: \ccat \rightarrow \ccat'$ can be completed to a commutative square 
\begin{equation}
\label{equation:commuting_square_of_adapted_homology_theories}
\begin{tikzcd}
	\ccat & {\ccat'} \\
	\acat & {\acat'}
	\arrow["\Hsf", from=1-1, to=2-1]
	\arrow["{\Hsf'}", from=1-2, to=2-2]
	\arrow["f", from=1-1, to=1-2]
	\arrow["\overline{f}"', from=2-1, to=2-2]
\end{tikzcd}
\end{equation}
with $\overline{f}$ an exact, locally graded functor of abelian categories, if and only if $f$ takes $\Hsf$-epimorphisms to $\Hsf'$-epimorphisms. If the latter is the case, then $f$ can be completed to such a square in a unique way. 
\end{lemma}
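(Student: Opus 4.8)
The plan is to use the characterization of adapted homology theories in terms of epimorphism classes, recalled just before the statement, which says that a homology theory is essentially the same data as a class of epimorphisms with enough injectives, and that the category of adapted homology theories on a fixed $\ccat$ is a \emph{poset}. The key observation is that an exact functor $\overline{f} \colon \acat \rightarrow \acat'$ making the square commute is forced: for an injective $i \in \acat'$ with associated object $c_i' \in \ccat'$ (so $\Hsf'(c_i') \cong i$), applying $f$ and using $\Hsf \dashv$ (lifts of injectives) one would need $\overline{f}(\Hsf(c)) \cong \Hsf'(f(c))$ on objects of the form $\Hsf(c)$, and since every object of $\acat$ is a (co)limit of such — indeed $\acat$ is generated by the images of compact projectives $\Hsf(c)$ — this determines $\overline{f}$ uniquely if it exists. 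So the content is the \emph{existence} half, and the stated necessary condition (that $f$ sends $\Hsf$-epimorphisms to $\Hsf'$-epimorphisms) is clearly necessary since $\overline{f}$ is exact and $\Hsf$-epis are by definition the maps $g$ with $\Hsf(g)$ epi, while $\Hsf'(f(g)) \cong \overline{f}(\Hsf(g))$ is then epi.

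The main steps I would carry out, assuming the condition on $f$: first, observe that the composite $\Hsf' \circ f \colon \ccat \rightarrow \acat'$ is itself a locally graded functor sending cofiber sequences to middle-exact sequences, i.e.\ a homology theory, and it factors through $\acat$ in the following precise sense. Second, I would check that $\Hsf' \circ f$ and $\Hsf$ determine the \emph{same} epimorphism class on $\ccat$ only if $f$ reflects as well as preserves epis — but in general they need not, so instead I would argue that the epimorphism class $\ecat_{\Hsf' \circ f}$ \emph{contains} $\ecat_{\Hsf}$ by the hypothesis on $f$. By the poset structure, this inclusion of epimorphism classes corresponds to a localization functor $q \colon \acat = \acat(\ccat, \Hsf) \rightarrow \acat(\ccat, \Hsf' \circ f)$, essentially by part (5) of the Recollection on epimorphism classes, and this $q$ is automatically exact and locally graded. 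Third, I would identify $\acat(\ccat, \Hsf' \circ f)$ with a subcategory of $\acat'$: since $\Hsf' \circ f$ is an adapted homology theory (adaptedness is inherited because lifts of injectives in $\acat'$ pull back appropriately — this uses that $f$ has the right exactness and that injectives in the universal abelian target are built from representables), its universal abelian target receives a canonical exact functor to $\acat'$, and composing $q$ with this gives the desired $\overline{f}$, together with a natural isomorphism $\overline{f} \circ \Hsf \cong \Hsf'\circ f$.

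The cleanest route, and the one I would actually write, is to invoke the universal property of $\acat(\ccat, \Hsf)$ directly: it is the initial abelian category under a homology theory out of $\ccat$ whose associated epimorphism class is $\Hsf$ (more precisely, every homology theory $\ccat \to \bcat$ whose epis contain the $\Hsf$-epis factors uniquely through $\acat(\ccat,\Hsf)$ via an exact functor). Applying this to the homology theory $\Hsf' \circ f \colon \ccat \to \acat'$, whose epimorphism class contains $\ecat_{\Hsf}$ precisely by the hypothesis on $f$, yields the unique exact locally graded $\overline{f}$ with $\overline{f}\circ \Hsf \cong \Hsf' \circ f$. The hard part — or rather the point requiring care — is pinning down the exact form of this universal property of $\acat(\ccat, \Hsf)$ and that it applies to homology theories landing in an arbitrary abelian category such as $\acat'$, not merely to epimorphism classes on $\ccat$ itself; this is where one must lean on the constructions of \cite{patchkoria2021adams} and the fact (from the Recollection) that adapted homology theories on $\ccat$ form a poset equivalent to the poset of epimorphism classes with enough injectives. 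Uniqueness is then immediate from that poset structure, as already noted in the footnote of part (9) of \cref{recollection:perfect_deformation_associated_to_a_homology_theory}.
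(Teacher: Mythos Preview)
Your overall strategy---factor the composite homology theory $\Hsf' \circ f$ through $\acat$ using a universal property---is exactly right and is the same idea the paper uses. However, you never actually establish the universal property you invoke; you simply assert that ``every homology theory $\ccat \to \bcat$ whose epis contain the $\Hsf$-epis factors uniquely through $\acat(\ccat,\Hsf)$ via an exact functor'' and then admit that pinning this down is ``the hard part''. But this assertion is essentially equivalent to the lemma itself, so invoking it is circular. Your second paragraph's detour through the adapted replacement $\acat(\ccat, \Hsf' \circ f)$ does not help: you would still need to produce the exact comparison functor from this to $\acat'$, which is the same problem one level down.

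The paper closes this gap cleanly by passing through the Freyd envelope $A(\ccat)$. The point is that $A(\ccat)$ has a genuinely prior universal property (\cite[Theorem 2.45]{patchkoria2021adams}): it is the target of the \emph{universal} homology theory on $\ccat$, so any exact $f \colon \ccat \to \ccat'$ induces $A(f) \colon A(\ccat) \to A(\ccat')$ automatically. Since every adapted homology theory $\Hsf \colon \ccat \to \acat$ arises as a Gabriel localization $A(\ccat) \to \acat$ (\cite[Theorem 2.56]{patchkoria2021adams}), the question reduces to the standard fact that an exact functor between abelian categories descends through localizations if and only if it sends the relevant Serre kernel into the other---and the kernel condition $A(f)(\ker \Hsf) \subseteq \ker \Hsf'$ unwinds precisely to ``$f$ takes $\Hsf$-epis to $\Hsf'$-epis''. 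This is the concrete mechanism your proposal was reaching for but did not name.
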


\begin{proof}
By the universal property of the Freyd envelope \cite[Theorem 2.45]{patchkoria2021adams}, to complete $f$ to such a commutative square is the same as to complete the induced functor $A(f)$ between Freyd envelopes to a commutative square 
\[
\begin{tikzcd}
	A(\ccat) & A({\ccat'}) \\
	\acat & {\acat'}
	\arrow["\Hsf", from=1-1, to=2-1]
	\arrow["{\Hsf'}", from=1-2, to=2-2]
	\arrow["A(F)", from=1-1, to=1-2]
	\arrow["\overline{f}"', from=2-1, to=2-2]
\end{tikzcd}.
\]
Since both vertical arrows are localizations by \cite[Theorem 2.56]{patchkoria2021adams}, this can be done precisely when $A(f)(\mathrm{ker}(\Hsf)) \subseteq \mathrm{ker}(\Hsf')$, in which case there is a unique way to do so. As this condition on kernels is equivalent to $f$ taking $\Hsf$-epimorphisms to $\Hsf'$-epimorphisms, the result follows.
\end{proof}

\begin{remark}
\label{remark:functors_of_adapted_homology_theories_in_terms_of_preserving_epis}
As a consequence of \cref{lemma:extending_exact_functors_to_squares_of_homology_theories} and \cite[Theorem 3.24]{patchkoria2021adams} the $(\infty, 2)$-category $\adapted$ is equivalent to the $(\infty, 2)$-category 
whose objects are idempotent-complete stable categories equipped with an adapted homology theory 
and whose morphisms are commuting squares as in (\ref{equation:commuting_square_of_adapted_homology_theories}).
\end{remark}

\begin{remark}
    In view of \Cref{remark:functors_of_adapted_homology_theories_in_terms_of_preserving_epis} we will freely move between the presentation of $\adapted$ in terms of epimorphism classes and the presentation in terms of adapted homology theories.
\end{remark}

\begin{construction} \label{construction:def}
    Let us write $\pcat\mathrm{St}^{\mathrm{ex}}$ for the $(\infty, 2)$-category of prestable categories with finite limits and exact functors. Given an adapted homology theory $(\ccat, \Hsf) \in \adapted$ let 
    \[ \Def_{\Hsf}^{\omega}(\ccat)_{\geq 0} : \pcat\mathrm{St}^{\mathrm{ex}} \to \mathrm{Cat}_{\infty} \]
    denote the $2$-presheaf of categories on $(\pcat\mathrm{St}^{\mathrm{ex}})^{\mathrm{op}}$
    which sends a prestable category $\dcat_{\geq 0}$ to the full subcategory \[
    \Def_{\Hsf}^{\omega}(\ccat)_{\geq 0}(\dcat_{\geq 0}) \subseteq \Fun(\ccat, \dcat_{\geq 0})
    \]
    spanned by the left exact functors which preserve $\Hsf$-exact cofiber sequences.
\end{construction}

\begin{remark}
\label{remark:preserving_h_exact_cofiber_sequences_same_as_being_a_prestable_enhancement}
As observed in \cite[Proof of Theorem 5.35]{patchkoria2021adams}, a left exact functor $f \colon \ccat \rightarrow \dcat_{\geq 0}$ sends $\Hsf$-exact cofiber sequences to cofiber sequences if and only if the homological functor $\tau_{\leq 0} \circ f \colon \ccat \rightarrow \dcat^{\heartsuit}$ factors (necessarily uniquely) through $\acat$. Thus, $f$ preserves $\Hsf$-exact cofiber sequences if and only if it can be made into a prestable enhancement of $\Hsf$ in the sense of \cite[Notation 4.41]{patchkoria2021adams}. 
\end{remark}

\begin{proposition} \label{prop:universal-property}
    Precomposition with $\nu_{\Hsf} : \ccat \to \dcat_{\Hsf}(\ccat)_{\geq 0}$ induces an equivalence 
    \[ \Def_{\Hsf}^{\omega}(\ccat)_{\geq 0}(-) \cong \Map_{\pcat\mathrm{St}^{\mathrm{ex}}}(\dcat^{\omega}_{\Hsf}(\ccat)_{\geq 0}, -) \]
\end{proposition}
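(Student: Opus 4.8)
The statement is a representability/universal-property claim: the $2$-presheaf $\Def_{\Hsf}^{\omega}(\ccat)_{\geq 0}$ is represented by $\dcat^{\omega}_{\Hsf}(\ccat)_{\geq 0}$, with the universal object being $\nu_{\Hsf}$. The plan is to produce the comparison map and then check it is an equivalence objectwise in $\dcat_{\geq 0} \in \pcat\mathrm{St}^{\mathrm{ex}}$. First I would exhibit the candidate map: given an exact functor $g \colon \dcat^{\omega}_{\Hsf}(\ccat)_{\geq 0} \to \dcat_{\geq 0}$ in $\pcat\mathrm{St}^{\mathrm{ex}}$, its restriction $g \circ \nu_{\Hsf} \colon \ccat \to \dcat_{\geq 0}$ is left exact (composite of the left exact $\nu_{\Hsf}$ with the exact $g$) and preserves $\Hsf$-exact cofiber sequences. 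Indeed, by \cref{recollection:perfect_deformation_associated_to_a_homology_theory}(iv) the composite $\tau_{\leq 0} \circ \nu_{\Hsf}$ factors through $\acat$ via $\pi_0^{\heartsuit}(\nu(-)) \cong \Hsf(-)$, and since $g$ is exact $\tau_{\leq 0} \circ g \circ \nu_{\Hsf}$ also factors through $\acat$; by \cref{remark:preserving_h_exact_cofiber_sequences_same_as_being_a_prestable_enhancement} this is exactly the condition of preserving $\Hsf$-exact cofiber sequences. So precomposition with $\nu_{\Hsf}$ lands in $\Def_{\Hsf}^{\omega}(\ccat)_{\geq 0}(\dcat_{\geq 0})$, giving the natural transformation to be shown invertible.

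\textbf{Fully faithfulness and essential surjectivity.} Next I would argue that this precomposition functor is an equivalence of categories for each $\dcat_{\geq 0}$. The key input is that $\dcat^{\omega}_{\Hsf}(\ccat)_{\geq 0}$ is, by its very construction in \cite{patchkoria2021adams} (recalled as the perfect derived category in \cref{recollection:perfect_deformation_associated_to_a_homology_theory}), generated under finite colimits and the local grading by the image of $\nu_{\Hsf}$; equivalently it is the universal prestable category receiving a prestable enhancement of $\Hsf$ among \emph{small} such categories. For essential surjectivity: given a left exact $f \colon \ccat \to \dcat_{\geq 0}$ preserving $\Hsf$-exact cofiber sequences, \cref{remark:preserving_h_exact_cofiber_sequences_same_as_being_a_prestable_enhancement} makes $f$ into a prestable enhancement of $\Hsf$ in the sense of \cite[Notation 4.41]{patchkoria2021adams}, and the universal property of $\dcat^{\omega}_{\Hsf}(\ccat)_{\geq 0}$ established in \cite[Theorem 4.42, Theorem 5.35]{patchkoria2021adams} (the representing property for homological/prestable enhancements) produces a unique-up-to-equivalence exact extension $g$ with $g \circ \nu_{\Hsf} \simeq f$. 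For fully faithfulness, the same universal property says that the space of extensions of a fixed $f$ is contractible, and more precisely that restriction along $\nu_{\Hsf}$ is fully faithful on exact functors out of $\dcat^{\omega}_{\Hsf}(\ccat)_{\geq 0}$ because any two such functors agreeing on the generating subcategory $\nu_{\Hsf}(\ccat)$ (and compatible with finite colimits and the local grading) agree. Naturality in $\dcat_{\geq 0} \in \pcat\mathrm{St}^{\mathrm{ex}}$ is automatic since every construction is by composition.

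\textbf{Expected main obstacle.} The genuine content is not the formal manipulation but unpacking exactly \emph{which} theorem of \cite{patchkoria2021adams} supplies the universal property in the form I need. The cited paper phrases the construction in terms of homological enhancements and Freyd envelopes; the bridge is \cref{remark:preserving_h_exact_cofiber_sequences_same_as_being_a_prestable_enhancement}, which translates ``left exact, preserves $\Hsf$-exact cofiber sequences'' into ``prestable enhancement of $\Hsf$.'' Once that dictionary is in place, \cref{prop:universal-property} is essentially a restatement of \cite[Theorem 5.35]{patchkoria2021adams} (or its prestable refinement) organized as a representable $2$-presheaf; the only care needed is to check that the equivalence is suitably $2$-natural, i.e.\ compatible with natural transformations of functors $\dcat_{\geq 0} \to \dcat'_{\geq 0}$ in $\pcat\mathrm{St}^{\mathrm{ex}}$, which again follows because everything in sight is defined by postcomposition and all the relevant functor categories are full subcategories of functor categories cut out by colimit/exactness conditions that postcomposition preserves.
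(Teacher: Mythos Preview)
Your proposal is correct and follows essentially the same approach as the paper: construct the comparison by showing $\nu_{\Hsf}$ preserves $\Hsf$-exact cofiber sequences, then invoke \cref{remark:preserving_h_exact_cofiber_sequences_same_as_being_a_prestable_enhancement} to translate into the language of prestable enhancements and cite \cite[Theorem 5.35]{patchkoria2021adams}. The paper's proof is more terse (it cites \cite[Proposition 5.34]{patchkoria2021adams} directly for the fact that $\nu_{\Hsf}$ preserves $\Hsf$-exact cofiber sequences rather than rederiving it), but the logical structure is identical.
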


\begin{proof}
Note that $\nu_{\Hsf}$ preserves $\Hsf$-exact cofiber sequences by \cite[Proposition 5.34]{patchkoria2021adams}, which implies that precomposition with $\nu_{\Hsf}$ induces a natural transformation
    \[ \Map_{\pcat\mathrm{St}^{\mathrm{ex}}}(\dcat^{\omega}_{\Hsf}(\ccat)_{\geq 0}, -) \to \Def_{\Hsf}^{\omega}(\ccat)(-)_{\geq 0} \]
    Keeping in mind \cref{remark:preserving_h_exact_cofiber_sequences_same_as_being_a_prestable_enhancement}, to say that this is an natural equivalence is a restatement of the universal property of $\dcat^{\omega}(\ccat)$ of \cite[Theorem 5.35]{patchkoria2021adams}.
\end{proof}

\begin{remark}
\label{remark:right_exact_variant_of_the_universal_property_of_def}
The universal property of \cref{prop:universal-property} has a right exact variant which we now describe. Let $\pcat\mathrm{St}^{\mathrm{rex}}$ denote the $(\infty, 2)$-category of prestable categories and right exact functors. Given an adapted homology theory $(\ccat, \Hsf) \in \adapted$ let 
\[ 
\Def_{\Hsf}^{\omega, \mathrm{rex}}(\ccat)_{\geq 0} \colon \pcat\mathrm{St}^{\mathrm{rex}} \to \mathrm{Cat}_{\infty}
\]
denote the functor which sends a prestable category $\dcat_{\geq 0}$ to the full subcategory 
\[
\Def_{\Hsf}^{\omega, \mathrm{rex}}(\ccat)_{\geq 0}(\dcat_{\geq 0}) \subseteq \Fun(\ccat, \dcat_{\geq 0})
\]
spanned by additive functors which preserve $\Hsf$-exact cofiber sequences. Then precomposition with $\nu_{\Hsf} : \ccat \to \dcat_{\Hsf}(\ccat)_{\geq 0}$ induces an equivalence 
\[ 
\Def_{\Hsf}^{\omega, \mathrm{rex}}(\ccat)_{\geq 0}(-) \cong \Map_{\pcat\mathrm{St}^{\mathrm{rex}}}(\dcat^{\omega}_{\Hsf}(\ccat)_{\geq 0}, -).
\]
This is immediate from the definition of $\dcat^{\omega}_{\Hsf}(\ccat)_{\geq 0}$ as a localization of the additive finite colimit completion $A^{\omega}_{\infty}(\ccat)$ of $\ccat$ studied in \cite[\S 4.2]{patchkoria2021adams} along $\Hsf$-exact cofiber sequences. 
\end{remark}

\begin{construction}
\label{construction:deformation_as_a_2functor}
    As a consequence of \Cref{prop:universal-property} $\Def_{\Hsf}^{\omega}(\ccat)_{\geq 0}(-)$ 
    is a representable $2$-presheaf of categories.  
    Allowing the choice of adapted homology theory to vary in \Cref{construction:def} 
    we obtain a $2$-functor of $(\infty,2)$-categories
    \[ \Def_{-}^{\omega}(-)_{\geq 0} \colon \adapted \to \pcat\mathrm{St}^{\mathrm{ex}}. \]
    If $f \colon (\ccat_1, \Hsf_1) \rightarrow (\ccat_2, \Hsf_2)$ is a morphism of adapted homology theories, we denote the induced exact functor of prestable categories by $\widetilde{f} \colon \Def_{\Hsf_1}^{\omega}(\ccat_1)_{\geq 0} \rightarrow \Def_{\Hsf_2}^{\omega}(\ccat_2)_{\geq 0}$.
\end{construction}

\begin{construction}
\label{construction:functors_between_deformations_from_universal_property}
Using the presentation of $\Def_{-}^{\omega}(-)_{\geq 0}$ as a $2$-presheaf of categories on $\pcat\mathrm{St}^{\mathrm{ex}}$ (and its right exact variant of \cref{remark:right_exact_variant_of_the_universal_property_of_def}) we can construct several natural transformations out of it. First, we have the two forgetful functors: 
\begin{enumerate}
    \item Let $\iota(-)$ denote the forgetful functor that sends 
    $(\ccat, \Hsf)$ to $\ccat$.
    \item  Let $\acat(-)$ denote the functor that sends 
    $(\ccat, \Hsf)$ to $\Def_{\Hsf}^{\omega}(\ccat)^{\heartsuit}$.
\end{enumerate}
We then also have the canonical maps into the generic and special fibers: 
\begin{enumerate}[start=3]
\item Let $(-)^{\tau=1}$ denote the natural transformation 
    \[(-)^{\tau=1} : \Def_{-}^{\omega}(-) \Rightarrow \iota(-)\]
    of \cite[Remark 5.47]{patchkoria2021adams} induced by the exact functor $\ccat \xrightarrow{\mathrm{Id}} \ccat$. 
    \item Let $(-)^{\tau=0}$ denote the natural transformation 
    \[ (-)^{\tau=0} \colon \Def_{-}^{\omega}(-) \to \dcat^b(\acat(-)) \]
    obtained by applying the universal property of \cref{remark:right_exact_variant_of_the_universal_property_of_def} to the composite 
    \[
    \iota(-) \rightarrow \acat(-) \rightarrow \dcat^{b}(\acat(-)),
    \]
    where the first arrow is the homology theory and the second one is the inclusion of the heart. This is left adjoint to the exact functor 
    \[
    \dcat^b(\acat(-)) \rightarrow \Def_{-}^{\omega}(-)
    \]
    which exists by the universal property of the bounded derived category. The resulting adjunction is monadic and determines the monad $C\tau \otimes -$ of \cite[\S 5.3]{patchkoria2021adams}.
\end{enumerate}
Finally, we have the local grading and the ``thread structure'' of \cite[\S 5.3]{patchkoria2021adams}:
\begin{enumerate}[start=5]
    \item The identity functor on $\adapted$ has an automorphism given by suspension 
    $\Sigma  \colon \ccat \to \ccat$.
    This equips $\Def_{\Hsf}^{\omega}(\ccat)$ with a local grading natural in $(\ccat, \Hsf)$ which we denote by $(-)(1)$. 
    \item The canonical map $S^1 \to \Map_{\Fun(\ccat, \ccat)}(\mathrm{Id}, \Sigma)$ induces a natural transformation
    \[\tau \colon \mathrm{Id} \Rightarrow \Omega \mathrm{Id} (1) \]
    of functors 
    \[ \Def_{\Hsf}^{\omega}(\ccat)_{\geq 0} \to \Def_{\Hsf}^{\omega}(\ccat)_{\geq 0}. \]
\end{enumerate}
\end{construction}

Giving a careful constructing of the natural transformation $\nu$ with all the desired functoriality is more delicate and requires a preparatory lemma.

\begin{lemma} \label{lem:tau-local-preserved}
If $f \colon (\ccat_1, \Hsf_1) \rightarrow (\ccat_2, \Hsf_2)$ is a morphism of adapted homology theories, then
\[
\widetilde{f} \colon \Def_{\Hsf_1}^{\omega}(\ccat_1) \rightarrow \Def_{\Hsf_2}^{\omega}(\ccat_2)
\]
sends $\tau$-local objects to $\tau$-local objects.
\end{lemma}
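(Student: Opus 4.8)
The plan is to reduce the statement to the identification, recorded in \Cref{recollection:perfect_deformation_associated_to_a_homology_theory}, of the $\tau$-local objects of a deformation as the essential image of its synthetic analogue functor $\nu$. First I would unwind the construction of $\widetilde f$ from \Cref{construction:deformation_as_a_2functor}: under the equivalence $\Def^{\omega}_{\Hsf_i}(\ccat_i)(-) \simeq \Map_{\pcat\mathrm{St}^{\mathrm{ex}}}(\dcat^{\omega}_{\Hsf_i}(\ccat_i), -)$ of \Cref{prop:universal-property}, given by precomposition with $\nu_{\Hsf_i}$, the functor $\widetilde f$ corresponds to the natural transformation ``precompose with $f$'', and evaluating this transformation on the tautological object $\nu_{\Hsf_2}$ yields $\widetilde f \circ \nu_{\Hsf_1} \simeq \nu_{\Hsf_2} \circ f$. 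This uses only that $f$ carries $\Hsf_1$-epimorphisms to $\Hsf_2$-epimorphisms, so that $\nu_{\Hsf_2}\circ f$ defines an object of $\Def^{\omega}_{\Hsf_1}(\ccat_1)(\dcat^{\omega}_{\Hsf_2}(\ccat_2))$, together with the fact that $\nu_{\Hsf_2}$ preserves $\Hsf_2$-exact cofiber sequences (\cite[Proposition 5.34]{patchkoria2021adams}).

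Granting that compatibility, the lemma is immediate: the $\tau$-local objects of $\dcat^{\omega}_{\Hsf_1}(\ccat_1)$ are exactly those of the form $\nu_{\Hsf_1}(c)$ with $c \in \ccat_1$ (in the stabilized setting using that $\ccat_i$ is already stable, so that the essential image of $\nu_{\Hsf_i}$ is closed under $\Omega$ and the local grading), and likewise for $\ccat_2$; hence $\widetilde f(\nu_{\Hsf_1}(c)) \simeq \nu_{\Hsf_2}(f(c))$ is again $\tau$-local.

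I would also record an alternative, purely formal argument that avoids the description of $\tau$-local objects altogether: $\widetilde f$ is exact, hence commutes with $\Omega$; it commutes with the local grading $(1)$ by the naturality of the latter in $(\ccat,\Hsf)$ recorded in \Cref{construction:functors_between_deformations_from_universal_property}; and, since $f$ is exact, it intertwines the canonical maps $S^1 \to \Map(\mathrm{Id},\Sigma)$ used to build $\tau$, so that $\widetilde f$ carries $\tau_X \colon X \to \Omega X(1)$ to $\tau_{\widetilde f X}$ up to the evident identifications. An equivalence $\tau_X$ then produces an equivalence $\tau_{\widetilde f X}$, so $\widetilde f X$ is $\tau$-local whenever $X$ is.

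The only genuinely delicate point in either approach is establishing the compatibility of the $\nu$'s under $\widetilde f$ (equivalently, the naturality of $\tau$ in the variable $(\ccat,\Hsf)$): this requires careful bookkeeping through the universal property of \Cref{prop:universal-property} and the construction of the $2$-functor $\Def^{\omega}_{-}(-)$, and it is precisely the input needed so that the subsequent construction of $\nu$ as a natural transformation of $2$-functors goes through. Everything else is formal.
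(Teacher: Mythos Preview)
Your alternative argument is exactly the paper's proof: naturality of $\tau$ and of the local grading $(-)(1)$ in $(\ccat,\Hsf)$ (items (5) and (6) of \Cref{construction:functors_between_deformations_from_universal_property}), together with the exactness of $\widetilde f$, shows that $\widetilde f$ carries $\tau_X$ to $\tau_{\widetilde f X}$.

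Your first approach via the essential image of $\nu$ also works, since the pointwise identification $\widetilde f \circ \nu_{\Hsf_1} \simeq \nu_{\Hsf_2} \circ f$ follows directly from the universal property of \Cref{prop:universal-property}. Note, however, that in the paper's logical order this lemma is a \emph{prerequisite} for constructing the $2$-natural $\nu$ in \Cref{construction:2_functorial_nu_using_universal_properties}, which is why the paper prefers the $\tau$-based route. Relatedly, your closing parenthetical slightly conflates two distinct naturalities: that of $\tau$ (already established before the lemma) and the $2$-naturality of $\nu$ (built afterward, using the lemma).
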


\begin{proof}
This follows from the naturality of $\tau$ and $(-)(1)$ in the adapted homology theory together with the fact that $\widetilde{f}$ is exact.
\end{proof}

\begin{construction}
\label{construction:2_functorial_nu_using_universal_properties}
Using \Cref{lem:tau-local-preserved}, we can construct a $2$-functor
\[
\Def_{-}^{\omega}(-)_{\geq 0}^{\tau=1} : \adapted \to \mathrm{Cat}_{\infty}
\]
that sends $(\ccat, \Hsf)$ to the subcategory of $\tau$-local objects in $\Def_{\Hsf}^{\omega}(\ccat)_{\geq 0}$ and an associated natural transformation
\[
\Def_{-}^{\omega}(-)_{\geq 0}^{\tau=1} \Rightarrow \Def_{-}^{\omega}(-)_{\geq 0}
\]
which is pointwise fully faithful. From \cite[Example 6.25]{patchkoria2021adams} (where $\tau$-local objects are referred to as ``potential $\infty$-stages'') we can read off that the composite
\[ 
\Def_{-}^{\omega}(-)_{\geq 0}^{\tau=1} \Rightarrow \Def_{-}^{\omega}(-)_{\geq 0} \xRightarrow{(-)^{\tau=1}} \iota(-) 
\]
is an equivalence. We let $\nu$ denote the natural transformation
\[ 
\nu : \iota(-) \Rightarrow \Def_{-}^{\omega}(-)_{\geq 0} 
\]
obtained by composing the inverse of the natural equivalence above with the inclusion of the $\tau$-local objects. 
\end{construction}

Altogether we obtain the following diagram of natural transformations of $2$-functors
\begin{equation}
    \begin{tikzcd}[sep=huge] 
        & \iota(-) \ar[r, "\Hsf"] \ar[d, hook, "\nu"] \ar[dl, "\mathrm{Id}"] & \acat(-) \ar[d, hook] \\
        \iota(-) & \dcat^{\omega}_{-}(-) \ar[l, "(-)^{\tau=1}"] \ar[r, "(-)^{\tau=0}"'] & \dcat^b(\acat(-)).
    \end{tikzcd}
\end{equation}

\begin{variant}
    Let $\adapted^{\mathrm{prl}}$ be the subcategory of $\adapted$ 
    whose objects are the pairs $(\ccat, \Hsf)$ 
    with $\ccat$ presentable and $\Hsf$ an adapted \emph{Grothendeick} homology theory
    and whose morphisms are the left adjoints which preserve $\Hsf$-epis.
    
    In parallel to the contents of this section the construction of the 
    unseperated derived category from \cite[Theorem 6.40]{patchkoria2021adams} extends to provide a $2$-functor
    \[
    \widecheck{\Def}_{\Hsf}(\ccat)_{\geq 0} \colon \adapted^{\mathrm{prl}} \rightarrow \pcat\mathrm{St}^{\mathrm{L}, \mathrm{ex}}
    \]
    into the $(\infty, 2)$-category of Grothendieck prestable categories and exact left adjoints. Moreover, we have the associated natural transformations $\nu$, $\tau$, $(-)^{\tau=1}$ and  $(-)^{\tau=0}$ as in \cref{construction:functors_between_deformations_from_universal_property} and \cref{construction:2_functorial_nu_using_universal_properties}, all of which preserve filtered colimits. 
\end{variant}

Recall from \cite[6.10.1]{krause_2010} that if $\Hsf \colon \ccat \rightarrow \acat$ is Grothendieck and $\ccat$ is compactly-generated, then the left Kan extension of $\Hsf$ along the restricted Yoneda embedding 
\[
(y_{0} \colonequals \pi_{0} \circ y) \colon \ccat \rightarrow P_{\Sigma}(\ccat^{\omega}; \Ab)
\]
induces a \emph{localization} 
\[
L_{\Hsf} \colon P_{\Sigma}(\ccat^{\omega}; \Ab) \rightarrow \acat.
\]
Moreover, any Grothendieck homology theory on $\ccat$ arises in this way. The following statement characterizes under which conditions the associated deformation is also compactly-generated. 

\begin{lemma}
\label{lemma:unseparated_def_of_cpt_gen_homology_theory_is_cpt_gen}
Let $\Hsf : \ccat \to \acat$ be a Grothendieck adapted homology theory
with $\ccat$ compactly generated. The following conditions are equivalent: 
\begin{enumerate}
\item $\acat$ is compactly generated and $\Hsf$ preserves compactness,
\item the kernel $K \colonequals \mathrm{ker}(L_{\Hsf})$ of the localization $P_{\Sigma}(\ccat^{\omega}; \Ab) \to \acat$ is generated as a localizing subcategory by cokernels of $\Hsf$-epimorphisms $y_{0}(c) \to y_{0}(d)$, where $c \rightarrow d$ is a map of compact objects of $\ccat$, 
\item $\widecheck{\Def}_{\Hsf}(\ccat)$ is compactly generated and $\nu_{\Hsf}$ preserves compactness.
\end{enumerate}
\end{lemma}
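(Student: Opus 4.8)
The plan is to prove the three conditions equivalent by establishing $(1) \Leftrightarrow (2)$ and $(2) \Leftrightarrow (3)$, with the passage through $P_{\Sigma}(\ccat^{\omega};\Ab)$ and its localizations doing most of the work. For $(1) \Leftrightarrow (2)$: recall that $L_{\Hsf} \colon P_{\Sigma}(\ccat^{\omega};\Ab) \to \acat$ is a localization whose kernel $K$ is a localizing subcategory, and $\acat \cong P_{\Sigma}(\ccat^{\omega};\Ab)/K$. Since $P_{\Sigma}(\ccat^{\omega};\Ab)$ is compactly generated by the representables $y_0(c)$ for $c \in \ccat^{\omega}$, the quotient $\acat$ is compactly generated with $L_{\Hsf}$ preserving compactness \emph{if and only if} $K$ is generated as a localizing subcategory by compact objects of $P_{\Sigma}(\ccat^{\omega};\Ab)$ --- this is the standard fact about localizations of compactly generated abelian/Grothendieck categories (e.g.\ via Neeman--Thomason-type arguments, or directly: if $K$ is generated by a set of compacts then the Bousfield localization is smashing in the relevant sense and compacts descend). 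The content of $(2)$ is then the identification of \emph{which} compacts suffice: the compact objects of $P_{\Sigma}(\ccat^{\omega};\Ab)$ are the finitely presented additive presheaves, i.e.\ cokernels of maps $y_0(c) \to y_0(d)$ between representables, and such a cokernel lies in $K$ precisely when $\Hsf(c) \to \Hsf(d)$ is an epimorphism (since $L_{\Hsf}$ is exact and $L_{\Hsf} y_0 \cong \Hsf$ on compacts). So $K$ is generated by compacts iff it is generated by the cokernels named in $(2)$; I would spell out that every compact of $K$ is built from these by the snake lemma / presentability of $K$.

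For $(2) \Leftrightarrow (3)$: here I would invoke the construction of $\widecheck{\Def}_{\Hsf}(\ccat)$ from \cite[Theorem 6.40]{patchkoria2021adams} as a localization of a category of sheaves/presheaves of spectra on $\ccat^{\omega}$, whose heart is $\acat$ realized as $P_{\Sigma}(\ccat^{\omega};\Ab)/K$. Concretely, $\widecheck{\Def}_{\Hsf}(\ccat)_{\geq 0}$ is the Bousfield localization of $P_{\Sigma}(\ccat^{\omega};\Sp_{\geq 0})$ (or the relevant prestable sheaf category) at the maps $\Sigma^{\infty}_+$ of the $\Hsf$-exact cofiber sequences, and the functor $\nu_{\Hsf}$ is the composite of the (spectral) Yoneda embedding with this localization. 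The key point is that $\nu_{\Hsf}(c)$ for $c \in \ccat^{\omega}$ is compact in $\widecheck{\Def}_{\Hsf}(\ccat)$ --- and these generate --- exactly when the localizing ideal being inverted is generated by maps between compact objects, which by the same $\pi_0$-analysis as above translates to condition $(2)$. I would make this precise by noting that the $t$-structure on $\widecheck{\Def}_{\Hsf}(\ccat)$ is compatible with filtered colimits (Grothendieck prestable), so compactness of a connective object can be tested on its homotopy objects in $\acat$, reducing the spectral statement to the abelian statement $(1)$; combined with the observation that $\nu_{\Hsf}(c)$ is bounded with homotopy in the essential image of $\Hsf$, this closes the loop.

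The main obstacle I expect is the bookkeeping in $(2) \Leftrightarrow (3)$: one must carefully relate the localizing \emph{subcategory} $K$ of the abelian presheaf category to the localizing \emph{subcategory} of the prestable (spectral) presheaf category defining $\widecheck{\Def}_{\Hsf}(\ccat)$, and check that ``generated by compacts'' transfers across the $t$-structure in both directions. The subtlety is that inverting $\Hsf$-exact cofiber sequences is a priori a condition in the stable/prestable world, not obviously the same as quotienting by $K$ on hearts; the reconciliation is precisely the content of \cite[Theorem 6.40]{patchkoria2021adams}, which identifies $\widecheck{\Def}_{\Hsf}(\ccat)^{\heartsuit} \cong \acat$ and shows $\nu$ is a prestable enhancement of $\Hsf$. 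Given that, the transfer is formal but fiddly: one uses that a localization of a compactly generated (pre)stable category at a set of maps between compact objects is again compactly generated, with the localization functor preserving compactness, and that the localized generators are the images of the original compact generators.

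\textbf{Remark on strategy.} An alternative, perhaps cleaner, organization: prove $(1) \Rightarrow (3) \Rightarrow (2) \Rightarrow (1)$ cyclically. For $(1) \Rightarrow (3)$, use that $\acat$ compactly generated with $\Hsf$ preserving compactness implies $\widecheck{\Def}_{\Hsf}(\ccat)$ is generated by the $\nu_{\Hsf}(c)$, $c \in \ccat^{\omega}$, which are compact because their homotopy objects $\Hsf(\Sigma^n c)$ are compact in $\acat$ and there are finitely many of them (boundedness). For $(3) \Rightarrow (2)$, take $\nu_{\Hsf}$-images of $\Hsf$-epis of compacts and read off that the relevant cokernels generate $K$ by passing to $\pi_0^{\heartsuit}$. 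For $(2) \Rightarrow (1)$, this is the purely abelian statement handled first above. I would likely write it this way to minimize the number of times one crosses between the stable and abelian settings.
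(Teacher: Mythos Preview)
Your main approach is essentially correct and close to the paper's, which argues cyclically $(1) \Rightarrow (2) \Rightarrow (3) \Rightarrow (1)$ rather than via two biconditionals. For $(2) \Rightarrow (3)$ the paper uses the description $\widecheck{\Def}_{\Hsf}(\ccat)_{\geq 0} \cong P_{\Sigma}(\ccat^{\omega})/\kcat$ you allude to, but the step you leave implicit is the formula
\[
\coker\bigl(y_{0}(c) \to y_{0}(d)\bigr) \;\cong\; \cof\bigl(\Sigma y(e) \to \cof(y(c) \to y(d))\bigr), \qquad e = \fib(c \to d),
\]
which exhibits the abelian generators of $K$ as iterated cofibers of compact representables in $P_{\Sigma}(\ccat^{\omega})$, hence as compact objects of the \emph{prestable} category. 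Without this you cannot pass from ``$K$ generated by compacts of $P_{\Sigma}(\ccat^{\omega};\Ab)$'' to ``$\kcat$ generated by compacts of $P_{\Sigma}(\ccat^{\omega})$'': an object of the heart that is compact in the heart need not be compact in the ambient prestable category.

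Your alternative cyclic strategy, however, has a genuine error in the step $(1) \Rightarrow (3)$. The object $\nu_{\Hsf}(c)$ is \emph{not} bounded: it is $\tau$-local, so $\pi_{n}^{\heartsuit}(\nu_{\Hsf}(c)) \cong \Hsf(c)[-n]$ for every $n \geq 0$, which is typically nonzero in all nonnegative degrees. Relatedly, compactness of a connective object cannot be tested on its homotopy objects---already in spectra, $H\mathbb{Z}$ has a single compact homotopy group but is not compact. The paper avoids this by going the other way: for $(3) \Rightarrow (1)$ one observes that $\pi_{0}^{\heartsuit} \colon \widecheck{\Def}_{\Hsf}(\ccat)_{\geq 0} \to \acat$ preserves compacts because its right adjoint preserves filtered colimits (the Grothendieck prestable condition), and then $\Hsf = \pi_{0}^{\heartsuit} \circ \nu_{\Hsf}$ does too.
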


\begin{proof}
$(1 \Rightarrow 2)$: Since $P_{\Sigma}(\ccat^{\omega}; \abeliangroups)$ is generated under colimits by representables, any object $k \in K$ can be written as the cokernel $\coker(y_0(a) \to y_0(b))$, where both $a, b \in \ccat$ are direct sums of compact objects.  We now need to approximate $k$ by objects of the form required by (2).    

Write $b \cong \oplus b_{\beta}$ as a direct sum of compact objects $b_{\beta}$.
    Let $a_{\beta}$ be defined by the pullback square
    \[ \begin{tikzcd}
    a_{\beta} \ar[r] \ar[d] & b_\beta \ar[d] \\
    a \ar[r] & b 
    \end{tikzcd}, \]
    where the top horizontal arrow is an $\Hsf$-epi as a pullback. If we write
    \[
    k_\beta \coloneqq \coker(y_0(a_\beta) \to y_0(b_\beta)),
    \]
    then since $\bigoplus y_{0}(b_{\beta}) \rightarrow y_{0}(b)$ is an isomorphism, $\oplus k_{\beta} \rightarrow k$ is an epimorphism.  Since localizing subcategories are closed under direct sums, replacing $a \rightarrow b$ with $a_{\beta} \rightarrow b_{\beta}$, we reduce to the case where $a \rightarrow b$ is an $\Hsf$-epimorphism with $b$ compact. 

    We can now write $a \cong \varinjlim a_{\alpha}$ as a filtered colimit of compact objects. Since $\Hsf(b)$ is compact by assumption and 
    \[
    \varinjlim \Hsf(a_{\alpha}) \cong \Hsf(a) \rightarrow \Hsf(b)
    \]
    is an epimorphism, there exists an $\alpha$ such that $\Hsf(a_{\alpha}) \rightarrow \Hsf(b)$ is also an $\Hsf$-epimorphism. As 
    \[
    \mathrm{coker}(y_{0}(a_{\alpha}) \rightarrow y_{0}(b)) \rightarrow k
    \]
    is an epimorphism, we see that $k$ is a quotient of an object of the required form. 
    
    $(2 \Rightarrow 3)$:  We begin by recalling from \cite[Proposition 6.48]{patchkoria2021adams} that the unseparated deformation can be explicitly described as the Gabriel quotient
    \[
    \widecheck{\Def}_{\Hsf}(\ccat)_{\geq 0} \cong P_{\Sigma}(\ccat^{\omega}) / \kcat
    \]
    where $\kcat$ is the localizing subcategory (in the sense of Grothendieck prestable categories) generated by $K \subseteq P_{\Sigma}(\ccat^{\omega})^{\heartsuit} \subseteq P_{\Sigma}(\ccat^{\omega})$.
    The key point is now that
    \[ \coker(y_{0}(c) \to y_{0}(d)) \cong \cof(\Sigma y(e) \to \cof( y(c) \to y(d) )) \]
    where $e \coloneqq \fib(c \to d)$.
    From this we can read off using (2) that $\kcat$ is compactly generated and the inclusion
    $\kcat \to P_{\Sigma}(\ccat^{\omega})$ preserves compactness.
    It follows that $P_{\Sigma}(\ccat^{\omega})/\kcat$ is compactly generated and the localization $P_{\Sigma}(\ccat^{\omega}) \to P_{\Sigma}(\ccat^{\omega})/\kcat$
    preserves compactness. Writing $\nu_{\Hsf}$ as the composite
    \[ \ccat \xrightarrow{y} P_{\Sigma}(\ccat^{\omega}) \to P_{\Sigma}(\ccat^{\omega})/\kcat \]
    we see that $\nu_{\Hsf}$ preserves compactness as well.

    $(3 \Rightarrow 1)$: The right adjoint to the functor $\pi_0 \colon \widecheck{\Def}_{\Hsf}(\ccat)_{\geq 0} \to \acat$ preserves filtered colimits since $\widecheck{\Def}_{\Hsf}$ is Grothendieck prestable. In particular, $\pi_0$ preserves compact objects. Precomposing with $\nu$ we see that $\Hsf$ preserves compactness. Finally, since $\acat$ is generated under colimits by the image of $\Hsf$ and $\Hsf$ preserves compact objects we see that $\acat$ is compactly generated.
\end{proof}

\subsection{Induced homology theories}
\label{subsection:induced_homology_theories}

In this section we study a simple, but useful, technique for producing new homology theories from old: pushforward along an adjunction.

\begin{convention}
We will assume that all stable categories considered in this section are idempotent-complete. Note that since any abelian category is idempotent-complete, precomposing with $\ccat \rightarrow \mathrm{Idem}(\ccat)$ yields an equivalence between homology theories on $\ccat$ and its idempotent completion, so this convention is not really restrictive. 
\end{convention}

\begin{construction}
\label{construction:class_of_epis_induced_by_an_adjunction}
Let $f^* : \ccat \rightleftharpoons \dcat : f_*$ be an adjunction between stable categories and let $\Hsf \colon \dcat \rightarrow \acat$ be an adapted homology theory. Let us say that an arrow $c_{0} \rightarrow c_{1}$ is an $\Hsf'$-epi if $f^{*}(c_{0}) \rightarrow f^{*}(c_{1})$ is $\Hsf$-epi. It is not difficult to see that $\Hsf'$-epis form an epimorphism class on $\ccat$. 
\end{construction}

\begin{lemma} \label{lem:cnstr-induced-homology}
    The epimorphism class $\Hsf'$ of \cref{construction:class_of_epis_induced_by_an_adjunction}
    has enough $\Hsf'$-injectives 
    and therefore uniquely determines an adapted homology theory 
    $\Hsf' : \ccat \to \acat'$.
    Moreover, $c \in \ccat$ is $\Hsf'$-injective if and only if it is a retract of an object of the form $f_*d$ where $d$ is $\Hsf$-injective.
\end{lemma}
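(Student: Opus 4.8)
The plan is to verify the two assertions of \Cref{lem:cnstr-induced-homology}: that the epimorphism class $\Hsf'$ has enough injectives, and the explicit identification of the $\Hsf'$-injectives as retracts of $f_*d$ for $\Hsf$-injective $d$. The key structural input is the adjunction $f^* \dashv f_*$ together with the fact that $\Hsf$ already has enough injectives.

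First I would prove the ``easy'' half of the injective characterization: if $d \in \dcat$ is $\Hsf$-injective, then $f_* d$ is $\Hsf'$-injective. This is a direct adjunction computation. For any $\Hsf'$-epi $c_0 \to c_1$ in $\ccat$, by definition $f^*(c_0) \to f^*(c_1)$ is an $\Hsf$-epi, and then using the adjunction isomorphism $\pi_0\Map_\ccat(c_i, f_*d) \cong \pi_0\Map_\dcat(f^*c_i, d)$ we see that $\pi_0\Map_\ccat(c_1, f_*d) \to \pi_0\Map_\ccat(c_0, f_*d)$ is identified with $\pi_0\Map_\dcat(f^*c_1, d) \to \pi_0\Map_\dcat(f^*c_0, d)$, which is injective since $d$ is $\Hsf$-injective. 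Since $\Hsf'$-injectives are closed under retracts (immediate from the definition, as retracts of injective-like objects stay so), every retract of such an $f_*d$ is $\Hsf'$-injective.

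Next I would show $\Hsf'$ has enough injectives, which will simultaneously give the converse. Given $c \in \ccat$, apply the unit $c \to f_*f^*c$, then since $\Hsf$ has enough injectives pick an $\Hsf$-injective $d$ with a map $f^*c \to d$ such that $d \to \mathrm{cofib}(f^*c \to d)$ is an $\Hsf$-epi; composing gives $c \to f_* f^* c \to f_* d$. I would then need to check two things: (a) that the composite $c \to f_*d$ has the property that $f_*d \to \mathrm{cofib}(c \to f_*d)$ is an $\Hsf'$-epi, and (b) that $f_*d$ is $\Hsf'$-injective, which we already have. For (a), the point is that $f^*$ applied to $c \to f_*d$ factors through $f^*c \to d$ via the counit $f^*f_*d \to d$; one uses that the composite $f^*c \to f^*f_*f^*c \to f^*d$ equals (up to the triangle identity) the chosen map $f^*c \to d$, so the cofiber of $f^*(c \to f_*d)$ receives a map from, or compares appropriately with, $\mathrm{cofib}(f^*c \to d)$. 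The cleanest route: since $f^*$ is exact, $f^*\mathrm{cofib}(c \to f_*d) \cong \mathrm{cofib}(f^*c \to f^*f_*d)$, and the map $f^*c \to f^*f_*d \to d$ being the chosen map with $\Hsf$-epi cofiber, combined with the fact that $d \to f^*f_*d$'s interaction... this is the step needing care. I expect to phrase it via: the map $f^* f_* d \to d$ admits a section-like comparison making $\mathrm{cofib}(f^*c \to f^* f_* d) \to \mathrm{cofib}(f^* c \to d)$ an $\Hsf$-epi, whence $f_*d \to \mathrm{cofib}(c\to f_*d)$ is an $\Hsf'$-epi. Finally, for the converse in the injective characterization: if $j \in \ccat$ is $\Hsf'$-injective, embed it via the above into $f_*d$ with $j \to f_*d$ having $\Hsf'$-epi cofiber; since $j$ is $\Hsf'$-injective the identity $\pi_0\Map(f_*d, j) \to \pi_0\Map(j,j)$ is surjective (as $j \to f_*d$ is an $\Hsf'$-mono with $\Hsf'$-epi cofiber, hence part of an $\Hsf'$-exact sequence), so $j \to f_*d$ splits and $j$ is a retract of $f_*d$.

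The main obstacle I anticipate is step (a): carefully verifying that the cofiber of $c \to f_*d$ is hit by an $\Hsf'$-epi, i.e.\ tracking the triangle identities through $f^*$ to see that $\mathrm{cofib}(f^*c \to f^*f_*d)$ maps onto (an $\Hsf$-epi) $\mathrm{cofib}(f^*c \to d)$. The subtlety is that $f^* f_* d \to d$ is not generally an equivalence, so one must argue that it is nonetheless an $\Hsf$-epi after taking the relevant cofibers — which follows because the composite $f^* c \to f^* f_* d \to d$ is by construction the chosen map whose cofiber is $\Hsf$-epi over $d$, and one then invokes the long exact sequence in $\Hsf$ together with the octahedral axiom relating $\mathrm{cofib}(f^*c \to f^*f_*d)$, $\mathrm{cofib}(f^*c \to d)$, and $\mathrm{cofib}(f^*f_*d \to d)$. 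Everything else is a routine adjunction manipulation. I would also remark that since abelian (hence idempotent-complete) targets are involved and homology theories are determined by their epimorphism classes, the phrase ``uniquely determines an adapted homology theory'' is immediate from the recollection on epimorphism classes once enough injectives is established.
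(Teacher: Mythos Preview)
Your proposal is correct and follows the same three-step outline as the paper's proof. The ``main obstacle'' you flag in step (a) is simpler than you suggest: since the composite $f^*c \to f^*f_*d \to d$ equals the chosen $\Hsf$-mono by the triangle identity, and the first factor of an $\Hsf$-mono is an $\Hsf$-mono, the map $f^*c \to f^*f_*d$ is an $\Hsf$-mono directly, with no need for the octahedral axiom.
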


\begin{proof}
    We start by proving that if $d$ is $\Hsf$-injective, then $f_*d$ is $\Hsf'$-injective.
    Given an $\Hsf'$-mono $c_1 \to c_2$ we have a commutative diagram
    \[ \begin{tikzcd}
        {[c_2, f_*d]} \ar[r] \ar[d, no head, "\cong"] & {[c_1, f_*d]} \ar[d, no head, "\cong"] \\
        {[f^*c_2, d]} \ar[r, two heads] & {[f^*c_1, d]} 
    \end{tikzcd} \]
    from which we can read off that $f_*d$ is $\Hsf'$-injective.
    
    Next we show that $\ccat$ has enough $\Hsf'$-injectives.
    Let $c \in \ccat$ and pick an $\Hsf$-mono $f^{*}c \rightarrow i$ into an $\Hsf$-injective. 
    The mate of this map, $c \rightarrow f_{*}i$, 
    is an $\Hsf'$-mono into an $\Hsf'$-injective since the composite
    $f^{*}c \rightarrow f^{*}f_{*}i \to i$
    is $\Hsf$-mono.
    
    To complete the proof we observe that
    given an $\Hsf'$-injective $c$ the construction above provides us with an $\Hsf'$-monomorphism $c \to f_*d$ where $d$ is $\Hsf$-injecitve
    and that $c$ is a retract of $f_*d$ since every $\Hsf'$-mono out of $\Hsf'$-injective admits a retraction.
\end{proof}

\begin{definition}
\label{definition:homology_theory_induced_by_an_adjunction}
We call the adapted homology theory $\Hsf' \colon \ccat \rightarrow \acat'$ determined by the class of $\Hsf'$-epimorphisms the \emph{the pushforward of $\Hsf$ along $f$}.
\end{definition}

\begin{example}
\label{example:monadic_adams_sseq_using_homology_induction}
Let $f^* \colon \ccat \rightleftharpoons \dcat \colon f_*$ be an adjunction between stable categories. The universal homology theory $y: \dcat \rightarrow A(\dcat)$ valued in the Freyd envelope of $\dcat$ is adapted. In this case  \cref{definition:homology_theory_induced_by_an_adjunction} yields an adapted homology theory on $\ccat$ which categorifies the monadic descent spectral sequence based on $f_{*} f^{*}(-)$, see \cite[Example 3.17]{patchkoria2021adams}.
\end{example}

\begin{example}
Specializing \cref{example:monadic_adams_sseq_using_homology_induction} 
to the free-forgetful adjunction 
$\spectra \rightleftharpoons \Mod_{R}(\spectra)$
associated to modules over an $\mathbb{E}_{1}$-ring spectrum $R$
we recover the Adams spectral sequence based on the Amitsur $R$-resolution, as in see \cite[Example 3.18]{patchkoria2021adams}.
\end{example}

\begin{remark} \label{remark:left-comonad}
It follows from \cite[Theorem 3.27]{patchkoria2021adams} that the pushforward homology theory 
    $\Hsf' \colon \ccat \to \acat'$ can be identified with the adapted factorization of the composite $\Hsf \circ f^*$.
    In particular, by \cite[Theorem A.1]{patchkoria2021adams} the functor $\overline{f^{*}} \colon \acat' \rightarrow \acat$ is an exact, comonadic left adjoint and this identifies $\acat'$ as the category of comodules for a left exact comonad on $\acat$.
\end{remark}

\begin{warning}
\label{warning:left_exact_comonads_not_too_useful}
The description of $\acat'$ as comodules for a left exact comonad of \Cref{remark:left-comonad} is not as useful in practice as one might hope, as in general neither the right adjoint nor the comonad are exact. In particular, the right adjoint to $\overline{f^{*}}$ is usually not $\overline{f_{*}}$ in the sense of \cref{lemma:extending_exact_functors_to_squares_of_homology_theories}; in fact, the latter might not exist as $f_{*} \colon \dcat \rightarrow \ccat$ need not take $\Hsf$-epimorphisms to $\Hsf'$-epimorphisms.
\end{warning}

We will now describe a common situation where the induced homology theory is particularly well-behaved and the resulting comonad is exact, avoiding pathological behaviour of \cref{warning:left_exact_comonads_not_too_useful}.

\begin{definition}
\label{definition:flat_adjunction}
Let $f^* \colon \ccat \rightleftharpoons \dcat \colon f_*$ be an adjunction between stable categories and let $\Hsf \colon \dcat \rightarrow \acat$ be an adapted homology theory. We say that $f^{*} \dashv f_{*}$ is \emph{$\Hsf$-flat} if $f^*f_* \colon \dcat \to \dcat$ preserves $H$-epimorphisms. 
\end{definition}

For an explanation of the origin of the terminology ``flat'' in this setting see \Cref{rmk:why-flat}.

\begin{proposition}
\label{proposition:flat_induction_yields_adjunction_of_adapted_homology_theories}
Let $f^{*} \colon \ccat \rightleftharpoons \dcat \colon f_{*}$ be an adjunction of stable categories, $\Hsf \colon \dcat \rightarrow \acat$ an adapted homology theory such that $f$ is $\Hsf$-flat and $\Hsf' \colon \ccat \rightarrow \acat'$ the induced homology theory of \cref{definition:homology_theory_induced_by_an_adjunction}. Then
\begin{enumerate}
    \item $f_{*}$ takes $\Hsf$-epimorphisms to $\Hsf'$-epimorphisms so that $f^{*} \dashv f_{*}$ induces an adjunction 
\[
(\Hsf' \colon \ccat \rightarrow \acat') \rightleftharpoons (\Hsf \colon \dcat \rightarrow \acat)
\]
in the $(\infty, 2)$-category $\adapted$ of adapted homology theories,
\item the comonad $f^{*} f_{*}$ descends uniquely to an exact comonad $Q := \overline{f}^{*} \overline{f}_{*}$ on $\acat$,
\item there is a canonical equivalence $\Comod_{Q}(\acat) \cong \acat'$ which identifies $\overline{f}^{*}$ with the forgetful functor $\Comod_{Q}(\acat) \rightarrow \acat$ and $\overline{f}_{*}$ with the cofree comodule functor $\acat \rightarrow \Comod_{Q}(\acat)$ . 
\end{enumerate}
\end{proposition}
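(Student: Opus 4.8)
The plan is to deduce everything from the abstract machinery of \cite{patchkoria2021adams} together with the $\Hsf$-flatness hypothesis, proceeding one claim at a time. For part (1), I would show that if $d_{0} \to d_{1}$ is an $\Hsf$-epimorphism of $\dcat$, then $f_{*}d_{0} \to f_{*}d_{1}$ is an $\Hsf'$-epimorphism of $\ccat$; by \cref{construction:class_of_epis_induced_by_an_adjunction} this amounts to checking that $f^{*}f_{*}d_{0} \to f^{*}f_{*}d_{1}$ is an $\Hsf$-epimorphism, which is precisely the $\Hsf$-flatness assumption of \cref{definition:flat_adjunction}. Since $f^{*}$ tautologically takes $\Hsf'$-epimorphisms to $\Hsf$-epimorphisms, both $f^{*}$ and $f_{*}$ now preserve the relevant epimorphism classes, and by \cref{remark:functors_of_adapted_homology_theories_in_terms_of_preserving_epis} (i.e. \cref{lemma:extending_exact_functors_to_squares_of_homology_theories}) they lift to morphisms $\overline{f^{*}}\colon \acat' \to \acat$ and $\overline{f_{*}}\colon \acat \to \acat'$ in $\adapted$; one then needs to see the unit and counit of $f^{*} \dashv f_{*}$ are themselves morphisms in $\adapted$, which follows because $\adapted$-morphisms out of a fixed object form a poset-like structure and the relevant $2$-morphisms are detected on the underlying stable categories. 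This produces the claimed adjunction $(\Hsf' \colon \ccat \to \acat') \rightleftarrows (\Hsf \colon \dcat \to \acat)$ in $\adapted$.

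For part (2), I would first invoke \cref{remark:left-comonad}: since $\Hsf'$ is the adapted factorization of $\Hsf \circ f^{*}$, the functor $\overline{f^{*}}\colon \acat' \to \acat$ is an exact, comonadic left adjoint, and its right adjoint is a priori only left exact, giving $\acat'$ as comodules over a merely left exact comonad (the pathology of \cref{warning:left_exact_comonads_not_too_useful}). The point of flatness is that now, by part (1), the right adjoint to $\overline{f^{*}}$ is honestly $\overline{f_{*}}$, which is exact because it came from a morphism in $\adapted$. Hence the comonad $Q := \overline{f^{*}} \circ \overline{f_{*}}$ is a composite of two exact functors and is therefore exact; its uniqueness as a descent of $f^{*}f_{*}$ follows from the uniqueness clause in \cref{lemma:extending_exact_functors_to_squares_of_homology_theories} applied to the (co)monad structure maps, i.e. the square relating $f^{*}f_{*}$ on $\dcat$ to $Q$ on $\acat$ is the unique completion.

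For part (3), the comonadicity of $\overline{f^{*}}$ from \cref{remark:left-comonad} gives a canonical equivalence $\acat' \cong \Comod_{Q'}(\acat)$ for \emph{some} comonad $Q'$, and the Barr--Beck identification forces $Q'$ to be $\overline{f^{*}} \circ (\text{its right adjoint}) = \overline{f^{*}} \circ \overline{f_{*}} = Q$; under this equivalence $\overline{f^{*}}$ becomes the forgetful functor and its right adjoint $\overline{f_{*}}$ becomes the cofree comodule functor, which is the content of the statement. The main obstacle I anticipate is not any single computation but rather the bookkeeping of part (1): one must be careful that the adjunction lifts \emph{coherently} to $\adapted$ — that is, that the counit $f^{*}f_{*} \Rightarrow \Id$ and unit $\Id \Rightarrow f_{*}f^{*}$ are genuine $2$-morphisms in $\adapted$ (not just natural transformations of underlying functors) so that applying the (lax) $2$-functor $\acat(-)$ produces an honest adjunction $\overline{f^{*}} \dashv \overline{f_{*}}$. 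Since mapping categories in $\adapted$ are \emph{full} subcategories of functor categories (\cref{dfn:adapted}), this coherence is automatic once the underlying functors preserve epimorphism classes, so in the end this obstacle dissolves; the proof really is a formal consequence of \cref{lemma:extending_exact_functors_to_squares_of_homology_theories}, \cref{remark:left-comonad}, and the $\Hsf$-flatness hypothesis.
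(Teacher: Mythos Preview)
Your proposal is correct and follows essentially the same route as the paper: verify that $f_{*}$ preserves epimorphism classes via the $\Hsf$-flatness hypothesis and \cref{construction:class_of_epis_induced_by_an_adjunction}, lift the adjunction to $\adapted$ via \cref{remark:functors_of_adapted_homology_theories_in_terms_of_preserving_epis}, observe $Q = \overline{f^{*}}\,\overline{f_{*}}$ is exact as a composite of exact functors, and invoke comonadicity of $\overline{f^{*}}$ from \cite[Theorem A.1]{patchkoria2021adams}. Your extra care about coherence of the adjunction in $\adapted$ and uniqueness of the descent is well-placed but, as you correctly note, dissolves because the mapping categories in $\adapted$ are full subcategories of functor categories.
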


\begin{proof}
Let $d_{1} \rightarrow d_{2}$ be an $H$-epimorphism in $\dcat$. Since $f^{*} \dashv f_{*}$ is $\Hsf$-flat, $f^{*} f_{*} d_{1} \rightarrow f^{*} f_{*} d_{2}$ is $\Hsf$-epi, so that $f^{*} d_{1} \rightarrow f^{*} d_{2}$ is an $\Hsf'$-epi by definition, proving (1). 
By \cref{remark:functors_of_adapted_homology_theories_in_terms_of_preserving_epis} this is enough to lift the adjunction $f^{*} \dashv f_{*}$ to an adjunction of adapted homology theories. 

Since $Q \colonequals \overline{f}^{*} \overline{f}_{*}$ is the composition of a left and right adjoint, both of which are exact, it has a canonical structure of an exact comonad. Moreover, since $\overline{f^{*}} \colon \acat' \rightarrow \acat$ is a conservative exact left adjoint, it is comonadic by \cite[Theorem A.1]{patchkoria2021adams}, so that the left and right adjoint can be identified with the forgetful and cofree functors. 
\end{proof}

We now verify that the adjunction obtained from \cref{proposition:flat_induction_yields_adjunction_of_adapted_homology_theories} is functorial in the epimorphism class. 

\begin{lemma}
\label{lemma:morphism_of_adjunctions_induced_by_map_of_flat_homology_theories}
Let $f^* \colon \ccat_1 \leftrightharpoons \ccat_2 \colon f_*$ be an adjunction between stable categories and let $q \colon \Hsf_{2a} \to \Hsf_{2b}$ a map of adapted homology theories on $\ccat_2$ such that $f$ is both $\Hsf_{2a}$-flat and $\Hsf_{2b}$-flat.
  Let $\Hsf_{1a}$ be the pushforward of $\Hsf_{2a}$ and
  let $\Hsf_{1b}$ be the pushforward of $\Hsf_{2b}$. Then, the induced square of adapted homology theories
  \[ \begin{tikzcd}
    (\ccat_1, \Hsf_{1a}) \ar[r, "f^*"] \ar[d, "q"] &
    (\ccat_2, \Hsf_{2a}) \ar[d, "q"] &
    \dcat^\omega_{\Hsf_{1a}}(\ccat_1) \ar[r, "f^*"] \ar[d, "q"] &
    \dcat^\omega_{\Hsf_{2a}}(\ccat_2) \ar[d, "q"] \\
    (\ccat_1, \Hsf_{1b}) \ar[r, "f^*"] &
    (\ccat_2, \Hsf_{2b}) &
    \dcat^\omega_{\Hsf_{1b}}(\ccat_1) \ar[r, "f^*"] &
    \dcat^\omega_{\Hsf_{2b}}(\ccat_2) 
  \end{tikzcd} \]
and an associated diagram of deformations are both right adjointable. 
\end{lemma}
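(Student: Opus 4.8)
The plan is to unwind what "right adjointable" means for each square and then reduce both claims to facts already proven in the excerpt. Recall that a square of left adjoints is right adjointable precisely when the Beck--Chevalley transformation relating the two composite right adjoints is an equivalence. For the square of adapted homology theories, the left adjoints are the functors $f^{*}$ (appearing in \cref{proposition:flat_induction_yields_adjunction_of_adapted_homology_theories}(1)) and the localizations $q$, each of which has a right adjoint by \cref{proposition:flat_induction_yields_adjunction_of_adapted_homology_theories}: the right adjoint of $f^{*}$ is $f_{*}$ (this is the content of part (1) of that proposition, which applies because $f$ is assumed both $\Hsf_{2a}$- and $\Hsf_{2b}$-flat, hence also $\Hsf_{1a}$- and $\Hsf_{1b}$-flat by \cref{lem:cnstr-induced-homology}), and the right adjoint of $q$ exists since $q$ is a localization of posets of epimorphism classes (it is the inclusion of $\Hsf_{1b}$- resp.\ $\Hsf_{2b}$-injective objects, or more precisely the functor between abelian targets produced by \cref{lemma:extending_exact_functors_to_squares_of_homology_theories}).

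First I would check right adjointability at the level of the underlying stable categories. The square of underlying categories is just the trivial square with $\ccat_1, \ccat_2$ on both rows and $f^{*}$ on the horizontal edges and the identity on the vertical edges, since $q$ acts as the identity on underlying categories (a map of adapted homology theories on a fixed $\ccat$ does not change $\ccat$). Hence on underlying categories the Beck--Chevalley map is literally an identity, so the square commutes strictly and is trivially right adjointable there. The real content is that the \emph{abelian} square
\[
\begin{tikzcd}
    \acat(\ccat_1, \Hsf_{1a}) \ar[r, "\overline{f^{*}}"] \ar[d, "\overline{q}"'] & \acat(\ccat_2, \Hsf_{2a}) \ar[d, "\overline{q}"] \\
    \acat(\ccat_1, \Hsf_{1b}) \ar[r, "\overline{f^{*}}"'] & \acat(\ccat_2, \Hsf_{2b})
\end{tikzcd}
\]
of abelian categories is right adjointable, and here I would argue as follows: by \cref{remark:left-comonad} and \cref{proposition:flat_induction_yields_adjunction_of_adapted_homology_theories}(3), each $\acat(\ccat_i, \Hsf_{ij})$ is the category of comodules over the exact comonad $Q_{ij} := \overline{f^{*}}\,\overline{f_{*}}$ on the corresponding base, the horizontal $\overline{f^{*}}$ are the forgetful functors, and their right adjoints $\overline{f_{*}}$ are the cofree-comodule functors. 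The vertical functors $\overline{q}$ are compatible with the comonads (the map $q \colon \Hsf_{2a} \to \Hsf_{2b}$ induces a morphism of comonads $Q_{2a} \to Q_{2b}$ covering $\overline{q}$ on the base, and similarly on the $\ccat_1$-side), so the Beck--Chevalley map for the comodule square is obtained by applying $\overline{q}$ levelwise to the Beck--Chevalley map for the square of \emph{base} abelian categories, which is again the trivial square (base of $\acat(\ccat_i, \Hsf_{i\bullet})$ is $\acat(\ccat_i, y)$ independent of which pushforward we take). Since $\overline{q}$ is exact it preserves the relevant (co)limit computing the cofree comodule, and one checks directly that the canonical comparison $\overline{q}\,\overline{f_{*}} \to \overline{f_{*}}\,\overline{q}$ is an isomorphism by comparing underlying objects. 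This establishes right adjointability of the square of adapted homology theories.

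Finally, for the square of perfect derived categories $\dcat^{\omega}$: applying the $2$-functor $\dcat^{\omega}_{-}(-)$ of \cref{construction:deformation_as_a_2functor} to the right adjointable square of adapted homology theories yields a square of prestable categories; since $\dcat^{\omega}_{-}(-)$ sends the $(\infty,2)$-categorical adjunctions in $\adapted$ to adjunctions (each edge $f^{*}$ resp.\ $q$ has a right adjoint after applying $\dcat^{\omega}$, the former by functoriality and the latter because $q$ induces a localization of deformations by \cref{recollection:perfect_deformation_associated_to_a_homology_theory}(10)), it suffices to check the Beck--Chevalley condition, and this can be done after passing to hearts and to $C\tau$-modules, i.e.\ on $\dcat^{b}(\acat(-))$, because the deformations in question are generated under colimits and shifts by the images of $\nu$ and the comparison transformation is compatible with $\tau$ and with $C\tau$. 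On $\dcat^{b}$ of the hearts the statement is exactly the right adjointability of the abelian square established in the previous step, extended to bounded derived categories by exactness of all four functors. The same argument, with $\widecheck{\Def}$ in place of $\dcat^{\omega}$ and using the variant $2$-functor on $\adapted^{\mathrm{prl}}$, handles the associated diagram of (unseparated) deformations. I expect the main obstacle to be the bookkeeping in the middle step --- verifying that $\overline{q}$ genuinely commutes with the cofree-comodule functors $\overline{f_{*}}$, which amounts to checking that the localization $\overline{q}$ is compatible with the comonad structures induced by the flat adjunction; everything else is formal manipulation of adjoints and the already-established universal properties.
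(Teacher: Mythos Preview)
Your proposal contains the right seed but buries it under unnecessary work, and one of the extra steps has a real gap.

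The paper's argument is essentially two lines. By \cref{dfn:adapted}, the mapping categories in $\adapted$ are \emph{full} subcategories of $\Fun(\ccat_1,\ccat_2)$, so the forgetful $2$-functor $\adapted \to \stableinftycats$ is fully faithful on $2$-cells. The Beck--Chevalley transformation for the left square is a $2$-cell, hence it is invertible in $\adapted$ if and only if it is invertible after forgetting to stable categories. After forgetting, the vertical maps $q$ are identities on $\ccat_1$ and $\ccat_2$, so the Beck--Chevalley map is literally the identity $f_* \Rightarrow f_*$. That finishes the left square. For the right square one simply applies the $2$-functor $\dcat^{\omega}_{-}(-)$ of \cref{construction:deformation_as_a_2functor}; any $2$-functor preserves right adjointable squares, since right adjointability is expressed purely in terms of adjunctions and invertibility of $2$-cells.

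You correctly observe that the underlying square is trivially right adjointable, but you then treat this as merely a first step and go on to separately verify right adjointability of the induced square of abelian categories. That is not needed: right adjointability is being asserted in $\adapted$, and the full-faithfulness on $2$-cells means the underlying check is already the whole argument. Your discussion of right adjoints to the vertical maps $q$ is also a red herring; horizontal right adjointability only requires right adjoints to the horizontal maps $f^*$.

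The genuine gap is in your treatment of the deformation square. You propose to verify the Beck--Chevalley condition by ``passing to hearts and to $C\tau$-modules'' on the grounds that the deformations are generated by images of $\nu$. But $\dcat^{\omega}_{\Hsf}(\ccat)$ is not presentable, so colimit-generation arguments of this kind do not apply directly, and checking a natural transformation on hearts alone does not in general control it on the whole prestable category. None of this is needed once you recognize that $2$-functors preserve right adjointable squares.
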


\begin{proof}
The square in $\adapted$ and the adjoints to $f^*$ are obtained via \Cref{proposition:flat_induction_yields_adjunction_of_adapted_homology_theories}. For the left square, right adjointability follows from the fact that the underlying category $2$-functor $\adapted \to \pcat\mathrm{St}$ given by $(\ccat, \Hsf) \mapsto \ccat$ is fully faithful on $2$-cells  and the fact that the underlying square of stable categories is right adjointable, since the vertical maps are given by the identity. The associated right adjointable square of deformations is obtained by applying the $2$-functor $\dcat^\omega_{-}(-)$.
\end{proof}

\begin{corollary}
Let $\Hsf \colon \dcat \rightarrow \acat$ be an adapted homology theory, 
let $f^{*} \colon \ccat \rightleftharpoons \dcat \colon f_{*}$ be an $\Hsf$-flat adjunction of stable categories
and let $\Hsf': \ccat \rightarrow \acat'$ be the pushforward of $\Hsf$. 
Then the resulting commutative diagram 
\[
\begin{tikzcd}
\ccat & {\Def_{\Hsf}(\ccat)} & \ccat \\
	\dcat & {\Def_{\Hsf}(\dcat)} & \dcat
	\arrow["{(-)^{\tau=1}}", from=1-2, to=1-3]
	\arrow["{(-)^{\tau=1}}", from=2-2, to=2-3]
	\arrow["{\nu_{H'}}", from=1-1, to=1-2]
	\arrow["{\nu_{H}}", from=2-1, to=2-2]
	\arrow["{f^{*}}"', from=1-1, to=2-1]
	\arrow["{\widetilde{f}^{*}}"', from=1-2, to=2-2]
	\arrow["{f^{*}}"', from=1-3, to=2-3]
\end{tikzcd}
\]
is vertically right adjointable.
\end{corollary}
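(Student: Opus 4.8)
The plan is to deduce this corollary directly from \Cref{lemma:morphism_of_adjunctions_induced_by_map_of_flat_homology_theories} together with the functoriality package assembled in \cref{construction:functors_between_deformations_from_universal_property} and \cref{construction:2_functorial_nu_using_universal_properties}. The point is that everything in sight is natural in the adapted homology theory, so the commuting diagram in the statement is obtained by applying a sequence of natural transformations of $2$-functors on $\adapted$ to a single morphism in $\adapted$, namely the adjunction $f^{*} \dashv f_{*}$ upgraded to a morphism $(\ccat, \Hsf') \to (\dcat, \Hsf)$ by \cref{proposition:flat_induction_yields_adjunction_of_adapted_homology_theories}(1).

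First I would record the input: by \cref{proposition:flat_induction_yields_adjunction_of_adapted_homology_theories}, $\Hsf$-flatness of $f^{*} \dashv f_{*}$ guarantees that $f_{*}$ takes $\Hsf$-epimorphisms to $\Hsf'$-epimorphisms, so that $f^{*} \dashv f_{*}$ is genuinely an adjunction internal to $\adapted$; in particular $f^{*}$ is a morphism in $\adapted$ with right adjoint $f_{*}$. Applying the $2$-functor $\Def_{-}^{\omega}(-)_{\geq 0}$ of \cref{construction:deformation_as_a_2functor} — which, being a $2$-functor, preserves adjunctions — produces the exact functor $\widetilde{f}^{*} \colon \Def_{\Hsf'}(\ccat) \to \Def_{\Hsf}(\dcat)$ together with a right adjoint $\widetilde{f}_{*}$. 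This already gives the middle vertical arrow of the diagram and its adjoint. The left square of the diagram commutes because $\nu$ is a natural transformation $\iota(-) \Rightarrow \Def_{-}^{\omega}(-)_{\geq 0}$ of $2$-functors on $\adapted$ by \cref{construction:2_functorial_nu_using_universal_properties}, applied to the morphism $f^{*}$; similarly the right square commutes because $(-)^{\tau=1} \colon \Def_{-}^{\omega}(-) \Rightarrow \iota(-)$ is a natural transformation of $2$-functors by \cref{construction:functors_between_deformations_from_universal_property}(3). So the entire diagram is the image of $f^{*}$ under a diagram of natural transformations, hence commutes on the nose.

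It remains to check vertical right adjointability, which is the claim that in each square the Beck--Chevalley/mate transformation built from the right adjoints of the vertical arrows is an equivalence. Here the key structural observation — already exploited in the proof of \cref{lemma:morphism_of_adjunctions_induced_by_map_of_flat_homology_theories} — is that the right adjoint $\widetilde{f}_{*}$ is itself obtained functorially: it is the value of a lax-natural right-adjoint assignment, and the forgetful $2$-functors $\iota$ and $\Def_{-}^{\omega}(-)_{\geq 0}$ send the adjunction $f^{*} \dashv f_{*}$ in $\adapted$ to honest adjunctions. Consequently the mate transformations for the left square and the right square are the images under $2$-functors ($\nu$-naturality and $(-)^{\tau=1}$-naturality, respectively) of the identity mate for the ``square'' $f^{*} \colon \ccat \to \dcat$ with itself — i.e.\ they are identities, and in particular equivalences. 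Concretely: for the right square the top and bottom horizontal arrows are $(-)^{\tau=1}$, whose restriction to $\tau$-local objects is an equivalence by \cref{construction:2_functorial_nu_using_universal_properties}, so $\widetilde{f}^{*}$ restricted to $\tau$-local objects corresponds under these equivalences to $f^{*}$ itself, and right adjointability reduces to the trivial right adjointability of the square $f^{*} = f^{*}$; for the left square one argues the same way using that $\nu$ identifies $\ccat$ with the $\tau$-local subcategory. I expect the main obstacle to be purely bookkeeping: making precise that the mate of a $2$-functor applied to an adjunction is the $2$-functor applied to the mate, and that \cref{lemma:morphism_of_adjunctions_induced_by_map_of_flat_homology_theories} (whose proof already handles the ``underlying category'' version of exactly this point) can be invoked verbatim with $\ccat_1 = \ccat$, $\ccat_2 = \dcat$, and the single homology theory $\Hsf$ in place of a comparison $q$. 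Once that identification is in place, the corollary follows formally.
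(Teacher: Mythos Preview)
Your approach is essentially the same as the paper's, and the core of your argument is correct: once \cref{proposition:flat_induction_yields_adjunction_of_adapted_homology_theories} upgrades $f^{*} \dashv f_{*}$ to an adjunction in $\adapted$, everything follows from $2$-functoriality of $\Def_{-}^{\omega}(-)$ together with $2$-naturality of $\nu$ and $(-)^{\tau=1}$. The paper's proof is literally two sentences to this effect.

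One remark: your attempt to invoke \cref{lemma:morphism_of_adjunctions_induced_by_map_of_flat_homology_theories} ``verbatim with a single homology theory $\Hsf$ in place of a comparison $q$'' is misplaced. That lemma concerns a square coming from a map $q \colon \Hsf_{2a} \to \Hsf_{2b}$ between \emph{two} homology theories on the \emph{same} category; there is no nontrivial specialization of it to the present situation. You do not need it. The actual mechanism is simpler than your last paragraph suggests: a (strict) $2$-natural transformation $\alpha \colon F \Rightarrow G$ is in particular natural with respect to the right adjoint $f_{*}$, and the naturality isomorphism for $f_{*}$ is exactly the Beck--Chevalley $2$-cell obtained as the mate of the naturality isomorphism for $f^{*}$. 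So vertical right adjointability of both squares is automatic from $2$-naturality; no ad hoc verification via $\tau$-local objects is required (though your direct check for the right square is also fine).
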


\begin{proof}
By \cref{proposition:flat_induction_yields_adjunction_of_adapted_homology_theories}, $f^{*} \dashv f_{*}$ induces an adjunction of adapted homology theories. Since the construction of deformation categories is functorial at the level of $(\infty,2)$-categories by \cref{construction:deformation_as_a_2functor} (together with $\nu$ and $\tau$-localization, by \cref{construction:functors_between_deformations_from_universal_property} and 
\cref{construction:2_functorial_nu_using_universal_properties}), it preserves adjunctions. 
\end{proof}

We observe that the pushforward construction of \cref{definition:homology_theory_induced_by_an_adjunction} preserves the property of being Grothendieck. 

\begin{lemma}
Let $f^* \colon \ccat \rightleftharpoons \dcat \colon f_*$ be an adjunction between stable presentable categories and let $\Hsf \colon \dcat \to \acat$ be a Grothendieck adapted homology theory on $\dcat$. Then, the pushforward of $\Hsf$ along $f$ is also a Grothendieck adapted homology theory.
\end{lemma}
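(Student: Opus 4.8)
The plan is to run everything through \cref{remark:left-comonad}, which identifies the pushforward $\Hsf' \colon \ccat \to \acat'$ of $\Hsf$ along $f$ with the adapted factorization of $\Hsf \circ f^{*} \colon \ccat \to \acat$, and presents the induced functor $\overline{f^{*}} \colon \acat' \to \acat$ as an exact, conservative, comonadic left adjoint; writing $G$ for its right adjoint and $C \colonequals \overline{f^{*}}\, G$ for the resulting left exact comonad on $\acat$, we obtain an identification $\acat' \cong \Comod_{C}(\acat)$ under which $\overline{f^{*}}$ becomes the forgetful functor. Recall that an adapted homology theory is Grothendieck when its source is presentable, its target is a Grothendieck abelian category, and it preserves filtered colimits. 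The source $\ccat$ is presentable by hypothesis, so it remains to verify that $\acat'$ is Grothendieck abelian and that $\Hsf'$ preserves filtered colimits.

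First I would note that $\Hsf \circ f^{*}$ preserves filtered colimits: the left adjoint $f^{*}$ preserves all small colimits, and $\Hsf$ preserves filtered colimits since it is Grothendieck. The crux is then to prove that the comonad $C$ is accessible, and this I expect to be the main obstacle: it cannot be obtained formally from the adjunction $\overline{f^{*}} \dashv G$ alone, because accessibility of $C$, the creation of filtered colimits by $\overline{f^{*}}$, and the presentability of $\acat'$ are a priori interdependent. The intended input is the explicit construction of the adapted factorization in \cite[\S 3]{patchkoria2021adams}: concretely, one fixes a regular cardinal $\kappa$ for which $f_{*}$ is $\kappa$-accessible (possible, as $f_{*}$ is a right adjoint between presentable $\infty$-categories), so that $f^{*}$ preserves $\kappa$-compact objects, and for which the $\Hsf$-epimorphisms generating $\ecat_{\Hsf}$ may be taken between $\kappa$-compact objects of $\dcat$ (possible, as $\Hsf$ is Grothendieck); then $\ecat_{\Hsf'} = (f^{*})^{-1}(\ecat_{\Hsf})$ is generated as an epimorphism class by arrows between $\kappa$-compact objects of $\ccat$, which realizes $\acat'$ as a Gabriel quotient $P_{\Sigma}(\ccat^{\kappa};\abeliangroups)/\kcat$ by a localizing subcategory generated by a set, as in the discussion preceding \cref{lemma:unseparated_def_of_cpt_gen_homology_theory_is_cpt_gen}. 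Equivalently, this exhibits $C$ as an accessible functor.

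Granting this, the conclusion is routine. Since $C$ is accessible and left exact, $\acat' \cong \Comod_{C}(\acat)$ is a presentable abelian category, the forgetful functor $\overline{f^{*}}$ is exact and creates filtered colimits, and these are exact in $\acat$; hence filtered colimits are exact in $\acat'$, so $\acat'$ is Grothendieck abelian. Finally $\overline{f^{*}} \circ \Hsf' = \Hsf \circ f^{*}$ preserves filtered colimits and $\overline{f^{*}}$ creates them, so $\Hsf'$ preserves filtered colimits; this completes the verification that $\Hsf'$ is Grothendieck. As a consistency check one can also run the last two paragraphs purely at the level of epimorphism classes: $\ecat_{\Hsf'}$ is closed under filtered colimits of arrows because $f^{*}$ preserves them and $\ecat_{\Hsf}$ is closed under filtered colimits of arrows (filtered colimits of epimorphisms in a Grothendieck abelian category being epimorphisms), which is the shadow of $\Hsf'$ being Grothendieck.
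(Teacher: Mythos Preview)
Your proposal is correct and follows the same initial reduction as the paper: both invoke \cref{remark:left-comonad} to identify $\Hsf'$ with the adapted factorization of the composite $\Hsf \circ f^{*}$, and both observe that this composite preserves filtered colimits since $f^{*}$ is a left adjoint and $\Hsf$ is Grothendieck. The difference is that the paper's proof then stops and simply cites \cite[Remark 6.45]{patchkoria2021adams} for the fact that the adapted factorization of a filtered-colimit-preserving homology theory on a presentable category is itself Grothendieck, whereas you attempt to unpack this black box by arguing directly that the comonad $C$ is accessible via a $\kappa$-small generation argument for $\ecat_{\Hsf'}$.

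Your unpacking is reasonable but the step asserting that $\ecat_{\Hsf'} = (f^{*})^{-1}(\ecat_{\Hsf})$ is generated as an epimorphism class by arrows between $\kappa$-compact objects of $\ccat$ is not fully justified: knowing that $\ecat_{\Hsf}$ is so generated in $\dcat$ and that $f^{*}$ preserves $\kappa$-compacts does not by itself give control over the preimage. One needs something like the description of $\acat'$ as a Gabriel quotient of $P_{\Sigma}(\ccat^{\kappa};\abeliangroups)$ and an argument that the localizing kernel is set-generated; this is exactly the content of the cited remark. So your argument is morally the same as the paper's, just with the citation partially expanded and one transition left slightly informal.
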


\begin{proof}
    In \Cref{remark:left-comonad} we identified $\Hsf' \colon \ccat \to \acat'$ can be identified with the adapted factorization of $\Hsf(f^*-)$.
    From \cite[Remark 6.45]{patchkoria2021adams} we know that this adapted factorization is itself an adapted Grothendieck homology theory.
\end{proof}

\subsection{Local and colocal objects} 
\label{subsection:local_and_colocal_objects}

Suppose that we have a stable category $\ccat$ and two adapted homology theories $\Hsf_1$ and $\Hsf_2$ on with targets 
the locally graded abelian categories $\acat_1$ and $\acat_2$ and a map $q$ of adapted homology theories.
This data fits into a commutative diagram 
\[ \begin{tikzcd}
	& \ccat \ar[dl, "{\Hsf_{1}}"'] \ar[dr, "{\Hsf_{2}}"] & \\ 
    {\acat_{1}} \ar[rr, "q"] & & {\acat_{2}}.
\end{tikzcd}\]
Given objects $c, d \in \ccat$, we have a comparison morphism 
\[
\Ext_{\acat_1}^{*, *}(\Hsf_{1}(c), \Hsf_{2}(d)) \rightarrow \Ext_{\acat_2}^{*, *}(\Hsf_{2}(c), \Hsf_{2}(d)) 
\]
and a corresonding comparison between Adams spectral sequences. It is natural to ask under what conditions these two spectral sequences agree. In this section we set up a framework for studying this question and prove some technical results which allow us to streamline our presentation in the body of the paper.

\begin{definition}
\label{definition:local_and_colocal_objects_for_a_functor}
    Given a functor $F \colon \ccat \to \dcat$ we say that an object $X \in \ccat$ is \emph{$F$-local} if the map
    \[ \Map_{\ccat}(Z, X) \to \Map_{\dcat}(F(Z), F(X)) \]
    induced by $F$ is an equivalence for every $Z \in \ccat$.
    Dually, 
    we say that $X$ is \emph{$F$-colocal} if  
    \[ \Map_{\ccat}(X, Y) \to \Map_{\dcat}(F(X), F(Y)) \]
    is an equivalence for every $Y \in \ccat$.
\end{definition}

\begin{remark}
\label{remark:testing_locality_and_colocality_on_generators}
If $F$ is a cocontinuous, to test of $X$ is $F$-local it suffices to allow $Z$ to range over a collection of generators of $\ccat$ under colimits. Dually, if $F$ is continuous to check if $X$ is $F$-colocal it is enough to allow $Y$ to range over a collection of generators under limits. 
\end{remark}

Our interest in local and colocal objects comes from the following example:

\begin{example}
    Given a map of adapted homology theories
    \[ \begin{tikzcd}
        & \ccat \ar[dl, "\Hsf_1"'] \ar[dr, "\Hsf_2"] & \\
        \acat_1 \ar[rr, "q"] & & \acat_2 
    \end{tikzcd} \] 
    we obtain a comparison functor between deformations
    \[ \widetilde{q}: \dcat_{\Hsf_1}^{\omega}(\ccat) \to \dcat_{\Hsf_2}^{\omega}(\ccat) \]
    which categorifies the natural comparison map from the $\Hsf_1$-based Adams spectral sequence to the $\Hsf_2$-based Adams spectral sequence.
    In particular, if $\nu_{\Hsf_1}(X)$ is $\widetilde{q}$-local, then 
    for every $Z \in \ccat$ the natural comparison map of Adams spectral sequences
    \[ {}^{\Hsf_1}E_r^{s,t}(Z,X) \to {}^{\Hsf_2}E_r^{s,t}(Z,X) \]
    is an isomorphism.
    Dually, if $\nu_{\Hsf_1}(X)$ is $\widetilde{q}$-colocal, then 
    for every $Y \in \ccat$ the natural comparison map of spectral sequences
    \[ {}^{\Hsf_1}E_r^{s,t}(X,Y) \to {}^{\Hsf_2}E_r^{s,t}(X,Y) \]
    is an equivalence.
\end{example}

\begin{lemma}
If $F$ preserves colimits, then
    colocal objects are closed under colimits. Dually, if $F$ preserves limits, then local objects are closed under limits.
\end{lemma}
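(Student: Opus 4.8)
The plan is to reduce the statement to the elementary fact that a limit of equivalences is an equivalence. I would treat the colocal case in detail; the local case is formally dual.

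Let $\{X_j\}_{j \in J}$ be a diagram in $\ccat$ with each $X_j$ being $F$-colocal, and set $X \colonequals \colim_j X_j$. To check that $X$ is $F$-colocal I must show that for every $Y \in \ccat$ the map
\[ \Map_{\ccat}(X, Y) \to \Map_{\dcat}(F(X), F(Y)) \]
induced by $F$ is an equivalence. First I would rewrite the source: since $\Map_{\ccat}(-, Y)$ sends colimits to limits, there is a canonical equivalence $\Map_{\ccat}(X, Y) \simeq \lim_j \Map_{\ccat}(X_j, Y)$. Next, using that $F$ preserves colimits we have $F(X) \simeq \colim_j F(X_j)$, and hence $\Map_{\dcat}(F(X), F(Y)) \simeq \lim_j \Map_{\dcat}(F(X_j), F(Y))$. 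Under these identifications the comparison map above is the limit over $j$ of the comparison maps $\Map_{\ccat}(X_j, Y) \to \Map_{\dcat}(F(X_j), F(Y))$, each of which is an equivalence because $X_j$ is $F$-colocal; a limit of equivalences is an equivalence, so the result follows. For the local case, take a diagram $\{X_j\}$ of $F$-local objects with $X \colonequals \lim_j X_j$. For any $Z \in \ccat$ we have $\Map_{\ccat}(Z, X) \simeq \lim_j \Map_{\ccat}(Z, X_j)$ and, since $F$ preserves limits, $\Map_{\dcat}(F(Z), F(X)) \simeq \lim_j \Map_{\dcat}(F(Z), F(X_j))$; again the comparison map is the limit of the componentwise comparison maps, each an equivalence by $F$-locality of $X_j$.

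There is essentially no obstacle here. The only point requiring a small amount of care is the compatibility of the identifications, namely that the comparison map for $X$ really is the limit of the componentwise comparison maps for the $X_j$; this is immediate from the naturality of the assembly maps $\Map_{\ccat}(-,-) \to \Map_{\dcat}(F(-), F(-))$ in both variables. One could alternatively deduce the statement from \cref{remark:testing_locality_and_colocality_on_generators}, but the direct argument above is self-contained.
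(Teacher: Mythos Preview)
Your argument is correct and is exactly the standard one: rewrite both mapping spaces as limits over the indexing diagram using the colimit-preservation hypothesis, and conclude by noting that a limit of equivalences is an equivalence. The paper actually states this lemma without proof, so there is no alternative approach to compare against; your write-up supplies the omitted details cleanly.
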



\begin{lemma} \label{lem:square-colocal}
    Suppose we are given a horizontally right adjointable square
    \[ \begin{tikzcd}
      \acat \ar[r, "f^*"] \ar[d, "h"] & \bcat \ar[d, "h"] \\
      \ccat \ar[r, "f^*"] & \dcat. 
    \end{tikzcd} \]
    \begin{enumerate}
        \item If $X \in \acat$ is $h$-colocal, then $f^*X$ is $h$-colocal.
        \item If the essential image of $f_*$ cogenerates $\acat$ and $f^*X$ is $h$-colocal, then $X$ is $h$-colocal.
        \item If $Y \in \bcat$ is $h$-local, then $f_*Y$ is $h$-local.
        \item If the essential image of $f^*$ generates $\bcat$ and $f_*Y$ is $h$-local, then $Y$ is $h$-local.
    \end{enumerate}
\end{lemma}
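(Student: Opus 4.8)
The plan is to prove all four statements by direct diagram chases using the right adjointability hypothesis, treating (1)--(2) and (3)--(4) as dual pairs (so I only need to think hard about one of each). Recall that horizontal right adjointability means the Beck--Chevalley map $h \circ f^* \Rightarrow f^* \circ h$ is an equivalence, equivalently (passing to right adjoints of the horizontal functors) the mate $f_* \circ h \Rightarrow h \circ f_*$ is an equivalence; I will use whichever form is convenient.

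For (1): let $X \in \acat$ be $h$-colocal; I want to show $f^*X$ is $h$-colocal, i.e. that $\Map_{\bcat}(f^*X, W) \to \Map_{\dcat}(h f^* X, h W)$ is an equivalence for all $W \in \bcat$. First I would reduce to $W$ in the essential image of $f^*$: since $f^*$ generates $\bcat$ under colimits and both sides send colimits in $W$ to limits, it suffices to take $W = f^* Z$ for $Z \in \acat$ — wait, that's not quite enough either, so instead the cleaner route is to use the adjunction $f^* \dashv f_*$ directly. We have $\Map_{\bcat}(f^*X, W) \simeq \Map_{\acat}(X, f_* W)$, and on the other side $\Map_{\dcat}(h f^* X, h W)$: using the Beck--Chevalley equivalence $h f^* \simeq f^* h$ this becomes $\Map_{\dcat}(f^* h X, h W) \simeq \Map_{\ccat}(h X, f_* h W)$, and by the mate equivalence $f_* h \simeq h f_*$ this is $\Map_{\ccat}(h X, h f_* W)$. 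Under these identifications the comparison map is exactly the map $\Map_{\acat}(X, f_* W) \to \Map_{\ccat}(hX, h f_* W)$ induced by $h$, which is an equivalence since $X$ is $h$-colocal (applied to the object $f_* W \in \acat$). This proves (1). For (2): given that $f^*X$ is $h$-colocal and that $f_*$ cogenerates $\acat$, I want $\Map_{\acat}(X, Y) \to \Map_{\ccat}(hX, hY)$ to be an equivalence for all $Y$; by \Cref{remark:testing_locality_and_colocality_on_generators} it suffices to check this for $Y$ of the form $f_* W$, and then the same chain of natural identifications as above turns this comparison map into the one exhibiting $h$-colocality of $f^* X$ tested against $W \in \bcat$ — which holds by hypothesis.

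Statements (3) and (4) are the formal duals, obtained by reversing all arrows (equivalently, passing to opposite categories): $h$-local becomes $h$-colocal for the opposite functor, $f^*$ and $f_*$ swap roles, and horizontal right adjointability of the square passes to horizontal (left, hence after relabeling right) adjointability of the opposite square. So (3) follows from (1) and (4) follows from (2) applied to the opposite square; alternatively I would just rerun the same two diagram chases with $f_*$ in place of $f^*$, using $\Map_{\ccat}(Z, f_* Y) \simeq \Map_{\dcat}(f^* Z, Y)$ and the Beck--Chevalley/mate equivalences in the form needed.

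The only real subtlety — and the step I would be most careful about — is bookkeeping the Beck--Chevalley and mate 2-cells so that the comparison maps genuinely match up on the nose rather than merely abstractly agreeing: one has to check that the map induced by $h$ on mapping spaces corresponds, under the adjunction equivalences, to the map induced by $h$ on the transposed mapping spaces, and that the Beck--Chevalley equivalence is compatible with the units/counits in the expected way. This is standard mate calculus but worth spelling out; everything else is formal. No result beyond \Cref{remark:testing_locality_and_colocality_on_generators} and the definition of right adjointability is needed.
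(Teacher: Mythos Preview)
Your proof is correct and takes essentially the same approach as the paper: the paper writes down a single commuting square of mapping spaces relating $\Map_{\acat}(X, f_*Y) \to \Map_{\ccat}(hX, hf_*Y)$ to $\Map_{\bcat}(f^*X, Y) \to \Map_{\dcat}(hf^*X, hY)$ via the adjunction and Beck--Chevalley isomorphisms, observes that the top arrow is an equivalence iff the bottom one is, and reads off (1) and (3) directly and (2) and (4) after invoking \Cref{remark:testing_locality_and_colocality_on_generators}. Your linear chain of identifications is exactly this diagram unspooled, and your use of the remark for (2) and (4) matches the paper; the only cosmetic difference is that the paper's single diagram handles (1) and (3) simultaneously (by fixing $Y$ and varying $X$ for (3)) rather than appealing to a duality argument, which sidesteps the bookkeeping concern you flag at the end.
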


\begin{proof}
  Given $X \in \acat$ and $Y \in \bcat$ we have a commuting diagram of mapping spaces
  \[
  \begin{tikzcd}
    \Map_{\acat}(X, f_*Y) \ar[r] \ar[ddd, no head, "\cong"] & 
    \Map_{\ccat}(h(X), h(f_*Y)) \ar[d, no head, "\cong"] \\
    & \Map_{\ccat}(h(X), f_* h(Y)) \ar[d, no head, "\cong"] \\
    & \Map_{\dcat}(f^* h(X), h(Y)) \ar[d, no head, "\cong"] \\
    \Map_{\bcat}(f^*X, Y) \ar[r] & 
    \Map_{\dcat}(h(f^*X), h(Y)).
  \end{tikzcd}
  \]
  Using the fact that the top horizontal arrow is an isomorphism 
  if and only if the bottom horizontal arrow is an isomorphism 
  we are able to prove (1) and (3).
  Proving (2) and (4) requires the additional observation from \cref{remark:testing_locality_and_colocality_on_generators} that it suffices to restrict to a generating collection of test objects.
\end{proof}

\begin{corollary}
    Suppose we are given
    a symmetric monoidal functor $F \colon \ccat \to \dcat$ and a dualizable object $X \in \ccat$ and an object $Y \in \ccat$.
    Then 
    \begin{enumerate}
        \item if $Y$ is $F$-colocal, then $X \otimes Y$ is $F$-colocal and
        \item if $Y$ if $F$-local, then $X \otimes Y$ is $F$-local.
    \end{enumerate}
\end{corollary}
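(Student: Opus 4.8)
The statement is a formal consequence of the adjunction between tensoring with a dualizable object and tensoring with its dual, combined with \Cref{lem:square-colocal}. The plan is to reduce the two assertions to the colocality/locality of $Y$ itself via a right adjointable square built from the duality adjunction, and then invoke the appropriate clause of \Cref{lem:square-colocal}. Let me spell this out.

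\textbf{Step 1: Set up the right adjointable square.} Let $X \in \ccat$ be dualizable with dual $X^{\vee}$, so that $(-) \otimes X \dashv (-) \otimes X^{\vee}$ as endofunctors of $\ccat$; since $F$ is symmetric monoidal it carries $X$ to a dualizable object $F(X)$ with dual $F(X^{\vee}) \cong F(X)^{\vee}$, and it intertwines the tensor functors: $F \circ ((-) \otimes X) \cong ((-) \otimes F(X)) \circ F$. This gives a commuting square
\[
\begin{tikzcd}
\ccat \ar[r, "{(-)\otimes X}"] \ar[d, "F"'] & \ccat \ar[d, "F"] \\
\dcat \ar[r, "{(-)\otimes F(X)}"'] & \dcat
\end{tikzcd}
\]
whose horizontal arrows are left adjoints (with right adjoints $(-)\otimes X^{\vee}$ and $(-)\otimes F(X)^{\vee}$ respectively), and the square is horizontally right adjointable because $F$ is symmetric monoidal, so the Beck--Chevalley transformation $F((-)\otimes X^{\vee}) \to F(-) \otimes F(X)^{\vee}$ is the canonical equivalence. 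Note here the roles of $\acat, \bcat, \ccat, \dcat$ in \Cref{lem:square-colocal} are played by $\ccat, \ccat, \dcat, \dcat$, and the vertical functor ``$h$'' there is our $F$.

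\textbf{Step 2: Apply \Cref{lem:square-colocal}.} For the colocality statement: if $Y$ is $F$-colocal, apply clause (1) of \Cref{lem:square-colocal} with $X \rightsquigarrow Y$ (the input object) to conclude that $(\text{left adjoint})(Y) = Y \otimes X$ is $F$-colocal. For the locality statement: if $Y$ is $F$-local, we want $Y \otimes X$ to be $F$-local; write $Y \otimes X \cong (Y \otimes X^{\vee})\otimes (X^{\vee})^{\vee}$ is not quite the cleanest route, so instead note that $Y \otimes X$ is the image of $Y$ under the right adjoint $(-)\otimes X^{\vee}$ \emph{of the bottom-to-top direction}: precisely, set up the square with horizontal arrows $(-)\otimes X^{\vee}$ on top and $(-)\otimes F(X)^{\vee} \cong (-)\otimes F(X^{\vee})$ on the bottom; this square is again horizontally right adjointable (its horizontal arrows being left adjoints with right adjoints $(-)\otimes X$), and clause (3) of \Cref{lem:square-colocal} gives that $f_*Y = Y \otimes X^{\vee}$ is $F$-local whenever $Y$ is. Replacing $X$ by $X^{\vee}$ (which is again dualizable, with $(X^{\vee})^{\vee}\cong X$) yields that $Y \otimes X$ is $F$-local, as desired.

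\textbf{Main obstacle.} There is no serious obstacle here; this is a packaging result. The only point requiring a little care is verifying that the relevant squares are genuinely right adjointable in the sense needed by \Cref{lem:square-colocal} --- i.e.\ that the canonical mate transformation is an equivalence rather than merely a map. This is exactly the statement that a symmetric monoidal functor preserves the evaluation and coevaluation maps witnessing dualizability of $X$, hence preserves the unit and counit of the adjunction $(-)\otimes X \dashv (-)\otimes X^{\vee}$; consequently the Beck--Chevalley map is an isomorphism. One should also observe that the generation/cogeneration hypotheses in clauses (2) and (4) of \Cref{lem:square-colocal} are not needed for this argument, since we only use clauses (1) and (3).
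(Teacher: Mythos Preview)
Your proof is correct and follows essentially the same approach as the paper: both construct the right adjointable squares coming from the duality adjunction $(-)\otimes X \dashv (-)\otimes X^{\vee}$ and its variant with $X^{\vee}$ in place of $X$, and then invoke clauses (1) and (3) of \Cref{lem:square-colocal}.

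One small bookkeeping slip in your Step 2 for locality: having set up the square with left adjoint $(-)\otimes X^{\vee}$ and right adjoint $(-)\otimes X$, you then write $f_*Y = Y\otimes X^{\vee}$, but with that square $f_*Y = Y\otimes X$, which gives the conclusion directly without the subsequent swap $X\leftrightarrow X^{\vee}$. (Alternatively, if you use the original square with left adjoint $(-)\otimes X$, then indeed $f_*Y = Y\otimes X^{\vee}$ and the swap is needed.) Either route is fine; the paper takes the direct one.
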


\begin{proof}
  Using the assumption that $F$ is symmetric monoidal and $X$ is dualizable we can construct right adjointable squares of the form 
  \[ \begin{tikzcd}
      \ccat \ar[rr, "- \otimes X"] \ar[d, "F"] & & 
      \ccat \ar[d, "F"] &
      \ccat \ar[rr, "- \otimes X^{\vee}"] \ar[d, "F"] & & 
      \ccat \ar[d, "F"] \\
      \dcat \ar[rr, "- \otimes F(X)"] & & 
      \dcat &
      \dcat \ar[rr, "- \otimes F(X)^{\vee}"]  & & 
      \dcat. 
  \end{tikzcd} \]
  The corollary now follows from \Cref{lem:square-colocal}, parts $(1)$ and $(3)$.
\end{proof}

\begin{lemma} \label{lem:dualizing-co-local}
    Suppose we are given
    \begin{enumerate}
        \item a stable, presentably symmetric monoidal category $\ccat$ that is compactly generated by dualizable objects,
        \item a stable, presentably symmetric monoidal category $\dcat$ and 
        \item a symmetric monoidal functor $F: \ccat \to \dcat$ that preserves colimits and compactness.
    \end{enumerate}
    Then, a dualizable object $X$ is $F$-colocal if and only if $X^{\vee}$ is $F$-local.
\end{lemma}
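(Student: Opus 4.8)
The plan is to reduce the statement to a pair of applications of \Cref{lem:square-colocal} by exhibiting $X$ and $X^{\vee}$ as images of one another under suitable right adjointable squares, and then use the hypothesis that $F$ preserves colimits and compactness to run the argument between large categories after reducing to small ones.

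First I would observe that since $\ccat$ and $\dcat$ are presentably symmetric monoidal and $F$ is a symmetric monoidal colimit-preserving functor, $F$ carries dualizable objects to dualizable objects and $F(X)^{\vee} \simeq F(X^{\vee})$ naturally. Using dualizability of $X$, the endofunctor $- \otimes X^{\vee}$ is both left and right adjoint to $- \otimes X$, and likewise on $\dcat$; so for each of these we obtain a square
\[
\begin{tikzcd}
  \ccat \ar[r, "- \otimes X"] \ar[d, "F"'] & \ccat \ar[d, "F"] \\
  \dcat \ar[r, "- \otimes F(X)"] & \dcat
\end{tikzcd}
\]
that is both horizontally left and horizontally right adjointable, with the horizontal adjoints given by tensoring with $X^{\vee}$ (resp. $F(X^{\vee})$). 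Applying \Cref{lem:square-colocal}(1)--(2) to the square with horizontal arrows $- \otimes X^{\vee}$ shows: $Y$ is $F$-colocal $\Rightarrow$ $Y \otimes X^{\vee}$ is $F$-colocal, and (since the essential image of $- \otimes X$ cogenerates $\ccat$, as $(- \otimes X) \circ (- \otimes X^{\vee}) \simeq \mathrm{id}$) conversely $Y \otimes X^{\vee}$ $F$-colocal $\Rightarrow$ $Y$ is $F$-colocal. Taking $Y = \monunit$, which is $F$-colocal since $F$ is symmetric monoidal, is not quite what I want; rather I want to play colocality of $X$ against locality of $X^{\vee}$.

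The key remaining point is the interchange between ``local'' and ``colocal'' across the dual. I would prove: for a dualizable $X$, $X$ is $F$-colocal if and only if $X^{\vee}$ is $F$-local, by comparing the two defining mapping-space conditions via the duality adjunction $\Map_{\ccat}(X, Z) \simeq \Map_{\ccat}(\monunit, X^{\vee} \otimes Z)$ and its image under $F$. Concretely, $F$-colocality of $X$ says $\Map_{\ccat}(X, Z) \to \Map_{\dcat}(F(X), F(Z))$ is an equivalence for all $Z$; rewriting both sides using duality this becomes $\Map_{\ccat}(\monunit, X^{\vee}\otimes Z) \to \Map_{\dcat}(\monunit, F(X^{\vee}\otimes Z))$ being an equivalence for all $Z$, i.e. that $X^{\vee} \otimes Z$ is ``$\monunit$-colocal'' for all $Z$. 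Dually, $F$-locality of $X^{\vee}$ says $\Map_{\ccat}(Z, X^{\vee}) \to \Map_{\dcat}(F(Z), F(X^{\vee}))$ is an equivalence for all $Z$; here I would use that $\ccat$ is compactly generated by dualizable objects, so by \Cref{remark:testing_locality_and_colocality_on_generators} it suffices to take $Z$ dualizable, and then rewrite $\Map_{\ccat}(Z, X^{\vee}) \simeq \Map_{\ccat}(\monunit, Z^{\vee} \otimes X^{\vee}) \simeq \Map_{\ccat}(\monunit, (Z \otimes X)^{\vee})$, matching the colocality condition with $Z \otimes X$ ranging over the same generating class. Since $F$ preserves colimits and compactness, and dualizable compact objects generate $\ccat$, the two families of conditions ($X^{\vee}\otimes Z$ colocal for all $Z$, vs. $W^{\vee}$ tested against all $F(W)$) are seen to coincide after this bookkeeping.

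The main obstacle I anticipate is making the ``rewrite via duality'' step fully rigorous at the level of mapping spectra with the correct naturality, in particular ensuring that the comparison map induced by $F$ on $\Map_{\ccat}(X,Z)$ really is carried to the comparison map on $\Map_{\ccat}(\monunit, X^{\vee}\otimes Z)$ under the duality equivalence — this is where symmetric monoidality of $F$ is used, and one must check the relevant diagram of evaluation/coevaluation maps commutes. Once that naturality is in hand, the reduction to generators (justified by $F$ preserving colimits, hence colocal objects being closed under colimits, together with \Cref{remark:testing_locality_and_colocality_on_generators}) and the symmetry of the roles of $X$ and $X^{\vee}$ close the argument. I would also remark that compact generation by dualizables is exactly what licenses restricting the test objects $Z$ to the dualizable ones in the locality condition, which is the asymmetry that must be absorbed.
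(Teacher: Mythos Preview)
Your argument for the direction ``$X$ is $F$-colocal $\Rightarrow$ $X^{\vee}$ is $F$-local'' is correct and matches the paper: reduce to dualizable test objects $Z$ via \cref{remark:testing_locality_and_colocality_on_generators}, then rewrite $\Map_{\ccat}(Z, X^{\vee}) \simeq \Map_{\ccat}(X, Z^{\vee})$ and invoke colocality of $X$.

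The gap is in the other direction. After your duality rewrite, $F$-colocality of $X$ becomes the statement that
\[
\Map_{\ccat}(\monunit, X^{\vee}\otimes Y) \to \Map_{\dcat}(\monunit, F(X^{\vee}\otimes Y))
\]
is an equivalence for \emph{all} $Y \in \ccat$, whereas $F$-locality of $X^{\vee}$ only gives you this for $Y$ of the form $Z^{\vee}$ with $Z$ a compact dualizable generator. To pass from the latter to the former you would need to write an arbitrary $Y$ as a filtered colimit of such objects and commute $\Map_{\ccat}(\monunit,-)$ past this colimit; but neither $\monunit$ nor $X$ is assumed compact, so this step does not go through. The anticipated obstacle you flag (naturality of the duality rewrite) is not the real issue---that step is fine; the real issue is this colimit passage, and your sentence ``the two families of conditions are seen to coincide after this bookkeeping'' does not address it.

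The paper circumvents this by writing \emph{both} $\monunit$ and $Y$ as filtered colimits $\monunit \simeq \colim_\alpha Z_\alpha$, $Y \simeq \colim_\beta Y_\beta$ of compact dualizables, so that
\[
\Map_{\ccat}(X,Y) \simeq \lim_\alpha \Map_{\ccat}(Z_\alpha, X^{\vee}\otimes \colim_\beta Y_\beta) \simeq \lim_\alpha \colim_\beta \Map_{\ccat}(Y_\beta^{\vee}\otimes Z_\alpha, X^{\vee}),
\]
where the second step uses compactness of $Z_\alpha$. After applying $F$-locality of $X^{\vee}$ termwise, one reassembles on the $\dcat$ side using that $F(Z_\alpha)$ is compact---this is exactly where the hypothesis that $F$ preserves compactness enters. (Your opening paragraph on adjointable squares is, as you suspected, a detour; it yields the separate corollary about tensoring (co)local objects by dualizables but does not feed into the proof of the lemma.)
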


\begin{proof}
    Assume $X^\vee$ is $F$-local. 
    Then we would like to show that for $Y \in \ccat$ we have 
    \[ \Map_{\ccat}(X,Y) \cong \Map_{\dcat}(F(X),F(Y)). \]
    Using our assumptions on $\ccat$ we can rewrite the monoidal unit $\monunit$ and $Y$ as colimits $\colim_\alpha Z_\alpha$ and $\colim_\beta Y_\beta$ of compact dualizable objects.
    Then, using our hypotheses on $F$ and the assumption that $X^\vee$ is $F$-local we have equivalences
    \begin{align*}
        \Map_{\ccat}&(X, Y)
        \cong \Map_{\ccat}(X \otimes \colim_\alpha Z_\alpha, \colim_\beta Y_\beta)
        \cong \Map_{\ccat}(\colim_\alpha Z_\alpha, X^\vee \otimes \colim_\beta Y_\beta) \\
        &\cong \lim_\alpha \Map_{\ccat}( Z_\alpha, \colim_\beta X^\vee \otimes  Y_\beta)
        \cong \lim_\alpha \colim_\beta \Map_{\ccat}( Z_\alpha, X^\vee \otimes  Y_\beta) \\
        &\cong\lim_\alpha \colim_\beta \Map_{\ccat}(Y_\beta^{\vee} \otimes Z_\alpha, X^\vee)
        \cong\lim_\alpha \colim_\beta \Map_{\dcat}(F(Y_\beta^{\vee} \otimes Z_\alpha), F(X^\vee)) \\
        &\cong\lim_\alpha \colim_\beta \Map_{\dcat}(F(Y_\beta)^{\vee} \otimes F(Z_\alpha), F(X)^\vee)
        \cong\lim_\alpha \colim_\beta \Map_{\dcat}(F(Z_\alpha), F(X)^\vee \otimes F(Y_\beta)) \\
        &\cong \Map_{\dcat}(\colim_\alpha  F(Z_\alpha), F(X)^\vee \otimes \colim_\beta  F(Y_\beta))
        \cong \Map_{\dcat}(F(X) \otimes F(\monunit), F(Y)) \\
        &\cong \Map_{\dcat}(F(X), F(Y)).
    \end{align*}
    
    For the reverse direction, assume that $X$ is $F$-colocal.
    We must show that  
    \[ \Map_{\ccat}(Z, X^\vee) \cong \Map_{\dcat}(F(Z),F(X^\vee)) \]
    for all $Z \in \ccat$.
    As above we expand the auxiliary object $Z$ as a colimit $\colim_\alpha Z_\alpha$ of compact dualizable objects.
    Then we have equivalences
    \begin{align*}
        \Map_{\ccat}&(Z, X^\vee) 
        \cong \Map_{\ccat}(\colim_\alpha Z_\alpha, X^\vee) 
        \cong \lim_\alpha \Map_{\ccat}(Z_\alpha, X^\vee) 
        \cong \lim_\alpha \Map_{\ccat}(X, Z_\alpha^\vee) \\
        &\cong \lim_\alpha \Map_{\dcat}(F(X), F(Z_\alpha^\vee)) 
        \cong \lim_\alpha \Map_{\dcat}(F(Z_\alpha), F(X^\vee)) \\
        &\cong  \Map_{\dcat}(\colim_\alpha F(Z_\alpha), F(X^\vee)) 
        \cong  \Map_{\dcat}(F(Z), F(X^\vee)). 
    \end{align*}
\end{proof}

\begin{lemma} \label{lem:big-to-small}
    Suppose we are given a map of adapted Grothendieck homology theories
    \[ \begin{tikzcd}
      & \ccat \ar[dl, "\Hsf_1"'] \ar[dr, "\Hsf_2"] &  &
      \dcat^\omega_{\Hsf_1}(\ccat) \ar[r, "q"] \ar[d] & \dcat^\omega_{\Hsf_2}(\ccat) \ar[d] \\
      \acat_1 \ar[rr, "q"] & & \acat_2 &
      \widecheck{\dcat}_{\Hsf_1}(\ccat) \ar[r, "\widecheck{q}"] & \widecheck{\dcat}_{\Hsf_2}(\ccat) 
    \end{tikzcd} \]
    and the associated square of comparison functors between the
    perfect and unseperated Grothendieck deformations along $\Hsf_1$ and $\Hsf_2$ respectively.
    Given an object $X \in \ccat$:
    \begin{enumerate}
    \item If $\nu_{\Hsf_1}(X)$ is $q$-colocal,
      both $\widecheck{\nu}_{\Hsf_1}(X)$ and $\widecheck{\nu}_{\Hsf_2}(X)$ are compact and
      $\widecheck{q}$ preserves compactness,
      then $\widecheck{\nu}_{\Hsf_1}(X)$ is $q$-colocal.
    \item If $\nu_{\Hsf_1}(X)$ is $q$-local, then $\widecheck{\nu}_{\Hsf_1}(X)$ is $q$-local.
    \item If $\widecheck{\nu}_{\Hsf_1}(X)$ is $q$-colocal, then $\nu_{\Hsf_1}(X)$ is $q$-colocal.
    \item If $\widecheck{\nu}_{\Hsf_1}(X)$ is $q$-local, then $\nu_{\Hsf_1}(X)$ is $q$-local.
    \end{enumerate}    
\end{lemma}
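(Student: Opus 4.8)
The plan is to run all four parts through a single mechanism built on the fully faithful comparison functors $\dcat^{\omega}_{\Hsf_i}(\ccat) \hookrightarrow \widecheck{\dcat}_{\Hsf_i}(\ccat)$ of \cite[Proposition 6.51]{patchkoria2021adams} (the special case is \cref{lem:easy-comparison}). These embeddings are compatible with $\nu$, with the local grading, and with the comparison functors $q$ and $\widecheck{q}$, so that the two embeddings together with the square displayed in the statement commute. First I would record the elementary consequence: for objects of the form $\nu_{\Hsf_1}(c)(n)$ with $c \in \ccat$, $n \in \Z$, and for their images $\widecheck{\nu}_{\Hsf_1}(c)(n)$, the relevant mapping spectra in the perfect and in the unseparated deformation agree, and agree compatibly with the maps induced by $q$ resp.\ $\widecheck{q}$. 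Hence, tested against such objects, the comparison-of-mapping-spectra map is an equivalence in one deformation if and only if it is in the other. Everything then reduces to passing between ``an equivalence on a generating family'' and ``an equivalence on all objects'', using that $\dcat^{\omega}_{\Hsf}(\ccat)$ is generated under finite colimits, desuspensions and retracts by the $\nu_{\Hsf}(c)(n)$, while $\widecheck{\dcat}_{\Hsf}(\ccat)$ is generated under all colimits by the $\widecheck{\nu}_{\Hsf}(c)(n)$.

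Next I would dispatch the locality statements $(2)$ and $(4)$, which need no compactness. Since $\map(-,W)$ always carries finite colimits (resp.\ arbitrary colimits) to finite limits (resp.\ limits), $q$ is exact, and $\widecheck{q}$ is a cocontinuous left adjoint, in either direction it suffices to check the comparison map in the \emph{source} variable on the generators $\nu_{\Hsf_1}(c)(n)$ resp.\ $\widecheck{\nu}_{\Hsf_1}(c)(n)$. On such a generator the comparison map is the composite of full faithfulness of one embedding, the hypothesised $q$- resp.\ $\widecheck{q}$-locality of $\nu_{\Hsf_1}(X)$ resp.\ $\widecheck{\nu}_{\Hsf_1}(X)$ applied to that generator as a test object, and full faithfulness of the other embedding; this composite is the canonical map by naturality of the embeddings and commutativity of the square, so it is an equivalence. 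Part $(3)$ is the same argument with the variables exchanged: colocality of $\nu_{\Hsf_1}(X)$ need only be tested against the finite-colimit generators of $\dcat^{\omega}_{\Hsf_1}(\ccat)$ since $\map(\nu_{\Hsf_1}(X),-)$ is exact, so again no compactness is needed and the three-step chain closes the argument.

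The remaining case $(1)$ is where the hypotheses do real work, and I expect the reduction there to be the main obstacle. Colocality of $\widecheck{\nu}_{\Hsf_1}(X)$ must be checked against \emph{all} objects of $\widecheck{\dcat}_{\Hsf_1}(\ccat)$, and the $\widecheck{\nu}_{\Hsf_1}(c)(n)$ only generate under infinite colimits; so I must show that the class of objects on which the comparison map is an equivalence is closed under colimits before reducing to generators. This is exactly what ``$\widecheck{\nu}_{\Hsf_1}(X)$ and $\widecheck{\nu}_{\Hsf_2}(X) = \widecheck{q}\,\widecheck{\nu}_{\Hsf_1}(X)$ are compact'' and ``$\widecheck{q}$ preserves compactness'' provide: $\map(\widecheck{\nu}_{\Hsf_1}(X),-)$ and $\map(\widecheck{\nu}_{\Hsf_2}(X),-)$ then commute with colimits and $\widecheck{q}$ is cocontinuous, so the test class is colimit-closed; once reduced to a generator $\widecheck{\nu}_{\Hsf_1}(c)(n)$ the same chain as before, now using the hypothesised $q$-colocality of $\nu_{\Hsf_1}(X)$ in $\dcat^{\omega}_{\Hsf_1}(\ccat)$, finishes the proof. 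Beyond this, the work is bookkeeping: pinning down the generation statements for $\dcat^{\omega}$ and $\widecheck{\dcat}$ from \cite{patchkoria2021adams}, and checking in each of the four cases that the composite of full-faithfulness isomorphisms with the hypothesised (co)locality equivalence really is the canonical comparison map.
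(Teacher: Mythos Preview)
Your proposal is correct and follows essentially the same approach as the paper: both arguments rest on the full faithfulness of the embeddings $\dcat^{\omega}_{\Hsf_i}(\ccat) \hookrightarrow \widecheck{\dcat}_{\Hsf_i}(\ccat)$, the fact that $\widecheck{\dcat}_{\Hsf_1}(\ccat)$ is generated under colimits by the image of the perfect deformation, and the cocontinuity of $\widecheck{q}$. The paper's proof is terser and groups the parts slightly differently---it notes that $(3)$ and $(4)$ are \emph{immediate} from full faithfulness alone (no reduction to generators is needed, since every test object in $\dcat^{\omega}_{\Hsf_1}(\ccat)$ is already one in $\widecheck{\dcat}_{\Hsf_1}(\ccat)$), while $(1)$ and $(2)$ additionally require the generation-under-colimits and left-adjoint observations---but your more explicit unpacking of where the compactness hypotheses enter for $(1)$ is a welcome clarification that the paper leaves implicit.
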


\begin{proof}
  We start by recalling from \cite[Thm 6.51]{patchkoria2021adams} that the vertical maps from the perfect deformation to the useperated deformation are fully faithful.
  Parts (3) and (4) of the lemma follow from this fully faithfulness.
  For part (1) and (2) it suffices to observe that
  $\widecheck{\dcat}_{\Hsf_1}(\ccat)$ is generated under colimits by the image of
  $\dcat^\omega_{\Hsf_1}(\ccat)$ and that the comparison map $\widecheck{q}$
  is a left adjoint between presentable categories.  
\end{proof}

\begin{proposition}
\label{prop:colocal-to-abelian}
    Suppose we are given a map of adapted homology theories
    \[ \begin{tikzcd}
        & \ccat \ar[dl, "\Hsf_1"'] \ar[dr, "\Hsf_2"] & \\
        \acat_1 \ar[rr, "q"] & & \acat_2 
    \end{tikzcd} \]
    and an object $X \in \ccat$.
    Then $\nu_{\Hsf_1}(X)$ is $\tilde{q}$-(co)local 
    if and only if $\Hsf_1(X) \in \dcat^b(\acat_1)$ is $q$-(co)local.
\end{proposition}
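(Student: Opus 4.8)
The statement to prove (\cref{prop:colocal-to-abelian}) asserts that for a map of adapted homology theories with common source $\ccat$, an object $\nu_{\Hsf_1}(X)$ is $\widetilde q$-(co)local in $\dcat^{\omega}_{\Hsf_1}(\ccat)$ if and only if $\Hsf_1(X)$ is $q$-(co)local in $\dcat^{b}(\acat_1)$. The basic strategy is to transfer the (co)locality question through the $\tau$-filtration of the deformation categories, reducing mapping spectra in $\dcat^{\omega}_{\Hsf_i}(\ccat)$ to a combination of mapping spectra of the generic fibre ($\ccat$ itself) and the special fibre ($\dcat^{b}(\acat_i)$), and then observing that the generic-fibre contribution is unchanged by $\widetilde q$ for formal reasons, so that (co)locality is governed entirely by the special fibre.

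\textbf{Key steps.} First I would set up the comparison: by \cref{recollection:perfect_deformation_associated_to_a_homology_theory}, there is a commuting triangle of $\nu$'s compatible with $\tau$-localization and with $C\tau$, where $\widetilde q \colon \dcat^{\omega}_{\Hsf_1}(\ccat) \to \dcat^{\omega}_{\Hsf_2}(\ccat)$ restricts to the identity on generic fibres (via $(-)^{\tau=1}$) and to $\dcat^{b}(q)\colon \dcat^{b}(\acat_1) \to \dcat^{b}(\acat_2)$ on special fibres (via $C\tau$); these are the contents of the functoriality packaged in \cref{construction:functors_between_deformations_from_universal_property}. Second, I would observe that for any $Z \in \ccat$ the natural map $\map_{\dcat^{\omega}_{\Hsf_1}}(\nu Z, \nu X^{\tau=1}) \to \map_{\dcat^{\omega}_{\Hsf_2}}(\nu Z, \nu X^{\tau=1})$ is an equivalence since both sides compute $\map_{\ccat}(Z, X)$ (the generic fibre is $\ccat$ independently of the homology theory), and similarly after smashing with $\nu$ of any object, using that $\widetilde q$ is exact and compatible with $\nu$. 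Third, I would use the cofibre sequence $\Sigma \nu(X)[-1] \xrightarrow{\tau} \nu(X) \to C\tau(\nu X)$ together with $C\tau(\nu X) \cong \Hsf_1(X)$ (as an object of $\dcat^{b}(\acat_1)_{\geq 0}\subseteq \dcat^{\omega}_{\Hsf_1}(\ccat)$) to write the mapping spectrum $\map_{\dcat^{\omega}_{\Hsf_1}}(\nu Z, \nu X)$ as an inverse limit along $\tau$ of mapping spectra into $C\tau$-modules, i.e.\ the convergent $\tau$-adic filtration. Applying $\widetilde q$ and comparing term-by-term: the comparison of the $C\tau$-layers is exactly the comparison $\map_{\dcat^{b}(\acat_1)}(\Hsf_1 Z, \Hsf_1 X) \to \map_{\dcat^{b}(\acat_2)}(\Hsf_2 Z, \Hsf_2 X)$, which is an equivalence for all $Z$ precisely when $\Hsf_1(X)$ is $q$-(co)local in $\dcat^{b}(\acat_1)$. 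Fourth, for the local case I would dually range over test objects $Z$; since every object of $\dcat^{\omega}_{\Hsf_1}(\ccat)$ is built from objects of the form $\nu(Z)$ under colimits and the functors involved are exact, it suffices to test against $\nu(Z)$ for $Z \in \ccat$ (invoking \cref{remark:testing_locality_and_colocality_on_generators}), and the same $\tau$-filtration argument reduces the question to the special fibre. Finally I would assemble: the generic-fibre layer of the $\tau$-filtration always compares via an equivalence, so the full mapping-spectrum comparison is an equivalence if and only if the $C\tau$-layer comparison is, which is the $q$-(co)locality of $\Hsf_1(X)$ in $\dcat^{b}(\acat_1)$.

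\textbf{Main obstacle.} The delicate point is the passage between the completed/filtered picture and the honest mapping spectrum: $\map_{\dcat^{\omega}_{\Hsf_1}}(\nu Z, \nu X)$ is the inverse limit of its $\tau$-adic tower only because $\dcat^{\omega}_{\Hsf_1}(\ccat)$ is a perfect (hence suitably complete) deformation and $\nu Z$ is built from compact generators, so $\map(\nu Z, -)$ commutes with the relevant limits; one must be careful that after applying $\widetilde q$ the comparison of towers is still a comparison of convergent towers, so that an equivalence on associated graded (which combines the always-equivalence on the $\ccat$-layer with the $C\tau$-layer comparison) forces an equivalence on limits. Concretely, I expect to need that $\widetilde q$ preserves the $\tau$-adic completeness of these objects, which follows from $\widetilde q$ being exact and $\tau$-compatible, but spelling this out carefully is where the real work lies; the rest is a formal unwinding of the functoriality recollections together with \cref{remark:testing_locality_and_colocality_on_generators}.
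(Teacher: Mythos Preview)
Your overall strategy matches the paper's: reduce (co)locality of $\nu_{\Hsf_1}(X)$ to (co)locality of $\Hsf_1(X)$ by observing that the generic-fibre comparison is automatic and that the content lives entirely in the $C\tau$-layer. The gap is precisely the obstacle you flagged, and your proposed resolution of it does not work as stated. Writing $\map(\nu Z,\nu X)$ as the inverse limit of $\map(\nu Z,\nu X/\tau^n)$ would require $\nu X$ to be $\tau$-complete, which it is not: $\tau$-inverting $\nu X$ recovers the nonzero object $X \in \ccat$, so the $\tau$-adic tower does not converge to $\map(\nu Z,\nu X)$, and in any case $\dcat^{\omega}_{\Hsf}(\ccat)$ is a small stable category in which such sequential limits need not exist. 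The statement ``$\widetilde q$ preserves $\tau$-adic completeness'' therefore has no content here.

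The paper circumvents this not by proving any completeness statement but by running an \emph{upward induction on the filtration index}. For $X,Y$ perfect one has cofibre sequences in each row of
\[
\begin{tikzcd}
{[\Sigma^{k,s+1}X,Y]}_{\dcat^{\omega}_{\Hsf_1}} \ar[r,"\tau"] \ar[d] & {[\Sigma^{k,s}X,Y]}_{\dcat^{\omega}_{\Hsf_1}} \ar[r] \ar[d] & {[\Sigma^{k,s}X,Y/\tau]}_{\dcat^{\omega}_{\Hsf_1}} \ar[d] \\
{[\Sigma^{k,s+1}X,Y]}_{\dcat^{\omega}_{\Hsf_2}} \ar[r,"\tau"] & {[\Sigma^{k,s}X,Y]}_{\dcat^{\omega}_{\Hsf_2}} \ar[r] & {[\Sigma^{k,s}X,Y/\tau]}_{\dcat^{\omega}_{\Hsf_2}}.
\end{tikzcd}
\]
Since $X$ and $Y$ are perfect, $Y/\tau$ is \emph{bounded} in $\dcat^{b}(\acat_i)$, so for $s\ll 0$ the right column vanishes and the first two vertical maps stabilise to the identity on $[\Sigma^{k}X^{\tau=1},Y^{\tau=1}]_{\ccat}$. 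This grounds an induction on $s$: the middle column is an isomorphism for all $k,s$ iff the right column is. The right column is then identified, via the adjunction $\dcat^{\omega}_{\Hsf_i}(\ccat)\rightleftarrows \dcat^{b}(\acat_i)$, with maps in $\dcat^{b}(\acat_i)$, giving exactly the $q$-(co)locality condition on $\Hsf_1(X)$. The key input you were missing is the \emph{boundedness of $Y/\tau$ for perfect $Y$}, which replaces your completeness argument by a finite induction.
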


\begin{proof}
  Given two objects $X,Y$ in $\dcat_{\Hsf_1}^{\omega}(\ccat)$ we have a diagram of mapping spectra
  \[ \begin{tikzcd}
    \map_{\dcat_{\Hsf_1}^{\omega}(\ccat)}(\Sigma^{k,s+1} X, Y) \ar[r, "\tau"] \ar[d] & 
    \map_{\dcat_{\Hsf_1}^{\omega}(\ccat)}(\Sigma^{k,s} X, Y) \ar[r] \ar[d] & \map_{\dcat_{\Hsf_1}^{\omega}(\ccat)}(\Sigma^{k,s} X, Y/\tau) \ar[d] \\
    \map_{\dcat_{\Hsf_2}^{\omega}(\ccat)}(\Sigma^{k,s+1} X, Y) \ar[r, "\tau"] & \map_{\dcat_{\Hsf_2}^{\omega}(\ccat)}(\Sigma^{k,s} X, Y) \ar[r] & \map_{\dcat_{\Hsf_2}^{\omega}(\ccat)}(\Sigma^{k,s} X, Y/\tau)
  \end{tikzcd} \]
  where each row is a cofiber sequence.
  Using the perfectness of $X$ and $Y$ we know that $Y/\tau$ is bounded and therefore that for $s \ll 0$ we have equivalences
  \[ [\Sigma^{k,s}X, Y]_{\dcat_{\Hsf_1}^{\omega}(\ccat)} \xrightarrow{\cong} [\Sigma^{k,s}X, Y]_{\dcat_{\Hsf_2}^{\omega}(\ccat)} \xrightarrow{\cong} [\Sigma^{k,s}X^{\tau=1}, Y^{\tau=1}]_{\ccat}. \]
  This lets us ground an upward induction on $s$ to show that
  \[ [\Sigma^{k,s} X, Y]_{\dcat_{\Hsf_1}^{\omega}(\ccat)} \to [\Sigma^{k,s} X, Y]_{\dcat_{\Hsf_2}^{\omega}(\ccat)} \]
  is an equivalence for all $k,s$ if and only if 
  \[ [\Sigma^{k,s} X, Y/\tau]_{\dcat_{\Hsf_1}^{\omega}(\ccat)} \to [\Sigma^{k,s} X, Y/\tau]_{\dcat_{\Hsf_2}^{\omega}(\ccat)} \]
  is an equivalence for all $k,s$.
  Then, using the homology adjunctions 
  $\dcat_{\Hsf_i}^{\omega}(\ccat) \rightleftharpoons \dcat^b(\acat_i)$ of \cite[\S 5.3]{patchkoria2021adams}) we obtain isomorphisms
  \[ [\Sigma^{k,s} X, Y/\tau]_{\dcat_{\Hsf_i}^{\omega}(\ccat)} \cong [\Sigma^{k,s} X, H_*H^* (Y)]_{\dcat_{\Hsf_i}^{\omega}(\ccat)} \cong [\Sigma^{k,s} H^*(X), H^*(Y)) ]_{\dcat^b(\acat_i)} \]
  which ends the argument. 
\end{proof}

\begin{example} \label{exm:H projective}
    Given an adapted homology theory $\Hsf \colon \ccat \to \acat$ there is a unique map of homology theories $\eta \colon A(\ccat) \to \Hsf$ from the universal adapted homology theory to $\Hsf$.
    Since $y_{0}(X) \in A(\ccat)$ is injective for all $X \in \ccat$ 
    we can use \Cref{prop:colocal-to-abelian} to conclude that the following are equivalent: 
    \begin{enumerate}
        \item $\nu_y(X)$ is $\widetilde{\eta}$-colocal
        \item $\Hsf(X)$ is a projective object of $\acat$ and the map 
    $[X, Y]_{\ccat} \rightarrow \Hom_{\acat}(\Hsf(X), \Hsf(Y))$ is an isomorphism for any $Y$.
    \end{enumerate}
Note that the second condition is equivalent to saying that $X$ is an $\Hsf$-injective for the opposite homology theory $\ccat^{op} \rightarrow \acat^{op}$. 
\end{example}

\begin{lemma} \label{lem:P colocal}
  Let $f^* \colon \ccat_1 \leftrightharpoons \ccat_2 \colon f_*$ be an adjunction between stable categories and let $q \colon \Hsf_{2a} \to \Hsf_{2b}$ a map of adapted homology theories on $\ccat_2$ such that $f$ is both $\Hsf_{2a}$-flat and $\Hsf_{2b}$-flat.
  Let $\Hsf_{1a}$ be the pushforward of $\Hsf_{2a}$ and
  let $\Hsf_{1b}$ be the pushforward of $\Hsf_{2b}$, so that by \cref{lemma:morphism_of_adjunctions_induced_by_map_of_flat_homology_theories} we we have a right adjointable square 
  \[ \begin{tikzcd}
    \dcat^\omega_{\Hsf_{1a}}(\ccat_1) \ar[r, "f^*"] \ar[d, "q"] &
    \dcat^\omega_{\Hsf_{2a}}(\ccat_2) \ar[d, "q"] \\
    \dcat^\omega_{\Hsf_{1b}}(\ccat_1) \ar[r, "f^*"] &
    \dcat^\omega_{\Hsf_{2b}}(\ccat_2) 
  \end{tikzcd} \]
of deformations. Suppose we are given an $X \in \ccat_1$ such that one of the following equivalent conditions
  \begin{enumerate}
  \item $\nu(f^*X)$ is $q$-colocal or
  \item $\Hsf_{2a}(f^*X)$ is colocal for the map
    $\dcat^b(\acat(\ccat_2, \Hsf_{2a})) \to \dcat^b(\acat(\ccat_2, \Hsf_{2b}))$
  \end{enumerate}
  is satisfied. Then, $\nu(X)$ is $q$-colocal.
\end{lemma}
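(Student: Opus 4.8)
The plan is to reduce the statement to colocality in bounded derived categories of abelian categories and then invoke \Cref{lem:square-colocal}; this is the abstract version of the argument already used in the proof of \Cref{lemma:P_colocal_from_amitsur_to_quiver}.

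First I would record the equivalence of conditions (1) and (2): both assert colocality for the comparison functor attached to the map of adapted homology theories $q \colon \Hsf_{2a} \to \Hsf_{2b}$ on $\ccat_2$, evaluated at $f^{*}X$, and \Cref{prop:colocal-to-abelian} says precisely that $\nu_{\Hsf_{2a}}(f^{*}X)$ is $\widetilde{q}$-colocal in $\dcat^{\omega}_{\Hsf_{2a}}(\ccat_2)$ if and only if $\Hsf_{2a}(f^{*}X)$ is colocal for $\dcat^{b}(\acat(\ccat_2,\Hsf_{2a})) \to \dcat^{b}(\acat(\ccat_2,\Hsf_{2b}))$. So I may assume (2). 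Dually, applying \Cref{prop:colocal-to-abelian} to $q \colon \Hsf_{1a} \to \Hsf_{1b}$ and the object $X$, it suffices to prove that $\Hsf_{1a}(X) \in \dcat^{b}(\acat(\ccat_1,\Hsf_{1a}))$ is colocal for the comparison functor $\dcat^{b}(\overline{q})$, where $\overline{q}$ is exact by \Cref{lemma:extending_exact_functors_to_squares_of_homology_theories}.

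Next I would set up the comparison square. By \Cref{lemma:morphism_of_adjunctions_induced_by_map_of_flat_homology_theories}, the two flatness hypotheses give a horizontally right adjointable square in $\adapted$ with horizontal arrows the pushforward maps $f^{*}$ (whose right adjoints $f_{*}$ exist precisely by flatness, via \Cref{proposition:flat_induction_yields_adjunction_of_adapted_homology_theories}) and vertical arrows $q$. Applying the $2$-functor $\dcat^{b}(\acat(-))$ yields a horizontally right adjointable square of bounded derived categories, the horizontal right adjoints being $\dcat^{b}(\overline{f_{*}})$; these make sense because $\overline{f_{*}}$ is the cofree comodule functor by \Cref{proposition:flat_induction_yields_adjunction_of_adapted_homology_theories}, hence exact. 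Since the pushforward homology theory is, by \Cref{remark:left-comonad}, the adapted factorization of $\Hsf_{2a}\circ f^{*}$, one has $\overline{f^{*}}(\Hsf_{1a}(X)) \simeq \Hsf_{2a}(f^{*}X)$, and by (2) the latter is colocal for the right-hand vertical map. Invoking \Cref{lem:square-colocal}(2) then gives that $\Hsf_{1a}(X)$ is colocal for the left-hand vertical map, hence $\nu(X)$ is $\widetilde{q}$-colocal by the reverse implication of \Cref{prop:colocal-to-abelian} — provided one knows that the essential image of $\dcat^{b}(\overline{f_{*}})$ cogenerates $\dcat^{b}(\acat(\ccat_1,\Hsf_{1a}))$.

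That last cogeneration claim is where $\Hsf_{2a}$-flatness really enters and is the one step requiring care: by \Cref{proposition:flat_induction_yields_adjunction_of_adapted_homology_theories} the functor $\overline{f^{*}}$ is a comonadic exact left adjoint, so $\acat(\ccat_1,\Hsf_{1a}) \simeq \Comod_{Q}(\acat(\ccat_2,\Hsf_{2a}))$ with $\overline{f_{*}}$ the cofree functor. Cofree comodules cogenerate the heart since the coaction $M \to QM$ of any comodule is a split monomorphism; passing to bounded complexes, the cobar (totalization) resolution exhibits every object as a limit of shifts of cofree comodules, which is the form of cogeneration needed to run \Cref{lem:square-colocal}(2) through \Cref{remark:testing_locality_and_colocality_on_generators}. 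I expect this promotion of the comonadic identification to the bounded derived level to be the only genuinely non-formal point; everything else is bookkeeping with the $2$-functoriality established in \Cref{sec:def}, and the argument is formally identical to the one already used for \Cref{lemma:P_colocal_from_amitsur_to_quiver}.
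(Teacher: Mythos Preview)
Your proposal is correct and follows essentially the same route as the paper's proof: reduce via \Cref{prop:colocal-to-abelian} to the bounded derived categories, apply $\dcat^{b}(\acat(-))$ to the right adjointable square of \Cref{lemma:morphism_of_adjunctions_induced_by_map_of_flat_homology_theories}, and invoke \Cref{lem:square-colocal}(2) together with the cogeneration coming from comonadicity of $\overline{f^{*}}$. The paper is terser at the cogeneration step, simply asserting that comonadicity of the horizontal adjunction implies the image of $f_{*}$ cogenerates $\dcat^{b}(\acat(\ccat_1,\Hsf_{1a}))$, while you spell this out via the coaction splitting and cobar-type resolution; both justifications are fine.
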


\begin{proof}
  Using \Cref{prop:colocal-to-abelian} we reduce the lemma to showing that $\Hsf_{1a}(X) \in \dcat^b(\acat(\ccat_1, \Hsf_{1a}))$ is $q$-colocal.
  Applying $\dcat^b(\acat(-))$ to the right adjointable square in $\adapted$ we obtain a right adjointable square
  \[ \begin{tikzcd}
    \dcat^b(\acat(\ccat_1,\Hsf_{1a})) \ar[r, "f^*"] \ar[d, "q"] &
    \dcat^b(\acat(\ccat_2,\Hsf_{2a})) \ar[d, "q"] \\
    \dcat^b(\acat(\ccat_1,\Hsf_{1b})) \ar[r, "f^*"] &
    \dcat^b(\acat(\ccat_2,\Hsf_{2b})) 
  \end{tikzcd} \]
  where the horizontal adjunctions are comonadic by \Cref{proposition:flat_induction_yields_adjunction_of_adapted_homology_theories}.
  In particular, comonadicity implies that the image of $f_*$ cogenerates $\dcat^b(\acat(\ccat_1,\Hsf_{1a}))$.
  Now we may use \Cref{lem:square-colocal} to conclude that
  it will suffice to show that $\Hsf_{2a}(f^*X)$ is $q$-colocal.
  Applying \Cref{prop:colocal-to-abelian} again if necessary we find that we are left with the our given hypothesis on $X$.
\end{proof}


\bibliographystyle{amsalpha}
\bibliography{quivers_and_the_adams_sseq_bibliography}

\end{document}